\documentclass[10pt,a4paper,reqno]{amsart}
\usepackage{amsfonts}
\usepackage{amsthm}
\usepackage{amsmath}
\usepackage{mathtools}
\usepackage{amscd}
\usepackage{upgreek}
\usepackage[OT1]{fontenc}
\usepackage[utf8]{inputenc}
\usepackage{t1enc}
\usepackage[mathscr]{eucal}
\usepackage{indentfirst}
\usepackage{graphicx}
\usepackage{graphics}
\usepackage{esint}
\usepackage{pict2e}
\usepackage{epic}
\usepackage[dvipsnames]{xcolor}
\usepackage{float}
\usepackage{MnSymbol}
\usepackage[nocompress]{cite}
\usepackage{multirow}
\usepackage{booktabs}
\usepackage{pdflscape}
\usepackage{shuffle} %the symbol shuffle product
\usepackage[table,xcdraw]{xcolor} %table background color
\usepackage{enumitem} %leftmargin = *
\usepackage{hyperref}
\usepackage{stmaryrd}
\usepackage[listings]{tcolorbox}
\tcbuselibrary{breakable}
\numberwithin{equation}{section}
\usepackage[margin=2.9cm]{geometry}
\usepackage{epstopdf}
 
\allowdisplaybreaks

\newmuskip\pFqmuskip
\newcommand*\pFq[6][8]{%
	\begingroup % only local assignments
	\pFqmuskip=#1mu\relax
	\mathchardef\normalcomma=\mathcode`,
	\mathcode`\,=\string"8000
	\begingroup\lccode`\~=`\,
	\lowercase{\endgroup\let~}\pFqcomma
	{}_{#2}F_{#3}{\left[\genfrac..{0pt}{}{#4}{#5};#6\right]}%
	\endgroup
}
\newcommand{\pFqcomma}{{\normalcomma}\mskip\pFqmuskip}

\theoremstyle{plain}
\newtheorem{theorem}{Theorem}[section]
\newtheorem{lemma}[theorem]{Lemma}
\newtheorem{corollary}[theorem]{Corollary}
\newtheorem{proposition}[theorem]{Proposition}

\theoremstyle{definition}
\newtheorem{definition}[theorem]{Definition}
\newtheorem{conjecture}[theorem]{Conjecture}
\newtheorem{observation}[theorem]{Observation}
\newtheorem{remark}[theorem]{Remark}

\newtheorem{example}[theorem]{Example}

\newcommand{\lcmD}[1]{ \left[1,\cdots,#1\right] }

\renewcommand\div{\operatorname{div}}
\newcommand\M{\mathcal{M}}
\newcommand\E{\mathcal{E}}
\renewcommand\S{\mathcal{S}}
\renewcommand\Im{\operatorname{Im}}
\renewcommand\Re{\operatorname{Re}}
\newcommand\SL{\operatorname{SL}}
\newcommand\den{\operatorname{den}}

\newcommand\Ap{\operatorname{Ap}}
\newcommand\Bp{\operatorname{Bp}}

\newcommand\boldL{\boldsymbol{\mathrm{L}}}

\newcommand\SmallMatrix[1]{{%
		\tiny\arraycolsep=0.3\arraycolsep\ensuremath{\begin{pmatrix}#1\end{pmatrix}}}}

\begin{document}
	
	\title{Holonomic sequences arising from meromorphic modular forms}
	
	\author[Kam Cheong Au]{Kam Cheong Au}
	
	\address{University of Cologne \\ Department of Mathematics and Computer Science \\ Weyertal 86-90, 50931 Cologne, Germany} 
	
	\email{kau@uni-koeln.de}
	\subjclass[2020]{Primary: 11F03, 11F67. Secondary: 11F33, 33C20, 33F10}
	
	\keywords{Ap\'ery limit, creative telescoping, denominator, holonomic, weakly holomorphic modular forms}

	\begin{abstract} 
	We introduce a systematic framework for constructing holonomic recurrences from meromorphic modular forms. Its central ingredient is a modular analogue of creative telescoping, which turns finite-dimensional quotients of spaces of meromorphic modular forms into explicit recurrence operators. This provides a common explanation for several previously isolated examples: it recovers Ap\'ery's recurrence, Beukers' modular constructions and Zagier's seven sporadic recurrences. The framework also produces a large collection of new examples across different weights and groups. The modular origin of these sequences also confers them with non-trivial arithmetic properties, such as denominator bounds, irrationality statements, and $p$-adic congruences. %These examples suggest that these recurrences form a broad class of arithmetic holonomic recurrences encompassing many phenomena previously studied separately.
	\end{abstract}
	
	\maketitle

	\section{Introduction}\label{sec:introduction}
	A sequence $\{A_n\}$ is called \textit{holonomic} \cite{paule2021holonomic, zeilberger1990holonomic} if it satisfies a recurrence with polynomial coefficients,
	$$p_l(n) A_{n+l} + \cdots + p_1(n) A_{n+1} + p_0(n) A_n = 0,\qquad p_j\in \mathbb{C}[n].$$
	One of the best-known examples is the sequence of Ap\'ery's numbers $\Ap_n$ defined by
	$$ (n+2)^3 \Ap_{n+2} -(3 + 2 n) (39 + 51 n + 17 n^2) \Ap_{n+1} + (n+1)^3 \Ap_n = 0, \qquad (\Ap_0,\Ap_1) = (1,5),$$
	used in the proof that $\zeta(3)$ is irrational \cite{zagier2018arithmetic, apery1979irrationalite}. \par
	Beukers \cite{beukers1987irrationality} gave an interpretation of $\Ap_n$ in terms of modular forms on $\Gamma_1(6)$. A goal of this article is to extend Beukers' interpretation to a much broader setting. Our framework systematically produces recurrences, which we call \textit{modulonomic}, with arithmetic properties resembling those of Ap\'ery's recurrence. We will see that this concept interacts with several other subjects, including classical holonomic sequences, irrationality questions, hypergeometric series, and $p$-adic congruences. \par
	We begin by stating our main theoretical results before turning to examples. Let $\Gamma$ be a subgroup of $\text{SL}_2(\mathbb{R})$ commensurable with $\text{SL}_2(\mathbb{Z})$. Let $S$ be a finite subset of the compactified Riemann surface $X(\Gamma) = \Gamma \backslash \mathbb{H}^\ast$ that contains all the cusps. Let $\M_k^S(\Gamma)$ be the space of meromorphic modular forms of weight $k\in \mathbb{Z}$ on $\Gamma$ whose poles are confined to $S$.\footnote{We tacitly assume that whenever we discuss odd weights, the underlying group $\Gamma$ does not contain $-I$.} Let $D$ be the differential operator $\frac{1}{2\pi i}\frac{d}{d\tau}$. Bol's identity says $D^{k-1}(\M_{2-k}^S(\Gamma)) \subset \M_k^S(\Gamma)$. Let $\E_k(\Gamma)$ and $\S_k(\Gamma)$ denote the spaces of Eisenstein series and cusp forms, respectively.
	\begin{theorem}[Dimension formula]\label{dim_theorem}
		For a positive integer $e$, define
		$$\mu_k(e):=\left\lfloor\frac{(2-k)(e-1)}{2e}\right\rfloor-\left\lfloor\frac{(2-k)(e-1)}{2e}-\frac{k-1}{e}\right\rfloor \in \mathbb{Z}_{\geq 0}.$$
		Then for $k\geq 2$, the quotient $$\frac{\M_k^S(\Gamma)}{D^{k-1}(\M_{2-k}^S(\Gamma))}$$ is finite-dimensional, with dimension
		\begin{equation}\label{aux_20_intro}\dim\E_k(\Gamma)+2\dim\S_k(\Gamma)+\sum_{x\in S\cap(\Gamma\backslash\mathbb{H})}\mu_k(e_x),\end{equation}
		where $e_x$ is the elliptic order of the point $x$, with $e_x=1$ if $x$ is not an elliptic point.
	\end{theorem}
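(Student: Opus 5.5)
We prove the dimension formula by identifying the quotient with a cohomology group and computing it through the Eichler--Shimura isomorphism, with local corrections at the elliptic points of $S$.

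\textbf{Cohomological model.} Put $n=k-2\geq 0$ and let $V_n$ be the space of polynomials of degree $\leq n$ with the weight $-n$ action of $\Gamma$. Set $Y=X(\Gamma)\setminus S$, an open Riemann surface. It is affine because $S$ contains every cusp and is nonempty. For $f\in\M_k^S(\Gamma)$, the $V_n$-valued differential $\omega_f=f(\tau)(X-\tau)^n\,d\tau$ is $\Gamma$-invariant. By Bol's identity, the derivative of $g(\tau)(X-\tau)^n$ with $g\in\M_{2-k}^S(\Gamma)$, suitably corrected by lower-order terms in $\tau$, gives $\omega_{D^{k-1}g}$ up to a constant. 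On the other hand, the de Rham complex of the flat bundle $\mathbb{V}_n$ attached to $V_n$, restricted to the holomorphic algebraic sections with poles only in $S$, is filtered by the Hodge filtration. Its graded pieces are $\M_{2-k}^S$ in degree $0$ and $\M_k^S$ in degree $1$, and the middle terms cancel. This is the usual argument that the Gauss--Manin connection on $\mathrm{Sym}^n$ is "Bol's identity".

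So I would first set up the map
\[
\M_k^S(\Gamma)/D^{k-1}\M_{2-k}^S(\Gamma)\longrightarrow H^1_{\mathrm{dR}}(Y,\mathbb{V}_n),
\]
and prove it is injective. Injectivity holds because $D^{k-1}g=f$ can be solved locally by an Eichler integral, and this integral is a global meromorphic form exactly when the period class vanishes. Surjectivity away from elliptic points follows from the algebraic de Rham theorem on the affine curve $Y$, with the Hodge filtration argument used to push every class into the top graded piece.

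\textbf{Computing $H^1$.} Next compute $\dim H^1(Y,\mathbb{V}_n)$ topologically. Removing punctures, the Euler characteristic gives
\[
\dim H^1=-(n+1)\chi(Y)+\text{(contributions of invariants)}.
\]
A cleaner route is the exact sequence
\[
0\to H^1_{\mathrm{par}}\to H^1(Y)\to \bigoplus_{x\in S}H^1(\text{punctured disc at }x,V_n),
\]
together with Eichler--Shimura, $\dim H^1_{\mathrm{par}}(X(\Gamma)\setminus\{\text{cusps}\})=2\dim\S_k$. Each cusp contributes a one-dimensional local cohomology, since the unipotent monodromy has one-dimensional coinvariants. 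The image of the cusp part has dimension $\dim\E_k$, which matches the Eichler--Shimura computation for $\Gamma$ with only the cusps removed. A non-elliptic point of $\mathbb{H}$ with trivial monodromy contributes $n+1=k-1$, and this equals $\mu_k(1)=\lfloor 0\rfloor-\lfloor -(k-1)\rfloor=k-1$. These local contributions add up with no further relations, because the global cokernel ($H^2_c$-type term) is already accounted for at the cusps.

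\textbf{Elliptic points (main obstacle).} At an elliptic point $x$ of order $e$, the local monodromy has finite order $e$ on $V_n$. Its eigenvalues are $\zeta^{j}$ with $\zeta$ a primitive $2e$-th root of unity, up to the sign convention of $\pm I$. Topologically the local invariants then have dimension equal to the number of $j$ giving eigenvalue $1$. The de Rham side, however, must be computed directly: in the local uniformizer $t=((\tau-x)/(\tau-\bar x))^{e}$, a form of weight $k$ has the shape $t^{a}\cdot h(t)\,(\text{fixed factor})$, with $a$ constrained mod $1$ by the weight. I would compute the local quotient
\[
\{\text{Laurent expansions at }x\}\big/D^{k-1}\{\text{weight }2-k\text{ expansions}\}
\]
by recording which exponents of $t$ are missed by $D^{k-1}$. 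In the $(\tau-x)$-coordinate, $D^{k-1}$ shifts exponents by $-(k-1)$ and kills those in $[0,k-2]$. The modular condition keeps only exponents in one fixed residue class mod $e$, offset by $(2-k)(e-1)/2$ from the weight twist. Counting integers in that residue class lying in a window of length $k-1$ gives exactly the difference of floors defining $\mu_k(e)$. The delicate part is fixing the offset and the sign conventions so that this count is correct, including the case $-I\notin\Gamma$. Summing all the local and global pieces then yields \eqref{aux_20_intro}.
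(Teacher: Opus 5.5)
Your route is genuinely different from the paper's, and it can be made to work. The paper stays entirely within line bundles. It truncates to $Q_N=\mathcal{L}_k(\mathcal{D}_N)/D^{k-1}\mathcal{L}_{2-k}(\mathcal{D}_N-(k-1)S^\circ)$ and uses Lemma~\ref{lemma_D_L} to control both sides: Bol's operator preserves pole orders at cusps and shifts them by exactly $(k-1)/e_x$ at interior poles. Riemann--Roch then gives $\dim Q_N$ as a difference of degrees of floored divisors. Shimura's formulas (Lemmas~\ref{lemma_k_2-k_dimension} and~\ref{lemma_k_2-k_dimension_S}) convert this into $\dim\E_k+2\dim\S_k+\sum\mu_k(e_x)$, and the paper finally stabilizes in $N$.

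You instead identify the quotient with $H^1(Y,\mathbb{V}_{k-2})$ and count topologically. Then $2\dim\S_k$ is parabolic cohomology, and $\mu_k(e_x)$ is the local cohomology $\dim V_{k-2}^{\gamma_x}$ of a punctured disc. Your exponent count at an elliptic point is exactly the local content of the floor computation in Lemma~\ref{lemma_k_2-k_dimension_S}. What your version buys is a canonical isomorphism, which explains the periods, quasi-periods and Legendre relation of Section~\ref{sec:gamma1-6-plus}. The price is heavy input: Eichler--Shimura for general Fuchsian groups, the Hodge/Kodaira--Spencer argument, and Deligne's comparison theorem. The paper's version is elementary and treats elliptic points and irregular cusps uniformly through floors. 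Crucially for the rest of the paper, its finite-level count $\ell_k(\mathcal{D}+(k-1)S^\circ)-\ell_{2-k}(\mathcal{D})$ is reused over $\mathbb{C}(n)$ to prove Theorem~\ref{creative_telescoping_S_intro}, which your argument does not supply.

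Three points in your sketch need repair.

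\textbf{Surjectivity at elliptic points is misdiagnosed.} The Hodge-filtration argument needs the Kodaira--Spencer isomorphism between weight-$2$ forms and differentials on all of $Y$. Near an elliptic point lying in $S$ this is automatic, because the point is deleted and $Y$ is an honest punctured disc there. So no separate de Rham computation is needed at such points, and a purely local count could not give global surjectivity anyway. The actual obstruction is at elliptic points \emph{not} in $S$, which remain orbifold points of $Y$. Fix this as follows:
\begin{itemize}
\item choose a torsion-free normal subgroup $\Gamma'\subset\Gamma$ of finite index;
\item run your argument on $X(\Gamma')$ minus the preimage of $S$;
\item take $\Gamma/\Gamma'$-invariants, which is exact in characteristic $0$ and compatible with $D^{k-1}$ by Bol's identity.
\end{itemize}
The contribution of $x\in S\cap(\Gamma\backslash\mathbb{H})$ is then $\dim V_{k-2}^{\gamma_x}$. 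Your exponent count shows this equals $\mu_k(e_x)$, provided it is done in the Cayley coordinate $w=(\tau-x)/(\tau-\bar x)$ rather than in $\tau-x$. In $w$, the stabilizer acts diagonally and $D^{k-1}$ is a constant multiple of $d^{k-1}/dw^{k-1}$. The invariant eigenvector indices and the exponents killed by $D^{k-1}$ then both run over the integers in $[0,k-2]$ lying in one residue class modulo $e_x$.

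\textbf{Irregular cusps contribute nothing in odd weight.} There the monodromy on $V_{k-2}$ is $-(\text{unipotent})$, so it has no coinvariants. Only regular cusps contribute, and their number $\varepsilon_\infty^{\mathrm{reg}}$ equals $\dim\E_k$.

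\textbf{The global-cokernel remark should be made precise.} For $k\geq3$ you have $H^2_c(Y,\mathbb{V}_{k-2})=0$, since $V_{k-2}$ has no $\Gamma$-invariants. For $k=2$ it is one-dimensional: this is the residue relation, which accounts for $\dim\E_2=\varepsilon_\infty-1$.
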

	
	In the simplest case, where $S$ consists of all cusps and no interior points, this is a familiar conclusion \cite{bringmann2014special, bringmann2015cycle, guerzhoy2008hecke, bonisch2023modularity,golyshev2021interpolated, bringmann2013eichler, bringmann2017harmonic}. Here $\M_k^S(\Gamma)$ becomes the space of weakly holomorphic modular forms, which we denote by $\M_k^!(\Gamma)$. The decomposition \begin{equation}\label{aux_27}\frac{\M_k^!(\Gamma)}{D^{k-1}(\M_{2-k}^!(\Gamma))} = \E_k(\Gamma) \oplus \frac{\S_k^!(\Gamma)}{D^{k-1}(\M_{2-k}^!(\Gamma))},\end{equation}
	where $\S_k^!(\Gamma)$ consists of elements of $\M_k^!(\Gamma)$ whose constant term at each cusp is zero,
	%\footnote{we mention the anecdote that the $\S$ in $\S_k(\Gamma)$ and $\S_k^!(\Gamma)$ have different etymologies: the former comes form German "Spritze" but latter comes from English "second", since these forms behave like meromorphic differential form without residue (i.e. those of \textit{second kind}).} 
	then implies that the second summand has dimension $2\dim \S_k(\Gamma)$.\par
	
	Choose $\mathfrak{B}\subset \M_k^S(\Gamma)$ whose images form a basis of the quotient space. If $t\in \M_0^S(\Gamma)$ and $g\in \M_k^S(\Gamma)$, then $t^ng$ also belongs to $\M_k^S(\Gamma)$ for every integer $n\geq 0$. Consequently, we can expand $t^ng$ in terms of these basis elements:
		\begin{equation}\label{eq:modulonomic_decomposition_intro}t^n g = \sum_{f\in \mathfrak{B}} c_{f,n} f + D^{k-1}(h_n),\qquad c_{f,n}\in \mathbb{C},\quad h_n\in \M_{2-k}^S(\Gamma).\end{equation}
	The following is the principal new theoretical result of the paper. By a recurrence operator $\boldL$, we mean an element of $\mathbb{C}(n)[\textbf{S}_n]$, where $\textbf{S}_n$ is the shift operator that replaces $n$ by $n+1$.
	\begin{theorem} [Creative telescoping] \label{creative_telescoping_S_intro}
	Let $k\geq2$, $t\in\M_0^S(\Gamma)$, and $g\in \M_k^S(\Gamma)$. Assume that the divisor of $t$ is supported on $S$. Then there exist a linear recurrence operator $\boldL$ and an element $R_n \in \M_{2-k}^S(\Gamma)\otimes_\mathbb{C} \mathbb{C}(n)$ such that
	$$\boldL(t^n g) = D^{k-1}(t^n R_n).$$
	Moreover, $\boldL$ may be chosen to have order at most $$\dim \frac{\M_k^S(\Gamma)}{D^{k-1}(\M_{2-k}^S(\Gamma))} - \delta_{k,2}.$$
\end{theorem}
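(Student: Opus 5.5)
The plan is to turn the finite-dimensionality in Theorem \ref{dim_theorem} into a recurrence, by showing that the classes of $t^ng$ in the quotient satisfy a first-order vector recurrence over $\mathbb{C}(n)$. Write $V=\M_k^S(\Gamma)/D^{k-1}(\M_{2-k}^S(\Gamma))$ and $d=\dim V$.

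The first step is to give the quotient by $D^{k-1}(t^n\M_{2-k}^S)$ a $\mathbb{C}(n)$-structure. The key identity comes from Bol's identity and the Leibniz rule. For $h\in\M_{2-k}^S(\Gamma)$, expand
$$D^{k-1}(t^n h)=t^n\sum_{j=0}^{k-1}\binom{k-1}{j}\,\frac{D^{j}(t^n)}{t^n}\,D^{k-1-j}h .$$
Each $D^j(t^n)/t^n$ is a polynomial in $n$ whose coefficients are built from $Dt/t$ and its derivatives. Since the divisor of $t$ lies in $S$, the quasimodular form $Dt/t$ of weight $2$ has poles only in $S$.

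A cleaner route is to use Bol's identity in the form $D^{k-1}(t^nh)=t^n\,\mathcal{D}_n(h)$. Here $\mathcal{D}_n=(D+n\,Dt/t)^{k-1}$ is the conjugated operator. Because $t^n$ has weight $0$, it carries the weight $2-k$ form $h$ to a genuine weight $k$ form with poles in $S$, depending polynomially on $n$. So we need to study the space
$$W=\M_k^S(\Gamma)\otimes\mathbb{C}(n)\Big/\mathcal{D}_n\big(\M_{2-k}^S(\Gamma)\otimes\mathbb{C}(n)\big),$$
and the claim becomes that the classes of $t^mg$ in $t^n$-twisted form span a space of bounded dimension.

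The second step is to show $\dim_{\mathbb{C}(n)}W\le d$. I would do this by specialization. For all but finitely many integers $n\ge0$, the operator $\mathcal{D}_n$ is simply $t^{-n}D^{k-1}t^n$. So $W$ specialized at $n$ is isomorphic to $V$, via multiplication by $t^n$ on $\M_k^S$ and on $\M_{2-k}^S$. Both spaces are preserved because the divisor of $t$ lies in $S$, and this is exactly where that hypothesis enters.

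A rank argument then finishes the step. Choose finitely many forms spanning, modulo $\mathcal{D}_n$, the relevant finite piece of $\M_k^S$ with bounded pole orders. The cokernel rank over $\mathbb{C}(n)$ equals the generic specialized rank, which is at most $d$. To have finite-dimensional matrices here, I would filter by pole order at the points of $S$. The proof of Theorem \ref{dim_theorem} presumably shows that high-order poles can always be removed by $D^{k-1}$ of something, with a bound uniform in $n$ after conjugation, since only leading coefficients in $n$ change. This uniformity is the main obstacle. At finitely many $n$ the reduction can fail, for instance when $n$ times the order of $t$ at a point cancels a local exponent. Those $n$ are allowed to become poles of rational functions in $n$, which is why $R_n$ lies in $\otimes\mathbb{C}(n)$.

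The third step produces $\boldL$. Consider the elements $g,\,tg,\,t^2g,\dots,t^{d}g$ of $\M_k^S\otimes\mathbb{C}(n)$, which are the images of $t^{n+j}g$ after dividing by $t^n$. These $d+1$ elements of a $\mathbb{C}(n)$-space of dimension $\le d$ admit a nontrivial relation
$$\sum_j p_j(n)\,t^jg=\mathcal{D}_n(R_n).$$
Multiplying by $t^n$ gives $\boldL(t^ng)=D^{k-1}(t^nR_n)$ with $\boldL=\sum_j p_j(n)\mathbf S_n^j$.

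For $k=2$ there is a further saving of one. In that case $D$ acts on weight $0$ and the constant functions lie in its kernel. Equivalently, the residue or constant-term functional gives one $n$-independent class, and I expect it to become trivial after twisting by $t^n$ for generic $n$. Indeed, the constant term of $t^n g$ at a cusp is a $D$-derivative obstruction only at $n=0$. So the twisted quotient has dimension $\le d-1$ and the order bound drops by $\delta_{k,2}$. I would verify this drop by checking that the Eisenstein class corresponding to $Dt/t$ itself equals $\mathcal{D}_n(1/n)$.
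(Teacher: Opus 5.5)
Your skeleton (the twisted operator $\mathcal{D}_n=(D+n\,Dt/t)^{k-1}$, a $\mathbb{C}(n)$-quotient, and a dependence among $g,tg,\dots,t^dg$) matches the paper. However, Step 2, which carries all the content, is not proved.

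\paragraph{The gap in Step 2.} A $\mathbb{C}(n)$-vector space such as $W$ has no specialization at an integer $n$ until you fix a finite $\mathbb{C}[n]$-model. You leave that model to what Theorem \ref{dim_theorem} ``presumably'' provides, and you call uniformity in $n$ the main obstacle.

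That obstacle is not real, and no pole-reduction procedure is needed. Let $h\in\mathcal{L}_{2-k}(\mathcal{D})$ with $\mathcal{D}$ supported on $S$. For every integer $n$ we have $t^nh\in\mathcal{L}_{2-k}(\mathcal{D}-n\div t)$. Lemma \ref{lemma_D_L}(1) then gives $D^{k-1}(t^nh)\in\mathcal{L}_k(\mathcal{D}-n\div t+(k-1)S^\circ)$. Dividing by $t^n$ puts $\mathcal{D}_n(h)$ in the $n$-independent space $\mathcal{L}_k(\mathcal{D}+(k-1)S^\circ)$. Since $\mathcal{D}_n(h)$ is a polynomial of degree at most $k-1$ in $n$, interpolation gives the same containment for indeterminate $n$; this is Proposition \ref{Dt^n_continuation}.

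There are no exceptional integers: $\mathcal{D}_n=t^{-n}D^{k-1}t^n$ holds for every $n\in\mathbb{Z}$. The rational dependence of $R_n$ on $n$ comes only from solving a linear system over $\mathbb{C}(n)$. Also, $Dt/t$ is a genuine weight-$2$ meromorphic modular form by Bol's identity, not merely quasimodular.

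\paragraph{How to finish from the fixed pair.} With the fixed map $\mathcal{L}_{2-k}(\mathcal{D})\to\mathcal{L}_k(\mathcal{D}+(k-1)S^\circ)$, the paper finishes without any specialization. The map is injective over $\mathbb{C}(n)$; for $k=2$ this holds because $t^nh$ being constant for all $n$ forces $h=0$. So the quotient has dimension exactly $\ell_k(\mathcal{D}+(k-1)S^\circ)-\ell_{2-k}(\mathcal{D})$. Riemann--Roch and Lemma \ref{lemma_k_2-k_dimension_S} evaluate this to $\dim V-\delta_{k,2}$ once $\deg\mathcal{D}$ is large.

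Your specialization route can also be completed, once made precise:
\begin{itemize}
\item The rank of the map at a specialization never exceeds its generic rank. So the generic cokernel dimension is at most the cokernel dimension at the single point $n=0$; you need only this one good specialization.
\item At $n=0$ the quotient $\mathcal{L}_k(\mathcal{D}+(k-1)S^\circ)/D^{k-1}(\mathcal{L}_{2-k}(\mathcal{D}))$ injects into $V$ by Lemma \ref{lemma_D_L}(2), provided $\mathcal{D}$ is positive on $S$. This gives order at most $\dim V$.
\end{itemize}

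\paragraph{The case $k=2$.} Your argument here is only heuristic. The identity $\mathcal{D}_n(1/n)=Dt/t$ indicates which class dies generically. By itself it does not prove that the cokernel drops, unless you add a limiting argument and check via residues that $Dt/t\notin D(\M_0^S)$. The clean argument is a rank comparison. At $n=0$ the map $D$ has the constants as kernel, whereas generically the map is injective. Hence the generic rank is one larger, and the bound becomes $\dim V-1$.
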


	There is also a version of this theorem with character; see Theorem \ref{creative_telescoping_char}. Theorem \ref{creative_telescoping_S_intro} closely parallels creative telescoping in the classical setting \cite{Koutschan09, AequalsB, wilf1990rational, wilf1992algorithmic}; see Remark \ref{CT_classical}. We also give an algorithm for computing $\boldL$ and $R_n$ explicitly in Section~\ref{sec:creative-computation}. \par
	
	Theorem \ref{creative_telescoping_S_intro} immediately implies that, for each $f\in\mathfrak{B}$, the sequence $\{c_{f,n}\}_{n\in \mathbb{N}}$ is holonomic: applying $\boldL$ to both sides of \eqref{eq:modulonomic_decomposition_intro} gives
	\begin{equation}\label{eq:modulonomic_recurrence_intro} D^{k-1}(t^n R_n) = \sum_{f\in \mathfrak{B}} \boldL(c_{f,n}) f + D^{k-1}( \boldL(h_n)).\end{equation}
	Because the images of $\mathfrak{B}$ form a basis of the quotient, $\boldL(c_{f,n})=0$. We call recurrence operators obtained from Theorem \ref{creative_telescoping_S_intro} \textit{modulonomic}, although in this article we use the term descriptively rather than give a formal definition. \par
	Our proofs of Theorems \ref{dim_theorem} and \ref{creative_telescoping_S_intro} are based on divisors of automorphic forms and the Riemann--Roch theorem. We draw heavily on the ideas in Shimura's standard text \cite[Chapter~2]{shimura1971introduction}. The key ingredient in creative telescoping is Proposition \ref{Dt^n_continuation}, which allows us to interpret an expression of the form
	$$\frac{D^{k-1}(t^n h)}{t^n}, \qquad t\in \M_0^S(\Gamma), h\in \M_{2-k}^S(\Gamma)$$
	as an element of $\M_k^S(\Gamma) \otimes_\mathbb{C} \mathbb{C}(n)$. The creative-telescoping theorem then follows by extending the classical Riemann--Roch theorem to the function field $\mathbb{C}(n)$; see Section~\ref{creative_telescope_section}. \par

	Throughout, let $\lcmD{x}$ denote the least common multiple of all positive integers at most $x$. We say that a sequence $c_n\in\overline{\mathbb{Q}}$ has denominator type $d_n$ if there exists a nonzero constant $A$, independent of $n$, such that $Ad_nc_n$ is an algebraic integer for every $n$. For a Laurent series $f\in \mathbb{C}((q))$, let $a_n(f)$ denote its coefficient of $q^n$.

	\subsection{An example on $\Gamma_1(6)$ and Ap\'ery's sequences}\label{sec:intro-gamma1-6}
	The best-known example of a modulonomic recurrence is Ap\'ery's recurrence. We give a more detailed account in Section~\ref{sec:gamma1-6-minus} and briefly recall the framework here. The group $\Gamma_1(6)$ has four cusps, $\{\infty,0,\frac12,\frac13\}$, and we consider the following element of $\M_0^!(\Gamma_1(6))$:
	\begin{equation}\label{t_choice_level6_intro}t(\tau) := \left(\frac{\eta(\tau)\eta(6\tau)}{\eta(2\tau) \eta(3\tau)}\right)^{12} =q-12q^2+66q^3-220q^4+495q^5+O(q^6),\end{equation}
	Its divisor is $(0)+(\infty)-(\frac12)-(\frac13)$. Let $g$ be
	%$$g(\tau) = -\frac{E_4(\tau)}{480}+\frac{7}{120}E_4(2\tau)-\frac{21}{160}E_4(3\tau)+\frac{3}{40}E_4(6\tau)$$
	the unique Eisenstein series, up to scaling, with Fricke eigenvalue $-1$ that vanishes at the cusp $\infty$. Since this eigenspace contains no cusp forms, Theorem \ref{dim_theorem} implies that, for every $n\geq0$,
	\begin{equation}\label{aux_11_intro}t^n g = \Ap_n g + D^3(h_n),\qquad \text{for some } \Ap_n\in \mathbb{Q},\quad h_n\in \M_{-2}^!(\Gamma_1(6)).\end{equation}
	We integrate both sides from $0$ to $i\infty$. The integral converges because $t$ is holomorphic at the endpoints, although the integrand has poles of increasing order at the other two cusps, $\frac12$ and $\frac13$. This gives
	$$(2\pi i)^3 \int_0^{i\infty} t(\tau)^n g(\tau) d\tau =  \Ap_n \zeta(3) + 12a_0(h_n) := \Ap_n \zeta(3) - \Bp_n.$$
	The integral of $g$ is evaluated using the $L$-values of Eisenstein series, while the integral of $D^3(h_n)$ follows from the fundamental theorem of calculus (Lemma \ref{IBP_lemma}). The left-hand side decays geometrically because the maximum value of $|t|$ on $i\mathbb{R}_+$ is $(\sqrt{2}-1)^4$. The sequences $\Ap_n$ and $\Bp_n$ are precisely Ap\'ery's sequences. One way to see this is to compute the creative telescoper from Theorem \ref{creative_telescoping_S_intro} and compare it with Ap\'ery's recurrence. Namely, $\boldL(t^n g)=D^3(t^nR)$, where
	$$\begin{aligned}\boldL &= (n+1)^3 -(3 + 2 n) (39 + 51 n + 17 n^2) \textbf{S}_n +(n+2)^3 \textbf{S}_n^2,\\ R&= -\frac{\eta(\tau)^{17}\eta(6\tau)^{17}}{2\eta(2\tau)^{19}\eta(3\tau)^{19}}.\end{aligned}$$
	
	The integrality of $\Ap_n$ and $\lcmD{n}^3\Bp_n$ follows naturally from \eqref{aux_11_intro}. A useful feature of our theory is that the denominator types of modulonomic sequences can be determined without first finding explicit formulas for them. By contrast, the classical approach usually requires an explicit expression. The same method applies in every weight and gives the following corollary; see Corollary \ref{Gamma1(6)_denomtype_cor}.
	
	\begin{corollary}\label{cor_denomtype_intro}
		Let $k\geq 3$, and let $\mathfrak{B}$ be a basis of $\M_k^!(\Gamma_1(6))$ modulo $D^{k-1}(\M_{2-k}^!(\Gamma_1(6)))$, chosen compatibly with the splitting \eqref{aux_27}. Let $t$ be as in \eqref{t_choice_level6_intro} and let $g\in \M_k(\Gamma_1(6)) \cap \overline{\mathbb{Q}}((q))$. Write
		$$t^n g = \sum_{f\in \mathfrak{B}} c_{f,n} f + D^{k-1}(h_n),\qquad h_n\in \M_{2-k}^!(\Gamma_1(6)).$$ Then
		\begin{itemize}[leftmargin=*]
			\item For any $f\in \mathfrak{B}\cap\E_k(\Gamma_1(6))$, $c_{f,n}$ has trivial denominator type.
			\item For any $f\in \mathfrak{B}\cap\S_k^!(\Gamma_1(6))$, $c_{f,n}$ has denominator type $\lcmD{n}^{k-1}$.
			\item $a_0(h_n)$ has denominator type $\lcmD{n}^{k-1}$.
		\end{itemize}
	\end{corollary}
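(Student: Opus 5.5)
The plan is to turn the decomposition into a problem about $q$-expansion coefficients and use the integrality of $t$. Since $t=q\prod(\cdots)$ is an eta quotient, $t\in q\mathbb{Z}[[q]]$ with leading coefficient $1$, so $1/t\in q^{-1}\mathbb{Z}[[q]]$. Likewise $g$ has coefficients in a fixed number field with bounded denominator, since it is holomorphic of integral weight on a congruence subgroup. Hence $t^n g$ has $q$-expansion coefficients with uniformly bounded denominators, and this holds at every cusp after applying the appropriate Atkin--Lehner/cusp-moving matrices, up to a fixed constant. The key observation is that $t$ has poles only at the cusps $\frac12,\frac13$, and there $t^{-1}$ (suitably normalised) is again an eta quotient with integral expansion. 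So at those cusps $t^ng$ has a pole of order $\asymp n$ with integral principal part, and at $\infty,0$ it is holomorphic.

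Next I would fix a normalised basis $\mathfrak{B}$. It consists of Eisenstein series $E$ with algebraic coefficients, together with elements of $\S_k^!$ representing the $2\dim\S_k$ classes. For the latter one can take cusp forms $f_j$ plus weakly holomorphic forms $f_j^\ast$ with poles of bounded order at a single cusp, both with algebraic coefficients of bounded denominator. I would then characterise the coefficients $c_{f,n}$ by linear functionals that kill $D^{k-1}(\M_{2-k}^!)$.

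For the Eisenstein part, the constant terms at the cusps do the job: $D^{k-1}h$ has zero constant term at every cusp, so the vector of constant terms of $t^ng$ at the four cusps equals $\sum_{E} c_{E,n}\,a_0(E)$. That vector is integral up to a fixed constant, so $c_{E,n}$ has trivial denominator type.

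For the cuspidal part, I would subtract the Eisenstein contribution to land in $\S_k^!$. There an element $F=\sum_m a_m q^m$ (at each cusp) lies in $D^{k-1}(\M_{2-k}^!)$ exactly when it pairs to zero against the relevant cusp forms and weakly holomorphic forms of weight $k$ via the Bruinier--Funke-type pairing $\{F,G\}=\sum_{\text{cusps}}\sum_{m} a_m(F)\,b_{-m}(G)\,m^{1-k}$. These pairings determine the $c_{f,n}$ by a fixed invertible Gram matrix. The exponents $m$ that occur satisfy $|m|\le Cn$, because the principal parts of $t^ng$ have length $O(n)$ and those of the basis are bounded. The factor $m^{1-k}$ then produces a denominator dividing $\lcmD{Cn}^{k-1}$, and a refinement gives $\lcmD{n}^{k-1}$.

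Alternatively, and more concretely, I would solve for $h_n$ directly. Subtracting $\sum c_{f,n}f$ leaves a form whose principal parts must equal those of $D^{k-1}h_n$. Since $D^{k-1}q^{-m}=(-m)^{k-1}q^{-m}$, the principal part of $h_n$ at each cusp is obtained by dividing by $m^{k-1}$ with $m\le Cn$. The constant term $a_0(h_n)$ then follows from the integrality of a basis of $\M_{2-k}^!$ determined by principal parts (integral Miller-type bases, built from powers of $1/t$ times a fixed form), which gives the third bullet.

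The main obstacle is the sharp bound $\lcmD{n}^{k-1}$ rather than $\lcmD{Cn}^{k-1}$. This requires checking that the pole order of $t^n g$ at the cusps $\frac12,\frac13$, measured in the local uniformiser with the correct cusp width, is exactly $n$ (plus a constant), and that the widths do not introduce extra factors. I would verify this from the divisor $(0)+(\infty)-(\frac12)-(\frac13)$ computed with the width-normalised uniformisers. I would also track the bounded set of primes dividing the widths and the level, and absorb them into the constant $A$.
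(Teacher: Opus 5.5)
Your first two bullets are sound. The Eisenstein bullet is the paper's constant-term argument.

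For the cuspidal bullet you take a different route from the paper. The pairing $\{F,G\}=\sum_{W\in\{I,W_2,W_3,W_6\}}\sum_{m\neq0}m^{1-k}a_m(F|W)\,a_{-m}(G|W)$ kills $D^{k-1}(\M_{2-k}^!)$ by Lemmas \ref{lemma:constant_term_integration_by_parts} and \ref{lemma:weight_two_residue_relation}. Granted its non-degeneracy on $\S_k^!/D^{k-1}(\M_{2-k}^!)$, it expresses $c_{f,n}$ through a fixed Gram matrix. That non-degeneracy is a standard Serre-duality fact, but you must supply it; the paper never needs it, because it obtains $c_f$ and $h$ together from the reduction lemma and Lemma \ref{linear_comb_int_lemma}. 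Since $t|W_2=t|W_3=t^{-1}\in q^{-1}(1+q\mathbb{Z}[[q]])$, only indices $-n\le m\le C$ occur, so $\lcmD{n}^{k-1}$ comes out directly. What you call the main obstacle is therefore not one.

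The genuine gap is the third bullet. The constant term $a_0(h_n)$ is not part of any principal part. So integrality of the principal parts of $\lcmD{n}^{k-1}h_n$ says nothing about it, unless you have a basis of $\M_{2-k}^!(\Gamma_1(6))$, adapted to principal parts at $\frac12,\frac13$, whose members are integral at all four cusps with denominators bounded independently of the pole order. That is exactly what has to be proved: it is Lemma \ref{lemma:gamma16_reduction} together with Steps 2--3 of Proposition \ref{prop:gamma16_denom_type}.

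Your suggested construction does not give such a basis. First, $1/t$ has its poles at $0$ and $\infty$. Second, for a fixed $\psi$, the forms $t^m\psi$ have pole orders at $\frac12$ and $\frac13$ differing by a fixed amount, so they span only about half of $\mathcal{L}_{2-k}(M(\tfrac12)+M(\tfrac13))$. What is needed is:
\begin{itemize}
\item functions with a single pole at $\frac12$ (resp. $\frac13$), integral at every cusp, with unit leading coefficient at the pole. These are the eta quotients of Table \ref{Gamma0(6)eta_content}, whose contents at the other cusps must be controlled; the paper inverts $2$ and $3$ separately for this reason.
\item the reduction carried out in weight $0$, after multiplying by a quasi-unit such as $c(u+8v)^{k-2}$. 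Otherwise each of the $\sim n$ reduction steps divides by the same non-unit leading coefficient and produces exponential denominators.
\end{itemize}

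Alternatively, you can close the gap inside your own pairing framework. Take the $\S_k^!$-part of $\mathfrak{B}$ with poles only at $\frac12,\frac13$; a change of basis costs only a fixed constant. Then $h_n$ is holomorphic at $0$ and $\infty$. Let $E_\infty\in\E_k(\Gamma_1(6))$ have constant term $1$ at $\infty$ and $0$ at the other three cusps, and apply Lemma \ref{lemma:weight_two_residue_relation} to $h_nE_\infty$. For $n\geq C$ this gives
\[
a_0(h_n)=-\sum_{W\in\{W_2,W_3\}}\sum_{m=1}^{n}(-m)^{1-k}\,a_{-m}\bigl(D^{k-1}(h_n)|W\bigr)\,a_m(E_\infty|W),
\]
where $a_{-m}\bigl(D^{k-1}(h_n)|W\bigr)=a_{-m}\bigl(t^{-n}(g|W)\bigr)-\sum_{f\in\mathfrak{B}}c_{f,n}\,a_{-m}(f|W)$. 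Your second bullet then yields denominator type $\lcmD{n}^{k-1}$ for $a_0(h_n)$.
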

	The proof of Corollary \ref{cor_denomtype_intro} analyzes the $q$-expansions of various weight-zero forms at the cusps. Section~\ref{sec:denominator-types} develops a broad generalization of this method for determining the denominator types of modulonomic sequences.
	
	The group $\Gamma_1(6)$ has a weight-four cusp form with Fricke eigenvalue $+1$. Let $\Gamma^+:=\langle\Gamma_1(6),W_6\rangle$, let $f = \eta(\tau)^2\eta(2\tau)^2\eta(3\tau)^2\eta(6\tau)^2$ be this cusp form, and fix a meromorphic partner $f^\sharp$ of $f$ that is analytic at $0$ and $\infty$. Let $E$ be the unique Eisenstein series on $\Gamma^+$ that vanishes at $0$ and $\infty$. The dimension formula \ref{dim_theorem} gives
	\begin{equation}\label{aux_14_intro}t^n f = A_n E + B_n f + C_n f^\sharp + D^3(h_n), \qquad A_n,B_n,C_n\in \mathbb{Q},\quad h_n \in \M_{-2}^!(\Gamma^+).\end{equation}
	With suitable normalizations (see Section~\ref{sec:gamma1-6-plus}), one obtains
	$$\begin{aligned}\frac{(2\pi i)^3}{2} \int_0^\infty t(\tau)^nf(\tau) d\tau &= 47\zeta(3)A_n+B_n\xi_1-C_n\eta_1 + \frac{78}{5}C_n-6a_0(h_n),\\
		\frac{(2\pi i)^3}{2}\int_0^\infty t(\tau)^nf(\tau) \tau d\tau &= \frac{2\pi^3}{3}A_n+B_n\xi_2-C_n\eta_2.
	\end{aligned}$$
	Here $\xi_1,\xi_2$ are periods of $f$, and $\eta_1,\eta_2$ are quasi-periods. Both left-hand sides decay geometrically with ratio $(\sqrt{2}-1)^4$. All four sequences have denominator type $\lcmD{n}^3$. Thus, the same inequality $(\sqrt{2}-1)^4 e^3<1$ as in Ap\'ery's argument yields an irrationality result; see Corollary \ref{cor_plus_level6_irrational}. Theorem \ref{creative_telescoping_S_intro} also implies that the four sequences $A_n$, $B_n$, $C_n$, and $a_0(h_n)$ satisfy the same fourth-order recurrence $\boldL$. \par
	
	We also give an example in Section \ref{CM_level7_sect} illustrating that, when the cusp form has CM, the denominators of the corresponding modulonomic sequences are supported on particular sets of primes rather than on the full $\lcmD{n}^{k-1}$; see Proposition \ref{-7_CM_cusp_form}.

	\subsection{Examples on other genus-zero groups}\label{sec:intro-other-groups}
	The technique used above to prove Corollary \ref{cor_denomtype_intro} also works in positive genus. When the group has genus zero and $t$ is a Hauptmodul, the denominator types can also be read from an explicit pairing between $t$ and an Eichler integral; see Proposition \ref{lemma:eichler_solution}. For example, Ap\'ery's sequences $\Ap_n$ and $\Bp_n$ admit the alternative expressions
	$$\Ap_n = a_0(t^{-n}G),\qquad \Bp_n = -12a_0\bigl(t^{-n}G\E_3(g)\bigr),$$
	where $g$ is normalized as in \eqref{aux_11_intro}, and
	$$G:=\Theta\frac{Dt}{t},\qquad \Theta:=\frac{\eta(2\tau)^7\eta(3\tau)^7}{\eta(\tau)^5\eta(6\tau)^5}.$$
	Here and below, for $f \in \mathbb{C}((q))$, we define
	$$\E_{k-1}(f) := \sum_{\substack{n\in \mathbb{Z} \\ n\neq 0}} \frac{a_n(f)}{n^{k-1}}q^n.$$
	These expressions are already implicit in Beukers' work \cite[Theorem~1]{beukers1987irrationality}. The denominator type $\lcmD{n}^3$ is then immediately explained by $\E_3(g)$.

	Together with the dimension bound from creative telescoping, this produces many second-order modulonomic recurrences and recovers several isolated examples from the literature. In particular, we recover Zagier's seven recurrences $\textbf{A}, \textbf{B}, \ldots, \textbf{G}$ \cite{zagier2009integral}, which played a pivotal role in a recent breakthrough by Calegari, Dimitrov and Tang \cite{CDT}. More precisely, we will see in Section~\ref{sec:zagier} that
	\begin{align}\label{aux_32}\text{Zagier's } &\textbf{A}, \textbf{B}, \cdots, \textbf{G} \text{ recurrences}\iff \text{ weight 3 modulonomic recurrences } \\ &\text{from genus-zero, torsion-free }\Gamma \text{ with exactly 4 cusps, all regular}. \nonumber \end{align}
	Other examples arise by relaxing some of the conditions on the right-hand side of \eqref{aux_32}. For instance, let $\mathcal{N}$ be the unique index-two subgroup of $\Gamma_0(6)$ for which
	$$\sqrt{\frac{\eta(2\tau)^7 \eta(3\tau)^7}{\eta(\tau)^5 \eta(6\tau)^5}} = 1+\frac{5q}{2}+\frac{27q^2}{8} + \frac{49q^3}{16}+\frac{147q^4}{128}+\cdots$$
	is a weight-one modular form. Its square belongs to $\M_2(\Gamma_0(6))$. The group $\mathcal{N}$ is a noncongruence subgroup satisfying all the conditions in \eqref{aux_32}, except that some of its cusps are irregular. A direct Ap\'ery-limit argument proves the irrationality of the period
	\begin{equation}\label{aux_33}(2\pi i)^2\int_0^{i\infty} \sqrt{\frac{\eta^9(\tau)\eta^9(6\tau)}{\eta^3(2\tau)\eta^3(3\tau)}}\tau \,d\tau \approx 0.400749797\end{equation}
	and the quasi-period
	$$\begin{aligned}
	(2\pi i)^2\int_0^{i\infty} \left( \sqrt{\frac{\eta(\tau)^{43}\eta(6\tau)^{23}}{\eta(2\tau)^{29}\eta(3\tau)^{25}}}
	-72\sqrt{\frac{\eta(\tau)^{23}\eta(6\tau)^{43}}{\eta(2\tau)^{25}\eta(3\tau)^{29}}} \right)\tau\,d\tau&\approx 0.05133879023,
	\end{aligned}$$ 
	of the corresponding meromorphic partner. Beukers had already proved the assertion about the period \eqref{aux_33} \cite{beukers1987irrationality}. In our framework, the arguments for the period and quasi-period proceed identically; see Examples \ref{example:beukers_weight_three} and \ref{example:beukers_weight_three_partner}. Both numbers are Ap\'ery limits of second-order recurrences. For example, \eqref{aux_33} corresponds to
	$$(2n+3)^2-2(68n^2+238n+209)\textbf{S}_n+4(n+2)^2\textbf{S}_n^2.$$
	One solution $A_n$ of this recurrence can be described as the square root of Ap\'ery's generating function:
	\begin{equation}\label{gen_square_root}\left(\sum_{n\ge0}A_nx^n\right)^2=
	\sum_{n\ge0}\Ap_n x^n,\qquad A_0 = 1.\end{equation}
	
	\subsection{Examples on $\Gamma_0(4)$ and moments of elliptic integrals}\label{sec:intro-gamma0-4}
	We next consider an example on $\Gamma_0(4)$, discussed in detail in Section~\ref{sec:gamma0-4}. This group has three cusps, and we choose the weight-zero modular function
	$$t(\tau) = \frac{256\eta^8(\tau)\eta^{16}(4\tau)}{\eta^{24}(2\tau)}=  256(q-8q^2+44q^3+\cdots) \sim (\infty)-(\frac12).$$
	Choose $g$ to be an Eisenstein series that vanishes at $0$ and $\infty$. Then for $n\geq 0$,
	\begin{equation}\label{eq:level4_moment_intro}(2\pi i)^3 \int_{0}^{i\infty} t(\tau)^n g(\tau) d\tau \in \mathbb{Q}\zeta(3) + \mathbb{Q}.\end{equation}
	This conclusion is known in the literature, although it is not usually stated in this form. More precisely, modular forms on $\Gamma_0(4)$ can be related to a hypergeometric function through the complete elliptic integral of the first kind,
	$$K(x) := \int_0^1 \frac{1}{\sqrt{(1-y^2)(1-xy^2)}}dy = \frac{\pi}{2} \pFq{2}{1}{\frac12,\frac12}{1}{x}.$$
	Under this substitution, \eqref{eq:level4_moment_intro} becomes
	$$\int_0^1 x^n K(x)^2 dx = 16^{-n}(2\pi i)^3 \int_{0}^{i\infty} t(\tau)^n g(\tau) d\tau.$$
	Wan first evaluated the left-hand side and related variants using hypergeometric techniques; these integrals were subsequently studied by several authors \cite{zhou2014legendre, Wanthesis, campbell2019interplay, wan2012moments, chanlegendre, wan2016integrals, rogers2015moments, wan2012moments}.
	
	Because these sequences are modulonomic, we can readily express \eqref{eq:level4_moment_intro} as $A_n\zeta(3)+B_n$, where $A_n$ has trivial denominator type and $B_n$ has denominator type $\lcmD{n}^3$. These facts are not immediate from the hypergeometric perspective, where one must again resort to explicit formulas. \par
	
	An advantage of $\Gamma_0(4)$ over $\Gamma_1(6)$ is that it has fewer cusp forms as the weight increases. This allows us to construct moments involving a single odd zeta value. For example, in weight six, a suitable Eisenstein series $g$ gives the following identity; see Section~\ref{sec:gamma0-4-weight6}:
	\begin{equation}\label{aux_34}(2\pi i)^5 \int_0^{i\infty} t(\tau)^n g(\tau)\,d\tau
	=16\int_0^1 [256x(1-x)]^n(2x-1)K(x)^4\,dx \in \mathbb{Q}\zeta(5)+\mathbb{Q}.\end{equation}
	This is a rare example of a family of moments involving only $\zeta(5)$, without contributions from lower zeta values. The same argument as in weight four shows that the two coefficients have denominator types $1$ and $\lcmD{n}^5$, respectively.
	\subsection{Congruence properties}\label{sec:intro-congruences}
	Modulonomic sequences also appear to have particularly strong $p$-adic properties. For example, Gessel proved \cite{gessel1982some} that Ap\'ery's sequence $\Ap_n$ in \eqref{aux_11_intro} satisfies, for every $p\geq5$ and $a,m\geq1$,
	$$\Ap_{p-1} \equiv 1 \pmod{p^3}, \qquad \Ap_{p^a m -1} \equiv \Ap_{p^{a-1} m -1} \pmod{p^{3a}}.$$
	Congruences of the same type seem to hold for modulonomic sequences whenever there is no cuspidal contribution. For example, Corollary \ref{congruence_level4_cor_2} shows that the coefficient of $\zeta(3)$ in \eqref{eq:level4_moment_intro} satisfies
	$$A_{p-1} \equiv 256^{p-1}\pmod{p^3}.$$
The coefficient of $\zeta(5)$ in \eqref{aux_34} satisfies
$$A_{p^am-1}\equiv 4096^{(p-1)p^{a-1}m}A_{p^{a-1}m-1}\pmod{p^{5a}},\qquad p\geq3,\quad a,m\geq1.$$
We do not pursue these congruences in depth, since the literature already contains many techniques that may be used to prove them \cite{beukers2024supercongruences, long2021supercongruences, allen2025atkin}. We do, however, present some experimental observations concerning Atkin--Swinnerton-Dyer-type congruences for modulonomic sequences arising from cusp forms.
\begin{observation}
Let $f = \eta(\tau)^2\eta(2\tau)^2\eta(3\tau)^2\eta(6\tau)^2$, let $t$ be as in \eqref{t_choice_level6_intro}, and set $U_n = a_0(t^{-n} f)$.\footnote{Up to a rational multiple, this is the same as the sequence $A_n$ in \eqref{aux_14_intro}.} We empirically observe that
$$U_{p^am}-a_p(f) U_{p^{a-1}m}+p^3U_{p^{a-2}m}\stackrel{?}{\equiv} 0\pmod{p^{2a}},\qquad p\geq 5,\quad a\geq2,\quad m\geq1.$$
\end{observation}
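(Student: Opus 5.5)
Because this is only an empirical observation, what follows is a plan for how I would try to prove it. I would model the argument on the classical proofs of Atkin--Swinnerton-Dyer congruences. Those proofs combine two ingredients: the Hecke relation for the eigenform $f$, and a Dwork-type congruence comparing $t(\tau)^p$ with $t(p\tau)$.

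\emph{Hecke relation.} Since $f\in\S_4(\Gamma_0(6))$ is a newform and $p\ge5$ does not divide the level, we have
$$f\,|\,U_p=a_p(f)\,f-p^3\,f\,|\,V_p,$$
where $U_p$ and $V_p$ act on $q$-expansions. Note also that $a_0(F(p\tau)\,h(\tau))=a_0\bigl(F\cdot(h\,|\,U_p)\bigr)$ for Laurent series $F,h$. Applying this with $F=t^{-n}$ and $h=f$ turns the three-term expression in the observation into a single constant term:
$$U_{pn}^{\flat}-a_p(f)U_n+p^3U_{n/p}=0,\qquad U_{pn}^{\flat}:=a_0\bigl(t(p\tau)^{-n}f(\tau)\bigr),$$
with $U_{n/p}:=0$ if $p\nmid n$. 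I would take $n=p^{a-1}m$.

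\emph{Frobenius comparison.} I would write $t(\tau)^{p}=t(p\tau)\,\Lambda(\tau)$. Because $t$ is an eta quotient with integral $q$-expansion, $\Lambda\in 1+p\,\mathbb{Z}[[q]]$, and hence
$$\Lambda^{-p^{a-1}m}\equiv1 \pmod{p^{a}}.$$
This gives
$$U_{p^am}-U^{\flat}_{p^am}=a_0\Bigl(t(p\tau)^{-p^{a-1}m}\bigl(\Lambda^{-p^{a-1}m}-1\bigr)f\Bigr).$$
This naive estimate yields only $p^{a}$. To reach $p^{2a}$ I would exploit more structure, in two steps.
\begin{enumerate}
\item First I would use that $f$ has weight $4$. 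Writing $f=\Theta^2\,(Dt/t)\,\phi(t)$ with $\phi$ a power series, or more cleverly using the Eichler-integral pairing of Proposition \ref{lemma:eichler_solution}, I would express $U_n$ as a Laurent coefficient of a form in $D^{3}(\cdot)$ plus a period part. Each application of $D$ contributes a factor of $n$, which is divisible by $p^{a-1}$.
\item Then I would run a Dwork/Beukers--Vlasenko ``crystal'' argument. Consider the $\mathbb{Z}_p$-module spanned by $t^{-n}f$ modulo $D^{3}(\M_{-2}^!)$ and modulo forms with $p$-integral $\E_3$-Eichler integrals. The Cartier operator ($U_p$) acts on this module, and I would show it is $p^2$-contractive on the complement of the line generated by $f$. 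Iterating this contraction $a$ times would yield the factor $p^{2a}$.
\end{enumerate}

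\emph{Main obstacle.} I expect step 2 to be the hard part: establishing the extra factor $p^{a}$ beyond the trivial Frobenius estimate. The quotient $\M_4^!/D^3(\M_{-2}^!)$ has dimension $3$ here, spanned by $E$, $f$ and $f^\sharp$, and the meromorphic partner $f^\sharp$ has Eichler integrals with denominators $\lcmD{n}^3$. Controlling the $f^\sharp$-component $p$-adically is precisely what should fix the exponent at $2a$ rather than $3a$. This also explains why the hypothesis $a\ge2$ appears in the observation. My first attempt would be to show that the $U_p$-action on this three-dimensional quotient has slopes $0$ on $E$, $(0,3)$ on $\{f,f^\sharp\}$, and to bound the cross terms using the denominator type $\lcmD{n}^3$ from Corollary \ref{cor_denomtype_intro}.
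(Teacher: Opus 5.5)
The paper gives no proof of this statement. It records it only as an experimental observation (hence the $\stackrel{?}{\equiv}$), so there is no argument to compare with. Taken on its own, your text is not yet a proof.

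The first concrete problem is that your ``exact'' Hecke identity is false. From $a_0(F(p\tau)h)=a_0(F\cdot h|U_p)$ and $f|U_p=a_p(f)f-p^3f|V_p$ one gets
\[
U^\flat_{pn}-a_p(f)\,U_n+p^3\,a_0\bigl((t^{-n}|U_p)\,f\bigr)=0 .
\]
Here $t^{-n}|U_p\neq t^{-n/p}$, and for $p\nmid n$ the last term need not vanish. For $n=p^{a-1}m$ one has $t^{-n}|U_p=t^{-p^{a-2}m}\bigl(\Lambda^{-p^{a-2}m}|U_p\bigr)$ with $\Lambda^{-p^{a-2}m}\in 1+p^{a-1}\mathbb{Z}[[q]]$. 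So this term equals $p^3U_{p^{a-2}m}$ only modulo $p^{a+2}$, which falls short of $p^{2a}$ once $a\geq 3$. The Frobenius comparison is therefore needed on two terms, not one, and both lose.

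After this correction, your argument proves the congruence modulo $p^{a}$, and that is only the Honda level. Since $f\,\frac{dq}{q}=\sum_n U_nt^n\frac{dt}{t}$, the series $\sum_n\frac{U_n}{n}t^n$ is $\sum_j\frac{a_j(f)}{j}q^j$ after the integral substitution $q=q(t)$. The exact Hecke recursion is a functional equation of Hazewinkel type, and his functional-equation lemma shows the resulting congruence modulo $p^{a}$ survives \emph{any} integral substitution. The whole content of the observation is the extra factor $p^{a}$, and your steps (1)--(2) do not supply it.

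In (1), only the first derivative carries a guaranteed factor of $n$: $t^{n}D^{3}(t^{-n}h)=(D-n\,\tfrac{Dt}{t})^{3}h$ contains the $n$-free term $D^{3}h$. Moving $D^3$ onto $\E_3(\cdot)$ then introduces denominators $j^{-3}$ that more than cancel that single factor. Your factorization also has the wrong weight, since $\Theta^2\,Dt/t$ has weight $6$. The correct relations are $f=t\Theta^2$ and $Dt=t\Theta\sqrt{1-34t+t^2}$, so $\sum_nU_nt^n=t\Theta/\sqrt{1-34t+t^2}$.

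In (2), the module is never defined and the $p^2$-contraction is asserted rather than proved. The claim also contradicts the slopes you state afterwards, since a slope-$0$ direction such as $E$ is not contracted at all. You cannot assume ordinarity, because the observation is claimed for every $p\geq5$. The bound $\lcmD{n}^3$ points the wrong way: it permits $p$-adic valuation as low as $-3a$ at index $p^am$, so it cannot produce divisibility by $p^{2a}$. Nothing in the plan distinguishes $a=1$ from $a\geq2$, so it does not explain that hypothesis either.

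A proof must use more than the Hecke relations and the integrality of $q\mapsto t$. The natural candidate is the weight-$4$ Frobenius structure, e.g.\ how the lift $t\mapsto t^p$ differs from $q\mapsto q^p$ on the symmetric-square Gauss--Manin system. Your plan does not yet identify this input.
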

Another observation concerns a modulonomic sequence on the noncongruence subgroup $\mathcal{N}$ defined above. Let $v_p$ denote the $p$-adic valuation normalized by $v_p(p)=1$.
\begin{observation}
Let $A_n$ be defined by \eqref{gen_square_root}. For every prime $p$ that splits in $\mathbb{Q}(\sqrt{-6})$, we have experimentally
$$v_p(A_n) \stackrel{?}{\geq} 2v_p( \binom{2n}{n}).$$
\end{observation}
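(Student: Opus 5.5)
The plan is to reduce the Observation to a Lucas-type factorization of $A_n$ modulo powers of $p$, combined with a sharpened statement about single base-$p$ digits. First I would rewrite \eqref{gen_square_root} modularly. Let $\Theta$ be the weight-two form on $\Gamma_1(6)$ introduced above, and let $t$ be as in \eqref{t_choice_level6_intro}. Beukers' description of $\Ap_n$ gives $\sum_n \Ap_n t^n = \Theta$ near $\tau=i\infty$, so $\sum_n A_n t^n = \sqrt{\Theta}$, which is the weight-one form on $\mathcal{N}$. Since $\Theta$ satisfies the order-three Apéry differential equation in $t$, its square root satisfies the second-order symmetric-square root of that equation. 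This is the $t$-version of the recurrence $(2n+3)^2-2(68n^2+238n+209)\textbf{S}_n+4(n+2)^2\textbf{S}_n^2$. Via Clausen's formula and a rational change of variable, I would then look for an explicit binomial-sum or ${}_2F_1$-type expression for $A_n$. Its terms should visibly carry factors like $\binom{2k}{k}$, together with powers of $2$ and $3$ coming from the half-integral coefficients in the $q$-expansion shown above.

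Next, recall Kummer's theorem: $v_p\binom{2n}{n}$ equals the number of carries when $n+n$ is added in base $p$. It therefore suffices to prove two things.
\begin{itemize}[leftmargin=*]
\item A Lucas/Dwork factorization: writing $n=n_0+pn'$ with $0\le n_0<p$, one has $A_n \equiv A_{n_0}A_{n'}\cdot u \pmod{p^{2(c+1)}}$, where $u$ is a $p$-adic unit and $c$ counts the carries.
\item A single-digit statement: $p^2 \mid A_{r}$ for every $r$ with $p/2 < r < p$. This is exactly the case $v_p\binom{2r}{r}=1$.
\end{itemize}
For the single-digit statement I would use the hypergeometric expression directly. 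Every term carries a factor $\binom{2k}{k}$ or $\binom{2(r-k)}{r-k}$, so divisibility by $p$ comes for free. The second power of $p$ should come from a cancellation that is governed by the Frobenius of the weight-one form.

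The splitting condition enters at this point. The weight-one form $\sqrt{\Theta}$ should be attached to $\mathbb{Q}(\sqrt{-6})$, for instance through a theta series or a CM description of the associated local system. I expect the relevant Hasse invariant, namely the truncation $\sum_{r<p}A_r t^r \bmod p$, to behave differently according to whether $p$ splits or is inert. For split $p$ the local system is ordinary, and Dwork's unit-root theory yields the congruences $A_{n_0+pn'} \equiv A_{n_0}A_{n'}$ modulo $p$ (and then modulo higher powers by the Dwork--Mellit--Vlasenko criterion). For the higher-power version I would apply the Atkin--Swinnerton-Dyer-type technique of Beukers \cite{beukers2024supercongruences} to the generating function $\sqrt{\Theta}$. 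I would also check numerically that the inert primes genuinely fail, so that the hypothesis is not an artefact.

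The main obstacle is the doubling of the exponent: proving $p^{2}$, and in general $p^{2c}$, rather than $p^{c}$. Ordinary Dwork congruences give only linear control in the number of carries. My first attempt would be to show that the square-root structure forces the divisibility. Since $(\sum_n A_nx^n)^2=\sum_n \Ap_nx^n$, I would compare with the known congruences for $\Ap_n$ and exploit the supercongruence $\Ap_{p-1}\equiv 1\pmod{p^3}$ to extract extra $p$-adic cancellation in the half-range $p/2<r<p$. If that fails, I would instead try to prove $p^2\mid A_r$ directly from the explicit formula, using a symmetry $r\mapsto p-1-r$ of the summand modulo $p^2$ that holds only when $-6$ is a square modulo $p$.
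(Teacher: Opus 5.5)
The paper gives no proof of this statement; it is recorded only as a numerical observation. Your plan therefore has to stand on its own, and its central reduction (the first bullet) fails in every reading. The reason is that $v_p\binom{2n}{n}$ is not additive along $n=n_0+pn'$: a carry out of the units digit can create further carries higher up.

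Take the split prime $p=5$ and $n=13=3+5\cdot 2$. Then $v_5\binom{26}{13}=2$, so the Observation predicts $v_5(A_{13})\ge 4$. On the other hand $v_5(A_3A_2)=2$, since $A_2=267/8$ and $A_3=10225/16$.

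- If $c$ is the total number of carries, then $A_{13}\equiv A_3A_2u \pmod{5^{2(c+1)}}$ with $u$ a unit forces $v_5(A_{13})=2$. That contradicts what you are trying to prove.
- If $c$ counts fewer carries, the congruence gives only $v_5(A_{13})\ge 2$, which is too weak.

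The prototype $\binom{2n}{n}^2$, which satisfies the target trivially, fails the same test: $v_5\binom{26}{13}^2=4$ but $v_5\bigl(\binom{6}{3}^2\binom{4}{2}^2\bigr)=2$. So a Lucas factorization with a unit cofactor is the wrong shape. Already $\binom{2n}{n}=p^{\epsilon_0}\frac{(2n'+\epsilon_0)!}{(n'!)^2}\cdot(\text{unit})$, where $\epsilon_0$ records the carry out of the units digit. You would need an analogous carry-sensitive, two-sequence statement for $A_n$. Dwork and Mellit--Vlasenko congruences are congruences for ratios, and the Lucas property is a statement modulo $p$. Neither determines the valuation of individual terms that are divisible by $p$.

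The single-digit step also rests on a false premise: the first power of $p$ does not come for free. At the inert prime $p=13$ one finds $A_7\equiv 10\pmod{13}$, even though $v_{13}\binom{14}{7}=1$. This follows either from the recurrence or from reducing $(\sum A_nx^n)^2=\sum \Ap_nx^n$ modulo $13$. So the splitting of $p$ must enter at the first power, not just the second.

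A termwise factor $\binom{2k}{k}$ or $\binom{2(r-k)}{r-k}$ would not supply that power anyway: for $r=\frac{p+1}{2}$ and $k=1$, neither factor is divisible by $p$. The tools you propose for the extra power, namely the square-root relation and Gessel's $\Ap_{p-1}\equiv1\pmod{p^3}$, hold for every $p\ge5$. They cannot see whether $-6$ is a square modulo $p$. The symmetry $r\mapsto p-1-r$ is never actually formulated.

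A smaller point: the operator you quote is $\boldL^\sharp$, which annihilates $a_0(t^{-n}H^3)$ with $A^\sharp_1=39/2$. The square-root sequence, with $A_1=\frac52$ and $A_2=\frac{267}{8}$, is annihilated instead by $(2n+1)^2-2(68n^2+170n+107)\textbf{S}_n+4(n+2)^2\textbf{S}_n^2$ from Example~\ref{example:beukers_weight_three}.

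A more promising reformulation: the Observation says exactly that $A_n/\binom{2n}{n}^2$ has no split prime in its denominator. This is the same shape as the CM denominator phenomenon in Proposition~\ref{-7_CM_cusp_form}. The paper handles that case with Hecke operators on weakly holomorphic partners, not with Lucas-type congruences.
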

	
	\subsection{Meromorphic examples}\label{sec:intro-meromorphic}
	We briefly describe two examples of modulonomic sequences associated with meromorphic modular forms; see Section~\ref{sec:meromorphic}. Their arithmetic properties appear to differ substantially from those arising in the weakly holomorphic case and are connected with deeper modular objects. \par
	Both examples take $\Gamma=\SL_2(\mathbb{Z})$, $k=4$, and $S=\{\infty,\tau_D\}$, where $\tau_D$ is a CM point of discriminant $D$.
	
	For the first example, take $D=-3$, so that $\tau_D$ is an elliptic point of order three. Let $J=j/1728$. The dimension formula \ref{dim_theorem} implies that, for every $n\geq1$, we can write
	\begin{equation}\label{aux_17_intro}\frac{E_4}{J^n}=A_n\frac{E_4}{J}+D^3(h_n), \qquad A_n\in \mathbb{Q},\quad h_n\in \M_{-2}^S(\SL_2(\mathbb{Z})).\end{equation}
	The sequence $A_n$ satisfies a second-order recurrence whose other solution is $B_n:=a_0(h_n)$. Their denominator types appear much harder to prove. Numerical experiments suggest the following conjecture.
	\begin{conjecture}\label{LCM_bound_Ex1_intro}
		The denominator types of $A_n$ and $B_n$ are $$A_n:\quad 64^n\lcmD{3n-2},\qquad
		B_n:\quad 64^n\lcmD{3n-2}  \lcmD{n-1} \lcmD{\frac{3n-2}{4}}.$$
	\end{conjecture}
	In this case, an explicit expression for $A_n$ yields even sharper denominator estimates; see Proposition \ref{beta_exp_mero}. Integrating both sides also gives
	\begin{equation}\label{aux_18_intrdo} (2\pi i)^3 \int_0^{i\infty} \frac{E_4}{J^n} d\tau = A_n \left((2\pi i)^3 \int_0^{i\infty} \frac{E_4}{J} d\tau \right)-2B_n.\end{equation}
	A nontrivial evaluation, recalled in Remark \ref{E4/J_eval}, is
	\begin{equation*}(2\pi i)^3 \int_0^{i\infty} \frac{E_4}{J} d\tau = 270 L_{-3}(2).\end{equation*}
	Thus, \eqref{aux_18_intrdo} expresses the $L$-value $L_{-3}(2)$ in terms of moments of meromorphic modular forms with poles of arbitrarily high order. \par
	When $\tau_D$ is not an elliptic point, the dimension formula \ref{dim_theorem} shows that $\frac{\M_4^S}{D^3(\M_{-2}^S)}$ has dimension four. Let $j_D=j(\tau_D)$. One basis is
	$$\left\{ E_4, \frac{E_4}{j-j_D},\frac{E_4}{(j-j_D)^2},\frac{E_4}{(j-j_D)^3}\right\}.$$
	For reasons that we do not yet fully understand, this does not appear to be the natural basis for the arithmetic questions considered here. A more suitable basis is the one introduced by Zhang in the context of magnetic modular forms \cite[Example~4.1]{zhang2025elliptic}. Explicitly, let $\psi_D = \frac{E_2^\ast E_4}{E_6}(\tau_D) \in \mathbb{Q}(j_D)$ and
	{\small $$\begin{aligned}
	G_D^{(1)}&=\frac{E_4}{j-j_D},\\
	G_D^{(2)}&=\left(\frac{j_D}{2(j_D-1728)}+\frac13+\frac{\psi_D}{6}\right)\frac{E_4}{j-j_D}
	+j_D\frac{E_4}{(j-j_D)^2},\\
	 G_D^{(3)}&=\left[\left(\frac{61}{72}+\frac{\psi_D}{12}\right)\frac{j_D}{j_D-1728}
	+\frac{\psi_D}{18}+\frac{\psi_D^2}{72}\right]\frac{E_4}{j-j_D}+j_D\left(\frac{3j_D}{2(j_D-1728)}+\frac43+\frac{\psi_D}{6}\right)\frac{E_4}{(j-j_D)^2}
	+2j_D^2\frac{E_4}{(j-j_D)^3}.
	\end{aligned}$$}
	Consider the modulonomic sequences $A_{{1},n}, A_{{2},n}, A_{{3},n} \in \mathbb{Q}(j_D)$ determined by\footnote{When the class number of $D$ is greater than one, the values $j_D$ and $\psi_D$ are determined only up to Galois conjugacy. This does not affect the following conjecture.}
	$$\frac{E_4}{(j-j_D)^n} = \sum_{j=1}^3 A_{(j),n} G_D^{(j)} + D^3(h_n),\qquad h_n \in \M_{-2}^S(\SL_2(\mathbb{Z})).$$
	These three sequences satisfy a fourth-order linear recurrence whose remaining solution is $B_n:=a_0(h_n)$. Our numerical experiments suggest the following conjecture.
	\begin{conjecture}\label{conj_mero_D_intro}Let $D$ be a negative discriminant $\neq -3$ or $-4$. Write $s_D = j_D(j_D-1728)$. Let $X_{i,n}=s_D^n A_{(i),n}$ and $Y_n = s_D^n  B_n$. Then \\
		(a) The denominator types of these four sequences are \begin{equation}\label{conj_mero_denom_intro}\begin{aligned}&X_{2,n}:\quad \lcmD{n-1}\\ &X_{1,n} \text{ and }X_{3,n}:\qquad \lcmD{n-1}\lcmD{\frac{n-1}{2}}\\
				&Y_n:\qquad \lcmD{n-1}\lcmD{\frac{n-1}{2}} \lcmD{\frac{n-1}{3}} .\end{aligned}\end{equation}
		(b) Let $p$ be coprime to $s_D$. If $p\mid \mathrm{den}(X_{1,n})$, then $p$ is inert in $\mathbb{Q}(\sqrt{D})$; if $p\mid \mathrm{den}(X_{3,n})$, then either $p$ splits in $\mathbb{Q}(\sqrt{D})$ or $p^2<n$.
	\end{conjecture}
	
	This conjecture explains why the basis $\{G_D^{(1)}, G_D^{(2)}, G_D^{(3)}\}$ is natural: it separates the prime support of the denominators of the first and third sequences into inert and split primes while keeping the denominator of the second sequence small. The normalizing factor $s_D$ in Conjecture \ref{conj_mero_D_intro} does not appear to be optimal; see Conjecture \ref{conj_mero_-7} for the case $D=-7$.
	~\\[0.01in]
	
	The article is organized as follows. Section~\ref{sec:gamma0-4} gives a gentle introduction through the group $\Gamma_0(4)$ and explains its connection with elliptic integrals. Section~\ref{sec:gamma1-6} considers $\Gamma_1(6)$, recovers Ap\'ery's sequences from the weight-four minus space, and explores the corresponding phenomena in the plus space; it also proves Corollary \ref{cor_denomtype_intro}. Sections~\ref{sec:divisors} and~\ref{creative_telescope_section} form the theoretical core of the article: they recall the language of divisors of automorphic forms, prove Theorems \ref{dim_theorem} and \ref{creative_telescoping_S_intro}, and give an algorithm for computing creative telescopers. Section~\ref{sec:meromorphic} studies the two meromorphic examples introduced in Section~\ref{sec:intro-meromorphic}. Section~\ref{sec:zagier} studies a special geometric configuration that leads to the equivalence in \eqref{aux_32}. Section~\ref{sec:characters} develops character versions of the two main theorems and presents further examples. Section~\ref{sec:denominator-types}, which may be read directly after Section~\ref{sec:gamma1-6}, develops a general method for determining the denominator types of modulonomic sequences.
		
	\section*{Acknowledgment}
	The author expresses his sincere gratitude to Prof. Kathrin Bringmann, Don Zagier and Wadim Zudilin for valuable discussions and moral support. The author also thanks Prof. Frank Calegari, Vesselin Dimitrov, Yunqing Tang for discussions during Oberwolfach in March 2026. The author has received funding from the European Research Council (ERC) under the European Union’s Horizon 2020 research and innovation programme (grant agreement No. 101001179). 
	
	\section*{Declaration on AI usage}
	All theoretical inputs and ideas in this article come from the human author. ChatGPT 5.6 Sol and 6 Astra were used for the following auxiliary tasks: checking the language; verifying the numerical quantities presented in the examples; computing creative telescopers using the pseudocode in Section~\ref{creative_telescope_section}; and writing code to test Conjecture \ref{conj_mero_D_intro}.

	\section{Examples on $\Gamma_0(4)$}\label{sec:gamma0-4}
	Recall the familiar weight $k$ slash operator acting on functions on the upper half-plane $\mathbb{H} := \{\tau \in \mathbb{C} \mid \Im(\tau)>0\}$, given by
	$$(f|_k \gamma)(\tau) = (c\tau+d)^{-k} f(\frac{a\tau+b}{c\tau+d}), \qquad \gamma = \begin{psmallmatrix}a&b\\c&d\end{psmallmatrix} \in \SL_2(\mathbb{R}).$$
	Define the differential operator $$D := \frac{1}{2\pi i} \frac{d}{d\tau} = q \frac{d}{dq}, \qquad q=e^{2\pi i \tau}.$$
	Bol's identity \cite[Lemma~5.3]{bringmann2017harmonic} says that, for any smooth $f:\mathbb{H}\to \mathbb{C}$, we have
	$$D^{k-1}(f|_{2-k} \gamma) = (D^{k-1} f)|_k \gamma.$$
	Throughout, we omit the subscript in the slash notation whenever the weight is obvious. Thus Bol's identity can also be written as $D^{k-1}(f|\gamma) = (D^{k-1} f)|\gamma.$ \par
	First we recall some basic facts about the group $\Gamma:=\Gamma_0(4)$ and its modular forms. It has three cusps: $\{0,\infty,\frac12\}$. Its normalizer in $\SL_2(\mathbb{R})$ permutes these cusps. Indeed, $$V = \begin{pmatrix}1&0\\2&1\end{pmatrix} \text{ and } W_4 = \frac{1}{\sqrt{4}} \begin{pmatrix}0&-1\\4&0 \end{pmatrix}$$
	normalize the group, and $V(\infty) = \frac12$, while $W_4(\infty) = 0$. Let \begin{align}\label{Level4_uv}u &= \frac{\eta^{20}(2\tau)}{\eta^8(\tau) \eta^8(4\tau)} = 1+8q+24q^2+32q^4+\cdots \in \M_2(\Gamma), \\ v &= \frac{\eta^8(4\tau)}{\eta^4(2\tau)} = q+4q^3+6q^5+\cdots \in \M_2(\Gamma).\end{align}
	Their divisors are $$\div u = (\frac12),\quad \div v = (\infty),\quad \div(u-16v) = (0).$$
	It is easy to see \begin{equation}\label{aux_7}u| V = 16v,\qquad v|V = \frac{u}{16},\qquad u| W_4 = -u,\qquad v| W_4 = -\frac{u}{16}+v.\end{equation}
	The ring of modular forms and its dimensions are given by $$\bigoplus_{k\geq 0}\M_k(\Gamma) = \mathbb{C}[u,v],\qquad \dim \M_{2k}(\Gamma) = k +1.$$
	\subsection{Weight $k=4$ and moments of elliptic integrals}\label{sec:gamma0-4-weight4}
	Let $t$ and $g$ be the following forms of weights $0$ and $4$, respectively: $$\begin{aligned}&t(\tau) := \frac{256v}{u} = \frac{256\eta^8(\tau)\eta^{16}(4\tau)}{\eta^{24}(2\tau)} = 256(q-8q^2+44q^3+\cdots) \sim (\infty)-(\frac12), \\ &g(\tau) := (u-16v)v = q-8q^2+28q^3+\cdots \sim (0)+(\infty).\end{aligned}$$
	The integral $$(2\pi i)^3 \int_{0}^{i\infty} t(\tau)^n g(\tau) \tau^j d\tau$$
	is absolutely convergent for any $n\in \mathbb{N}, j\in \mathbb{Z}$, because the integrand vanishes at $0$ and $\infty$. We will be mainly interested in those $j$ corresponding to the critical values, which in this case are $j\in  \{0,1\}$.
	As a toy example to illustrate our theory, we will prove
	\begin{proposition}\label{basic_example}For $n\geq 0$, we have
	$$\begin{aligned}
	(2\pi i)^3 \int_{0}^{i\infty} t(\tau)^n g(\tau) d\tau &= A_n \frac{7\zeta(3)}{2} + B_n , \\
	(2\pi i)^3 \int_{0}^{i\infty} t(\tau)^n g(\tau) \tau d\tau &= A_n \frac{\pi^3 i}{16},
	\end{aligned}$$
	for some $A_n, B_n\in \mathbb{Q}$. Moreover, $A_n\in\mathbb{Z}$ and $\lcmD{n}^3 B_n\in\mathbb{Z}$.
	\end{proposition}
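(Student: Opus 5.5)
The plan is to use the decomposition \eqref{eq:modulonomic_decomposition_intro} with a one-element basis, and then integrate term by term. Here $\dim\M_4(\Gamma_0(4))=3$ and $\S_4(\Gamma_0(4))=0$, so $\E_4(\Gamma_0(4))$ is $3$-dimensional. By Theorem \ref{dim_theorem} with $S$ the set of cusps, $\M_4^!(\Gamma)/D^3(\M_{-2}^!(\Gamma))\cong\E_4(\Gamma)$ has dimension $3$.

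\textbf{Step 1: the decomposition $t^ng=A_ng+D^3(h_n)$.} The isomorphism of the quotient with $\E_4(\Gamma)$ is detected by the constant terms at the three cusps, since $D^3(h)$ has vanishing constant term at every cusp by Bol's identity. The form $t^ng$ has zero constant term at $\infty$ and at $0$: at $\infty$ because $t$ is holomorphic and $g$ vanishes there; at $0$ because $t|W_4$ is holomorphic at $\infty$ and $g|W_4=g$, which follows from \eqref{aux_7}. Only its constant term at $\frac12$ is nonzero. Now $g$ is an Eisenstein series, as $\S_4=0$, and it vanishes at $0,\infty$. Hence $t^ng=A_ng+D^3(h_n)$, where $A_n$ is the ratio of the constant terms at $\frac12$ of $t^ng$ and of $g$. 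By \eqref{aux_7}, $t|V=u/v$ and $g|V=-u(u-16v)/16$. So $A_n=a_0\bigl((u/v)^n u(u-16v)\bigr)$, with $u/v\in q^{-1}\mathbb{Z}[[q]]$ because $v/q\in 1+q\mathbb{Z}[[q]]$. Thus $A_n\in\mathbb{Z}$. The kernel of $D^3$ on $\M_{-2}^!(\Gamma)$ is trivial, since it would consist of modular polynomials in $\tau$ of degree at most $2$. Hence $h_n$ is unique and lies in $\M_{-2}^!(\Gamma)\cap\mathbb{Q}((q))$.

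\textbf{Step 2: integration.} I would integrate both sides against $d\tau$ and $\tau\,d\tau$ from $0$ to $i\infty$. The integrals of $g$ are its critical $L$-values $L(g,3)$ and $L(g,2)$, which come from the Eisenstein series; I expect the values $\frac72\zeta(3)$ and $\frac{\pi^3 i}{16}$ after a direct computation of the $L$-series of $(u-16v)v$ as a combination of divisor sums. For $D^3(h_n)$, I would use the fundamental theorem of calculus (Lemma \ref{IBP_lemma}). Repeated integration by parts against $\tau^j$ with $j\le 1<k-1$ leaves only boundary terms involving $D^{2-j}h_n$ and $\tau^{\le j}$ at $0$ and $i\infty$. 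Since $h_n$ is holomorphic at $\infty$ and $0$, these reduce to the constant terms $a_0(h_n)$ and $a_0(h_n|W_4)$ for $j=0$. For $j=1$ they should cancel, which gives the pure $A_n$ formula. Thus $B_n$ is a rational combination of $a_0(h_n)$ and $a_0(h_n|W_4)$, with bounded powers of $2$ absorbed into the constant.

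\textbf{Step 3: denominators of $B_n$, the main obstacle.} The nonconstant coefficients of $h_n$ at any cusp are $a_m(\cdot)/m^3$ of forms with integral expansions. The pole at $\frac12$ has order at most $n$, since $t|V=u/v$ has a simple pole in $q$, and the cusp $\frac12$ has width $1$. Hence the principal part of $h_n|V$ has denominators dividing $\lcmD{n}^3$. The constant terms are not visible this way, so I would use the residue theorem. For $f\in\M_4(\Gamma)$, the product $h_nf$ is a weakly holomorphic weight-$2$ form, and the sum over cusps of its constant terms (weighted by widths) vanishes.

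I would take $f=u(u-16v)$. Its constant term is $1$ at $\infty$ and $0$ at $0$, by \eqref{aux_7}, and it has integral expansions at all cusps. Since $h_n$ is holomorphic at $\infty$ and $0$, this gives
\[
a_0(h_n)=-\sum_{m=1}^{n} a_{-m}(h_n|V)\,a_m(f|V),
\]
up to a bounded power of $2$. The right side has denominator dividing $\lcmD{n}^3$. A similar choice, for instance $f=uv$ or $f$ vanishing at $\infty$, handles $a_0(h_n|W_4)$. The delicate points are keeping track of the widths and the $\sqrt4$ in $W_4$, so that only a fixed power of $2$ enters. This establishes $\lcmD{n}^3B_n\in\mathbb{Z}$ up to the harmless constant $A$ of the denominator-type convention, and I would check that this constant is $1$ in this normalization.
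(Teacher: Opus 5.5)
Your proposal is correct. Steps 1 and 2 follow the paper, but Step 3 is a genuinely different argument.

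The paper shows that $F=16\lcmD{n}^3v^n(h_n|V)\in\M_{2n-2}(\Gamma_0(4))$ has integral coefficients of $q^0,\ldots,q^{n-1}$. It then uses $\bigoplus_k\M_k(\Gamma_0(4))=\mathbb{C}[u,v]$ (Lemma~\ref{victor_miller_gamma0(4)}) and the $16^k$-integrality of slashing by $V$ and $VW_4$ (Lemma~\ref{16n_trans}) to conclude that $\lcmD{n}^3h_n$ and $\lcmD{n}^3(h_n|W_4)$ have integral $q$-expansions.

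You instead pair $h_n$ with a holomorphic weight-$4$ form and apply the residue theorem (Lemma~\ref{lemma:weight_two_residue_relation}). Then only the principal part of $h_n|V$ enters, and its coefficients are $a_{-m}\bigl((u/v)^nu(u-16v)\bigr)/(16m^3)$.

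The bookkeeping you left open closes with constant $1$. By Lemma~\ref{IBP_lemma}, $B_n=-8a_0(h_n|W_4)$, so only the pairing that isolates the cusp $0$ is needed; the pairing with $u(u-16v)$ is superfluous. With $V$ and $W_4$ as scaling matrices, all three cusp widths equal $1$. Pairing with $uv$, for which $uv|V=uv$ and $uv|W_4=u^2/16-uv$, gives
$$B_n=8\sum_{m=1}^{n}\frac{a_{-m}\bigl((u/v)^nu(u-16v)\bigr)\,a_m(uv)}{m^3}=8\,a_0\bigl((u/v)^nu(u-16v)\,\E_3(uv)\bigr).$$
Hence $\lcmD{n}^3B_n\in 8\mathbb{Z}$, and this formula reproduces $B_1,B_2,B_3$ in Table~\ref{AB_table_1}.

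Your route is shorter and needs nothing about the graded ring. It also yields an explicit constant-term formula for $B_n$ of the same Eichler-integral shape as in Proposition~\ref{prop:level_six_plus_constant_terms}, obtained here without a telescoper. The paper's route proves more: every Fourier coefficient of $\lcmD{n}^3h_n$ is integral at every cusp. That is exactly what Corollary~\ref{congruence_level4_cor} requires, and it is the template for the reduction-system method of Section~\ref{sec:denominator-types}.
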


\begin{table}[h]
	\begin{tabular}{|c|c|c|c|c|c|c|c|}
		\hline
		$n$ & $0$ & $1$ & $2$ & $3$ & $4$ & $5$ & $6$ \\ \hline
		$A_n$ & $1$ & $8$ & $88$ & $1088$ & $14296$ & $195008$ & $2728384$ \\ \hline
		$B_n$ & $0$ & $8$ & $136$ & $\frac{56384}{27}$ & $\frac{850648}{27}$ & $\frac{1601473216}{3375}$ & $\frac{24177906368}{3375}$ \\ \hline
	\end{tabular}
\caption{First few values of $A_n, B_n$.}
\label{AB_table_1}
\end{table}
	
	Since $\dim \M_4(\Gamma_0(4)) = 3$ and $\dim \S_4(\Gamma_0(4))=0$, Theorem \ref{dim_theorem} implies that the space $$\frac{\mathcal{M}_4^!(\Gamma)}{D^3(\mathcal{M}_{-2}^!(\Gamma))}$$ is three-dimensional. Recall that for even weight $k\geq 4$ and any given cusp, there exists an Eisenstein series that does not vanish at that cusp and vanishes at all other cusps. When $n\geq 0$, $t^ng$ vanishes at $0$ and $\infty$. Since $g \in \E_4(\Gamma_0(4))$ also vanishes at $0$ and $\infty$, we have
	\begin{equation}\label{aux_6}t^n g = A_n g + D^3(h_n),\qquad A_n\in \mathbb{Q}, h_n\in \M_{-2}^!(\Gamma_0(4)).\end{equation} 
	
	Next we derive an explicit expression for $A_n$. For the matrix $V = \SmallMatrix{1 & 0 \\ 2 & 1}$ defined above, we have
	$$u| V = 16v, \qquad v|V = \frac{1}{16}{u},\qquad t|V = \frac{256}{t}=\frac{u}{v}.$$ Taking constant terms in $(t^n g)| V = A_n g| V + D^3(h_n|V)$ gives $$A_n = \frac{a_0((t^ng)|V)}{a_0(g|V)} = \frac{a_0((256/t)^n \frac{u}{16}(16v-u))}{a_0(\frac{u}{16}(16v-u))} = a_0\left((u/v)^n u(u-16v)\right).$$
	Since $u/v\in q^{-1}(1+q\mathbb{Z}\llbracket q\rrbracket)$ and $u(u-16v)$ has integral Fourier coefficients, we see that $A_n\in\mathbb{Z}$. This formula provides a convenient way to calculate $A_n$ for small $n$.
	\par
	
	We now integrate both sides of \eqref{aux_6} from $0$ to $i\infty$. As an Eisenstein series, $g$ satisfies $g(\tau) = \frac{1}{240}E_4(\tau )-\frac{17}{240} E_4(2 \tau )+\frac{1}{15} E_4(4\tau)$. Thus
	$$L(g(\tau),s) = \frac{(-2\pi i)^s}{\Gamma(s)} \int_0^{i\infty} g(\tau) \tau^{s-1} d\tau = 4^{-s} \left(16-17\times 2^s+4^s\right) \zeta (s-3) \zeta (s).$$
	Letting $s\to 1$ and $s\to 2$ gives respectively $$(2\pi i)^3 \int_{0}^{i\infty} g(\tau) d\tau = \frac{7\zeta(3)}{2}, \qquad (2\pi i)^3 \int_{0}^{i\infty} g(\tau) \tau d\tau  =  \frac{\pi^3 i}{16}.$$
	Therefore we have
	$$\begin{aligned}(2\pi i)^3 \int_{0}^{i\infty} t(\tau)^n g(\tau) d\tau &= A_n \frac{7\zeta(3)}{2} + \int_0^{i\infty} \left(\frac{d^3}{d\tau^3} h_n(\tau)\right) d\tau, \\
	(2\pi i)^3 \int_{0}^{i\infty} t(\tau)^n g(\tau) \tau d\tau &= A_n \frac{\pi^3 i}{16} + \int_0^{i\infty} \left(\frac{d^3}{d\tau^3} h_n(\tau)\right) \tau d\tau
	\end{aligned}$$
	It is easy to evaluate these two integrals using the fundamental theorem of calculus and integration by parts. For later reference, we state a more general result:
	\begin{lemma}\label{IBP_lemma}
	Let $\Gamma$ be a subgroup of $\text{SL}_2(\mathbb{R})$ commensurable with $\text{SL}_2(\mathbb{Z})$, let $k\geq 2$, and let $h\in \M_{2-k}^S(\Gamma)$. Let $\alpha,\beta \in \mathbb{Q}\cup \{\infty\}$, and suppose that $h$ is analytic at these two cusps. Then, for $0\leq i\leq k-2$,
	$$\int_\alpha^\beta \left( \frac{d^{k-1}}{d\tau^{k-1}} h(\tau) \right) \tau^i d\tau = B_h(\beta,i) - B_h(\alpha,i),$$ where
	$$B_h(\alpha,i) = \begin{cases}0 & \alpha = \infty,0\leq i\leq k-3, \\ (-1)^{k-2} (k-2)! \times a_0(h) & \alpha = \infty, i=k-2 \\
	\alpha^i (k-2)! (-c)^{k-2} \times a_0(h|_{2-k}\gamma) & \alpha\neq \infty, \gamma = \SmallMatrix{a&b\\c&d}\in \text{SL}_2(\mathbb{R}) \text{ such that }\alpha = \gamma\infty.\end{cases}$$
	\end{lemma}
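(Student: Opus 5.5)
The plan is to write down an explicit primitive of $h^{(k-1)}(\tau)\tau^i$ and evaluate its limits at the two cusps. At a cusp other than $\infty$ we first move that cusp to $\infty$ using Bol's identity.

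\emph{The primitive.} For any polynomial $P$ of degree $\le k-2$ and any meromorphic $H$, repeated integration by parts gives
$$\frac{d}{dz}\Bigl(\sum_{j=0}^{k-2}(-1)^jP^{(j)}(z)H^{(k-2-j)}(z)\Bigr)=P(z)H^{(k-1)}(z),$$
because $P^{(k-1)}=0$. Hence $F_P[H]:=\sum_{j}(-1)^jP^{(j)}H^{(k-2-j)}$ is a single-valued meromorphic primitive of $PH^{(k-1)}$. It follows that $\int_\alpha^\beta h^{(k-1)}\tau^i\,d\tau$ does not depend on the path in $\mathbb{H}$ avoiding the poles of $h$, since all residues vanish. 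It therefore suffices to compute the limits of $F_{\tau^i}[h]$ as $\tau\to\alpha$ and as $\tau\to\beta$ along, say, geodesics ending at the cusps.

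\emph{The cusp $\infty$.} Since $h$ is analytic at $\infty$, we can write $h=a_0(h)+\sum_{n>0}a_nq^{n/w}$ for some width $w$. Every derivative $h^{(m)}$ with $m\ge1$ is then $O(e^{-c\,\Im\tau})$, and this decay beats any polynomial factor $\tau^{i-j}$. So in $F_{\tau^i}[h]$ only a term with $h^{(0)}=h$ can survive as $\tau\to i\infty$. That term is $j=k-2$, and it is present only when $i=k-2$, since otherwise $P^{(k-2)}=0$. Its limit is $(-1)^{k-2}(k-2)!\,a_0(h)$, and all other cases give $0$. This is exactly $B_h(\infty,i)$.

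\emph{A finite cusp $\alpha=\gamma\infty$.} Here $\gamma=\SmallMatrix{a&b\\c&d}$ with $c\neq0$ and $\alpha=a/c$. Substitute $\tau=\gamma z$, so that $d\tau=(cz+d)^{-2}dz$. Bol's identity (with $d/d\tau$ in place of $D$, which only changes constants) gives
$$h^{(k-1)}(\gamma z)=(cz+d)^k\,(h|_{2-k}\gamma)^{(k-1)}(z).$$
The integrand therefore becomes $P(z)H^{(k-1)}(z)$, where $H=h|_{2-k}\gamma$ and $P(z)=(az+b)^i(cz+d)^{k-2-i}$. This $P$ has degree exactly $k-2$ and leading coefficient $a^ic^{k-2-i}=\alpha^ic^{k-2}$. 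The function $H$ is analytic at $\infty$ because $h$ is analytic at $\alpha$. The previous paragraph applies verbatim, now with $P^{(k-2)}=(k-2)!\,\alpha^ic^{k-2}$, so $F_P[H](z)\to(-1)^{k-2}(k-2)!\,\alpha^ic^{k-2}a_0(H)$ as $z\to i\infty$. This equals $B_h(\alpha,i)$. Moreover $F_P[H](z)$ coincides with $F_{\tau^i}[h](\gamma z)$, since both are primitives of the same differential and they agree up to a constant, which one checks by comparing; alternatively one can work directly with the definite integral in the $z$-variable. As $\tau\to\alpha$ we have $z\to i\infty$, so the endpoint $\alpha$ contributes $-B_h(\alpha,i)$ when it is the lower limit and $+B_h(\beta,i)$ when it is the upper limit. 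Combining the two endpoints gives $B_h(\beta,i)-B_h(\alpha,i)$.

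The main obstacle is keeping the bookkeeping honest in the change of variables. One must confirm that the boundary value at $\alpha$ is determined by the $z\to i\infty$ limit alone, with no stray constant. The cleanest way to do this is to apply the substitution to the definite integral from $\alpha$ to a fixed interior point $\tau_0$, and to track $F_P[H]$ evaluated at $\gamma^{-1}\tau_0$. The remaining points are routine: the sign $(-1)^{k-2}c^{k-2}=(-c)^{k-2}$, and the fact that the result is independent of the choice of $\gamma$ with $\gamma\infty=\alpha$. The latter holds because changing $\gamma$ by an element of the stabilizer of $\infty$ rescales $c$ and $a_0(H)$ compatibly.
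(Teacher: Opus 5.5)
Your primitive $F_P[H]$, the evaluation at $\infty$, and the substitution $\tau=\gamma z$ via Bol's identity are all correct. The integrand does become $P(z)H^{(k-1)}(z)\,dz$ with $P=\tau^i|_{2-k}\gamma=(az+b)^i(cz+d)^{k-2-i}$ and $P^{(k-2)}=(k-2)!\,\alpha^ic^{k-2}$. This formula persists when $a=0$, where the degree of $P$ drops, so your claim that $P$ has degree exactly $k-2$ is slightly off but harmless.

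\textbf{The gap} is the assertion $F_P[H](z)=F_{\tau^i}[h](\gamma z)$. That both are primitives of the same differential only gives equality up to an additive constant $C$. The number $(k-2)!\,\alpha^i(-c)^{k-2}a_0(H)$ you computed is the limit of $F_P[H]$. The actual boundary contribution of $F_{\tau^i}[h]$ at $\alpha$ is that number minus $C$. Your proposed workaround does not eliminate $C$. If you split $\int_\alpha^\beta$ at an interior point $\tau_0$ and use the $z$-primitive on the piece ending at $\alpha$, then $F_P[H](\gamma^{-1}\tau_0)$ must cancel against $F_{\tau^i}[h](\tau_0)$ (or against the analogous primitive attached to $\beta$) from the other piece. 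That cancellation is precisely $C=0$. So ``which one checks by comparing'' is the whole content of the finite-cusp case, and it is not automatic.

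The identity is true, so your route can be completed, but it needs an argument. One way: for fixed $\gamma$, $P$ and $z_0$, the difference $F_{P|_{2-k}\gamma}[h|_{2-k}\gamma](z_0)-F_P[h](\gamma z_0)$ depends only on the $(k-2)$-jet of $h$ at $\gamma z_0$. So you may replace $h$ by its Taylor polynomial of degree $k-2$. For polynomials $P,p$ of degree $\le k-2$, the expression $\sum_j(-1)^jP^{(j)}p^{(k-2-j)}$ is constant; on monomials $\tau^i,\tau^m$ it equals $(-1)^i\,i!\,m!\,\delta_{i+m,k-2}$. It is a nonzero multiple of the standard $\mathrm{SL}_2$-invariant pairing on $\mathrm{Sym}^{k-2}$, hence unchanged under $(P,p)\mapsto(P|_{2-k}\gamma,\,p|_{2-k}\gamma)$, which gives $C=0$.

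Alternatively, and this is what the paper does, skip the substitution entirely. Write $h(\tau)=(-c\tau+a)^{k-2}H(\gamma^{-1}\tau)$ with $H(z)=a_0(H)+O(e^{-\varepsilon\Im z})$. The exponentially small part of $H(\gamma^{-1}\tau)$ and all its $\tau$-derivatives tend to $0$ as $\tau\to\alpha$, and $-c\tau+a$ vanishes at $\alpha$. Hence $h^{(m)}(\tau)\to0$ for $m\le k-3$ and $h^{(k-2)}(\tau)\to(k-2)!(-c)^{k-2}a_0(H)$. Only the $j=0$ term of $F_{\tau^i}[h]$ survives, giving $\alpha^i(k-2)!(-c)^{k-2}a_0(H)$ directly, with no constant to track.
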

	Note that the result is independent of the path from $\alpha$ to $\beta$. Indeed, if $\frac{d^{k-1}}{d\tau^{k-1}} h(\tau)$ has a pole in $\mathbb{H}$, then it is of order $\geq k-1$, and since $i\leq k-2$, the integrand does not have residues at any pole in $\mathbb{H}$.
\begin{proof}
First we claim that, for $\alpha\neq \infty$, with $\gamma = \SmallMatrix{a&b\\c&d}\in \text{SL}_2(\mathbb{R})$ such that $\alpha = \gamma\infty$, we have
$$\lim_{\tau\to \alpha} \frac{d^i}{d\tau^i} h(\tau) = \begin{cases}0 &\text{if } 0\leq i\leq k-3 \\ (k-2)! (-c)^{k-2} a_0(h|_{2-k}\gamma) &\text{if } i=k-2
\end{cases}$$
Write $g(\tau) = (h|_{2-k}\gamma)(\tau)$ and $g(\tau) = A + O(e^{-\varepsilon \Im(\tau)})$ as $\tau\to \infty$. Then $$h = g|_{2-k} \gamma^{-1} = (-c\tau + a)^{k-2} g(\gamma^{-1}\tau) = (-c\tau+a)^{k-2}\left(A+O(e^{-\varepsilon \Im(\gamma^{-1}\tau)})\right), \qquad \tau\to \alpha.$$
The term $e^{-\varepsilon \Im(\gamma^{-1}\tau)}$ and all of its derivatives tend to $0$ exponentially as $\tau\to \alpha$. Also note that $\gamma\infty = \alpha \iff -c\alpha+a = 0$, so the only nonvanishing derivative corresponds to $i=k-2$. Moreover,
$$\left.\frac{d^{k-2}}{d\tau^{k-2}} (-c\tau+a)^{k-2}\right|_{\tau=\alpha} = (-c)^{k-2}(k-2)!,$$
and $A = a_0(h|_{2-k}\gamma)$. This proves the claim. \par
Repeated integration by parts gives, for $0\leq i\leq k-2$, 
$$\int \tau^i h^{(k-1)}(\tau) d\tau = \sum_{j=0}^i (-1)^j i(i-1)\cdots (i-j+1) \tau^{i-j} h^{(k-2-j)}(\tau).$$
It remains to show $$C_h(\alpha,i) := \sum_{j=0}^i (-1)^j i(i-1)\cdots (i-j+1) \lim_{\tau\to \alpha}\left(\tau^{i-j} h^{(k-2-j)}(\tau)\right)$$
equals $B_h(\alpha,i)$ as defined in the lemma. We split into the cases $\alpha = \infty$ and $\alpha \in \mathbb{Q}$.
\begin{itemize}[leftmargin=*]
\item $\alpha = \infty$: then $h^{(\geq 1)}(\tau)$ decays exponentially as $\tau\to \infty$, so only the term with $j=k-2$ can contribute. Thus $C_h(\alpha,i) = (-1)^{k-2}i(i-1)\cdots(i-k+3) \lim_{\tau \to \infty} (\tau^{i-(k-2)} h(\tau))$. This is nonzero only when $i=k-2$; in this case, it is $(-1)^{k-2} (k-2)! a_0(h)$.
\item $\alpha \in \mathbb{Q}$: in this case
$$C_h(\alpha,i) = \sum_{j=0}^i (-1)^j i(i-1)\cdots (i-j+1) \alpha^{i-j}\lim_{\tau\to \alpha}\left( h^{(k-2-j)}(\tau)\right).$$
By the previous observation, the summand is non-zero only when $k-2-j = k-2\iff j=0$, so $C_h(\alpha,i) = \alpha^i \lim_{\tau\to\alpha} h^{(k-2)}(\tau)$, completing the proof.
\end{itemize}
\end{proof}
	
Using the Fricke involution $W_4$, which maps $\infty$ to $0$, we deduce
	$$B_n := \int_0^{i\infty} \left(\frac{d^3}{d\tau^3} h_n(\tau)\right) d\tau = -8a_0(h_n|W_4), \qquad \int_0^{i\infty} \left(\frac{d^3}{d\tau^3} h_n(\tau)\right) \tau d\tau = 0.$$
From this it becomes evident that $B_n \in \mathbb{Q}$.
	\begin{example}
		The value of $B_n$ is more difficult to calculate than that of $A_n$. At this stage, we have only a naive method. Note that any form in $\M_k^!(\Gamma_0(4))$ is a rational function\footnote{Recall that $\div(u) = (\frac12)$, $\div(v) = (\infty)$, and $\div(u-16v)=(0)$; these are the three cusps of $\Gamma_0(4)$.} in $u,v$. We first find $h_n$ explicitly in terms of $u,v$ such that \eqref{aux_6} holds, and then use $$u|W_4 = -u,\qquad v|W_4 = \frac{-1}{16}u+v.$$
		Consider the case $n=2$. Since $\div(t^2 g) = 3(\infty)+(0)-2(\frac12)$, the negative part of $\div(h_2)$ is $2(\frac12)$. Hence $u^2 h_2 \in \M_{2}(\Gamma_0(4))$ is a holomorphic modular form, and $h_2 = (c_1 u + c_2 v)/u^2$ for some constants $c_1,c_2$. Their values can be found by comparing sufficiently many $q$-coefficients on both sides of
		$$t^2 g = A_2 g + c_1 D^3(\frac{1}{u}) + c_2 D^3(\frac{v}{u^2}),$$
		giving $A_2 = 88$, $c_1 = 15$, and $c_2 = 32$. Thus $$B_2 = -8 a_0(h_2|W_4) = -8 a_0\left(\frac{-17u+32v}{u^2}\right) = 136.$$
		For general $n$, one sets $h_n = u^{-n}\left(\sum_{i=0}^{n-1} c_i u^i v^{n-i}\right)$ and then solves for $c_i$, giving the values of $B_n$ in Table \ref{AB_table_1}.
	\end{example}
	
	It is not obvious from the above computation that $B_n$ has denominator type $\lcmD{n}^3$. In fact, it is a useful feature of the theory that its denominator type can be deduced without any computation. For the present introductory example, we give a more direct proof, which will be vastly generalized in a later section.
	
	\begin{lemma}\label{victor_miller_gamma0(4)}
	Let $f\in \M_{2k}(\Gamma_0(4))$. If its Fourier coefficients of $q^0,q^1,\ldots,q^k$ are integers, then all its Fourier coefficients are integers.
	\end{lemma}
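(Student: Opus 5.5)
We construct a Victor Miller–type basis of $\M_{2k}(\Gamma_0(4))$ with integral coefficients and reduce to it. The ring is $\mathbb{C}[u,v]$ with $u = 1+8q+\cdots$ and $v = q+4q^3+\cdots$. Both have integral Fourier coefficients, being eta quotients of the form $q^{a}\prod(1-q^n)^{c_n}$ with integer exponents. Also $u\in 1+q\mathbb{Z}\llbracket q\rrbracket$ and $v\in q(1+q\mathbb{Z}\llbracket q\rrbracket)$.

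Since $\dim \M_{2k}(\Gamma_0(4)) = k+1$, the monomials
$$f_j := u^{k-j}v^j,\qquad 0\leq j\leq k,$$
form a basis of $\M_{2k}(\Gamma_0(4))$. Each $f_j$ lies in $q^j(1+q\mathbb{Z}\llbracket q\rrbracket)$, so the $q$-expansions of the $f_j$ are integral and unitriangular with respect to $q^0,\ldots,q^k$.

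Now write $f = \sum_{j=0}^k c_j f_j$ with $c_j\in\mathbb{C}$, and determine the $c_j$ recursively from the coefficients $a_0(f),\ldots,a_k(f)$. Comparing coefficients of $q^0$ gives $c_0 = a_0(f)\in\mathbb{Z}$. Suppose $c_0,\ldots,c_{m-1}\in\mathbb{Z}$ for some $m\leq k$. Since $f_j$ contributes nothing to $q^m$ for $j>m$ and $f_m$ contributes $1$, we get
$$c_m = a_m(f) - \sum_{j<m} c_j\, a_m(f_j).$$
The right-hand side is an integer because each $a_m(f_j)\in\mathbb{Z}$. By induction all $c_j\in\mathbb{Z}$, so $f$ is a $\mathbb{Z}$-linear combination of forms with integral Fourier coefficients, hence has integral Fourier coefficients itself.

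The only point needing care is the integrality and leading terms of $u$ and $v$. This follows from the product expansions of the eta quotients: the factor $\prod(1-q^n)^{c}$ lies in $1+q\mathbb{Z}\llbracket q\rrbracket$ for any integer $c$, including negative $c$. Alternatively, one can use the identities $u=\theta(\tau)^4$ and $v=q\,\psi(q^2)^4$, where $\psi(q)=\sum_{n\geq0} q^{n(n+1)/2}$, which give $u\in 1+8q\mathbb{Z}\llbracket q\rrbracket$ directly. The basis fact $\bigoplus_k\M_k(\Gamma_0(4))=\mathbb{C}[u,v]$, with $u^{k-j}v^j$ independent, was recalled above. Independence is in any case immediate from the distinct leading orders $q^j$.
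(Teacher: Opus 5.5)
Your proof is correct and is essentially the paper's argument: expand $f$ in the monomial basis $u^{k-j}v^j$, use $u\in 1+q\mathbb{Z}\llbracket q\rrbracket$ and $v\in q(1+q\mathbb{Z}\llbracket q\rrbracket)$, and solve the resulting unitriangular system for the coefficients $c_j$ one at a time. You also spell out why $u$ and $v$ have integral expansions (via their eta-products or theta-series), which the paper takes for granted.
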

	\begin{proof}
	From the structure of the ring of modular forms, $f$ is a polynomial in $u,v$: $f = \sum_{j=0}^k c_j u^{k-j} v^j$. Note that $u \in 1+\mathbb{Z}\llbracket q\rrbracket$ and $v \in q(1+\mathbb{Z}\llbracket q\rrbracket)$. Comparing the coefficient of $q^0$ implies that $c_0 \in \mathbb{Z}$; comparing the coefficient of $q^1$ then implies that $c_1\in \mathbb{Z}$, and so on. Thus $c_0,\ldots,c_k\in \mathbb{Z}$.
	\end{proof}

	\begin{lemma}\label{16n_trans}
	Let $f\in \M_{2k}(\Gamma_0(4))$ have integral Fourier coefficients. Then so do $$16^k f|W_4,\qquad 16^k f|V,\qquad 16^k f|(V W_4).$$
	\end{lemma}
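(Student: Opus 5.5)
The plan is to reduce everything to the generators $u,v$, as in the proof of Lemma \ref{victor_miller_gamma0(4)}, and then use the explicit transformation rules \eqref{aux_7}.

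First, write $f=\sum_{j=0}^k c_j u^{k-j}v^j$. By the triangular argument in the proof of Lemma \ref{victor_miller_gamma0(4)}, the coefficients $c_j$ are integers. This uses that $u\in 1+\mathbb{Z}\llbracket q\rrbracket$ and $v\in q(1+\mathbb{Z}\llbracket q\rrbracket)$, so the coefficient of $q^j$ determines $c_j$ modulo integer combinations of $c_0,\dots,c_{j-1}$. Since the slash operator is multiplicative, $(u^{k-j}v^j)|\gamma=(u|\gamma)^{k-j}(v|\gamma)^j$ in weight $2k$, and it suffices to control the images of $u$ and $v$.

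Next, apply \eqref{aux_7} to each of the three matrices.
\begin{itemize}[leftmargin=*]
\item For $V$: $u|V=16v$ and $v|V=u/16$, so
$$f|V=\sum_j c_j 16^{k-2j}v^{k-j}u^j.$$
Multiplying by $16^k$ gives coefficients $c_j16^{2k-2j}\in\mathbb{Z}$.
\item For $W_4$: $u|W_4=-u$ and $v|W_4=v-u/16$, so
$$16^k f|W_4=\sum_j c_j(-1)^{k-j}16^{k-j}u^{k-j}(16v-u)^j,$$
which is integral.
\item For $VW_4$: $u|VW_4=(16v)|W_4=16v-u$ and $v|VW_4=(u/16)|W_4=-u/16$, so
$$16^kf|VW_4=\sum_j c_j 16^{k-j}(16v-u)^{k-j}(-u)^j,$$
again integral.
\end{itemize}

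Since $u,v$ have integral $q$-expansions, each of these polynomial expressions has integral Fourier coefficients.

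The main point to check carefully is the composition rule $f|(VW_4)=(f|V)|W_4$ with the correct weights, together with the consistency of \eqref{aux_7} under composition. Beyond that there are only the trivial bookkeeping powers of $16$; no step requires more than this.
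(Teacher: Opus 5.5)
Your proposal is correct and matches the paper's proof: you write $f=\sum_j c_j u^{k-j}v^j$ with $c_j\in\mathbb{Z}$, using the triangular argument of Lemma~\ref{victor_miller_gamma0(4)}, and then substitute \eqref{aux_7} together with $u|(VW_4)=16v-u$ and $v|(VW_4)=-u/16$. Your explicit tracking of the powers of $16$ in the three cases simply spells out what the paper's one-line proof leaves to the reader.
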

	\begin{proof}
	We can write $f = \sum_{j=0}^k c_j u^{k-j} v^j$ with $c_j\in \mathbb{Z}$. The assertion then follows immediately from \eqref{aux_7} and
	\begin{equation*}u|_2 (V W_4) = -u+16v,\qquad v|_2 (V W_4) = -\frac{u}{16}.\qedhere\end{equation*}
	\end{proof}

	Now we are ready to prove the assertion that $\lcmD{n}^3 B_n\in\mathbb{Z}$. Because $t^ng=A_ng+D^3(h_n)$ has poles only at $\frac12$, of order at most $n$, the same is true of $h_n$. Thus $u^nh_n\in\M_{2n-2}(\Gamma)$. Write
	\begin{equation}\label{aux_8}
	h_n|V=a_0(h_n|V)+
	\sum_{\substack{m=-n\\m\neq0}}^\infty
	\frac{a_m(D^3(h_n|V))}{m^3}q^m.
	\end{equation}
	From
	$$D^3(h_n|V)=\bigl((u/v)^n-A_n\bigr)(g|V),
	\qquad g|V=\frac{u}{16}(16v-u),$$
	and $A_n\in\mathbb{Z}$, we see that $16D^3(h_n|V)$ has integral Fourier coefficients. Multiplying by $\lcmD{n}^3$ to clear the denominators $m^3$ in \eqref{aux_8}, we find that the negative Fourier coefficients of $16\lcmD{n}^3(h_n|V)$ are integers.
	Now consider
	$$F:=16\lcmD{n}^3v^n(h_n|V)
	=16^{1-n}\lcmD{n}^3(u^nh_n)|V.$$
	This is a holomorphic modular form of weight $2n-2$. Since $v^n=q^n+O(q^{n+1})$ has integral Fourier coefficients, the coefficients of $q^0,q^1,\ldots,q^{n-1}$ in $F$ are integers. Lemma \ref{victor_miller_gamma0(4)} implies that all Fourier coefficients of $F$ are integers. Applying Lemma \ref{16n_trans} in weight $2n-2$, and using $V^2\in\Gamma_0(4)$, shows that both
	$$16^{n-1}F|V=\lcmD{n}^3u^nh_n,\qquad
	16^{n-1}F|(V W_4)=\lcmD{n}^3(-u)^n(h_n|W_4)$$
	have integral Fourier coefficients. Since $u$ is a unit in $\mathbb{Z}\llbracket q\rrbracket$, the same holds for $\lcmD{n}^3h_n$ and $\lcmD{n}^3(h_n|W_4)$. In particular,
	$$\lcmD{n}^3B_n=-8a_0\bigl(\lcmD{n}^3(h_n|W_4)\bigr)\in\mathbb{Z}.$$
	The proof of Proposition \ref{basic_example} is now completed.
	
	\begin{corollary}\label{congruence_level4_cor}
	Let $t,g$ be as above, let $p$ be an odd prime, and let $l\geq 1$. If $p>n$, then $$(1-p^3) a_{p^l}(t^n g) \equiv A_n \pmod{p^{3l}}.$$
	\end{corollary}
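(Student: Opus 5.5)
We start from the decomposition \eqref{aux_6}, $t^n g = A_n g + D^3(h_n)$, and compare the coefficients of $q^{p^l}$ on both sides. This gives
$$a_{p^l}(t^n g) = A_n a_{p^l}(g) + p^{3l} a_{p^l}(h_n).$$
So the plan has two parts: compute $a_{p^l}(g)$ modulo $p^{3l}$, and show that $a_{p^l}(h_n)$ is $p$-integral. Then $p^{3l}a_{p^l}(h_n)\equiv 0 \pmod{p^{3l}}$.

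For the Eisenstein coefficient, we use $g=\frac{1}{240}E_4(\tau)-\frac{17}{240}E_4(2\tau)+\frac1{15}E_4(4\tau)$. For odd $m$, only the first term contributes to $a_m(g)$. Since $E_4=1+240\sum\sigma_3(m)q^m$, we get $a_m(g)=\sigma_3(m)$. This is consistent with $g=q-8q^2+28q^3+\cdots$. Hence
$$a_{p^l}(g)=\sigma_3(p^l)=1+p^3+\cdots+p^{3l},$$
and therefore $(1-p^3)a_{p^l}(g)=1-p^{3l+3}\equiv 1 \pmod{p^{3l}}$. Multiplying the displayed coefficient identity by $(1-p^3)$ gives
$$(1-p^3)a_{p^l}(t^ng)\equiv A_n + (1-p^3)p^{3l}a_{p^l}(h_n)\pmod{p^{3l}}.$$

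It remains to show that the $q$-expansion of $h_n$ at $\infty$ has $p$-integral coefficients when $p>n$ is odd. The proof of Proposition \ref{basic_example} already gives that $\lcmD{n}^3 h_n$ has integral Fourier coefficients. Since $p>n$, the prime $p$ does not divide $\lcmD{n}$, so all coefficients of $h_n$ are $p$-integral. The factor $16$ that appeared along the way is harmless because $p$ is odd. This also accounts for the hypotheses $p>n$ and $p$ odd.

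I expect no step here to be a real obstacle. The only point needing care is to confirm that the integrality statement for $\lcmD{n}^3h_n$ (and not only for $\lcmD{n}^3(h_n|W_4)$) was established in the preceding argument. It was: it follows from $16^{n-1}F|V=\lcmD{n}^3u^nh_n$ together with the fact that $u$ is a unit in $\mathbb{Z}\llbracket q\rrbracket$.
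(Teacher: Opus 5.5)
Your proposal is correct and is essentially the paper's proof: you compare $q^{p^l}$-coefficients in \eqref{aux_6}, use that $\lcmD{n}^3h_n$ has integral coefficients (so $a_{p^l}(h_n)$ is $p$-integral when $p>n$), and evaluate $a_{p^l}(g)=\sigma_3(p^l)\equiv(1-p^3)^{-1}\pmod{p^{3l}}$. One small point of emphasis: no power of $16$ survives in the integrality statement for $\lcmD{n}^3h_n$, so the oddness of $p$ is really used only in $a_{p^l}(g)=\sigma_3(p^l)$; you also implicitly use $A_n\in\mathbb{Z}$ when reducing $A_n(1-p^{3l+3})$ modulo $p^{3l}$.
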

	\begin{proof}
	Taking the $p^l$-th Fourier coefficient on both sides of \eqref{aux_6} gives
	$$a_{p^l}(t^n g) = A_n a_{p^l}(g) + p^{3l} a_{p^l}(h_n).$$
	The proof above also shows that $\lcmD{n}^3 h_n$ has integral Fourier coefficients, so $\lcmD{n}^3 a_{p^l}(h_n) \in \mathbb{Z}$. When $p>n$, the denominator of $a_{p^l}(h_n)$ is not divisible by $p$; hence $$a_{p^l}(t^n g) \equiv A_n a_{p^l}(g) \pmod{p^{3l}}.$$
	Since $g$ is an Eisenstein series, it is easy to calculate that $a_{p^l}(g) = \sigma_3(p^l) = 1+p^3+\cdots+p^{3l} = \frac{p^{3l+3}-1}{p^3-1} \equiv (1-p^3)^{-1} \pmod{p^{3l}}$. This proves the claim.
	\end{proof}

\begin{corollary}\label{congruence_level4_cor_2}
If $p$ is an odd prime, then $$A_{p-1} \equiv 256^{p-1}\pmod{p^3},\qquad A_{p-2} \equiv 256^{p-2} (8-8p) \pmod{p^3}.$$
\end{corollary}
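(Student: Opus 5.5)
The plan is to specialize Corollary~\ref{congruence_level4_cor} to $l=1$ and to the two values $n=p-1$ and $n=p-2$. Both values satisfy the hypothesis $p>n$, and $p$ is odd by assumption. For $l=1$ the corollary gives
$$A_n\equiv (1-p^3)\,a_p(t^ng)\equiv a_p(t^ng)\pmod{p^3}.$$
So the whole task reduces to computing a single Fourier coefficient of $t^ng$, namely the coefficient of $q^p$, when $n$ is $p-1$ or $p-2$.

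The key observation is that $t^ng$ vanishes to order $n+1$ at $\infty$, so for these $n$ the coefficient of $q^p$ sits at, or just past, the leading term. From the expansions given earlier,
$$t=256\,q\,(1-8q+O(q^2)),\qquad g=q\,(1-8q+O(q^2)),$$
and both have integral coefficients. Multiplying out gives
$$t^ng=256^n\,q^{n+1}\,(1-8q+O(q^2))^{n+1}=256^n\,q^{n+1}\bigl(1-8(n+1)q+O(q^2)\bigr).$$

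The two cases then read off directly.
\begin{itemize}[leftmargin=*]
\item For $n=p-1$, the exponent $n+1$ equals $p$, so $a_p(t^{p-1}g)=256^{p-1}$. Hence $A_{p-1}\equiv 256^{p-1}\pmod{p^3}$.
\item For $n=p-2$, the coefficient of $q^p$ is the $q^1$-term of the bracket, so $a_p(t^{p-2}g)=256^{p-2}\cdot(-8(p-1))=256^{p-2}(8-8p)$. This gives the second congruence.
\end{itemize}

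There is no real obstacle here. The deep input, namely the denominator type $\lcmD{n}^3$ of $h_n$ and the value of $a_{p}(g)$, is already packaged into Corollary~\ref{congruence_level4_cor}. The only thing to check carefully is the second-order coefficients $-8$ of $t/256$ and of $g$, which we read from the $q$-expansions stated at the start of Section~\ref{sec:gamma0-4-weight4}.
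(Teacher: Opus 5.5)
Your proposal is correct and follows the paper's proof essentially verbatim. Like the paper, you specialize Corollary~\ref{congruence_level4_cor} to $l=1$ and $n=p-1,p-2$, then read off $a_p(t^ng)$ from $t^ng=256^nq^{n+1}\bigl(1-8(n+1)q+O(q^2)\bigr)$; your remark that $(1-p^3)a_p(t^ng)\equiv a_p(t^ng)\pmod{p^3}$, because $t$ and $g$ have integral coefficients, makes explicit a step the paper leaves implicit.
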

\begin{proof}
There are corresponding statements for $A_{p-m}$ for every positive integer $m$; we give only the cases $m=1,2$ as illustrations. Taking $l=1$ and $n=p-1$ or $n=p-2$ in the preceding corollary gives $A_{p-1} \equiv a_p(t^{p-1}g) \pmod{p^3}$ and $A_{p-2} \equiv a_p(t^{p-2}g) \pmod{p^3}$. Since $$t = 256q(1-8q+44q^2+\cdots),\qquad g=q(1-8q+28q^2+\cdots),$$
we have $$t^{p-1}g = 256^{p-1}q^p + O(q^{p+1}),\qquad t^{p-2}g = 256^{p-2}q^{p-1}(1-8q)^{p-1} + O(q^{p+1}),$$
so $a_p(t^{p-1}g) = 256^{p-1}, a_p(t^{p-2}g) = 256^{p-2}(8-8p)$.
\end{proof}
	
	\begin{comment}
	\begin{remark}
	As mentioned in the introduction, certain aspects of Proposition \ref{basic_example} is actually known in the literature, although usually not formulated in the language of modular form. Denote $K(x) := \int_0^1 \frac{1}{\sqrt{(1-y^2)(1-xy^2)}}dy = \frac{\pi}{2} {_2F_1}(\frac12,\frac12;1;x)$ to be the complete elliptic integral of the first kind. There are many hypergeometric-function-based methods that allow one to show that \cite{zhou2014legendre, Wanthesis, campbell2019interplay, wan2012moments}
	$$\int_0^1 x^n K(x)^2 dx \in \mathbb{Q}\zeta(3) + \mathbb{Q}, \qquad \int_0^1 x^n K(x) K(1-x) dx \in \mathbb{Q}\pi^3 + \mathbb{Q}.$$
	Using a well-known pullback that transform $K(x)$ into weight $1$ modular form (see e.g. \cite{chanlegendre, wan2016integrals, rogers2015moments, wan2012moments}), one immediately recognizes they are the same object, with $x=16v/u=t/16$:
		$$\begin{aligned}\int_0^1 x^n K(x)^2 dx &= 16^{-n}(2\pi i)^3 \int_{0}^{i\infty} t(\tau)^n g(\tau) d\tau, \\
		\int_0^1 x^n K(x) K(1-x) dx &= -2i\,16^{-n}(2\pi i)^3 \int_{0}^{i\infty} t(\tau)^n g(\tau) \tau d\tau.\end{aligned}$$
	The denominator types and congruences properties of these properties, on the other hand, are not usually discussed in this context.
	\end{remark}
	\end{comment}
	
	\begin{remark}\label{weight_4_Gamma04_cong}
	It can be shown that the sequence $A_n$ satisfies the following Dwork-type supercongruence:
	$$A_{p^am-1}\equiv 256^{(p-1)p^{a-1}m}A_{p^{a-1}m-1}\pmod{p^{3a}},\qquad p\geq3,\quad a,m\geq1.$$
	\end{remark}
	
	\subsection{A weight $6$ example}\label{sec:gamma0-4-weight6}
	Using the fact that the plus $W_4$-eigenspace of $\S_6(\Gamma_0(4))$ is trivial, we can deduce an analogous version of the previous result for $\zeta(5)$. Namely, let $G = \langle \Gamma_0(4),W_4\rangle$. It has two cusps $\{\frac12,\infty\}$, where the distinct $\Gamma_0(4)$-cusps $0$ and $\infty$ merge. The quotient $\frac{\M_6^!(G)}{D^5(\M_{-4}^!(G))}$ has dimension $2$. Let
	\begin{equation}\begin{aligned}t(\tau) &:= \frac{64^2 v}{u}\left(1-\frac{16v}{u}\right) = 2^{12} q(1+24q+300q^2+\cdots) \in \M_0^!(G) \\
	g(\tau) &:= v(u-16v)(u-32v) = q - 32 q^2 + 244 q^3 + \cdots	\in \M_6(G) 
%	\\ \theta &:= \frac{64^2 g}{Dt} = u^2.
\end{aligned}\end{equation}
	%Here $Dt= q\frac{dt}{dq}  = t(u-32v)$. 
	Note that $t$ is a Hauptmodul on $G$, with $\div_G(t) = (\infty)-(\frac12)$. Also $g$ is, up to scalar multiple, the unique element of $\E_6(G)$ vanishing at $\infty$. Therefore, when $n\geq 0$, we have
	\begin{equation}\label{aux_26}t^n g = A_n g + D^5(h_n), \qquad A_n\in \mathbb{Q},\quad h_n \in \M_{-4}^!(G).\end{equation}
	The form $h_n$ is holomorphic at $0$ and $\infty$. Integrating as before and using Lemma \ref{IBP_lemma}, together with $h_n| W_4=h_n$, we obtain
	$$(2\pi i)^5 \int_0^{i\infty} t(\tau)^n g(\tau)\,d\tau
	= A_n (186\zeta(5))-384a_0(h_n).$$
	These periods can also be converted into integrals involving $K(x)$. With $x=16v/u$, we have
	$$(2\pi i)^5 \int_0^{i\infty} t(\tau)^n g(\tau)\,d\tau
	=16\int_0^1 [256x(1-x)]^n(2x-1)K(x)^4\,dx.$$
	This is a rare example of moments involving only a single $\zeta(5)$, without mixing lower zeta values; see \cite{zlobin2007rhin} for another example. It appears that only the $n=0$ case is known in the literature \cite{zhou2018two}:
	$$\int_0^1 (2x-1)K(x)^4\,dx=\frac{93}{8}\zeta(5).$$
	Set $B_n:=-384a_0(h_n)$. Using the same argument as above, one can show that $A_n$ has denominator type $1$ and $B_n$ has denominator type $\lcmD{n}^5$. We leave the details to the interested reader.

	\section{Examples on $\Gamma_1(6)$ and irrationality}\label{sec:gamma1-6}
	We quickly recall some basic facts about modular forms on $\Gamma:=\Gamma_1(6)$. Recall the Atkin--Lehner involutions
	$$W_2 = \frac{1}{\sqrt{2}} \begin{pmatrix}-2 & -1 \\ 6 & 2 \end{pmatrix}, \qquad W_3 = \frac{1}{\sqrt{3}}\begin{pmatrix} 3 & 1 \\ 6 & 3\end{pmatrix},\qquad W_6 = \frac{1}{\sqrt{6}}\begin{pmatrix} 0 & -1 \\ 6 & 0\end{pmatrix}.$$
	They normalize $\Gamma_1(6)$ and act transitively on its four cusps. Let \begin{align}\label{level6_uv}\begin{split}u &:= \frac{\eta(\tau)^6 \eta(6\tau)}{\eta(2\tau)^3 \eta(3\tau)^2} =1-6q+12q^2+\cdots \in \M_1(\Gamma), \\ v &:= \frac{\eta(\tau) \eta(6\tau)^6}{\eta(2\tau)^2 \eta(3\tau)^3} = q-q^2+\cdots \in \M_1(\Gamma), \\
	u+8v &=  \frac{\eta(2\tau) \eta(3\tau)^6}{\eta(\tau)^2 \eta(6\tau)^3},\qquad u+9v =  \frac{\eta(3\tau) \eta(2\tau)^6}{\eta(6\tau)^2 \eta(\tau)^3}.\end{split}\end{align}
	The dimension of $\M_k(\Gamma)$ is $k+1$; therefore, the ring of holomorphic modular forms is $\mathbb{C}[u,v]$.
	For each cusp, there is a unique (up to a scalar multiple) weight $1$ form vanishing exactly at that cusp:
	$$\div u = (0),\qquad \div v = (\infty),\qquad \div(u+9v) = (\frac12),\qquad \div(u+8v) = (\frac13).$$
	Also
	$$u| W_6 = -6i\sqrt{2} v,\qquad v| W_6 = \frac{-i}{6\sqrt{2}}u,\qquad t| W_6 = t.$$
	In most of this section, we will consider
	\begin{equation}\label{t_choice_level6}t(\tau) := \frac{uv}{(u+8v)(u+9v)} = \left(\frac{\eta(\tau)\eta(6\tau)}{\eta(2\tau) \eta(3\tau)}\right)^{12}\sim (0)+(\infty)-(\frac12)-(\frac13).\end{equation}
	For a fixed $g\in \M_k^!(\Gamma)$, the integrals $$(2\pi i)^k \int_0^{i\infty} t(\tau)^n g(\tau) \tau^j d\tau,\qquad n\gg 0,\qquad j\in \{0,\ldots,k-2\},$$ are absolutely convergent. Since, on the imaginary axis, $$|t(\tau)| \leq \left|t(\frac{i}{\sqrt{6}})\right| = (\sqrt{2}-1)^4,$$
	these integrals decay at the rate $O((\sqrt{2}-1)^{4n})$.

		\subsection{Weight $4$ minus space and Ap\'ery sequences}\label{sec:gamma1-6-minus}
	Let $\M_k^!(\Gamma,\pm)$ denote the $\pm$-eigenspaces of $W_6$ inside $\M_k^!(\Gamma)$. The unique cusp form in $\M_4(\Gamma)$ lies in the plus space. Then Bol's identity implies the splitting
	$$\frac{\M_4^!(\Gamma)}{D^{3}(\M_{-2}^!(\Gamma))} = \frac{\M_4^!(\Gamma,+)}{D^{3}(\M_{-2}^!(\Gamma,+))} \oplus \frac{\M_4^!(\Gamma,-)}{D^{3}(\M_{-2}^!(\Gamma,-))}.$$
	By Theorem \ref{dim_theorem}, the left-hand side has dimension $\dim \E_4(\Gamma) + 2\dim\S_4(\Gamma) = 6$. If we let $G = \langle \Gamma,W_6 \rangle$ be the group generated by $\Gamma$ and $W_6$, then the first factor on the right-hand side is $\frac{\M_4^!(G)}{D^3(\M_{-2}^!(G))}$, which, again by Theorem \ref{dim_theorem}, has dimension $\dim \E_4(G) + 2\dim\S_4(G) = 2+2 = 4$. Hence the second factor has dimension $2$, and a basis can be chosen to consist of two independent elements of $\E_4(\Gamma,-)$.

	First we look at the minus space. Let $$g = \frac{1}{2}(72uv^3-u^3v) =  -\frac{E_4(\tau)}{480}+\frac{7}{120}E_4(2\tau)-\frac{21}{160}E_4(3\tau)+\frac{3}{40}E_4(6\tau)$$ be the unique (up to a scalar multiple) element of $\E_4(\Gamma,-)$ that is nonvanishing at the cusps $\frac12$ and $\frac13$. Since $t$ vanishes at $0$ and $\infty$, we can write, for $n\geq 0$, 
	\begin{equation}\label{aux_11}t^n g = \Ap_n g + D^3(h_n),\qquad \Ap_n\in \mathbb{Q}, h_n\in \M_{-2}^!(\Gamma,-).\end{equation}
	It is easy to show that $(2\pi i)^3 \int_0^{i\infty} g\,d\tau = \zeta(3)$. Invoking Lemma \ref{IBP_lemma} yields $$(2\pi i)^3 \int_0^{i\infty} t(\tau)^n g(\tau) d\tau = \Ap_n \zeta(3) - 2! (\sqrt{6})^{4-2} a_0(h_n|W_6) = \Ap_n \zeta(3) + \underbrace{12a_0(h_n)}_{:= -\Bp_n}.$$
	The sequence $\{\Ap_n\}$ has trivial denominator type. To see this, let $\gamma \in \text{SL}_2(\mathbb{R})$ map $\infty$ to $\frac13$. The same argument as in the previous section gives
	$$\Ap_n = \frac{a_0((t^ng)|\gamma)}{a_0(g|\gamma)}.$$
	For example, we can take the Atkin--Lehner involution $W_2$; then $t|W_2 = t^{-1}$. Hence
	$$\Ap_n = \frac{a_0(t^{-n} (g|W_2))}{a_0(g|W_2)}.$$
	Since $t\in q(1+\mathbb{Z}\llbracket q  \rrbracket)$, this proves the claim about the denominator type of $\Ap_n$. The denominator type of $\Bp_n$ is less immediate. We will see below (Corollary \ref{cor:ABCD_denom_type}) that $\lcmD{n}^3 h_n$ has integral Fourier coefficients; therefore, $a_0(h_n)$ has denominator type $\lcmD{n}^3$.
	
	Hence we have produced a rational linear combination $\Ap_n \zeta(3) - \Bp_n = O((\sqrt{2}-1)^{4n})$, with $\Ap_n$ having trivial denominator type and $\Bp_n$ having denominator type $\lcmD{n}^3$. Since $(\sqrt{2}-1)^4 e^3 < 1$, we conclude that $\zeta(3)$ is irrational. 
	
	As an illustration of the creative telescoping method, let us show that the sequences $\Ap_n, \Bp_n$ defined above are annihilated by the recurrence operator
	$$\boldL := (n+1)^3 -(3 + 2 n) (39 + 51 n + 17 n^2) \textbf{S}_n +(n+2)^3 \textbf{S}_n^2,$$
	where $\textbf{S}_n$ shifts $n$ to $n+1$. Together with the initial conditions $(\Ap_0,\Ap_1,\Bp_0,\Bp_1)=(1,5,0,6)$, this shows that $\Ap_n, \Bp_n$ coincide with the well-known approximations to $\zeta(3)$. 
	
	Let $R = -\frac{uv}{2(u+8v)^2(u+9v)^2} \in \M_{-2}^!(\Gamma)$, which we will call the \textit{certificate} of creative telescoping. We pull out of the hat the following identity, which holds for any $n\in \mathbb{Z}$:
	$$\boldL(t^n g) = D^3\left(t^n R\right).$$
	More concretely, this means
	\begin{equation}\label{aux_12}(n+1)^3 t^n g -(3 + 2 n) (39 + 51 n + 17 n^2) t^{n+1} g+(n+2)^3 t^{n+2}g =  D^3\left(t^n  R\right).\end{equation}
	Assuming that \eqref{aux_12} holds, let us prove that $\boldL$ annihilates $\Ap_n, \Bp_n$. Apply $\boldL$ to both sides of \eqref{aux_11}, giving
	$$\boldL(t^n g) = \boldL(\Ap_n) g + D^3(\boldL h_n) = D^3(t^n R).$$
	Thus $\boldL(\Ap_n) g \in D^3(\M_{-2}^!(\Gamma))$, which forces $\boldL(\Ap_n) = 0$ because $g$ has a nonzero constant term at some cusp, whereas every element of $D^3(\M_{-2}^!(\Gamma))$ has zero constant term at every cusp. Then $D^3(\boldL h_n) = D^3(t^n R)$ implies $\boldL h_n = t^n R$: the kernel of $D^3$ consists of polynomials in $\tau$, none of which is a nonzero modular form of weight $-2$. Taking constant terms gives $$\boldL a_0(h_n) = a_0(\boldL h_n) = a_0(t^n R).$$
	Since $n\geq 0$ and both $t$ and $R$ vanish at $\infty$, this is zero. Thus $\boldL$ also annihilates $\Bp_n$. 
	
	In the spirit of classical creative telescoping, once an identity like \eqref{aux_12} has been written down, it is straightforward to verify. Let us briefly indicate how this can be done.
	\begin{itemize}[leftmargin=*]
	\item Using quasimodular forms, one can successively compute $D, D^2, D^3$ of the right-hand side, introducing only one extra object, $E_2 := 1-24\sum_{n\geq 1} \frac{nq^n}{1-q^n}$, with
	$$Du = \frac{E_2 u}{12}-\frac{u^3}{12}-5 u^2 v-36 u v^2, \qquad Dv = \frac{E_2v}{12}+\frac{11 u^2 v}{12}+12 u v^2+36 v^3.$$
	The verification then becomes a straightforward computation in the purely transcendental field $\mathbb{Q}(u,v,E_2)$. We will not pursue this approach further, as the next approach seems simpler and more natural.
	\item Instead, after developing some theory in Section~\ref{creative_telescope_section}, we will see a more natural and simpler way to verify identities like these: compare the first few Fourier coefficients on both sides of \eqref{aux_12} (see Example \ref{Ap\'ery_rec_verification}).
	\end{itemize}

	\begin{corollary}
	Let $t,g$ be as in \eqref{aux_11}, let $p\geq 5$ be a prime, and let $l\geq 1$. If $p>n$, then $$(1-p^3) a_{p^l}(t^n g) \equiv \frac{-1}{2}\Ap_n \pmod{p^{3l}}.$$
	\end{corollary}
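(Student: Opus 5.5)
The plan is to copy the proof of Corollary \ref{congruence_level4_cor}, with \eqref{aux_11} in place of \eqref{aux_6}. First take the coefficient of $q^{p^l}$ on both sides of \eqref{aux_11}. Since $D^3$ multiplies the $m$-th Fourier coefficient by $m^3$, this gives
$$a_{p^l}(t^n g)=\Ap_n\, a_{p^l}(g)+p^{3l}\,a_{p^l}(h_n).$$
Everything then reduces to two facts. The first is that $p^{3l}a_{p^l}(h_n)\equiv 0 \pmod{p^{3l}}$, meaning it is $p$-integral with the full power $p^{3l}$. The second is an explicit congruence for $a_{p^l}(g)$.

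For the first fact, I will invoke the integrality statement announced just above, namely Corollary \ref{cor:ABCD_denom_type}: $\lcmD{n}^3 h_n$ has integral Fourier coefficients, possibly after multiplying by a fixed constant supported on the primes $2$ and $3$ coming from the level. When $p>n$, $p$ does not divide $\lcmD{n}$. Since $p\geq 5$, $p$ also does not divide any such fixed constant. Hence $a_{p^l}(h_n)$ is $p$-integral. I also use that $\Ap_n\in\mathbb{Z}$, which was shown above via $t|W_2=t^{-1}$, to make sense of the congruence. If the fixed constant in Corollary \ref{cor:ABCD_denom_type} turns out to involve primes other than $2$ and $3$, the restriction $p\geq5$ would have to be revisited. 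I expect it does not, and this is the only step that relies on later material.

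For the second fact, I use the Eisenstein expression
$$g=-\tfrac{1}{480}E_4(\tau)+\tfrac{7}{120}E_4(2\tau)-\tfrac{21}{160}E_4(3\tau)+\tfrac{3}{40}E_4(6\tau).$$
Because $p\geq5$, the index $p^l$ is coprime to $6$, so the forms $E_4(2\tau)$, $E_4(3\tau)$ and $E_4(6\tau)$ contribute nothing to the coefficient of $q^{p^l}$. Since $E_4=1+240\sum\sigma_3(m)q^m$, we get
$$a_{p^l}(g)=-\tfrac{240}{480}\sigma_3(p^l)=-\tfrac12\,\frac{p^{3l+3}-1}{p^3-1}.$$
Modulo $p^{3l}$ this is congruent to $-\tfrac12(1-p^3)^{-1}$, and $2$ is invertible modulo $p$.

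Combining the two facts gives $a_{p^l}(t^ng)\equiv -\tfrac12(1-p^3)^{-1}\Ap_n \pmod{p^{3l}}$. Multiplying by the $p$-adic unit $1-p^3$ yields the claim. The main obstacle is the $p$-integrality of $a_{p^l}(h_n)$; the rest is routine bookkeeping.
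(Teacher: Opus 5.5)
Your proposal is correct and is essentially the paper's own proof. You extract the $q^{p^l}$-coefficient of \eqref{aux_11}, use that $a_{p^l}(h_n)$ is $p$-integral when $p>n$, and evaluate $a_{p^l}(g)=-\tfrac12\sigma_3(p^l)\equiv-\tfrac12(1-p^3)^{-1}\pmod{p^{3l}}$ from the Eisenstein expression; this is exactly what the paper does. The caveat you raise is one the paper also passes over: it simply asserts that $\lcmD{n}^3h_n$ is integral, whereas Corollary~\ref{Gamma1(6)_denomtype_cor} only gives this up to an unspecified $\nu$, and already $a_1(h_1)=\tfrac52$. Your expectation is nonetheless right. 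Run the proof of Proposition~\ref{prop:gamma16_denom_type} with $\mathfrak{B}$ replaced by the two minus-space Eisenstein series, which is legitimate since $t^ng$ lies in the minus space, and with quasi-unit $(u+8v)^2$. Then the only constants that enter are the contents of $g$ and of the eta-quotients at the cusps, together with the cubes $m^3$ ($2\le m\le 4$) from the leading principal-part coefficients of $D^3(\kappa_j(u+8v)^{-2})$ at $\tfrac13$. All of these involve only the primes $2$ and $3$, so $p\geq5$ suffices.
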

\begin{proof}
The proof proceeds in exactly the same way as that of Corollary \ref{congruence_level4_cor}. One needs only the facts that $\lcmD{n}^3 h_n$ has integral Fourier coefficients (which we will establish later in a more general context) and that $g = -\frac{E_4(\tau)}{480}+\frac{7}{120}E_4(2\tau)-\frac{21}{160}E_4(3\tau)+\frac{3}{40}E_4(6\tau)$, so that, when $p\geq 5$,
$a_{p^l}(g) = \frac{-1}{2}\frac{p^{3l+3}-1}{p^3-1}$.
\end{proof}

\begin{corollary}\label{cor:Ap\'ery_mod_p}
If $p\geq 5$ is a prime, then $$\Ap_{p-1} \equiv 1\pmod{p^3},\qquad \Ap_{p-2}\equiv 5-12p \pmod{p^3}.$$
\end{corollary}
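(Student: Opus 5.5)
The plan is to mirror the proof of Corollary \ref{congruence_level4_cor_2}, using the preceding corollary with $l=1$. Since $p^3\equiv 0 \pmod{p^3}$, the factor $1-p^3$ is $\equiv 1$. So for $p\geq 5$ and $n<p$ the preceding corollary reads
$$\Ap_n \equiv -2\,a_p(t^n g) \pmod{p^3}.$$
We apply this with $n=p-1$ and $n=p-2$; both satisfy $n<p$. The problem is thereby reduced to computing the $q^p$-coefficient of $t^{p-1}g$ and of $t^{p-2}g$. This needs only the first two terms of the $q$-expansions of $t$ and $g$.

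For the expansions, \eqref{t_choice_level6_intro} gives $t=q(1-12q+O(q^2))$. For $g=\tfrac12(72uv^3-u^3v)$ we use $u=1-6q+O(q^2)$ and $v=q-q^2+O(q^3)$ from \eqref{level6_uv}. The term $72uv^3$ is $O(q^3)$, so it does not contribute. On the other hand $u^3v=q(1-18q)(1-q)+O(q^3)=q-19q^2+O(q^3)$. Hence
$$g=-\tfrac12 q\,(1-19q+O(q^2)).$$
As a consistency check, the leading coefficient $-\tfrac12$ agrees with the Eisenstein expression, whose $q$-coefficient is $a_1(g)=-\tfrac12$.

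Next we read off the two coefficients. We have $t^{p-1}g=-\tfrac12 q^{p}+O(q^{p+1})$, so $a_p(t^{p-1}g)=-\tfrac12$ and therefore $\Ap_{p-1}\equiv 1\pmod{p^3}$. Similarly,
$$t^{p-2}g=-\tfrac12 q^{p-1}\bigl(1-12(p-2)q\bigr)(1-19q)+O(q^{p+1}).$$
Its $q^p$-coefficient is $-\tfrac12\bigl(-12(p-2)-19\bigr)=\tfrac12(12p-5)$. Multiplying by $-2$ gives $\Ap_{p-2}\equiv 5-12p\pmod{p^3}$.

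The only nontrivial input is the preceding corollary. That corollary relies on the integrality statement that $\lcmD{n}^3h_n$ has integral Fourier coefficients, which the paper says it establishes later, in Corollary \ref{cor:ABCD_denom_type}. Granting that, the rest is a short $q$-expansion computation. The only care needed is the two-term expansion of $u^3v$, which I have done above.
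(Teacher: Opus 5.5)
Your proposal is correct and follows the paper's proof: the preceding corollary with $l=1$ reduces both congruences to $a_p(t^{p-1}g)=-\tfrac12$ and $a_p(t^{p-2}g)=\tfrac12(12p-5)$. You compute these from the same two-term expansions $t=q(1-12q+\cdots)$ and $g=-\tfrac{q}{2}(1-19q+\cdots)$. Your derivation of the coefficient $-19$ from $u^3v$ correctly fills in the step the paper calls an easy calculation.
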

\begin{proof}
Again, this is the same, \textit{mutatis mutandis}, as the proof of Corollary \ref{congruence_level4_cor_2}. We have $a_p(t^{p-1}g) \equiv \frac{-1}{2}\Ap_{p-1} \pmod{p^3}$ and $a_p(t^{p-2}g) \equiv \frac{-1}{2}\Ap_{p-2} \pmod{p^3}$. Using
$$t = q(1-12q+66q^2+\cdots),\qquad g=\frac{-q}{2}(1-19q+91q^2+\cdots),$$
one easily calculates that $a_p(t^{p-1}g) = \frac{-1}{2}$ and $a_p(t^{p-2}g) = \frac{12p-5}{2}$.
\end{proof}

Note that the congruence for $\Ap_{p-2}$ also follows directly from the recurrence once that for $\Ap_{p-1}$ is known, so only the latter is the interesting conclusion.

\subsection{Weight $4$ plus space}\label{sec:gamma1-6-plus}
	Next let us turn our attention to the plus space. Let $\Gamma^+ := \langle \Gamma_1(6),W_6\rangle$. To perform explicit computations, we need to fix a basis of the two-dimensional space $$\frac{\S_4^!(\Gamma^+)}{D^3(\M_{-2}^!(\Gamma^+))},$$
	which we fix as
	\begin{align}\begin{split}
		&f := u v (u + 8 v) (u + 9 v) =\eta(\tau)^2 \eta(2\tau)^2 \eta(3\tau)^2 \eta(6\tau)^2\in \S_4(\Gamma^+) \\ & f^\sharp := \frac{u v \left(u^4+12 u^3 v+60 u^2 v^2+864 u v^3+5184  v^4\right)}{(u+8 v) (u+9 v)} \in \S_4^!(\Gamma^+).\end{split}\end{align}
Also let $E := u v (13 u^2 + 216 u v + 936 v^2) \in \E_4(\Gamma^+)$ be the unique Eisenstein series that vanishes at the cusps $0$ and $\infty$. Consider the sequences $A_n, B_n, C_n \in \mathbb{Q}$ defined by
\begin{equation}\label{aux_14}t^n f = A_n E + B_n f + C_n f^\sharp + D^3(h_n), \qquad h_n \in \M_{-2}^!(\Gamma^+).\end{equation}
Denote
$$\begin{aligned}
	\xi_1 &= \frac{(2\pi i)^3}{2}  \int_0^{i\infty} f(\tau) d\tau,\qquad \xi_2 = \frac{(2\pi i)^3}{2}  \int_0^{i\infty} f(\tau) \tau d\tau, \\
	\eta_1 &= \frac{(2\pi i)^3}{2} i\int_0^{i\infty} f^\sharp(\tau) d\tau - \frac{78i}{5},\qquad \eta_2 = \frac{(2\pi i)^3}{2} i \int_0^{i\infty} f^\sharp(\tau) \tau d\tau 
\end{aligned}$$
One can show that they satisfy the Legendre relation \cite{golyshev2021interpolated} $$\eta_2 \xi_1 - \eta_1 \xi_2 = -\frac{4\pi^3}{5}.$$
By observing that $(u+8v)^n (u+9v)^n h_n \in \M_{2n-2}(\Gamma)$, one can find $A_n, B_n, C_n, h_n$ for each fixed $n$, obtaining the example values in the table below, where $D_n = \frac{156}{5} C_n - 12 a_0(h_n)$. Using the by-now familiar argument of comparing constant terms at cusps, it is evident that $A_n$ has trivial denominator type. The denominator types of $B_n,C_n,D_n$ are not obvious. We will show that they are $\lcmD{n}^3$. In fact, we will give two proofs of this fact of completely different kinds, one in Proposition \ref{prop:level_six_plus_constant_terms} and the other in Corollary \ref{cor:ABCD_denom_type}.
$$\begin{array}{c|cccc}
	n&A_n&B_n&C_n & D_n \\ \hline
	0&0&1&0 & 0\\
	1&-\frac15&\frac{13}{5}&0 & 0\\
	2&-\frac{22}{5}&\frac{281}{5}&1 & \frac{156}{5}\\
	3&-\frac{591}{5}&\frac{7528}{5}&\frac{135}{4} & \frac{2103}{2}\\
	4&-\frac{17108}{5}&\frac{652957}{15}&\frac{19177}{18}&\frac{1492981}{45}
\end{array}$$
Applying $(2\pi i)^3\int_0^{i\infty} \cdots d\tau$ and $(2\pi i)^3\int_0^{i\infty} \cdots \tau d\tau$ to \eqref{aux_14} gives
$$\begin{aligned}\frac{(2\pi i)^3}{2} \int_0^{i\infty} t(\tau)^nf(\tau) d\tau &= 47\zeta(3)A_n+B_n\xi_1-C_n\eta_1+\frac{D_n}{2},\\
\frac{(2\pi i)^3}{2}\int_0^{i\infty} t(\tau)^nf(\tau) \tau d\tau &= \frac{2\pi^3}{3}A_n+B_n\xi_2-C_n\eta_2.
\end{aligned}$$
Both left-hand sides decay at the geometric rate $(\sqrt{2}-1)^{4n}$. Taking linear combinations of these two expressions proves the following.
\begin{corollary}\label{cor_plus_level6_irrational}
For any $(u,v)\in \mathbb{R}^2$, $(u,v)\neq (0,0)$, at least one of the following four numbers is irrational:
$$\left\{47\zeta(3)u + \frac{2\pi^3}{3}v,  u\xi_1 + v\xi_2, u\eta_1+v\eta_2, u \right\}.$$
\end{corollary}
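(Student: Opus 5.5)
Suppose, for contradiction, that all four numbers $47\zeta(3)u+\frac{2\pi^3}{3}v$, $u\xi_1+v\xi_2$, $u\eta_1+v\eta_2$ and $u$ are rational. The plan is an Apéry-type argument applied to the combination $u\cdot(\text{first period identity})+v\cdot(\text{second period identity})$. Set
$$I_n := u\,\frac{(2\pi i)^3}{2}\int_0^{i\infty} t^n f\,d\tau + v\,\frac{(2\pi i)^3}{2}\int_0^{i\infty} t^n f\,\tau\, d\tau .$$
The two displayed identities preceding the corollary give
$$I_n = A_n\Bigl(47\zeta(3)u+\tfrac{2\pi^3}{3}v\Bigr) + B_n(u\xi_1+v\xi_2) - C_n(u\eta_1+v\eta_2) + \tfrac{u}{2}D_n .$$
Write the four numbers as $r_1,r_2,r_3,r_4\in\mathbb{Q}$ with common denominator $Q$. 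The denominator types of $A_n,B_n,C_n,D_n$ are at most $\lcmD{n}^3$: for $A_n$ this is the constant-term argument, and for $B_n,C_n,D_n$ it is the claim announced in the text (Proposition \ref{prop:level_six_plus_constant_terms} or Corollary \ref{cor:ABCD_denom_type}). Hence $N_n := 2Q\,M\,\lcmD{n}^3 I_n$ is an integer for a fixed constant $M$.

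Next, bound $I_n$ from above. Since $|t|\le(\sqrt2-1)^4$ on $i\mathbb{R}_+$ and $f$ is a cusp form vanishing at both endpoints, $|I_n| \le C(\sqrt2-1)^{4n}$. The prime number theorem gives $\lcmD{n}^3=e^{3n+o(n)}$, and $(\sqrt2-1)^4e^3<1$, so $N_n\to0$. As $N_n$ is an integer, $I_n=0$ for all large $n$.

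The main obstacle is ruling out $I_n=0$ for all large $n$. I would write $I_n=\int_0^{\infty} t(iy)^n\,\phi(y)\,dy$ with $\phi(y)=f(iy)(u'+v'iy)$ for appropriate constants $u',v'$; since $(u,v)\ne(0,0)$, $\phi$ is not identically zero. The key fact is that $t(iy)$ is real on the imaginary axis. It is positive, monotone on each side of $y=1/\sqrt6$ and symmetric under $W_6$, with its maximum $\lambda=(\sqrt2-1)^4$ at $y_0=1/\sqrt6$. By Laplace's method, $I_n\sim c\,\lambda^n n^{-1/2}\phi(y_0)$, where $\phi(y_0)=f(iy_0)(u'+v'i/\sqrt6)$. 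Here $f(iy_0)\neq0$, since $f$ is an eta product and has no zeros in $\mathbb{H}$.

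Consequently $I_n$ is eventually nonzero unless $u'+v'i/\sqrt6=0$, which is impossible for real nonzero $(u,v)$ after tracking the factor $i$ from $d\tau=i\,dy$ and $\tau=iy$. Should this coefficient vanish in some normalization, I would pass to the next Laplace term, or use the $W_6$-symmetry to split the integrand into even and odd parts. In either case $I_n$ is eventually nonzero, contradicting $I_n=0$, so one of the four numbers is irrational.
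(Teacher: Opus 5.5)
Your core argument is the paper's own. The paper proves the corollary in one line: take $u$ times the first period identity plus $v$ times the second, and run the Ap\'ery argument with denominators $\lcmD{n}^3$, decay $(\sqrt2-1)^{4n}$ and $(\sqrt2-1)^4e^3<1$. The paper never checks that the linear form $I_n$ is nonzero, so your Laplace step is a genuine addition. As written, however, its main case rests on a slip in the paper's normalizations. As printed, $\xi_2$ and the left side of the second identity are purely imaginary: on $\tau=iy$ one has $\tau\,d\tau=-y\,dy$, and the prefactor $(2\pi i)^3$ is imaginary. Meanwhile $\frac{2\pi^3}{3}A_n$ is real. Taken literally, $u\xi_1+v\xi_2\notin\mathbb{R}$ whenever $v\neq0$, so that case of the statement would be trivial. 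For the corollary to have content, and for the later choice $u=\xi_2/\pi^3\in\mathbb{R}$ to make sense, everything must be normalized to be real. Then $I_n$ is a nonzero real multiple of $\int_0^\infty t(iy)^nf(iy)(u+cvy)\,dy$ with $c\in\mathbb{R}$. The factor $i$ is gone, and the leading Laplace coefficient $u+cv/\sqrt6$ vanishes on a whole line. On that line, ``pass to the next Laplace term'' would need a computation you have not done, involving $\frac{d}{dy}f(iy)$ and the third derivative of $\log t(iy)$ at $y_0$.

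Your $W_6$ alternative is the right fix, and it takes one line. From $t|W_6=t$ and $f|_4W_6=f$ one gets $t(i/(6y))=t(iy)$ and $f(i/(6y))=36y^4f(iy)$. The substitution $y\mapsto 1/(6y)$ then gives $\int_0^\infty t^nf\,dy=6\int_0^\infty t^nf\,y^2\,dy$, and hence
\[
\int_0^\infty t(iy)^nf(iy)(u+cvy)\,dy=\int_0^\infty t(iy)^nf(iy)\Bigl(\tfrac{u}{2}(1+6y^2)+cvy\Bigr)\,dy .
\]
On the degenerate line $cv=-\sqrt6\,u$ (where necessarily $u\neq0$) the bracket equals $\tfrac{u}{2}(1-\sqrt6\,y)^2$. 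Since $t(iy),f(iy)>0$, the integral has strict sign for every $n$. Off that line your Laplace asymptotic applies unchanged. With this inserted, your proof is complete, and it supplies the non-vanishing step that the paper leaves implicit.
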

By taking $u=\frac{\xi_2}{\pi^3}$ and $v = -\frac{\xi_1}{\pi^3}$ and using the Legendre relation, we see that at least one of the two numbers $$\left\{47\zeta(3)\frac{\xi_2}{\pi^3} - \frac{2\xi_1}{3}, \frac{\xi_2}{\pi^3}\right\}$$ is irrational. This result was obtained by Beukers \cite[Theorem~2]{beukers1987irrationality}. \par

The principle of creative telescoping also applies here. Let
\begin{multline*}\boldL := (n + 1)^3 - 2 (2 n + 3) (17 n^2 + 51 n + 45) \textbf{S}_n + 
6 (n + 2) (193 n^2 + 772 n + 781) \textbf{S}^2_{n} \\ - 
2 (2 n + 5) (17 n^2 + 85 n + 113) \textbf{S}^3_{n} + (n + 3)^3 \textbf{S}^4_{n},\end{multline*}
and let $R= \frac{u^5 v + 17 u^4 v^2 - 1224 u^2 v^4 - 5184 u v^5}{(u+8v)^4 (u+9v)^4} \in \M_{-2}^!(\Gamma^+)$. Then one can easily verify (in the sense that we will describe in Section \ref{creative_telescope_section}) that
\begin{equation}\label{eq:level_six_plus_telescoping}\boldL(t^nf) = D^3\left(t^n R\right).\end{equation}
We claim that $\boldL$ annihilates all four sequences $A_n,B_n,C_n,D_n$ defined above. Applying $\boldL$ to both sides of \eqref{aux_14}, we have
$$\boldL(t^n f) = (\boldL A_n) E + (\boldL B_n) f + (\boldL C_n) f^\sharp + D^3(\boldL h_n) = D^3\left(t^n R\right).$$
Since $E,f,f^\sharp$ are independent elements of $\M_4^!(\Gamma^+)/D^3(\M_{-2}^!(\Gamma^+))$, $\boldL$ annihilates $A_n,B_n,C_n$, and $D^3(\boldL h_n) = D^3\left(t^n R\right)$ implies that $\boldL h_n = t^n R$. Taking constant terms gives $\boldL a_0(h_n) = a_0(t^n R)$. Since both $t$ and $R$ vanish at $\infty$, we have $\boldL D_n = 0$. Therefore, $A_n,B_n,C_n,D_n$ are a complete set of solutions to the recurrence $\boldL$. 

Next we give a proof of the denominator types of these four sequences. We will also see a generalization of this method later.

	\begin{lemma}\label{lemma:constant_term_integration_by_parts}
	For $f,g\in\mathbb{C}((q))$ and a positive integer $r$, one has
	$$a_0\bigl(D^r(f)g\bigr)=(-1)^r a_0\bigl(fD^r(g)\bigr).$$
\end{lemma}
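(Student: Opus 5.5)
The plan is to reduce to the case $r=1$ and then iterate. The key observation is that $D=q\frac{d}{dq}$ acts on Laurent series coefficientwise: $a_m(D(F))=m\,a_m(F)$ for every $F\in\mathbb{C}((q))$. In particular $a_0(D(F))=0$, which is the formal analogue of ``the integral of a derivative vanishes'' (here the residue of $dF$ is zero).

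First I check that all products involved make sense. Since $f,g\in\mathbb{C}((q))$, each has only finitely many negative-index coefficients. Hence $fg$, $D(f)g$ and $fD(g)$ are well-defined Laurent series. Their constant terms are finite sums:
$$a_0(fg)=\sum_{m\in\mathbb{Z}} a_m(f)a_{-m}(g),$$
where only finitely many $m$ contribute.

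For $r=1$, I would use the Leibniz rule $D(fg)=D(f)g+fD(g)$, which holds for $q\frac{d}{dq}$ on Laurent series. Taking constant terms and using $a_0(D(fg))=0$ gives $a_0(D(f)g)=-a_0(fD(g))$. Equivalently, one can argue directly with coefficients:
$$a_0(D(f)g)=\sum_m m\,a_m(f)a_{-m}(g)=-\sum_m a_m(f)\,(-m)a_{-m}(g)=-a_0(fD(g)).$$

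For general $r$, I would apply the $r=1$ identity repeatedly, moving one $D$ at a time from $f$ to $g$. At the $j$-th step the identity is applied to the pair $(D^{r-j}(f),D^{j-1}(g))$. This gives
$$a_0(D^r(f)g)=-a_0(D^{r-1}(f)D(g))=\cdots=(-1)^r a_0(fD^r(g)).$$
A formal induction on $r$ does the same job.

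There is no real obstacle here. The only point needing care is that the sums are finite, and this follows from $f$ and $g$ each having finite principal part.
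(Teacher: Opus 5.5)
Your proof is correct and is essentially the paper's argument. The paper writes $(D^rf)g+(-1)^{r-1}f(D^rg)=D\bigl(\sum_{i=0}^{r-1}(-1)^iD^{r-1-i}(f)D^i(g)\bigr)$ and uses that $a_0$ vanishes on the image of $D$; this is exactly your iterated Leibniz step, packaged as a single telescoping identity.
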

\begin{proof}
	This follows immediately from the identity
	\begin{equation*}(D^r f)g+(-1)^{r-1}f(D^r g)
	=D\left(\sum_{i=0}^{r-1}(-1)^iD^{r-1-i}(f)D^i(g)\right)\qedhere.\end{equation*}
\end{proof}

	\begin{lemma}\label{lemma:weight_two_residue_relation}
	Let $\Gamma \subset \SL_2(\mathbb{R})$ be any subgroup commensurable with $\SL_2(\mathbb{Z})$. For each cusp $\beta$ of $\Gamma$, choose $\gamma_\beta\in\SL_2(\mathbb{R})$ with $\gamma_\beta\infty=\beta$, and let $w_\beta>0$ be the corresponding cusp width, so that $f|_2\gamma_\beta$ is $w_\beta$-periodic. Then, for $f\in\M_2^!(\Gamma)$, one has
	\begin{equation}\label{eq:weight_two_residue_relation}
		\sum_{\beta}w_\beta\,a_0(f|_2\gamma_\beta)=0.
	\end{equation}
\end{lemma}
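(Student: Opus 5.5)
The plan is to read the left-hand side of \eqref{eq:weight_two_residue_relation} as a sum of residues of a meromorphic differential on the compact Riemann surface $X(\Gamma)$, and then apply the residue theorem. If $-I\notin\Gamma$ and $f\in\M_2^!(\Gamma)$, we have $f|_2(-I)=f$. So we may replace $\Gamma$ by $\pm\Gamma$ without changing anything, and assume $-I\in\Gamma$. The quantity $f(\tau)\,d\tau$ is invariant under $\Gamma$, because $f|_2\gamma=f$ is equivalent to $f(\gamma\tau)\,d(\gamma\tau)=f(\tau)\,d\tau$. Since $f$ is holomorphic on $\mathbb{H}$, the form $\omega:=f(\tau)\,d\tau$ descends to a differential on $\Gamma\backslash\mathbb{H}$.

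\textbf{Step 1: holomorphy at elliptic points.} Let $x$ be an elliptic point of order $e$, with local uniformizer $z=\left((\tau-\tau_0)/(\tau-\bar\tau_0)\right)^e$. Since $f$ is holomorphic at $\tau_0$, the differential $\omega$ descends to a differential in $z$ with no pole. Indeed, $\omega$ is invariant under the rotation by $\zeta_e$, so its local expansion in $w=(\tau-\tau_0)/(\tau-\bar\tau_0)$ has the form $\sum_m c_m w^{m}\,dw$ with $c_m\neq0$ only when $m\equiv -1 \pmod e$. Writing $w^{e-1}\,dw=dz/e$, this becomes $\sum_{j\ge0} c_{je+e-1}\,z^j\,dz/e$, which is holomorphic in $z$. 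So $\omega$ has no residue at any interior point of $X(\Gamma)$.

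\textbf{Step 2: residues at cusps.} Fix a cusp $\beta$ and put $g=f|_2\gamma_\beta$, which is $w_\beta$-periodic. Changing variables by $\gamma_\beta$ gives $f(\tau)\,d\tau=g(\sigma)\,d\sigma$, where $\tau=\gamma_\beta\sigma$. This is again the weight-two invariance, now for $\gamma_\beta\in\SL_2(\mathbb{R})$. The local uniformizer at $\beta$ is $q_\beta=e^{2\pi i\sigma/w_\beta}$, so $d\sigma=\frac{w_\beta}{2\pi i}\frac{dq_\beta}{q_\beta}$. Here $g=\sum_m a_m(g)\,q_\beta^{m}$, where by $a_0(g)$ we mean the constant term in this expansion; this is the constant term appearing in the statement. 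Hence
$$\operatorname{Res}_\beta\omega=\frac{w_\beta}{2\pi i}\,a_0(f|_2\gamma_\beta).$$

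\textbf{Step 3: conclusion.} The residue theorem on the compact surface $X(\Gamma)$ gives $\sum_\beta \operatorname{Res}_\beta\omega=0$, and multiplying by $2\pi i$ yields \eqref{eq:weight_two_residue_relation}.

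One subtlety is irregular cusps, which can occur for $-I\notin\Gamma$. There $f|_2\gamma_\beta$ might only be anti-periodic with respect to the stated width. This cannot happen in even weight $2$, since $f|(-T^{w})=f|T^{w}$, and we have already passed to $\pm\Gamma$. Another subtlety is that the result does not depend on the choice of $\gamma_\beta$: changing $\gamma_\beta$ to $\gamma_\beta\begin{psmallmatrix}a&b\\0&a^{-1}\end{psmallmatrix}$ rescales both $w_\beta$ and $a_0$ inversely, so the product $w_\beta\,a_0(f|_2\gamma_\beta)$ is unchanged.

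The main obstacle is checking these normalization details carefully, in particular that the ``cusp width'' in the statement matches the period used in the local parameter. The residue argument itself is routine.
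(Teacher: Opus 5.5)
Your proposal is correct and follows the paper's proof. Both arguments treat $2\pi i f(\tau)\,d\tau$ as a meromorphic differential on $X(\Gamma)$, compute its residue at each cusp as $w_\beta\,a_0(f|_2\gamma_\beta)$ via $2\pi i\,d\sigma = w_\beta\,dq_\beta/q_\beta$, and apply the residue theorem; your extra checks (no residues at elliptic points, irrelevance of irregular cusps in even weight, independence of the choice of $\gamma_\beta$) fill in details the paper leaves implicit.
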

\begin{proof}
	Since $f$ has weight $2$, $2\pi i f(\tau)\,d\tau$ is a meromorphic differential on the compact Riemann surface $X(\Gamma)$. Near the cusp $\beta$, put $q_\beta=e^{2\pi i\tau/w_\beta}$ and write
	$$(f|_2\gamma_\beta)(\tau)=\sum_{m\in \mathbb{Z}}a_{\beta,m}q_\beta^m.$$
	Since $2\pi i\,d\tau=w_\beta\,dq_\beta/q_\beta$, the residue of the differential at $\beta$ is $w_\beta a_{\beta,0}$. The result now follows from the fact that the sum of the residues of a meromorphic differential on a compact Riemann surface is zero.
\end{proof}

As an immediate consequence, we have
\begin{itemize}[leftmargin=*]
\item If $f\in \M_2^!(\Gamma_1(6))$, then $a_0(f) + a_0(f | W_2) + a_0(f | W_3) + a_0(f | W_6) = 0.$
\item If $f\in \M_2^!(\Gamma_1(6),+)$, then $a_0(f) + a_0(f | W_2)= 0.$
\end{itemize}

For $f\in \mathbb{C}((q))$, put
\begin{equation}\label{def:Eichler_int}\E_{k-1}(f):=\sum_{\substack{n\in\mathbb{Z}\\n\neq 0}}\frac{a_n(f)}{n^{k-1}}q^n.\end{equation}
Note that $D^{k-1}\E_{k-1}(f)=f-a_0(f)$.

\begin{proposition}\label{prop:level_six_plus_constant_terms}
For $n\geq 1$, the four sequences $A_n, B_n, C_n, D_n$ defined above are given by
\begin{align*}
A_n&=-\frac15a_0(t^{-n}f),\quad B_n=-13A_n-a_0\bigl(t^{-n}f\E_3(f^\sharp)\bigr),\\
C_n&=a_0\bigl(t^{-n}f\E_3(f)\bigr),\quad D_n=\frac{12}{5}a_0\bigl(t^{-n}f\E_3(E)\bigr).
\end{align*}
In particular, $A_n$ has trivial denominator type, while $B_n,C_n,D_n$ have denominator type $\lcmD{n}^3$.
\end{proposition}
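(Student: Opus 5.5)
The plan is to pair the decomposition \eqref{aux_14} against a suitable weight-zero meromorphic function and take constant terms. I will then use Lemma \ref{lemma:weight_two_residue_relation} and Lemma \ref{lemma:constant_term_integration_by_parts} to kill the exact part $D^3(h_n)$. Concretely, for a fixed $F\in\{f,f^\sharp,E\}$ (all weight $4$ on $\Gamma^+$), I consider the weight-two form obtained by multiplying \eqref{aux_14} by $t^{-n}\,\E_3(F)$. The trouble is that $\E_3(F)$ is not modular, so I instead work with $t^{-n}\times(\text{\eqref{aux_14}})$, which is a weight-four identity
\[ f = t^{-n}\bigl(A_nE+B_nf+C_nf^\sharp\bigr)+t^{-n}D^3(h_n).\]
I then take $a_0$ of the product with $\E_3(F)$. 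By Lemma \ref{lemma:constant_term_integration_by_parts}, $a_0(t^{-n}D^3(h_n)\,\E_3(F))$ can be moved around only if $t^{-n}$ is absorbed, so a better organisation is needed.

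The organisation I will use is to view $t^{-n}f\,d\tau$ against the Eichler integrals as a residue pairing. Set $\Phi=t^{-n}f$. For $n\ge1$, $\Phi$ has poles only at $0,\infty$ (where $t$ vanishes) and is holomorphic at $\frac12,\frac13$. The key identity to establish is
\[ a_0(\Phi\,\E_3(F)) = \langle t^nf\text{-class}, F\rangle,\]
where $\langle\cdot,\cdot\rangle$ is the antisymmetric pairing on $\M_4^!/D^3(\M_{-2}^!)$ given by $\{G,F\}:=\sum_\beta w_\beta a_0\bigl((G\,\E_3(F))|\gamma_\beta\bigr)$, suitably corrected. This pairing kills $D^3(\M_{-2}^!)$ in either slot by Lemma \ref{lemma:constant_term_integration_by_parts}, since $a_0(D^3(h)\E_3(F)) = -a_0(h(F-a_0(F)))$ and $hF$ is modular of weight $2$, so Lemma \ref{lemma:weight_two_residue_relation} applies. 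Rather than decomposing $t^nf$, I will decompose $t^{-n}f$ itself. By the symmetry $t|W_2=t^{-1}$, applying $W_2$ to \eqref{aux_14} turns it into a decomposition of $t^{-n}(f|W_2)=\pm t^{-n}f$ with coefficients $A_n,B_n,C_n$ against $E|W_2,f|W_2,f^\sharp|W_2$, up to signs fixed by the $W_2$-eigenvalues. Only the cusp $\infty$ (with $0$ identified under $W_6$) carries poles, so the residue sum collapses to a single constant term at $\infty$, giving $a_0(t^{-n}f\,\E_3(F))$ on one side. On the other side I obtain $A_n\{E,F\}+B_n\{f,F\}+C_n\{f^\sharp,F\}$ plus constant-term corrections $a_0(F)a_0(h)$, which produce the $a_0(h_n)$ contribution in $D_n$.

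It remains to evaluate the finitely many pairings $\{E,E\},\{E,f\},\ldots,\{f^\sharp,f^\sharp\}$ from a few $q$-coefficients. One expects $\{f,f\}=0$, $\{f^\sharp,f\}=$ a nonzero normalising constant, $\{E,f\}=\{E,f^\sharp\}=0$, and $\{f,E\}$, $\{E,E\}$ giving the $-\frac15$, $-13$ and $\frac{12}{5}$ factors. Solving the resulting triangular system gives the four formulas; taking $F$ with $a_0$ weight-zero version yields $A_n=-\frac15a_0(t^{-n}f)$ directly, as for $\Ap_n$. The denominator claims then follow: $t^{-n}\in q^{-n}(1+q\mathbb{Z}\llbracket q\rrbracket)$ and $f,f^\sharp,E$ have integral coefficients (to be checked), so only the coefficients $a_m(F)/m^3$ with $1\le m\le n$ of $\E_3(F)$ enter, giving denominator type $\lcmD{n}^3$.

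The main obstacle I expect is the bookkeeping in the residue pairing. This includes tracking the $W_2$ signs and cusp widths, the constant-term corrections coming from $a_0(F)\neq0$ for $E$ and $f^\sharp$ at the various cusps, and verifying that the polar parts of $h_n$ at $\frac12,\frac13$ contribute nothing after transport.
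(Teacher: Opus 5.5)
Your approach works, and it differs from the paper's. The paper never pairs \eqref{aux_14} with anything. It transports the telescoping identity \eqref{eq:level_six_plus_telescoping} by $W_2$ to get $\boldL(t^{-n}f)=D^3(t^{-n}\widetilde R)$. It then uses Lemmas \ref{lemma:constant_term_integration_by_parts} and \ref{lemma:weight_two_residue_relation} to show that $a_0(t^{-n}f)$ and $a_0(t^{-n}f\,\E_3(X))$ are annihilated by $\boldL$; the defect is $-a_0(t^nR\,(X|W_2))$, which is zero for $n\geq 1$. Finally it matches four initial values from the table.

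You apply the same two lemmas directly to the $W_2$-transform of \eqref{aux_14}. Each formula then becomes an exact linear identity for every $n\geq1$. No telescoper, certificate or initial values are needed, and the constants acquire a meaning. The price is a small table of pairings, which requires expansions at $\frac13$. Carried out with $a_0(E|W_2)=-5$, $a_1(E)=13$ and $a_{-1}(f^\sharp|W_2)=1$, your method gives
\[ a_0(t^{-n}f)=-5A_n,\qquad a_0\bigl(t^{-n}f\,\E_3(f)\bigr)=C_n, \]
\[ a_0\bigl(t^{-n}f\,\E_3(f^\sharp)\bigr)=-13A_n-B_n,\qquad a_0\bigl(t^{-n}f\,\E_3(E)\bigr)=13C_n-5\,a_0(h_n), \]
which is exactly the proposition.

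Several details need correcting when you carry this out.

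\emph{Definition of the pairing.} Take $\{G,F\}=\sum_\beta a_0\bigl((G|\gamma_\beta)\,\E_3(F|\gamma_\beta)\bigr)$, with all widths equal to $1$ for the Atkin--Lehner scalings. The slash of $G\,\E_3(F)$ is not periodic, so it cannot be used.

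\emph{Twisted pairings.} After applying $W_2$, the right-hand side is $A_n\{E|W_2,F\}+B_n\{f,F\}+C_n\{f^\sharp|W_2,F\}+\{D^3(h_n|W_2),F\}$, not the untwisted pairings you wrote. These genuinely differ. For example, $\{E,f^\sharp\}$ is governed by $a_1(E|W_2)$, whereas $\{E|W_2,f^\sharp\}=-2a_1(E)\,a_{-1}(f^\sharp|W_2)=-26$.

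\emph{Predicted table.} Two holomorphic forms always pair to $0$, because $\E_3$ discards constant terms. So $\{f,E\}=\{E,E\}=0$, and these cannot produce any of the constants. Instead:
\begin{itemize}
\item $-\frac15$ comes from comparing constant terms at $\infty$, not from a pairing.
\item $-13$ comes from $\{E|W_2,f^\sharp\}=-26$ against $\{f,f^\sharp\}=-2$, using also $\{f^\sharp|W_2,f^\sharp\}=0$.
\item The coefficient $1$ in $C_n$ comes from $\{f^\sharp|W_2,f\}=2$.
\item $\frac{12}{5}$ comes from $\{f^\sharp|W_2,E\}=26$ together with the correction $\sum_\beta a_0\bigl((h_n|W_2)|\gamma_\beta\bigr)\,a_0(E|\gamma_\beta)=-10\,a_0(h_n)$. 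Only the cusps $\frac12,\frac13$ contribute to this correction.
\end{itemize}

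\emph{Role of $n\geq1$.} The cusps $\frac12,\frac13$ drop out of $\{t^{-n}f,F\}$ because $t^{-n}f$ vanishes there to order $n+1\geq2$, while $\E_3(f^\sharp|W_2)$ has only a simple pole. This, not holomorphy, is where $n\geq 1$ is used.

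\emph{Polar parts of $h_n$.} These need no separate treatment. The two lemmas already reduce $\{D^3h,F\}$ to $\sum_\beta a_0(h|\gamma_\beta)\,a_0(F|\gamma_\beta)$.
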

\begin{proof}
It suffices to show that the sequences on the right-hand sides of the above expressions satisfy $\boldL$ and agree with $A_n,B_n,C_n,D_n$ at four consecutive initial values. The initial values are checked directly from the table above. Note that $t|W_2=t^{-1}$ and $f| W_2=f$. Put $\widetilde R:=R| W_2$. Applying $W_2$ to \eqref{eq:level_six_plus_telescoping} gives
$$\boldL(t^{-n}f)=D^3(t^{-n}\widetilde R).$$ Define
$$U_0(n):=a_0(t^{-n}f),\qquad
U_X(n):=a_0\bigl(t^{-n}f\E_3(X)\bigr),\quad X\in\{E,f,f^\sharp\}.$$
Taking constant terms shows that $\boldL U_0=0$. Since $a_0(X)=0$, 
$$\begin{aligned}\boldL U_X(n)&= \boldL a_0(t^{-n}f \E_3(X)) = a_0( \boldL(t^{-n}f) \E_3(X)) \\
	&= a_0(D^3(t^{-n}\widetilde{R}) \E_3(X))\\
	&= a_0(t^{-n}\widetilde R X), \qquad \text{ by Lemma }\ref{lemma:constant_term_integration_by_parts} \\
	&= -a_0(t^n R (X|W_2)), \qquad \text{ by Lemma }\ref{lemma:weight_two_residue_relation}. \end{aligned}$$
	Since $t$ vanishes at $\infty$, the above is $0$ for sufficiently large $n$; here $n\geq 1$ is enough. The claim about denominator types follows immediately from $t\in q(1+q\mathbb Z\llbracket q\rrbracket)$ and $f,E,f^\sharp\in q\mathbb Z\llbracket q\rrbracket$.
\end{proof}

\subsection{Denominator types of $h_n$: a general approach}\label{sec:gamma1-6-denominators}
The main result of this subsection is to prove \ref{cor_denomtype_intro}. The result will not be used in the sequel so can be skipped on the first reading. It extends the technique used in Section~\ref{sec:gamma0-4-weight4} to establish the denominator type of $B_n$ there. In Section~\ref{sec:denominator-types}, we will place this technique in a more general context. Before reading this subsection, the reader might want to read Section~\ref{sec:divisors}, because we will freely use the language of divisors and the notation introduced there. \par
Throughout this subsection, we let $A$ be a subring of $\mathbb{C}$ containing $\overline{\mathbb{Z}}$, the ring of algebraic integers. For a positive integer $N$ and a ring $R$, we use $R_{(N)}=R[1/N]$ for localization by powers of $N$. 
	\begin{lemma}\label{linear_comb_int_lemma}
	Let $\Gamma \subset \SL_2(\mathbb{R})$ be commensurable with $\SL_2(\mathbb{Z})$. Let $f_1,\ldots,f_n$ be linearly independent elements of $\M_k^!(\Gamma)$ with algebraic Fourier coefficients. Let $g \in \M_k^!(\Gamma)$ be a $\mathbb{C}$-linear combination $g = \sum_j c_j f_j$. Assume that all Fourier coefficients of $g$ at $\infty$ are contained in $A$. Then there exists a positive integer $N$, depending only on $f_1,\ldots,f_n$, such that $Nc_j \in A$ for every $j$.
\end{lemma}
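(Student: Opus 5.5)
The plan is to reduce the statement to linear algebra on finitely many Fourier coefficients. Since $f_1,\ldots,f_n$ are linearly independent elements of $\M_k^!(\Gamma)$, their $q$-expansions at $\infty$ are linearly independent in $\mathbb{C}((q))$. The $q$-expansion map is injective on $\M_k^!(\Gamma)$: a meromorphic form whose expansion at $\infty$ vanishes is identically zero. Hence there are finitely many exponents $m_1,\ldots,m_n$ such that the $n\times n$ matrix
$$M=\bigl(a_{m_i}(f_j)\bigr)_{1\le i,j\le n}$$
is invertible. To find them, consider the $\infty\times n$ matrix of all coefficients. It has rank $n$, because its columns are independent. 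Therefore some $n$ of its rows are independent, and these rows give the $m_i$.

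The entries of $M$ are algebraic numbers, since the $f_j$ have algebraic Fourier coefficients. Next compare the coefficients of $q^{m_i}$ in $g=\sum_j c_jf_j$. This gives the linear system $M\,(c_1,\ldots,c_n)^T=(a_{m_1}(g),\ldots,a_{m_n}(g))^T$, whose right-hand side lies in $A^n$ by hypothesis. By Cramer's rule,
$$c_j=\det(M)^{-1}\sum_i \operatorname{adj}(M)_{ji}\,a_{m_i}(g).$$

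The remaining work is to clear denominators uniformly. Choose a positive integer $N_0$ such that $N_0a_{m_i}(f_j)$ is an algebraic integer for all $i,j$. This is possible because every algebraic number has an integer multiple that is an algebraic integer. Then $N_0^{n-1}\operatorname{adj}(M)$ has entries in $\overline{\mathbb{Z}}\subset A$.

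The determinant $\delta:=\det(M)$ is a nonzero algebraic number. Its inverse $\delta^{-1}$ is algebraic, so some positive integer $N_1$ makes $N_1\delta^{-1}$ an algebraic integer. Now set $N=N_0^{n-1}N_1$. Then
$$Nc_j=(N_1\delta^{-1})\sum_i\bigl(N_0^{n-1}\operatorname{adj}(M)_{ji}\bigr)a_{m_i}(g),$$
which lies in $A$, because $A$ is a ring containing $\overline{\mathbb{Z}}$.

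The integer $N$ depends only on $M$, that is, only on $f_1,\ldots,f_n$ and the choice of the $m_i$, and not on $g$. The only step needing care is the existence of the invertible minor, which relies on the injectivity of the $q$-expansion; everything else is routine.
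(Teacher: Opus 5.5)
Your proof is correct and follows the same route as the paper. Both select an invertible $n\times n$ minor of the algebraic coefficient matrix, solve for the $c_j$, and clear the denominators of the inverse with a fixed integer; your Cramer's-rule construction $N=N_0^{n-1}N_1$ is simply an explicit version of the paper's choice of $N$ making $NT^{-1}$ integral.
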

\begin{proof}
	Linear independence implies that finitely many Fourier-coefficient functionals give a matrix of rank $n$ on the span of $f_1,\ldots,f_n$. Choose an invertible $n\times n$ submatrix $T$ whose entries are algebraic numbers. Comparing the corresponding coefficients gives $T(c_1,\ldots,c_n)^{\mathsf T}=b$ with $b\in A^n$. Choose a positive integer $N$ such that every entry of $NT^{-1}$ is an algebraic integer. Since $A$ is an $\overline{\mathbb{Z}}$-module, $N(c_1,\ldots,c_n)^{\mathsf T}=NT^{-1}b\in A^n$.
\end{proof}

For the rest of this subsection, let $\Gamma=\Gamma_1(6)$ and $k\geq 3$. Note that if $f\in \M_k^!(\Gamma) \cap \overline{\mathbb{Q}}((q))$, then the same is true of $f|W_2$, $f|W_3$, and $f|W_6$. Also $$W_2 \infty = \frac{1}{3},\quad W_3 \infty = \frac{1}{2},\quad W_6 \infty = 0.$$ We say that $f$ is $A$-integral at the cusp $\infty$ if $f\in A((q))$; $f$ is $A$-integral at the cusp $\frac{1}{2}$ if $f|W_3 \in A((q))$; $f$ is $A$-integral at the cusp $\frac{1}{3}$ if $f|W_2 \in A((q))$; and $f$ is $A$-integral at the cusp $0$ if $f|W_6 \in A((q))$. We say that $f$ is $A$-integral at all cusps if $f,f|W_2,f|W_3,f|W_6$ all belong to $A((q))$. 

We introduce notation that will also be used for other groups. Let $S$ be a finite set of cusps and $\mathcal{D}=\sum_{s\in S}d_s(s)$ an effective divisor. For a weakly holomorphic modular form $X$, write its chosen Fourier expansion at a cusp $s$ as $X_s=\sum_m a_{s,m}q_s^m$, with exponents measured in the local cusp parameter $q_s$. Define the \textit{principal part beyond $\mathcal{D}$} by
$$\mathrm{pp}_{>\mathcal{D}}(X):=\left(\sum_{m<-d_s}a_{s,m}q_s^m\right)_{s\in S}.$$
Saying that $\mathrm{pp}_{>\mathcal{D}}(X)$ has coefficients in $A$ imposes no condition on the coefficients with $m\geq-d_s$. In the present subsection, for $\mathcal{D}=A_0(\frac12)+B_0(\frac13)$, we use $X_{1/2}=X|W_3$ and $X_{1/3}=X|W_2$, both expanded in $q$. For a divisor $\mathcal{D}$, we let $\mathcal{D}_\infty$ be the nonnegative divisor formed from its negative part.

\begin{lemma}[Reduction lemma]\label{lemma:gamma16_reduction}
	Let $X \in \M_0^!(\Gamma)$, with $(\div X)_\infty$ supported on $\{\frac12,\frac13\}$. Let $\mathcal{D}$ be the effective divisor $A_0(\frac12)+B_0(\frac13)$. Assume that $\mathrm{pp}_{>\mathcal{D}}(X)$ has coefficients in $\overline{\mathbb{Z}}$. Then \\ (a) $X$ can be written as $X = G+\kappa$, where $G$ is $\overline{\mathbb{Z}}_{(2)}$-integral at all cusps and $\kappa\in \mathcal{L}_0( \mathcal{D})$. \\
	(b) $X$ can also be written as $X = G+\kappa$, with possibly different $G$ and $\kappa$, such that $G$ is $\overline{\mathbb{Z}}_{(3)}$-integral at all cusps and $\kappa\in \mathcal{L}_0( \mathcal{D})$.
\end{lemma}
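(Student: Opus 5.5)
We subtract from $X$ explicit "building blocks" that kill its principal parts beyond $\mathcal{D}$ one coefficient at a time, starting from the most negative order.

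\textbf{Building blocks.} We use Hauptmoduln on $\Gamma_1(6)$ (genus zero) with a single pole at $\frac12$ or $\frac13$. In terms of $u,v$ from \eqref{level6_uv}, candidates with pole at $\frac12$ only are ratios such as
$$\frac{u}{u+9v},\qquad \frac{v}{u+9v},\qquad \frac{u+8v}{u+9v},$$
and analogously with $u+8v$ in the denominator for $\frac13$. Their expansions at the cusps are computed using $u|W_6=-6i\sqrt2\,v$ and the analogous formulas for $W_2$ and $W_3$.

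\textbf{Integrality of the blocks.} The point is that suitably normalized such Hauptmoduln are $\overline{\mathbb Z}_{(2)}$-integral at all four cusps, or $\overline{\mathbb Z}_{(3)}$-integral at all four cusps. Their local principal part is then $c\,q^{-1}+\dots$ with $c$ a unit up to a power of $2$ (respectively $3$). The eta quotients $u,v,u+8v,u+9v$ transform into one another under $W_2,W_3,W_6$ up to constants, and these constants involve $\sqrt2,\sqrt3,2,3$. Hence quotients of weight one forms have expansions in $\overline{\mathbb Z}[1/6]((q))$. To get only $2$, or only $3$, we choose the pair cleverly. For instance, $\frac{u+8v}{u+9v}$ has leading coefficient $1$ at $\infty$, and its value at the other cusps introduces only powers of $3$, since $9$ is a power of $3$ and $8$ of $2$. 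Taking the two alternatives then yields parts (a) and (b) respectively.

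\textbf{Reduction.} Let $y_{1/2}$ be a block with a simple pole at $\frac12$ only, integral in the chosen sense at all cusps. Suppose $X$ has pole order $m>A_0$ at $\frac12$ with leading coefficient $a\in\overline{\mathbb Z}$. Then $y_{1/2}^m$ has leading coefficient $\lambda^m$ with $\lambda$ invertible in $\overline{\mathbb Z}_{(2)}$. Replacing $X$ by $X-a\lambda^{-m}y_{1/2}^m$ lowers the pole order at $\frac12$ and changes coefficients only by integral amounts, so the hypothesis on $\mathrm{pp}_{>\mathcal{D}}$ is preserved. Induct downward until the pole order at $\frac12$ is at most $A_0$, and do the same at $\frac13$. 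Setting $G=\sum(\text{subtracted terms})$, the remainder $\kappa=X-G$ has $\div\kappa\ge-\mathcal D$, i.e.\ $\kappa\in\mathcal L_0(\mathcal D)$. Moreover $G$ is a polynomial with $\overline{\mathbb Z}_{(2)}$-coefficients in integral blocks, hence integral at all cusps. Case (b) is identical, with blocks integral over $\overline{\mathbb Z}_{(3)}$.

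\textbf{Main obstacle.} The hard part is exhibiting the blocks explicitly and verifying their integrality at every cusp, including the local leading coefficients. To get these expansions I would first try computing $(u+8v)|W_j$ and $(u+9v)|W_j$ as eta quotients, via the transformation of $\eta$ under Atkin–Lehner maps. If a single block fails to be simultaneously integral at $0$ and $\infty$ over $\overline{\mathbb Z}_{(2)}$, I would fall back on products of different Hauptmoduln and an eta-quotient argument. It is precisely the appearance of both $2$ and $3$ in those constants that explains why the statement localizes at $2$ and $3$ separately.
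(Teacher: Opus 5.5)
Your strategy is the paper's. For each $N\in\{2,3\}$ the paper takes two weight-zero eta quotients, one with its only (simple) pole at $\frac12$ and one with its only pole at $\frac13$. Their expansions at all four cusps are units in $\overline{\mathbb Z}_{(N)}((q))$, and the paper removes the principal parts beyond $\mathcal D$ by subtracting a polynomial in them with $\overline{\mathbb Z}_{(N)}$-coefficients. Your downward induction is exactly that step. (The invariant it preserves is $\overline{\mathbb Z}_{(N)}$-integrality of $\mathrm{pp}_{>\mathcal D}$, not $\overline{\mathbb Z}$-integrality, but that is all you need.)

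However, you stop where the paper's proof has its content, namely exhibiting the blocks, and the one justification you give is wrong. For $(u+8v)/(u+9v)=\eta_6(1,-5,5,-1)$ the contents at $\infty,0,\frac12,\frac13$ are $1,\frac89,\frac19,8$, so powers of $2$ do appear. At $0$ this is immediate from $(u+8v)|W_6=-\frac{2\sqrt2\,i}{3}(u+9v)$ and $(u+9v)|W_6=-\frac{3i}{2\sqrt2}(u+8v)$. As written, this function is a valid block for (b) at $\frac12$, since the $2$'s sit only in numerators and the pole coefficient $\frac19$ is a unit in $\overline{\mathbb Z}_{(3)}$. It is not a block for (a), because $\frac19\notin\overline{\mathbb Z}_{(2)}$. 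Likewise its ``analogue'' $(u+9v)/(u+8v)$ has pole coefficient $-\frac18$ at $\frac13$, so it is not a block for (b). Every Hauptmodul whose only pole is at $s$ has the form $ay+b$ for one fixed such $y$. So the real task is to determine the correct scalar $a$ for each $N$ and $s$, and that is precisely the content computation you skipped.

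The blocks that work are among your candidates; they are the entries of Table~\ref{Gamma0(6)eta_content}. Below, contents are listed at $\infty,0,\frac12,\frac13$.

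\textbf{Part (a).} Use $u/(u+9v)=\eta_6(9,-9,-3,3)$, with contents $1,8,1,8$, and $v/(u+8v)=\eta_6(3,-3,-9,9)$, with contents $1,\frac18,1,\frac18$.

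\textbf{Part (b).} Use $v/(u+9v)=\eta_6(4,-8,-4,8)$, with contents $1,\frac19,\frac19,1$, and $u/(u+8v)=\eta_6(8,-4,-8,4)$, with contents $1,9,9,1$.

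The contents at $0$ follow from the $W_6$ formulas above. Those at $\frac12$ and $\frac13$ follow from the transformation law of $\eta$, for instance through $\operatorname{cont}_{a/c}(f)=\prod_{m\mid 6}\bigl(\gcd(c,m)/m\bigr)^{r_m/2}$. With these four explicit blocks and their contents verified, your reduction argument proves the lemma.
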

\begin{proof}
The key point of the proof is to construct enough eta-quotients with desired integrality properties at the cusps. We abbreviate
$$\eta_6(r_1,r_2,r_3,r_6) := \prod_{j\mid 6} \eta(j \tau)^{r_j}.$$
The following table lists rational numbers $\mu$ such that $f|W \in \mu \mathbb{Z}((q))^\times$ for various $f\in \M_0^!(\Gamma_1(6))$ and $W\in \{I,W_2,W_3,W_6\}$. For example,
$$\eta_6(3,-3,-9,9)|W_2  = \frac{-1}{8} \left(q^{-1} + 3 + 6 q + O(q^2)\right).$$
\begin{table}[H]
	\begin{tabular}{|c|c|c|c|c|}
		\hline
		 & $W_6$ & $W_2$       & $W_3$ & $W_1 = I$ \\ \hline
		$\eta_6(3,-3,-9,9) \sim (\infty)-(\frac13)$ & $\frac18$      & $\frac18$ & $1$       & $1$ \\ \hline
		$\eta_6(9,-9,-3,3)\sim (0)-(\frac12)$       & 8        & 8         & 1         & 1         \\ \hline
		$\eta_6(8,-4,-8,4)\sim (0)-(\frac13)$       & 9        & 1         & 9         & 1         \\ \hline
		$\eta_6(4,-8,-4,8)\sim (\infty)-(\frac12)$  & $\frac19$        & 1 & $\frac19$ & 1         \\ \hline
	\end{tabular}
	\caption{Rational numbers $\mu$ such that $f|W \in \mu \mathbb{Z}((q))^\times$.}
	\label{Gamma0(6)eta_content}
\end{table}
Let $(\div X)_\infty = A(\frac12) + B(\frac13)$. There is nothing to prove when $X\in \mathcal{L}_0(\mathcal{D})$: we can simply take $G = 0$ and $\kappa = X$. We therefore assume that $A>A_0$ or $B>B_0$.  \\
(a) Let $f_1 = \eta_6(3,-3,-9,9)$ and $f_2 = \eta_6(9,-9,-3,3)$. They have simple poles at $\frac13$ and $\frac12$, respectively. Therefore, by matching principal parts, we see that there exists $P \in \mathbb{C}[X_1,X_2]$ such that $X-P(f_1,f_2) \in \mathcal{L}_0(\mathcal{D})$. By the hypothesis that $\mathrm{pp}_{>\mathcal{D}}(X)$ has coefficients in $\overline{\mathbb{Z}}$, we see that $P \in \overline{\mathbb{Z}}_{(2)}[X_1,X_2]$. Thus, we can take $G = P(f_1,f_2)$ and $\kappa = X-P(f_1,f_2)$.\\
(b) The same argument applies to $\eta_6(8,-4,-8,4)$ and $\eta_6(4,-8,-4,8)$: each has a simple pole at one of the two cusps and no other poles, and they are $\mathbb{Z}_{(3)}$-integral. 
\end{proof}
	
 Our goal in this subsection is to prove the following:
\begin{proposition}\label{prop:gamma16_denom_type}
Let $\mathfrak{B}$ be a basis of $\M_k^!(\Gamma) \cap \overline{\mathbb{Q}}((q))$ modulo $D^{k-1}(\M_{2-k}^!(\Gamma))$. Let $F \in \M_k^!(\Gamma)$ with pole divisor $(\div F)_\infty = A(\frac12)+B(\frac13)$. Write
\begin{equation}\label{aux_21}F = \sum_{f\in \mathfrak{B}} c_f f + D^{k-1}(h),\qquad h\in \M_{2-k}^!(\Gamma).\end{equation} Let $M = \max(A,B)$. Assume that $F$ is $\overline{\mathbb{Z}}$-integral at the cusps $\frac12$ and $\frac13$. Then there exists a positive integer $\nu$, depending only on $k$ and $\mathfrak{B}$ and independent of $F$, such that
\begin{itemize}[leftmargin=*]
\item $\nu \lcmD{M}^{k-1} h$ is $\overline{\mathbb{Z}}$-integral at all cusps.
\item $\nu \lcmD{M}^{k-1} c_f \in \overline{\mathbb{Z}}$.
\end{itemize}
\end{proposition}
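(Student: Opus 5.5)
We follow the argument of Section~\ref{sec:gamma0-4-weight4} for the denominator of $B_n$, with Lemma~\ref{lemma:gamma16_reduction} replacing the explicit ring structure $\mathbb{C}[u,v]$. The plan has four stages: reduce to a fixed finite-dimensional situation, control the principal parts of $h$, reduce the remaining pole part, and conclude.

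\textbf{Step 1: fix the data independent of $F$.} Fix once and for all an effective divisor $\mathcal{D}=A_0(\frac12)+B_0(\frac13)$ with $A_0,B_0$ large. We want two things. First, every class of the quotient $\M_k^!/D^{k-1}(\M_{2-k}^!)$ should have a representative with poles bounded by $\mathcal{D}$. Second, the relevant part of $\mathcal{L}_{2-k}(\cdot)$ should be controlled by finitely many forms. This is possible because the quotient is finite-dimensional by Theorem~\ref{dim_theorem}. Since $\mathfrak{B}$ is a fixed finite set of forms with algebraic coefficients at all cusps, there is a constant $\nu_0$ with $\nu_0 f$ being $\overline{\mathbb{Z}}$-integral at all cusps for every $f\in\mathfrak{B}$.

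\textbf{Step 2: principal parts of $h$.} Write $h_s$ for the expansion of $h$ at the cusp $s\in\{\frac12,\frac13\}$. Then
$$h_s=a_0(h_s)+\E_{k-1}\bigl(D^{k-1}h_s\bigr)=a_0(h_s)+\E_{k-1}\Bigl(F_s-\sum_f c_f f_s\Bigr).$$
The poles of $h$ have order at most $M$, so only the denominators $m^{k-1}$ with $|m|\le M$ enter the principal part. At this stage the $c_f$ are not yet known to be integral. To handle this, first subtract a combination of elements of $\mathfrak{B}$ whose principal parts beyond $\mathcal{D}$ vanish. Because $F$ is $\overline{\mathbb{Z}}$-integral at $\frac12$ and $\frac13$, we get that $\mathrm{pp}_{>\mathcal{D}}(\lcmD{M}^{k-1}h)$ has $\overline{\mathbb{Z}}$ coefficients, possibly after a fixed multiplier. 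The multiplier comes from the finitely many coefficients of $\mathfrak{B}$, which do not depend on $F$.

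\textbf{Step 3: reduce to weight zero and apply the reduction lemma.} To use Lemma~\ref{lemma:gamma16_reduction}, which is stated in weight $0$, multiply by a fixed form. Let $\phi=u^a v^b$ (or another fixed eta quotient) of weight $k-2$ that is a unit at the cusps $\frac12$ and $\frac13$, up to a constant in Table~\ref{Gamma0(6)eta_content}. Then $X=\phi\,\lcmD{M}^{k-1}h$ lies in $\M_0^!$, with poles only at $\frac12$ and $\frac13$ modulo fixed zeros of $\phi$. Apply parts (a) and (b) of Lemma~\ref{lemma:gamma16_reduction} to $X$. This gives $X=G_2+\kappa_2=G_3+\kappa_3$, where $G_p$ is $\overline{\mathbb{Z}}_{(p)}$-integral at all cusps and $\kappa_p\in\mathcal{L}_0(\mathcal{D})$.

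\textbf{Step 4: conclude.} Dividing by $\phi$, it remains to bound $\kappa_p/\phi$. This lies in a fixed finite-dimensional space, namely $\mathcal{L}_{2-k}$ of a bounded divisor. Its coefficients are determined by finitely many Fourier coefficients of $X$ at $\infty$ and by the $c_f$, through the equation $D^{k-1}(\cdot)=F-\sum c_f f$. Here Lemma~\ref{linear_comb_int_lemma} applies: expressing $\kappa_p/\phi$ and the $c_f$ in a fixed basis, with the right-hand side having coefficients in $\overline{\mathbb{Z}}_{(p)}$ after scaling by $\lcmD{M}^{k-1}$, gives a fixed $N$ with $N\lcmD{M}^{k-1}c_f\in\overline{\mathbb{Z}}_{(p)}$. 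The same bound holds for $\kappa_p$. Intersecting the $p=2$ and $p=3$ statements gives $\overline{\mathbb{Z}}_{(2)}\cap\overline{\mathbb{Z}}_{(3)}=\overline{\mathbb{Z}}$, which proves both bullet points.

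\textbf{Main obstacle.} The hardest point is the circularity in Step 2. The integrality of the principal part of $h$ seems to require the integrality of the $c_f$, and vice versa. I would first try to break it by choosing $\mathcal{D}$ so large that the principal parts beyond $\mathcal{D}$ of all elements of $\mathfrak{B}$ vanish. Then the principal part of $h$ beyond $\mathcal{D}$ comes from $F$ alone, and the $c_f$ are determined afterwards by a bounded linear system, to which Lemma~\ref{linear_comb_int_lemma} applies.
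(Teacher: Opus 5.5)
Your proposal is correct and is essentially the paper's proof. It has the same steps: control $\mathrm{pp}_{>\mathcal{D}}(\lcmD{M}^{k-1}h)$ through $\E_{k-1}$ with $\mathcal{D}$ fixed so that $\mathfrak{B}\subset\mathcal{L}_k(\mathcal{D})$; multiply by a weight-$(k-2)$ quasi-unit; apply Lemma~\ref{lemma:gamma16_reduction}(a),(b); apply $D^{k-1}$ and Lemma~\ref{linear_comb_int_lemma}; finally use $\overline{\mathbb{Z}}_{(2)}\cap\overline{\mathbb{Z}}_{(3)}=\overline{\mathbb{Z}}$. Your $\phi=u^av^b$ replaces the paper's $(u+8v)^{k-2}$ and even spares the shift of $\mathcal{D}$.

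Two details need to be stated explicitly. First, Lemma~\ref{linear_comb_int_lemma} must be applied to the expansions at $\frac12$ (i.e.\ after slashing by $W_3$), not at $\infty$, since $F$ is only assumed integral at $\frac12,\frac13$. Second, your fix for the ``circularity'' requires first replacing $\mathfrak{B}$ by a basis with poles only at $\frac12,\frac13$; as the paper notes, this changes $\nu$ only by a fixed factor.
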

\begin{proof} Note that it suffices to prove the statement for one $\mathfrak{B}$: choosing any other basis will only alter the constant $\nu$. Fix an effective divisor $\mathcal{D} = A_0(\frac12)+B_0(\frac13)$ such that $\mathfrak{B}\subset \mathcal{L}_{k}(\mathcal{D})$. We proceed in steps. We will use $\nu_1,\nu_2,\ldots \in \mathbb{Z}_{>0}$ to denote integers that depend only on $k$ and $\mathfrak{B}$ (i.e., are independent of $F$).\\
	\underline{Step 1: the origin of the LCM $\lcmD{M}$.}  Let $\gamma \in \{W_2,W_3\}$ and write
	\begin{equation}\label{aux_30}h|_{2-k}\gamma := a_0 + \sum_{n\neq 0} \frac{a_n}{n^{k-1}} q^n.\end{equation}
	Then $$ \sum_{n\neq 0} a_n q^n = F|_k \gamma -\sum_{f\in \mathfrak{B}} c_f f|_k \gamma.$$
	By assumption, $F$ is integral at the two cusps $\frac12$ and $\frac13$. Hence, $a_n \in\overline{\mathbb{Z}}$ for $n<-B_0$ when $\gamma=W_2$ and for $n<-A_0$ when $\gamma=W_3$. Any nonzero coefficient in these ranges has $n\geq -M$. Using $\lcmD{M}^{k-1}$ to cancel the denominators in \eqref{aux_30}, we see that $\mathrm{pp}_{>\mathcal{D}}(\lcmD{M}^{k-1}h)$ has coefficients in $\overline{\mathbb{Z}}$. \\
	\underline{Step 2.} Take $g=c(u+8v)^{k-2}$, where the positive integer $c$ is chosen so that $g$ is integral at all cusps. Its divisor is $(k-2)(\frac13)$, so $g$ has no zeros in $\mathbb{H}$ and both $g$ and $g^{-1}$ have bounded denominators at every cusp. For the fixed effective divisor $\mathcal{D}_1=\mathcal{D}+\div g$, the principal part $\mathrm{pp}_{>\mathcal{D}_1}(\lcmD{M}^{k-1}gh)$ has coefficients in $\overline{\mathbb{Z}}$, and the poles of $\lcmD{M}^{k-1}gh$ are supported on $\{\frac12,\frac13\}$. Lemma \ref{lemma:gamma16_reduction} therefore gives $G$, $\overline{\mathbb{Z}}_{(2)}$-integral at all cusps, and $\kappa\in\mathcal{L}_0(\mathcal{D}_1)$ such that $$\lcmD{M}^{k-1}g h = G + \kappa.$$
	\underline{Step 3.} Multiply both sides of the above equation by $g^{-1}$, apply $D^{k-1}$, and then use \eqref{aux_21} to obtain
	$$\begin{aligned}\lcmD{M}^{k-1} F - D^{k-1}(Gg^{-1}) &= \sum_{f\in \mathfrak{B}} \lcmD{M}^{k-1} c_f f + D^{k-1}(\kappa g^{-1}) \\ 
		&= \sum_{f\in \mathfrak{B}} \lcmD{M}^{k-1} c_f f + \sum_{j=1}^r \theta_j D^{k-1}(\kappa_j g^{-1}),
	\end{aligned}$$ 
	where, in the last line, we chose $\kappa_1,\ldots,\kappa_r$ as a basis of $\mathcal{L}_0(\mathcal{D}_1)$ with algebraic Fourier coefficients and wrote $\kappa = \sum_j \theta_j \kappa_j$. After applying $W_3$, the left-hand side has Fourier coefficients in $\nu_2^{-1}\overline{\mathbb{Z}}_{(2)}$, since $F$ is integral at $\frac12$, $G$ is $\overline{\mathbb{Z}}_{(2)}$-integral there, and $g^{-1}|_{2-k}W_3$ has bounded denominators. By definition of $\mathfrak{B}$, the forms $\{f\mid f\in \mathfrak{B}\}$ and $\{D^{k-1}(\kappa_1g^{-1}),\ldots,D^{k-1}(\kappa_rg^{-1})\}$ are linearly independent. Applying Lemma \ref{linear_comb_int_lemma} to their transforms by $W_3$ shows that there exists $\nu_3\in \mathbb{Z}_{>0}$ such that $$\lcmD{M}^{k-1} c_f \in \overline{\mathbb{Z}}_{(2)}/\nu_3, \qquad \theta_j \in  \overline{\mathbb{Z}}_{(2)}/\nu_3.$$
	Note that this $\nu_3$ is independent of $F$. Also, writing $\lcmD{M}^{k-1} h = Gg^{-1} + \kappa g^{-1}$, we see that $\nu_4\lcmD{M}^{k-1} h$ is $\overline{\mathbb{Z}}_{(2)}$-integral at all cusps. Therefore, we have proved the two statements in the bullets with $\overline{\mathbb{Z}}$ replaced by $\overline{\mathbb{Z}}_{(2)}$. \\
	\underline{Step 4.} Using part (b) of Lemma \ref{lemma:gamma16_reduction}, one can repeat the previous step with $G$ being $\overline{\mathbb{Z}}_{(3)}$-integral. Since $2$ and $3$ are coprime, this implies the conclusion.
\end{proof}

\begin{corollary}\label{Gamma1(6)_denomtype_cor}
Let $t$ be as in \eqref{t_choice_level6} and let $g\in \M_k(\Gamma_1(6)) \cap \overline{\mathbb{Q}}((q))$. Write
$$t^n g = \sum_{f\in \mathfrak{B}} c_{f,n} f + D^{k-1}(h_n),\qquad h_n\in \M_{2-k}^!(\Gamma).$$ Then there exists an integer $\nu$, independent of $n$, such that
\begin{itemize}[leftmargin=*]
	\item $\nu \lcmD{n}^{k-1} h_n$ is $\overline{\mathbb{Z}}$-integral at all cusps.
	\item $\nu \lcmD{n}^{k-1} c_{f,n} \in \overline{\mathbb{Z}}$.
\end{itemize}
\end{corollary}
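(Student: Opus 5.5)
The plan is to apply Proposition \ref{prop:gamma16_denom_type} with $F = t^n g$, uniformly in $n$. Three things need checking: the pole divisor of $F$ is supported on $\{\frac12,\frac13\}$ with $M\le n$; $F$ is $\overline{\mathbb{Z}}$-integral at those two cusps, up to a constant independent of $n$; and the resulting $\nu$ does not depend on $n$.

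First, the divisor. Since $t\sim(0)+(\infty)-(\frac12)-(\frac13)$ and $g$ is holomorphic, $t^ng\in\M_k^!(\Gamma)$ satisfies
$$(\div t^n g)_\infty \le n(\tfrac12)+n(\tfrac13).$$
Hence $A,B\le n$ and $M\le n$. If the order is smaller, only the factor $\lcmD{M}^{k-1}$ gets replaced by the larger $\lcmD{n}^{k-1}$, which is harmless.

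Second, the integrality. I will use $t|W_2=t^{-1}$ and $t|W_6=t$. Then $t|W_3=t|(W_2W_6)=t^{-1}$, because $W_2W_6$ equals $W_3$ modulo $\Gamma_1(6)$. This coset identity is the point to verify; if there is a sign or diamond-operator twist, $t|W_3$ is still $\pm t^{\pm1}$ by the divisor computation. Since $t\in q(1+q\mathbb{Z}\llbracket q\rrbracket)$, it follows that $t^{-1}\in q^{-1}\mathbb{Z}\llbracket q\rrbracket$ and hence $t^{-n}\in\mathbb{Z}((q))$ for every $n$. The form $g$ has algebraic Fourier coefficients and $g\in\mathbb{C}[u,v]$. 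The forms $u|W$ and $v|W$ are constant multiples of $u,v,u+8v,u+9v$, and these have integral expansions. So $g|W_2$ and $g|W_3$ lie in $c^{-1}\overline{\mathbb{Z}}((q))$ for some fixed integer $c$ depending only on $g$. Thus $cF$ is $\overline{\mathbb{Z}}$-integral at $\frac12$ and $\frac13$.

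Finally, apply Proposition \ref{prop:gamma16_denom_type} to $cF$. Its constant $\nu$ depends only on $k$ and $\mathfrak{B}$. By uniqueness of the decomposition, the coefficients for $cF$ are $c\,c_{f,n}$ and $c\,h_n$. Taking $\nu c$ as the new constant gives both bullets, independent of $n$.

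The only real obstacle is the $n$-uniform integrality of $t^n g$ at the cusps $\frac12,\frac13$. It reduces to the fact that $t$ transforms to $t^{\pm1}$, a unit power series with integer coefficients, under $W_2$ and $W_3$, so that powers introduce no growing denominators.
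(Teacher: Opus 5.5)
Your proposal is correct and matches the paper's proof: apply Proposition \ref{prop:gamma16_denom_type} to a fixed integer multiple of $t^ng$, using $(\div t^ng)_\infty\le n(\frac12)+n(\frac13)$, $t\in q(1+q\mathbb{Z}\llbracket q\rrbracket)$, $t|W_2=t|W_3=t^{-1}$, and a fixed integer that clears the denominators of $g$ at the cusps. Your added check that $W_2W_6\equiv W_3$ modulo $\Gamma_1(6)$ is a valid justification of $t|W_3=t^{-1}$, which the paper simply asserts.
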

\begin{proof}
This follows immediately from the preceding proposition, since $(\div(t^ng))_\infty \leq n(\frac12)+n(\frac13)$, $t\in q(1+q\mathbb{Z}[[q]])$, and $t|W_2 = t|W_3 = t^{-1}$. A fixed nonzero integer clears the denominators of $g$ at all cusps.
\end{proof}

\begin{corollary}\label{cor:ABCD_denom_type}
The Ap\'ery sequence $\Bp_n$ in \eqref{aux_11} and the sequences $B_n,C_n,D_n$ in \eqref{aux_14} all have denominator types $\lcmD{n}^3$.
\end{corollary}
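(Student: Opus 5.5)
The plan is to deduce everything from Corollary \ref{Gamma1(6)_denomtype_cor} with $k=4$, applied to the two decompositions \eqref{aux_11} and \eqref{aux_14}. The only issue is that these decompositions use particular bases: $\{g\}$ for the minus space, and $\{E,f,f^\sharp\}$ for the plus space over $\Gamma^+$. The corollary, by contrast, is stated for a basis $\mathfrak{B}$ of the full quotient $\M_4^!(\Gamma_1(6))/D^3(\M_{-2}^!(\Gamma_1(6)))$.

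\textbf{Step 1: choose a compatible basis.} By the splitting into $W_6$-eigenspaces recalled in Section~\ref{sec:gamma1-6-minus}, I would take $\mathfrak{B}=\{g,g',E,f,f^\sharp,E'\}$. Here $g'\in\E_4(\Gamma,-)$ completes the minus part, and $E'\in\E_4(\Gamma^+)$ completes the plus part. All of these can be chosen with rational Fourier coefficients, so $\mathfrak{B}\subset \M_4^!(\Gamma)\cap\overline{\mathbb{Q}}((q))$.

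\textbf{Step 2: match the decompositions.} Since $t|W_6=t$, the decomposition \eqref{aux_11} of $t^ng$ (with $g$ in the minus space) is already a decomposition with respect to $\mathfrak{B}$. Its coefficients are $c_{g,n}=\Ap_n$ and $0$ for all other basis elements, and its $h_n$ is the same. This uses uniqueness: $\mathfrak{B}$ is a basis of the quotient, and $D^3$ is injective on $\M_{-2}^!$ because its kernel consists of polynomials in $\tau$. Likewise \eqref{aux_14} is a decomposition of $t^nf$ with respect to $\mathfrak{B}$. Here $h_n\in\M_{-2}^!(\Gamma^+)\subset\M_{-2}^!(\Gamma)$, and the coefficients of $g,g',E'$ vanish.

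\textbf{Step 3: apply the corollary.} Since $g,f\in\M_4(\Gamma_1(6))\cap\overline{\mathbb{Q}}((q))$, Corollary \ref{Gamma1(6)_denomtype_cor} gives a fixed $\nu$ such that $\nu\lcmD{n}^3 h_n$ is $\overline{\mathbb{Z}}$-integral at $\infty$ and $\nu\lcmD{n}^3 c_{f,n}\in\overline{\mathbb{Z}}$. In particular:
\begin{itemize}
\item $\nu\lcmD{n}^3 a_0(h_n)\in\overline{\mathbb{Z}}$, and since $\Bp_n=-12a_0(h_n)$, this gives the claim for $\Bp_n$;
\item $\nu\lcmD{n}^3 B_n$ and $\nu\lcmD{n}^3 C_n$ lie in $\overline{\mathbb{Z}}$;
\item since $D_n=\frac{156}{5}C_n-12a_0(h_n)$, we get $5\nu\lcmD{n}^3D_n\in\overline{\mathbb{Z}}$.
\end{itemize}
All these quantities are rational, so "algebraic integer" can be replaced by "integer", which gives denominator type $\lcmD{n}^3$ in each case.

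\textbf{Main obstacle.} Essentially the only point needing care is the compatibility in Step 2. One must check that the plus-space basis $\{E,f,f^\sharp\}$ together with the minus-space basis really forms a basis of the full $\Gamma_1(6)$ quotient. This follows from the eigenspace splitting and the dimension count $6=4+2$. I would verify this first, and then the rest is formal.
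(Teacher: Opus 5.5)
Your proposal is correct and follows the same argument as the paper. You take $k=4$ in Corollary~\ref{Gamma1(6)_denomtype_cor} with a basis $\mathfrak{B}$ adapted to the $W_6$-eigenspace splitting, so that \eqref{aux_11} and \eqref{aux_14} are exactly the decompositions with respect to $\mathfrak{B}$; you also spell out the uniqueness of the decomposition and the bookkeeping for $D_n=\frac{156}{5}C_n-12a_0(h_n)$, which the paper leaves implicit.
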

\begin{proof}
Take $k=4$ in the preceding corollary. Choose $\mathfrak{B}$ to consist of elements in the plus and minus $W_6$-eigenspaces. The sequence in \eqref{aux_11} comes from the minus space, and those in \eqref{aux_14} come from the plus space.
\end{proof}

	\section{Divisor of automorphic forms on $\Gamma\backslash \mathbb{H}^\ast$}\label{sec:divisors}
	Let $\Gamma$ be a subgroup of $\SL_2(\mathbb{R})$ commensurable with $\SL_2(\mathbb{Z})$. The locally compact quotient $\Gamma\backslash \mathbb{H}$ can be compactified to $X(\Gamma) := \Gamma\backslash \mathbb{H}^\ast$ by adding a finite number of cusps. Let $k\in \mathbb{Z}$, which we will use chiefly to denote weights of various modular forms. We employ the following notation:
	\begin{itemize}[leftmargin=*]
	\item $\mathcal{A}_k(\Gamma)$: the space of weight $k$ meromorphic modular forms on $\Gamma$;
	\item $\theta_k$: a fixed nonzero element of $\mathcal{A}_k(\Gamma)$;
	\item $g$: the genus of the compact Riemann surface $\Gamma\backslash \mathbb{H}^\ast$;
\item $\varepsilon_2, \varepsilon_3$: the numbers of inequivalent elliptic points of orders $2$ and $3$;
\item $\varepsilon_\infty, \varepsilon^{\text{reg}}_\infty, \varepsilon^{\text{irr}}_\infty$: the numbers of inequivalent cusps, regular cusps, and irregular cusps, respectively;
	\item $S$: a nonempty finite subset of $X(\Gamma)$, which we assume to contain all cusps.
	
	\end{itemize}

	For $f\in \mathcal{A}_k(\Gamma)$, define its associated divisor, a formal rational combination of points in $\Gamma\backslash \mathbb{H}^\ast$, by
	$$\div f := \sum_{P \in \Gamma\backslash \mathbb{H}^\ast} v_P(f) P.$$ Because of its importance, we recapitulate the full definition of $v_P(f)$ here, following closely the treatment of \cite[Chapter~2]{shimura1971introduction} or \cite[Chapter~3]{diamond2005first}.
	\begin{itemize}[leftmargin=*]
	\item For $P\in \mathbb{H}$ that is not an elliptic point of $\Gamma$: $v_P(f)$ is the order of the meromorphic function $f$ in a neighbourhood of $P$.
	\item For $P\in \mathbb{H}$ that is an elliptic point of $\Gamma$: $v_P(f)$ is the order (as above) divided by the order of the elliptic point.
	\item For $P\in X(\Gamma)\setminus(\Gamma\backslash\mathbb{H})$ corresponding to a cusp $s$: choose an element $\rho \in \text{SL}_2(\mathbb{R})$ such that $\rho(s) = \infty$. Putting $\Gamma_s = \{\gamma \in \Gamma \mid \gamma(s) = s\}$, we have $$\rho \Gamma_s \rho^{-1} \cdot \{\pm 1\} = \left\{\pm  \SmallMatrix{1 & h \\ 0 & 1}^m  \Bigg| m\in \mathbb{Z}\right\}$$
	for some positive integer $h$. \begin{itemize}[leftmargin=*]
	\item $k$ is even. Since $(f|_k \rho^{-1})(\tau)$ is invariant under $\tau \mapsto \tau+h$, there exists a meromorphic function $\Phi$ near the origin such that $(f|_k \rho^{-1})(\tau) = \Phi(e^{2\pi i \tau/h})$. The order $v_P(f)$ is defined as the order of $\Phi$ at the origin, namely $v_0(\Phi)$.
	\item $k$ is odd. In this case $\rho \Gamma_s \rho^{-1}$ is generated either by $\SmallMatrix{1 & h \\ 0 & 1}$ or by $- \SmallMatrix{1 & h \\ 0 & 1}$. In the former case (a regular cusp), $f|_k \rho^{-1}$ is invariant under $\tau \mapsto \tau+h$, and $v_P(f)$ is defined in the same way as above. In the latter case (an irregular cusp), $f|_k \rho^{-1}$ is negated under $\tau \mapsto \tau+h$. Hence, there exists an odd function $\Psi$ such that $(f|_k \rho^{-1})(\tau) = \Psi(e^{\pi i \tau/h})$; we define $v_P(f)$ to be $v_0(\Psi)/2$.
	\end{itemize}
	\end{itemize}
	Note that $v_P(f)$ can be nonintegral only at elliptic points or irregular cusps. For a divisor $\mathcal{D}$, set $$\mathcal{L}_k(\mathcal{D}) := \{f\in \mathcal{A}_k(\Gamma) \mid \div(f) \geq -\mathcal{D}\} \cup \{0\}.$$
	This is a finite-dimensional complex vector space; denote its dimension by $\ell_k(\mathcal{D}) := \dim \mathcal{L}_k(\mathcal{D})$.
	After dividing by our fixed nonzero element $\theta_k \in \mathcal{A}_k(\Gamma)$, we have
	$$\ell_k(\mathcal{D}) = \ell_0(\mathcal{D} + \div(\theta_k )) = \ell_0(\lfloor \mathcal{D} + \div(\theta_k )\rfloor),$$
	where the floor is applied componentwise to yield an integral divisor. 
	
			\begin{lemma}\label{lemma_k_2-k_dimension}
		For $k\geq 2$, we have \begin{equation}\label{aux_1}\deg (\lfloor \div \theta_k \rfloor) -  \deg (\lfloor \div \theta_{2-k} \rfloor) =  \dim \E_k(\Gamma) + 2\dim \S_k(\Gamma) - \delta_{k,2}.\end{equation}
	\end{lemma}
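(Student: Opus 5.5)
The plan is to avoid computing either floor-degree separately. Instead, I would pair $\lfloor\div\theta_k\rfloor$ with $\lfloor\div\theta_{2-k}\rfloor$ through the canonical divisor, and then read off $\deg\lfloor\div\theta_k\rfloor$ from the Riemann--Roch description of cusp forms as in \cite[Chapter~2]{shimura1971introduction}.

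\textbf{Step 1 (pairing the floors).} The quantity $\deg(\lfloor\div\theta_k\rfloor)$ does not depend on the choice of $\theta_k$, since two choices differ by a function. So I may take $\theta_{2-k}=\theta_2/\theta_k$, which gives $\div\theta_k+\div\theta_{2-k}=\div\theta_2$. Writing $\theta_2=\phi\,d\tau$ for a meromorphic differential $\phi\, d\tau$ on $X(\Gamma)$, Shimura's computation gives
$$\div\theta_2=K+\sum_{\text{cusps}}(s)+\sum_{x\ \text{elliptic}}\bigl(1-\tfrac1{e_x}\bigr)(x),$$
with $K$ a canonical divisor of degree $2g-2$. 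I would then compare $\lfloor a\rfloor+\lfloor b\rfloor$ with $a+b$ pointwise, where $a,b$ are the coefficients of $\div\theta_k$ and $\div\theta_{2-k}$.
\begin{itemize}
\item At an elliptic point of order $e$, the fractional parts of $a$ and $b$ lie in $\frac1e\mathbb Z$ and sum to $1-\frac1e$ modulo $1$. Hence their sum is exactly $1-\frac1e$, and the floors add up to $a+b-(1-\frac1e)$.
\item At an irregular cusp (odd $k$), both $a$ and $b$ are half-integers, so the floors lose a total of $1$.
\item At a regular cusp, both $a$ and $b$ are integers and nothing is lost.
\end{itemize}
This yields
$$\lfloor\div\theta_k\rfloor+\lfloor\div\theta_{2-k}\rfloor\sim K+\sum_{s\ \text{regular}}(s),$$
and therefore the left-hand side of \eqref{aux_1} equals $2\deg\lfloor\div\theta_k\rfloor-(2g-2)-\varepsilon^{\rm reg}_\infty$. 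Here $\varepsilon^{\rm reg}_\infty=\varepsilon_\infty$ when $k$ is even.

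\textbf{Step 2 (cusp forms via Riemann--Roch).} A cusp form must satisfy $v_s\ge1$ at regular cusps and $v_s\geq\frac12$ at irregular ones, and the latter condition is already built into the floor. Thus
$$\S_k(\Gamma)=\theta_k\cdot\mathcal L_0\Bigl(\lfloor\div\theta_k\rfloor-\sum_{\text{reg}}(s)\Bigr).$$
For $k\ge3$ this divisor has degree $>2g-2$, so Riemann--Roch gives
$$\dim\S_k=\deg\lfloor\div\theta_k\rfloor-\varepsilon^{\rm reg}_\infty+1-g.$$
Substituting into Step 1, the left-hand side of \eqref{aux_1} becomes $2\dim\S_k+\varepsilon^{\rm reg}_\infty$. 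Since $\dim\E_k=\varepsilon^{\rm reg}_\infty$ for $k\ge3$, this is the claim.

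\textbf{Step 3 ($k=2$).} Here $\theta_{2-k}=\theta_0=1$, so the left-hand side is just $\deg\lfloor\div\theta_2\rfloor=2g-2+\varepsilon_\infty$. Using $\dim\S_2=g$ and $\dim\E_2=\varepsilon_\infty-1$, this equals $\dim\E_2+2\dim\S_2-1$, which accounts for the $\delta_{k,2}$.

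\textbf{Main obstacle.} I expect the delicate point to be the bookkeeping of fractional parts in Step 1, especially the odd-weight irregular cusps. One needs that $v_s(\theta_k)$ and $v_s(\theta_{2-k})$ are both half-integers there, which follows from $2-k$ also being odd. One must also make sure that Shimura's description of $\S_k$ and $\dim\E_k$ uses the same floor conventions as here.
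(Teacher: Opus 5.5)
Your proof is correct, but it follows a different route from the paper's. The paper substitutes Shimura's closed formulas for $\deg\lfloor\div\theta_k\rfloor$ (separately for even and odd $k$), for $\dim\E_k(\Gamma)$ and for $\dim\S_k(\Gamma)$ into both sides. Everything then reduces to the elementary identity $\lfloor\frac{k(e-1)}{2e}\rfloor+\lfloor\frac{(2-k)(e-1)}{2e}\rfloor=0$, valid for $k$ even or $e$ odd.

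You instead compute $\lfloor\div\theta_k\rfloor+\lfloor\div\theta_{2-k}\rfloor=K+\sum_{s\ \mathrm{regular}}(s)$ exactly through $\theta_2=\theta_k\theta_{2-k}$. You then obtain $\dim\S_k(\Gamma)$ from one application of Riemann--Roch, so the only external inputs are the shape of $\div\theta_2$ and $\dim\E_k(\Gamma)=\varepsilon^{\mathrm{reg}}_\infty$. The paper's floor identity is exactly your elliptic-point bookkeeping in another form.

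The paper's route is a quick check once Shimura's formulas are quoted. Yours explains why $\dim\S_k$ appears with coefficient $2$: there is a duality between weights $k$ and $2-k$ through the canonical divisor. It is also the form of the argument that should adapt to the character version \eqref{d_chi_dim}, where $\theta_2/\theta_k$ carries the conjugate character.

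Two details deserve a line each:
\begin{itemize}
\item At an elliptic point, each fractional part is at most $1-\frac1e$. Their sum therefore lies in $[0,2-\frac2e]$, and the congruence forces it to equal $1-\frac1e$.
\item The hypothesis $\deg>2g-2$ in Step 2 follows from Step 1. Indeed, $\deg\bigl(\lfloor\div\theta_k\rfloor-\sum_{s\ \mathrm{regular}}(s)\bigr)=2g-2-\deg\lfloor\div\theta_{2-k}\rfloor$, and $\deg\lfloor\div\theta_{2-k}\rfloor\le\deg\div\theta_{2-k}<0$ for $k\ge 3$ by \eqref{degXk}.
\end{itemize}
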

	\begin{proof}
		The proof consists only of substituting various formulas from Shimura \cite[Chapter~2]{shimura1971introduction}. Let $e_1,\ldots, e_r$ be the orders of the inequivalent elliptic points of $\Gamma\backslash \mathbb{H}^\ast$. First, we have\footnote{Here the even case is found at the end of page 45 of \cite{shimura1971introduction}, and the odd case in the middle of page 47.}
		$$\deg (\lfloor \div \theta_k \rfloor) = \begin{cases}
			\frac{k}{2}(2g-2 +\varepsilon_\infty) + \sum_{j=1}^r \lfloor \frac{k(e_j-1)}{2e_j}\rfloor, & \qquad k\in 2\mathbb{Z},\\
			\frac{k}{2}(2g-2) + \frac{k}{2}\varepsilon^{\text{reg}}_\infty + \frac{k-1}{2}\varepsilon^{\text{irr}}_\infty + \sum_{j=1}^r \lfloor \frac{k(e_j-1)}{2e_j}\rfloor, & \qquad k\in 2\mathbb{Z}+1.
		\end{cases}$$ 
		Now we recall various dimension formulas for modular forms \cite[Theorems~2.24,2.25]{shimura1971introduction}. When $k=2$, we have $\dim \E_k(\Gamma) = \varepsilon_\infty-1$ and $\dim \S_k(\Gamma) = g$. It is then easy to check that \eqref{aux_1} holds. When $k\geq 4$ is even, $$\dim \E_k(\Gamma) = \varepsilon_\infty,\quad \dim \S_k(\Gamma) = (k-1)(g-1) + (\frac{k}{2}-1)\varepsilon_\infty +\sum_{j=1}^r \lfloor \frac{k(e_j-1)}{2e_j}\rfloor.$$
		When $k\geq 3$ is odd, $$\dim \E_k(\Gamma) = \varepsilon^{\text{reg}}_\infty,\quad \dim \S_k(\Gamma) = (k-1)(g-1) + (\frac{k}{2}-1)\varepsilon^{\text{reg}}_\infty + \frac{k-1}{2}\varepsilon^{\text{irr}}_\infty + \sum_{j=1}^r \lfloor \frac{k(e_j-1)}{2e_j}\rfloor.$$
		Note that when $k$ is odd, the $e_j$ must also be odd \cite[Corollary~1.21]{shimura1971introduction}. The following elementary observation about the floor function then completes the proof:
		\begin{itemize}[leftmargin=*]
			\item For $k\in 2\mathbb{Z}$, $e\in \mathbb{N}$, we have $ \lfloor \frac{k(e-1)}{2e}\rfloor +  \lfloor \frac{(2-k)(e-1)}{2e}\rfloor = 0$.
			\item For $k\in 2\mathbb{Z}+1$, $e\in 2\mathbb{N}+1$, we have $ \lfloor \frac{k(e-1)}{2e}\rfloor +  \lfloor \frac{(2-k)(e-1)}{2e}\rfloor = 0$.
		\end{itemize} 
	\end{proof}
	
		\begin{lemma}\label{lemma_elliptic_order_congruence}
		Let $x\in\Gamma\backslash\mathbb{H}$. Then, for $f\in \mathcal{A}_k(\Gamma)$, we have
		$$v_x(f)\equiv\frac{k(e_x-1)}{2e_x}\pmod{\mathbb{Z}}.$$
	\end{lemma}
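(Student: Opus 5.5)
The plan is to reduce to a local computation at a lift $z\in\mathbb{H}$ of $x$. Let $e=e_x$. The stabilizer $\Gamma_z$, modulo $\pm I$, is cyclic of order $e$, generated by an elliptic element $\gamma$. Conjugate by the Cayley map $\rho:\mathbb{H}\to\mathbb{D}$ with $\rho(z)=0$. Then $\rho\gamma\rho^{-1}$ acts on $\mathbb{D}$ as the rotation $w\mapsto \zeta w$, where $\zeta$ is a primitive $e$-th root of unity. By definition $v_x(f)=\operatorname{ord}_z(f)/e$, so the claim is equivalent to
\[
\operatorname{ord}_z(f)\equiv \frac{k(e-1)}{2}\pmod{e}.
\]

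First I reduce to one convenient form. If $f_1,f_2\in\mathcal{A}_k(\Gamma)$ are both nonzero, then $f_1/f_2\in\mathcal{A}_0(\Gamma)$ is a meromorphic function on $X(\Gamma)$. Its order at $z$ is divisible by $e$, since the local coordinate on $X(\Gamma)$ at $x$ is $w^e$. So $\operatorname{ord}_z(f)\bmod e$ does not depend on $f$, and it suffices to compute it for a single form. There is a nonzero $f$ only if the weight is compatible, i.e.\ $-I\notin\Gamma$ for odd $k$, as the paper assumes.

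Second, I compute the constraint directly. Write $\gamma=\pm\begin{psmallmatrix}a&b\\c&d\end{psmallmatrix}$ with eigenvalues $\lambda^{\pm1}$ for $\gamma$ itself, where $\lambda=e^{i\pi m/e}$ or similar. One has $c z+d=\lambda^{-1}$ (up to the standard convention), and $\gamma'(z)=(cz+d)^{-2}$. Expand $f(\tau)=\sum_{n\ge n_0}a_n(\tau-z)^n$, or better $F(w)=(\text{factor})^{k}f(\rho^{-1}w)=\sum a_n w^n$, using the usual automorphy factor that makes the weight-$k$ action a rotation. Invariance under $\gamma$ becomes
\[
F(\zeta w)=\epsilon\,\lambda^{k}F(w)\qquad(\epsilon=\pm1 \text{ accounting for } -\gamma).
\]
Hence every nonzero coefficient $a_n$ satisfies $\zeta^n=\epsilon\lambda^{k}$. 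The point is that $\lambda^{2}=\zeta^{\pm1}$: the derivative of the rotation at $0$ is $\zeta$, and it equals $(cz+d)^{-2}$. So $\lambda^k$ is a square root of $\zeta^{\pm k}$, and the relation pins down $n$ modulo $e$.

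Third, I identify this residue with $k(e-1)/2$. The congruence reads $n\equiv \mp k/2 \pmod e$ when $k$ is even; note $-k/2\equiv k(e-1)/2 \pmod e$. When $k$ is odd, $e$ is odd (Corollary 1.21 of Shimura), so $2$ is invertible mod $e$ and the same formula holds. The sign $\epsilon$ from the choice between $\gamma$ and $-\gamma$ must match. To avoid this delicate bookkeeping, I would instead verify the residue using Shimura's explicit count from the proof of Lemma \ref{lemma_k_2-k_dimension}. The fractional part of $\deg\div\theta_k$ at an elliptic point is $\{k(e-1)/(2e)\}$; this is exactly how Shimura's formula $\deg\lfloor\div\theta_k\rfloor$ with $\lfloor k(e_j-1)/2e_j\rfloor$ arises. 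Concretely, one can take $f=\theta_2^{k/2}$ for even $k$. Here $\theta_2=d\phi/d\tau$ for $\phi\in\mathcal{A}_0$ nonconstant with $\phi(\tau)-\phi(z)\sim (\tau-z)^e$, so $\operatorname{ord}_z\theta_2=e-1$. This gives $\operatorname{ord}_z f=k(e-1)/2$ on the nose. For odd $k$, I would use $f^2\in\mathcal{A}_{2k}$, which gives $2\operatorname{ord}_z f\equiv k(e-1)\pmod e$, and then divide by $2$ since $e$ is odd.

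The main obstacle is this last sign/normalization issue in the odd-weight case. The even case is clean via the differential $d\phi$. The squaring trick handles the odd case, provided the fact that odd weight forces odd $e$ is invoked, and I expect that to be the only delicate point.
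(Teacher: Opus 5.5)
Your argument is correct, but the route you actually rely on (your third step) differs from the paper's. The paper carries out the direct computation you sketch and then abandon in your second step. In the coordinate $w=(\tau-x)/(\tau-\overline{x})$ one has $w(\gamma\tau)=j_x^{-2}w(\tau)$ with $j_x=cx+d$. The function $\widetilde f(w)=(1-w)^{-k}f(\tau)$ then satisfies $\widetilde f(j_x^{-2}w)=j_x^{k}\widetilde f(w)$, so every exponent $m$ occurring in $\widetilde f$ satisfies $j_x^{2m+k}=1$. Because the same number $j_x$ governs both the rotation and the automorphy factor, the $\pm$ ambiguity you worried about never arises. Replacing $\gamma$ by $-\gamma$ only changes $j_x^k$ by $(-1)^k$, which is harmless for even $k$ and moot for odd $k$, where $-I\notin\Gamma$.

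You instead argue in three steps. First, $\operatorname{ord}_z f \bmod e$ is independent of $f$, because weight-$0$ quotients are functions on $X(\Gamma)$. Second, you evaluate it on the test form $(\phi')^{k/2}$. Third, you handle odd $k$ by squaring and halving, using that $e$ is odd; this is the same input the paper uses. Both proofs are complete.

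The paper's version is self-contained and proves slightly more: all Laurent exponents of $\widetilde f$, not just the leading one, lie in the single class $k(e-1)/2 \bmod e$. Yours is more conceptual. It exhibits the residue $k(e-1)/2$ as $k/2$ times the ramification order $e-1$ of $d\phi$, which is exactly where the terms $\lfloor k(e_j-1)/2e_j\rfloor$ in Shimura's formula come from.

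Note that you only need $\operatorname{ord}_z\phi'\equiv -1 \pmod e$. This holds for every nonconstant $\phi\in\mathcal{A}_0(\Gamma)$, whether or not it is holomorphic at $z$, since the order of $\phi-\phi(z)$ (or of $\phi$ itself, at a pole) is a multiple of $e$. So choosing $\phi$ with $\phi-\phi(z)$ of order exactly $e$ is unnecessary.
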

	\begin{proof}
		If $e_x = 1$, i.e., $x$ is not an elliptic point, there is nothing to prove. Thus, assume $e := e_x >1$. Introduce the coordinate
		$$w=\frac{\tau-x}{\tau-\overline{x}},\qquad
		\tau=\frac{x-\overline{x}w}{1-w}.$$
		If $\gamma=\begin{psmallmatrix}a&b\\c&d\end{psmallmatrix}$ generates the stabilizer of $x$ modulo scalar matrices, put
		$$j_x:=cx+d,\qquad \zeta_x:=j_x^{-2}.$$
		One easily checks that $w(\gamma\tau)=\zeta_x w(\tau)$. Thus, $\zeta_x$ is a primitive $e$-th root of unity. Consider $$\widetilde{f}(w) := (1-w)^{-k}f\left(\frac{x-\overline{x}w}{1-w}\right).$$
		Since $f$ has weight $k$, $$\widetilde f(\zeta_x w)=j_x^k\widetilde f(w).$$
		If $\widetilde f(w)=\sum_{m\in \mathbb{Z}} a_mw^m$, comparison of coefficients shows that
		$$a_m\neq 0 \implies \zeta_x^m=j_x^k \iff j_x^{2m+k} = 1.$$
		When $k$ is even, the fact that $j_x^2$ is a primitive $e$-th root of unity implies $m+k/2 \equiv 0 \pmod{e}$. When $k$ is odd, $e$ is odd and $j_x$ is a primitive $e$-th root of unity. Hence $2m+k\equiv 0 \pmod{e} \implies m\equiv k\frac{e-1}{2} \pmod{e}$. Taking $m=e v_x(f)$ in both cases gives the conclusion.
	\end{proof}

	For a positive integer $e$, define
	$$\mu_k(e):=\left\lfloor\frac{(2-k)(e-1)}{2e}\right\rfloor-\left\lfloor\frac{(2-k)(e-1)}{2e}-\frac{k-1}{e}\right\rfloor.$$
	For the given finite $S\subset X(\Gamma)$, define the following divisor
	$$S^\circ := \sum_{x\in S\cap (\Gamma \backslash \mathbb{H})} \frac{x}{e_x}.$$
	\begin{lemma}\label{lemma_k_2-k_dimension_S}
	For $k\geq 2$, we have
	$$\deg\lfloor\div\theta_k\rfloor-\deg\left\lfloor\div\theta_{2-k}-(k-1)S^\circ\right\rfloor+\delta_{k,2} = \dim\E_k(\Gamma)+2\dim\S_k(\Gamma)+\sum_{x\in S\cap(\Gamma\backslash\mathbb{H})}\mu_k(e_x).$$
	\end{lemma}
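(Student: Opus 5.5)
The plan is to reduce everything to Lemma \ref{lemma_k_2-k_dimension}, which already gives
$$\deg\lfloor\div\theta_k\rfloor-\deg\lfloor\div\theta_{2-k}\rfloor+\delta_{k,2}=\dim\E_k(\Gamma)+2\dim\S_k(\Gamma).$$
Subtracting this from the claimed identity, the statement becomes equivalent to
$$\deg\lfloor\div\theta_{2-k}\rfloor-\deg\left\lfloor\div\theta_{2-k}-(k-1)S^\circ\right\rfloor=\sum_{x\in S\cap(\Gamma\backslash\mathbb{H})}\mu_k(e_x).$$
The floor is taken componentwise, and $S^\circ$ is supported on the interior points of $S$. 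Hence only points $x\in S\cap(\Gamma\backslash\mathbb{H})$ contribute to the left-hand side. At such a point the contribution is
$$\lfloor v_x(\theta_{2-k})\rfloor-\left\lfloor v_x(\theta_{2-k})-\tfrac{k-1}{e_x}\right\rfloor .$$
It therefore suffices to show that this local contribution equals $\mu_k(e_x)$ for each such $x$.

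To do this, I will use Lemma \ref{lemma_elliptic_order_congruence} in weight $2-k$. Write $e=e_x$. The lemma gives
$$v_x(\theta_{2-k})=\frac{(2-k)(e-1)}{2e}+N$$
for some integer $N$. Both floors are shifted by the same integer $N$, so $N$ cancels in the difference. What remains is exactly
$$\left\lfloor\frac{(2-k)(e-1)}{2e}\right\rfloor-\left\lfloor\frac{(2-k)(e-1)}{2e}-\frac{k-1}{e}\right\rfloor=\mu_k(e).$$
Summing over $x$ gives the reduced identity. Combined with Lemma \ref{lemma_k_2-k_dimension}, this proves the statement.

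The nonnegativity $\mu_k(e)\geq0$ follows because $k\geq2$ makes $\frac{k-1}{e}\geq0$. At non-elliptic points ($e=1$) the formula reduces to $\mu_k(1)=k-1$, which is consistent with a pole of order $k-1$ being permitted there.

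I expect no real obstacle. The only point needing care is that the componentwise floor commutes with the integer shift $N$, even though $v_x$ is fractional at elliptic points. This is exactly what the congruence in Lemma \ref{lemma_elliptic_order_congruence} provides. One should also observe that cusps in $S$ contribute nothing, because $S^\circ$ excludes them.
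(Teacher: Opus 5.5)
Your proposal is correct and follows the paper's proof essentially verbatim. Both arguments reduce to Lemma \ref{lemma_k_2-k_dimension} and localize the difference of floors to the points of $S\cap(\Gamma\backslash\mathbb{H})$; then Lemma \ref{lemma_elliptic_order_congruence} in weight $2-k$ shows $v_x(\theta_{2-k})\equiv\frac{(2-k)(e_x-1)}{2e_x}\pmod{\mathbb{Z}}$, so the integer shift cancels and each local term is $\mu_k(e_x)$.
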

	\begin{proof}
	Note that this reduces to Lemma \ref{lemma_k_2-k_dimension} when $S$ has no interior points in $\mathbb{H}$. To prove this, start with Lemma~\ref{lemma_k_2-k_dimension}, which says that the quantity in question is
	$$
	\dim\E_k(\Gamma)+2\dim\S_k(\Gamma)+\deg\lfloor\div\theta_{2-k}\rfloor-\deg\left\lfloor\div\theta_{2-k}-(k-1)S^\circ\right\rfloor.$$
	By Lemma~\ref{lemma_elliptic_order_congruence},
	$$v_x(\theta_{2-k})\equiv\frac{(2-k)(e_x-1)}{2e_x}\pmod{\mathbb{Z}}.$$
	Consequently,
	\begin{align*}
		\deg\lfloor\div\theta_{2-k}\rfloor-\deg\left\lfloor\div\theta_{2-k}-(k-1)S^\circ\right\rfloor
		&=\sum_{x\in S\cap(\Gamma\backslash\mathbb{H})}\left(\lfloor v_x(\theta_{2-k})\rfloor-\left\lfloor v_x(\theta_{2-k})-\frac{k-1}{e_x}\right\rfloor\right)\\
		&=\sum_{x\in S\cap(\Gamma\backslash\mathbb{H})}\mu_k(e_x). \qedhere
	\end{align*}
	\end{proof}
	
	Recall the following consequence of the Riemann--Roch theorem, which will be of paramount importance to us.
	\begin{lemma}\cite[Proposition~2.14]{shimura1971introduction}\label{lemma_riemann_roch}
		Let $\mathcal{D}$ be an integral divisor of $\Gamma \backslash \mathbb{H}^\ast$. If $\deg(\mathcal{D}) > 2g-2$, then $$\ell_0(\mathcal{D}) = \deg(\mathcal{D})-g+1.$$
	\end{lemma}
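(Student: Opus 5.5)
The plan is to reduce the statement to the classical Riemann--Roch theorem on the compact Riemann surface $X(\Gamma)=\Gamma\backslash\mathbb{H}^\ast$. This amounts to checking that the paper's conventions for weight-zero forms agree with the usual ones on $X(\Gamma)$.

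\emph{Step 1: weight-zero forms are functions on $X(\Gamma)$.} I would identify $\mathcal{A}_0(\Gamma)$ with the field of meromorphic functions on $X(\Gamma)$. A $\Gamma$-invariant meromorphic function on $\mathbb{H}$ that is meromorphic at the cusps descends to $X(\Gamma)$, and every meromorphic function on $X(\Gamma)$ pulls back to such a function. This is standard from the construction of the complex structure on $X(\Gamma)$ in \cite[Chapter~1]{shimura1971introduction}.

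\emph{Step 2: the valuations $v_P$ agree with orders in local coordinates.} For a weight-zero $f$ I would compare $v_P(f)$ with the order of $f$ in a local chart at each type of point.
\begin{itemize}[leftmargin=*]
\item At a non-elliptic point of $\mathbb{H}$ the two orders coincide, since $\tau-P$ is a local coordinate.
\item At an elliptic point $x$ of order $e$, the local coordinate on $X(\Gamma)$ is $w^e$, with $w=(\tau-x)/(\tau-\overline{x})$ as in the proof of Lemma~\ref{lemma_elliptic_order_congruence}. If $f$ has order $m$ in $w$, then by that lemma (with $k=0$) $e\mid m$. So the order in the chart $w^e$ is $m/e$, which is exactly $v_x(f)$.
\item At a cusp the local coordinate is $q_h=e^{2\pi i\rho\tau/h}$, and $v_P(f)=v_0(\Phi)$ by definition. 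The irregular-cusp convention only matters in odd weight, so it plays no role here.
\end{itemize}
Consequently $\div f$ is the usual divisor of $f$ as a meromorphic function on $X(\Gamma)$. In weight zero it is always integral, so $\mathcal{L}_0(\mathcal{D})$ is the classical Riemann--Roch space $L(\mathcal{D})$ for an integral divisor $\mathcal{D}$.

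\emph{Step 3: apply Riemann--Roch.} The classical theorem gives $\ell(\mathcal{D})-\ell(K-\mathcal{D})=\deg\mathcal{D}-g+1$ for a canonical divisor $K$, which has degree $2g-2$. If $\deg\mathcal{D}>2g-2$, then $\deg(K-\mathcal{D})<0$, and a nonzero function $f$ with $\div f\ge \mathcal{D}-K$ would have divisor of positive degree. That is impossible, since principal divisors have degree $0$. Hence $\ell(K-\mathcal{D})=0$, which gives the claim.

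The only point needing care is Step~2 at elliptic points: one must see that the paper's normalisation ``order divided by $e$'' is exactly the order in the true local coordinate. The divisibility $e\mid m$ from Lemma~\ref{lemma_elliptic_order_congruence} in weight zero settles this.
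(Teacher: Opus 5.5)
Your proposal is correct and takes essentially the same route as the paper: the paper gives no argument of its own and simply cites Shimura, where $v_P$ (including the division by $e$ at elliptic points) is set up so that $\div f$ for $f\in\mathcal{A}_0(\Gamma)$ is the ordinary divisor on the compact Riemann surface $X(\Gamma)$, and Proposition~2.14 then follows from Riemann--Roch since $\ell(K-\mathcal{D})=0$ when $\deg\mathcal{D}>2g-2$. Your Step~2 check is exactly the implicit bridge between the paper's conventions and Shimura's that the citation relies on.
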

	
	%We say a divisor $\mathcal{D}$ is \textit{cuspidal} if it is supported on the cusps of $\Gamma\backslash \mathbb{H}^\ast$.
	
	\begin{lemma}\label{lemma_D_L}
		Let $\mathcal{D}$ be a divisor of $X(\Gamma)$ supported on $S$. Then, \begin{enumerate}[leftmargin=*]
			\item $D^{k-1}(\mathcal{L}_{2-k}(\mathcal{D})) \subset \mathcal{L}_{k}(\mathcal{D} + (k-1)S^\circ)$.
			\item Assume that all coefficients of $\mathcal{D}$ at points of $S$ are positive. If $f\in \M_{2-k}^S(\Gamma)$ is such that $D^{k-1}(f) \in \mathcal{L}_{k}(\mathcal{D} + (k-1)S^\circ)$, then $f\in \mathcal{L}_{2-k}(\mathcal{D})$. 
		\end{enumerate}
	\end{lemma}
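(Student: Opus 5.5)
Both parts are local statements, so the plan is to check them point by point: for each $P\in S$, compare $v_P(D^{k-1}f)$ with $v_P(f)$. Away from $S$ there is nothing to check, since $f\in\mathcal{L}_{2-k}(\mathcal{D})$ with $\mathcal{D}$ supported on $S$ is holomorphic there and $D^{k-1}$ preserves holomorphy. Bol's identity guarantees that $D^{k-1}f$ is again automorphic of weight $k$. So only three local computations are needed.

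At a cusp $s$, choose $\rho$ with $\rho s=\infty$ and use Bol's identity $D^{k-1}(f|\rho^{-1})=(D^{k-1}f)|\rho^{-1}$. In the local parameter $q_h=e^{2\pi i\tau/h}$ we have $D=\frac1h q_h\frac{d}{dq_h}$, which multiplies $q_h^m$ by $m/h$. This never lowers the order and kills only the constant term. For irregular cusps the same holds with half-integral exponents, via $e^{\pi i\tau/h}$. Hence $v_s(D^{k-1}f)\ge v_s(f)$ whenever $v_s(f)\le 0$, and otherwise $v_s(D^{k-1}f)\ge \min(v_s(f),1)\ge 0$. Since $S^\circ$ has no cusp component, the cusp coefficient required in (1) is just $-\mathcal{D}(s)$, which settles (1) at cusps. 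For (2), with $\mathcal{D}(s)>0$, the lowest nonconstant term $a_mq_h^m$ of $f$ is mapped to $(m/h)^{k-1}a_mq_h^m$ with nonzero coefficient. Therefore, if $f$ has a pole at $s$, then $v_s(D^{k-1}f)=v_s(f)$. The condition $v_s(D^{k-1}f)\ge-\mathcal{D}(s)$ then forces $v_s(f)\ge -\mathcal{D}(s)$. If $f$ has no pole at $s$ the inequality is trivial, because $-\mathcal{D}(s)<0$.

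At an interior point $x\in S$ with elliptic order $e=e_x$, let $m=e\,v_x(f)$ be the order of $f$ as a function of $\tau$. If $m<0$, differentiating $k-1$ times gives $\tau$-order exactly $m-(k-1)$. If $m\ge0$, the order is at least $\min(m-(k-1),0)$, and $m-(k-1)$ is the worse of the two lower bounds whenever it matters. So $e\,v_x(D^{k-1}f)\ge e\,v_x(f)-(k-1)$, i.e.\ $v_x(D^{k-1}f)\ge v_x(f)-\frac{k-1}{e}$. This is exactly the shift by $(k-1)S^\circ$, which gives (1).

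The main obstacle is the converse at interior points in (2). There $D^{k-1}$ can have a kernel-like defect: if $0\le m\le k-2$, the derivative may drop the order by less than $k-1$. Fortunately this case is harmless, because $m\ge 0$ already gives $v_x(f)\ge0>-\mathcal{D}(x)$. If instead $m<0$, then $\frac{d^{k-1}}{d\tau^{k-1}}(\tau-x)^m$ has nonzero coefficient $m(m-1)\cdots(m-k+2)$, so the order drops by exactly $k-1$. The hypothesis $v_x(D^{k-1}f)\ge -\mathcal{D}(x)-\frac{k-1}{e}$ then yields $v_x(f)\ge -\mathcal{D}(x)$. Note that it is enough for the coefficients to be compared as rational numbers; no integrality or rounding is involved, so Lemma \ref{lemma_elliptic_order_congruence} is not needed here. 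The positivity hypothesis on $\mathcal{D}$ is used exactly once in each case, to dispose of the $m\ge0$ situation where the order comparison fails.
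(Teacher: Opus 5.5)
Your proposal follows the paper's proof essentially step for step: Bol's identity plus the $q$-expansion computation at cusps (the order never drops, only the constant term is killed, and the order is preserved exactly at a pole), the Laurent-expansion estimate $v_x(D^{k-1}f)\ge v_x(f)-\frac{k-1}{e_x}$ at interior points with equality at a pole, and positivity of $\mathcal{D}$ to dispose of the holomorphic case in (2). One bookkeeping fix is needed: in (1) the divisor $\mathcal{D}$ need not be effective (Proposition \ref{Dt^n_continuation} applies it to $n\mathcal{D}_0+\mathcal{D}$, which for $n\gg0$ has large negative coefficients at the zeros of $t$), so you must record the unconditional bounds $v_s(D^{k-1}f)\ge v_s(f)$ and $\tau$-order at least $\max(m-(k-1),0)$, rather than the weaker $\min(v_s(f),1)$ and $\min(m-(k-1),0)$ you wrote, both of which follow immediately from your own observations.
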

	\begin{proof}
		Let $f\in\mathcal{L}_{2-k}(\mathcal{D})$. Bol's identity shows that $D^{k-1}(f)$ is a meromorphic modular form of weight $k$. We first consider the cusp $\infty$. Write $f = \sum_{n\in \mathbb{Z}} a_n e^{2\pi i n\tau/w}$, where $w$ is the width at $\infty$. Then
		\begin{equation}\label{aux_2}D^{k-1}(f) =  \frac{1}{w^{k-1}} \sum_{\substack{n\in \mathbb{Z} \\ n\neq 0}} a_n n^{k-1} e^{2\pi i n\tau/w}.\end{equation}
		Thus $v_\infty(D^{k-1}(f)) \geq v_\infty (f)$, and equality holds if and only if $v_\infty(f)\neq 0$. The situation at other cusps can be translated to that of $\infty$ by using Bol's identity $D^{k-1}(f|_{2-k} \gamma) = (D^{k-1}f)|_k \gamma$; at an irregular cusp, the same calculation is made in the corresponding half-integral local parameter. At an interior point $x$, differentiation of a local Laurent expansion gives
		$$v_x(D^{k-1}f)\geq v_x(f)-\frac{k-1}{e_x}.$$
		Moreover, if $f$ is holomorphic at $x$, then so is $D^{k-1}f$. It follows that $D^{k-1}f$ has no poles outside $S$ and that
		$$\div(D^{k-1}f)\geq-\mathcal{D}-(k-1)S^\circ,$$
		which proves (1).

		For (2), let $x\in S\cap(\Gamma\backslash\mathbb{H})$. If $f$ has a pole at $x$, differentiation of its leading Laurent term gives the equality
		$$v_x(D^{k-1}f)=v_x(f)-\frac{k-1}{e_x}.$$
		The assumed lower bound for $v_x(D^{k-1}f)$ therefore implies $v_x(f)\geq-\mathcal{D}(x)$. If $f$ is holomorphic at $x$, the same conclusion follows from the positivity of $\mathcal{D}(x)$. At a cusp $s\in S$, equation \eqref{aux_2} and Bol's identity similarly show that
		$$v_s(D^{k-1}f)=v_s(f)$$
		whenever $f$ has a pole at $s$; the holomorphic case again follows from the positivity of $\mathcal{D}(s)$. Since $f$ has no poles outside $S$, we conclude that $\div(f)\geq-\mathcal{D}$, proving (2).
	\end{proof}
	
Next we restate and prove Theorem \ref{dim_theorem} in the introduction.
	\begin{theorem}
	For $k\geq 2$, the quotient $$\frac{\M_k^S(\Gamma)}{D^{k-1}(\M_{2-k}^S(\Gamma))}$$ is finite-dimensional, with dimension
	\begin{equation}\label{aux_20}\dim\E_k(\Gamma)+2\dim\S_k(\Gamma)+\sum_{x\in S\cap(\Gamma\backslash\mathbb{H})}\mu_k(e_x).\end{equation}
	\end{theorem}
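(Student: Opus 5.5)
The plan is to exhaust $\M_k^S(\Gamma)$ by the finite-dimensional spaces $\mathcal{L}_k(\mathcal{D}+(k-1)S^\circ)$ and compute each quotient by Riemann--Roch. Fix a divisor $\mathcal{D}$ supported on $S$ whose coefficients at all points of $S$ are positive and large. Every element of $\M_k^S(\Gamma)$ lies in $\mathcal{L}_k(\mathcal{D}+(k-1)S^\circ)$ once $\mathcal{D}$ is large enough, so
$$\frac{\M_k^S(\Gamma)}{D^{k-1}(\M_{2-k}^S(\Gamma))}=\varinjlim_{\mathcal{D}}\frac{\mathcal{L}_k(\mathcal{D}+(k-1)S^\circ)}{D^{k-1}(\M_{2-k}^S(\Gamma))\cap \mathcal{L}_k(\mathcal{D}+(k-1)S^\circ)}.$$
By Lemma \ref{lemma_D_L}(1) and (2), the denominator on the right is exactly $D^{k-1}(\mathcal{L}_{2-k}(\mathcal{D}))$. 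The kernel of $D^{k-1}$ on $\M_{2-k}^S(\Gamma)$ consists of modular forms of weight $2-k$ that are polynomials in $\tau$, i.e. constants. Such constants exist only when $k=2$, and then they contribute one dimension. So the dimension of the $\mathcal{D}$-th quotient is
$$\ell_k(\mathcal{D}+(k-1)S^\circ)-\ell_{2-k}(\mathcal{D})+\delta_{k,2}.$$

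Next I compute both terms. We have $\ell_k(\mathcal{D}+(k-1)S^\circ)=\ell_0(\lfloor \mathcal{D}+(k-1)S^\circ+\div\theta_k\rfloor)$ and $\ell_{2-k}(\mathcal{D})=\ell_0(\lfloor\mathcal{D}+\div\theta_{2-k}\rfloor)$. For $\mathcal{D}$ large, both integral divisors have degree $>2g-2$, so Lemma \ref{lemma_riemann_roch} applies. The difference of the two $\ell$'s is therefore
$$\deg\lfloor \mathcal{D}+(k-1)S^\circ+\div\theta_k\rfloor-\deg\lfloor \mathcal{D}+\div\theta_{2-k}\rfloor.$$

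Since $\mathcal{D}$ is integral, it can be pulled out of both floors. I also want to move $(k-1)S^\circ$ from the $\theta_k$ term to the $\theta_{2-k}$ term, so that the expression matches Lemma \ref{lemma_k_2-k_dimension_S}. This is the main bookkeeping point, and it only involves the interior points of $S$. At such a point $x$, Lemma \ref{lemma_elliptic_order_congruence} gives $v_x(\theta_k)\equiv k(e-1)/(2e)$ and $v_x(\theta_{2-k})\equiv -k(e-1)/(2e)+(e-1)/e \pmod{\mathbb{Z}}$, where $e=e_x$. A short check shows that the fractional parts satisfy
$$\lfloor v_x(\theta_k)+\tfrac{k-1}{e}\rfloor-\lfloor v_x(\theta_k)\rfloor=\lfloor v_x(\theta_{2-k})\rfloor-\lfloor v_x(\theta_{2-k})-\tfrac{k-1}{e}\rfloor.$$
Both sides equal $\mu_k(e)$: the left side is the number of integers in $(v_x(\theta_k),v_x(\theta_k)+\frac{k-1}{e}]$, the right side is the number in $(v_x(\theta_{2-k})-\frac{k-1}{e},v_x(\theta_{2-k})]$, and these intervals are related by $y\mapsto \frac{k-1}{e}-y$ up to an integer shift.

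It is actually cleaner to avoid the interval comparison altogether. I would write the $\theta_k$ term as $\lfloor\div\theta_k\rfloor$ plus the local sums $\sum_x(\lfloor v_x(\theta_k)+\frac{k-1}{e_x}\rfloor-\lfloor v_x(\theta_k)\rfloor)$, and then evaluate these sums directly. Combining with Lemma \ref{lemma_k_2-k_dimension} or Lemma \ref{lemma_k_2-k_dimension_S}, the dimension of each $\mathcal{D}$-th quotient comes out as \eqref{aux_20}, which is independent of $\mathcal{D}$.

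Finally, the transition maps in the direct limit are injective, because the intersection description above is compatible as $\mathcal{D}$ grows. A directed union of spaces of constant finite dimension $d$ with injective maps has dimension $d$. This gives both the finite-dimensionality and the formula. I expect the floor and fractional-part identity at elliptic points to be the main obstacle; everything else is a direct combination of the cited lemmas.
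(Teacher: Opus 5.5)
Your architecture is the paper's. You exhaust by Riemann--Roch spaces, identify the denominator with $D^{k-1}(\mathcal{L}_{2-k}(\cdot))$ via Lemma~\ref{lemma_D_L}, apply Lemma~\ref{lemma_riemann_roch}, and take the union of nested $d$-dimensional images. The one difference is normalization. The paper applies Lemma~\ref{lemma_D_L} to the non-integral divisor $\mathcal{D}_N-(k-1)S^\circ$, which that lemma allows. So its weight-$k$ space is $\mathcal{L}_k(\mathcal{D}_N)$ with $\mathcal{D}_N$ integral, and the degree difference produced by Riemann--Roch is literally the left side of Lemma~\ref{lemma_k_2-k_dimension_S}. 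By taking $\mathcal{D}$ integral (which you should state explicitly), you put $(k-1)S^\circ$ on the weight-$k$ side instead. You then owe, at each interior $x\in S$ with $e=e_x$, the local identity
\[
\Bigl\lfloor v_x(\theta_k)+\tfrac{k-1}{e}\Bigr\rfloor-\bigl\lfloor v_x(\theta_k)\bigr\rfloor=\bigl\lfloor v_x(\theta_{2-k})\bigr\rfloor-\Bigl\lfloor v_x(\theta_{2-k})-\tfrac{k-1}{e}\Bigr\rfloor .
\]

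The justification you give for this identity is wrong: the two intervals are not related by $y\mapsto\frac{k-1}{e}-y$ up to an integer shift. Take $e=3$, $k=3$. Then $v_x(\theta_3)\equiv 0$ and $v_x(\theta_{-1})\equiv-\frac13$, so up to integer translation the intervals are $(0,\frac23]$ and $(-1,-\frac13]$. Neither contains an integer, so both sides equal $0=\mu_3(3)$. The reflected interval $[0,\frac23)$, however, and every integer translate of it contain exactly one integer, so it cannot be identified with $(-1,-\frac13]$. Your fallback of evaluating the local sums ``directly'' is the same identity, left unproved.

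The missing observation is that the relation is a pure integer translation. By Lemma~\ref{lemma_elliptic_order_congruence},
\[
v_x(\theta_k)+\frac{k-1}{e}-v_x(\theta_{2-k})\equiv\frac{(2k-2)(e-1)}{2e}+\frac{k-1}{e}=k-1\equiv 0\pmod{\mathbb{Z}},
\]
so the second interval is an integer translate of the first, and the identity is immediate. Equivalently, the left side depends only on $v_x(\theta_k)$ modulo $\mathbb{Z}$, so you may take $v_x(\theta_k)=\frac{k(e-1)}{2e}$. The exact equality $\frac{k(e-1)}{2e}+\frac{k-1}{e}=\frac{(2-k)(e-1)}{2e}+(k-1)$ then turns the left side into $\mu_k(e)$. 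Alternatively, choose $\mathcal{D}=\mathcal{D}_N-(k-1)S^\circ$ as the paper does, and the issue never arises.

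With this step repaired, the rest of your argument is correct: the kernel of $D^{k-1}$, the application of Riemann--Roch, and the nested-subspace argument for the union.
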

	\begin{proof}
	For $N\in\mathbb{N}$, let
	$$\mathcal{D}_N:=\sum_{s\in S}Ns.$$
	For sufficiently large $N$, the divisor $\mathcal{D}_N-(k-1)S^\circ$ has support $S$. Lemma~\ref{lemma_D_L} gives
	$$D^{k-1}\left(\mathcal{L}_{2-k}(\mathcal{D}_N-(k-1)S^\circ)\right)\subset\mathcal{L}_k(\mathcal{D}_N).$$ We may therefore consider the quotient
	$$Q_N:=\frac{\mathcal{L}_k(\mathcal{D}_N)}{D^{k-1}(\mathcal{L}_{2-k}(\mathcal{D}_N-(k-1)S^\circ))}.$$
	The kernel of $D^{k-1}$ on $\M_{2-k}^S(\Gamma)$ is zero when $k\geq 3$ and consists of the constants when $k=2$. It follows that
	$$\dim Q_N=\ell_k(\mathcal{D}_N)-\ell_{2-k}(\mathcal{D}_N-(k-1)S^\circ)+\delta_{k,2}.$$
	Using the fixed nonzero forms $\theta_k\in\mathcal{A}_k(\Gamma)$ and $\theta_{2-k}\in\mathcal{A}_{2-k}(\Gamma)$, this becomes
	$$\dim Q_N={}\ell_0\left(\mathcal{D}_N+\lfloor\div\theta_k\rfloor\right)
	-\ell_0\left(\mathcal{D}_N+\left\lfloor\div\theta_{2-k}-(k-1)S^\circ\right\rfloor\right)+\delta_{k,2}.$$
	For sufficiently large $N$, Riemann--Roch (Lemma \ref{lemma_riemann_roch}) gives
	$$\dim Q_N=\deg\lfloor\div\theta_k\rfloor-\deg\left\lfloor\div\theta_{2-k}-(k-1)S^\circ\right\rfloor+\delta_{k,2},$$
	which is independent of $N$. By Lemma~\ref{lemma_k_2-k_dimension_S}, this is exactly the quantity on the right-hand side of \eqref{aux_20}.
	Hence, $d:= \dim Q_N$ equals \eqref{aux_20} when $N$ is sufficiently large. For any given $f\in \M_k^S(\Gamma)$, we have $f\in \mathcal{L}_k(\mathcal{D}_N)$ for some $N$, so we have proved
	$$\dim\frac{\M_k^S(\Gamma)}{D^{k-1}(\M_{2-k}^S(\Gamma))} \leq d.$$
	
	Next we argue that the dimension is exactly $d$. Let $N$ be sufficiently large and fix elements $f_1,\ldots,f_d \in \mathcal{L}_k(\mathcal{D}_N)$ whose classes form a basis of $Q_N$. It remains to show that $f_1,\ldots,f_d$ are linearly independent modulo $D^{k-1}(\mathcal{M}_{2-k}^S(\Gamma))$. Let $h \in \mathcal{M}_{2-k}^S(\Gamma)$ be such that
	$\sum_{j=1}^d c_j f_j = D^{k-1}(h)$. The left-hand side is in $\mathcal{L}_k(\mathcal{D}_N)$. Since $\mathcal{D}_N-(k-1)S^\circ$ is positive for sufficiently large $N$, part (2) of Lemma \ref{lemma_D_L} implies that $h\in \mathcal{L}_{2-k}(\mathcal{D}_N - (k-1)S^\circ)$. Thus, $c_j = 0$ by our choice of $f_j$, completing the proof.
	\end{proof}

	\section{Creative telescoping}\label{creative_telescope_section}
	We continue to use the notation introduced at the beginning of the previous section. Throughout this section, we let $t$ be a nonconstant element of $\M_0^S(\Gamma)$ whose divisor is supported on $S$. \par
	The space of meromorphic modular forms $\M_k^S(\Gamma)$ is a $\mathbb{C}$-vector space. Letting $n$ be an indeterminate,\footnote{Sometimes we will specialize $n$ to an integer. It will always be clear from context which interpretation of $n$ is intended.} we shall consider the $\mathbb{C}(n)$-vector space $\M_k^S(\Gamma) \otimes_\mathbb{C} \mathbb{C}(n)$. We will still use $D^{k-1}$ to denote the map obtained from $\M_{2-k}^S(\Gamma) \to  \M_{k}^S(\Gamma)$ by tensoring with $\mathbb{C}(n)$.
	\subsection{Existence of creative telescoping and proof of Theorem \ref{creative_telescoping_S_intro}}
	We need some preparation results, which will also be important conceptually.
	\begin{proposition}\label{Dt^n_continuation}
	Let $k\geq 2$ and $h\in \mathcal{L}_{2-k}(\mathcal{D})$ for a divisor $\mathcal{D}$ supported on $S$. Let $f_1,\ldots,f_r$ be a basis of $\mathcal{L}_k(\mathcal{D}+(k-1)S^\circ)$. Then there exist polynomials $p_1,\ldots,p_r\in \mathbb{C}[n]$ such that, for every $n\in \mathbb{Z}$, we have
	\begin{equation}\label{aux_16}\frac{D^{k-1}(t^n h)}{t^n} = p_1(n) f_1 + \cdots + p_r(n) f_r.\end{equation}
	\end{proposition}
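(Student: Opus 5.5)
The plan is to expand $D^{k-1}(t^n h)$ by the Leibniz rule and then show that each coefficient of the resulting polynomial in $n$ is a genuine modular form lying in $\mathcal{L}_k(\mathcal{D}+(k-1)S^\circ)$.

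\textbf{Step 1: expansion in $n$.} Write $\lambda := Dt/t$. Since $D(t^n) = n\lambda t^n$, induction gives $D^j(t^n) = t^n P_j(n)$, where $P_j(n)$ is a polynomial in $n$ of degree $j$. Its coefficients are polynomials in $\lambda, D\lambda, \ldots, D^{j-1}\lambda$, independent of $n$; for instance $P_{j+1} = D P_j + n\lambda P_j$. The Leibniz rule then gives
\[
\frac{D^{k-1}(t^n h)}{t^n} = \sum_{j=0}^{k-1}\binom{k-1}{j} P_j(n)\, D^{k-1-j}h = \sum_{m=0}^{k-1} n^m F_m,
\]
where each $F_m$ is a holomorphic function on $\mathbb{H}$ away from a discrete set, independent of $n$. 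This identity holds for every integer $n$, and even formally in $n$.

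\textbf{Step 2: each $F_m$ is a meromorphic modular form of weight $k$.} I would use Bol's identity. For $\gamma \in \Gamma$ and any integer $n$, we have $t^n h|_{2-k}\gamma = t^n h$, so $D^{k-1}(t^n h)/t^n$ is $\Gamma$-invariant of weight $k$ for every $n \in \mathbb{Z}$. Hence the polynomial $\sum_m n^m (F_m|_k\gamma - F_m)$ vanishes for all integers $n$. This forces $F_m|_k\gamma = F_m$ for each $m$, by a Vandermonde argument: it suffices to take $k$ distinct values of $n$ and invert. In the same way, each $F_m$ is a fixed $\mathbb{C}$-linear combination of the modular forms $D^{k-1}(t^{n_i}h)/t^{n_i}$ for $n_i = 0, 1, \ldots, k-1$. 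Each of these is meromorphic on $X(\Gamma)$, including meromorphy at the cusps, so each $F_m \in \mathcal{A}_k(\Gamma)$.

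\textbf{Step 3: pole bounds (the main obstacle).} We must show $\div F_m \ge -\mathcal{D} - (k-1)S^\circ$. The Vandermonde combination alone only yields bounds like $\mathcal{D} + (k-1)(\div t)_\infty + \cdots$, which is too weak. Instead I would analyse the local Leibniz expansion directly.

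Away from $S$, both $t$ and $t^{-1}$ are holomorphic because $\div t$ is supported on $S$. Thus $\lambda$ and its derivatives are holomorphic there, and $F_m$ is holomorphic outside $S$.

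At an interior point $x \in S$, use the local coordinate $z = w^{e_x}$, with $w$ as in Lemma \ref{lemma_elliptic_order_congruence}. Write $t = z^{a}u$ with $u$ a unit. Then $\lambda = D\log t$ has at worst a simple pole in $\tau$, which is of order $1/e_x$ in the normalized valuation. So each $D^i\lambda$ has order at least $-(i+1)/e_x$, and each term $P_j(n) D^{k-1-j}h$ has order at least $v_x(h) - j/e_x - (k-1-j)/e_x$. This gives $v_x(F_m) \ge -\mathcal{D}(x) - (k-1)/e_x$.

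At a cusp $s$, after slashing by $\rho^{-1}$, $t$ becomes $q_s^{a}\cdot(\text{unit})$, so $\lambda$ is holomorphic at $s$. Together with $v_s(D^{i}h) \ge v_s(h)$, this gives $v_s(F_m) \ge -\mathcal{D}(s)$. At an irregular cusp the same computation is done in the half-integral parameter.

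\textbf{Step 4: conclusion.} Since every $F_m$ lies in $\mathcal{L}_k(\mathcal{D}+(k-1)S^\circ)$, expanding each $F_m$ in the basis $f_1, \ldots, f_r$ and collecting powers of $n$ produces the required polynomials $p_i \in \mathbb{C}[n]$ of degree at most $k-1$, valid for all $n \in \mathbb{Z}$.
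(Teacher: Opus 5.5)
Your argument is correct. It is organized differently from the paper's, but your Step 3 rests on a misdiagnosis.

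The paper first fixes $n\in\mathbb{Z}$. With $\mathcal{D}_0=-\div t$, which is supported on $S$, one has $t^nh\in\mathcal{L}_{2-k}(n\mathcal{D}_0+\mathcal{D})$. Lemma~\ref{lemma_D_L}(1) then gives $D^{k-1}(t^nh)\in\mathcal{L}_k(n\mathcal{D}_0+\mathcal{D}+(k-1)S^\circ)$, and dividing by $t^n$ cancels the $n\mathcal{D}_0$ exactly. So $D^{k-1}(t^nh)/t^n\in\mathcal{L}_k(\mathcal{D}+(k-1)S^\circ)$ for every integer $n$, with no loss. Your claim that the Vandermonde combination only yields bounds involving $(\div t)_\infty$ is therefore wrong.

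Consequently each $F_m$, being a $\mathbb{C}$-linear combination of the $k$ forms $D^{k-1}(t^{n_i}h)/t^{n_i}$, already lies in $\mathcal{L}_k(\mathcal{D}+(k-1)S^\circ)$, and your local divisor analysis is redundant. It is still valid. At a cusp $s\neq\infty$, though, you should say explicitly that you apply the Leibniz expansion to the pair $(t|_0\rho^{-1},\,h|_{2-k}\rho^{-1})$. You then identify the resulting coefficients with $F_m|_k\rho^{-1}$ via Bol's identity and the same Vandermonde argument. This is needed because $D^i$ for $i<k-1$ does not commute with slashing.

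For polynomiality, the paper does not decompose into $n$-independent pieces. It observes that the $q$-coefficients of $(D+na)^{k-1}h$, with $a=Dt/t$, are polynomials in $n$ of degree at most $k-1$. It then inverts a nonsingular matrix formed from finitely many $q$-coefficients of $f_1,\dots,f_r$.

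Your Vandermonde step is the same linear algebra carried out in the $n$-direction rather than the $q$-direction. It buys an explicit decomposition $D^{k-1}(t^nh)/t^n=\sum_{m=0}^{k-1}n^mF_m$ with $n$-independent forms $F_m\in\mathcal{L}_k(\mathcal{D}+(k-1)S^\circ)$. From this the $p_i$ come by expanding each $F_m$ in the basis, and $\deg p_i\le k-1$ is immediate. The paper obtains the same degree bound from the $q$-expansion.
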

	\begin{proof}
	Let $\mathcal{D}_0 = -\div t$, which is supported on $S$ by assumption. Then $t^n h \in \mathcal{L}_{2-k}(n\mathcal{D}_0 + \mathcal{D})$. Part (1) of Lemma \ref{lemma_D_L} implies that $D^{k-1}(t^n h) \in \mathcal{L}_k(n\mathcal{D}_0 + \mathcal{D}+(k-1)S^\circ)$, and hence $$\frac{D^{k-1}(t^n h)}{t^n} \in \mathcal{L}_k(\mathcal{D}+(k-1)S^\circ),\qquad \forall n\in \mathbb{Z}.$$
	Therefore, there exist functions $p_i\colon \mathbb{Z}\to \mathbb{C}$ such that \eqref{aux_16} holds. It remains to show that these $p_i$ are polynomials in $n$. To do this, we consider the $q$-expansion at the cusp $\infty$. Without loss of generality, we assume that this cusp has width $1$, so $t, h \in \mathbb{C}(\!(q)\!)$. When $t$ and $h$ are interpreted as formal Laurent series, we claim that
	\begin{equation}\label{aux_15}\frac{D^{k-1}(t^n h)}{t^n} \in \mathbb{C}[n](\!(q)\!),\end{equation}
	i.e., the $q$-coefficients of the left-hand side are polynomials in $n$. Indeed, write $a = \frac{Dt}{t} \in \mathbb{C}(\!(q)\!)$. Then
	$$\frac{D(t^nh)}{t^n} = Dh + n \frac{Dt}{t}h = (D+na)h.$$
	Note that $D$ and multiplication by $a$ need not commute. Nonetheless, we have $$\frac{D^{k-1}(t^nh)}{t^n} = (D+na)^{k-1}h.$$
	Thus, each individual coefficient is a polynomial in $n$ of degree at most $k-1$. The left-hand side of \eqref{aux_16}, when expanded as a $q$-series, has coefficients in $\mathbb{C}[n]$, while each $f_i$ has $q$-coefficients in $\mathbb{C}$. Hence, the $p_i$ must be polynomials in $n$. 
	\end{proof}

	The proposition gives a way to analytically continue the expression $\frac{D^{k-1}(t^n h)}{t^n}$ with respect to $n$, which is \textit{a priori} defined only for $n\in \mathbb{Z}$: the right-hand side of \eqref{aux_16} makes sense for any complex $n$, or even when $n$ is viewed as an indeterminate. The proposition's conclusion can be stated more succinctly as $$\frac{D^{k-1}(t^n h)}{t^n} \in \mathcal{L}_k(\mathcal{D}+(k-1)S^\circ)\otimes_\mathbb{C} \mathbb{C}[n].$$
	Write $K = \mathbb{C}(n)$. Extending this $K$-linearly with respect to $h$, we have the following containment of $K$-vector spaces:
	\begin{equation}\label{quotient_D_space_CT}\left\{ \frac{D^{k-1}(t^n h)}{t^n} \Bigg| h\in \mathcal{L}_{2-k}(\mathcal{D})\otimes_\mathbb{C} K \right\} \subset \mathcal{L}_k(\mathcal{D}+(k-1)S^\circ)\otimes_\mathbb{C} K.\end{equation}
	Let $\mathcal{H}_t(\mathcal{D})$ denote the quotient of the right-hand side by the left-hand side. \par
	The $K$-dimension of the right-hand side is $\dim_\mathbb{C} \mathcal{L}_k(\mathcal{D}+(k-1)S^\circ) = \ell_k(\mathcal{D}+(k-1)S^\circ)$. We claim that the $K$-dimension of the left-hand side is $\ell_{2-k}(\mathcal{D})$. This amounts to the fact that $h \mapsto \frac{D^{k-1}(t^n h)}{t^n}$ is injective. When $k\geq 3$, this is immediate: if $\frac{D^{k-1}(t^n h)}{t^n} = 0$ identically in $n$, then $h = 0$. When $k=2$, the same identity implies that $t^n h$ is constant for every integer $n$, which forces $h=0$ because $t$ is nonconstant. \par
	Therefore $\dim_K \mathcal{H}_t(\mathcal{D}) = \ell_k(\mathcal{D}+(k-1)S^\circ) - \ell_{2-k}(\mathcal{D})$. Now assume that $\mathcal{D}$ is integral. Then $$\begin{aligned} &\quad \ell_k(\mathcal{D}+(k-1)S^\circ) - \ell_{2-k}(\mathcal{D}) \\ &= \ell_0\left(\mathcal{D}+\left\lfloor \div \theta_k+(k-1)S^\circ\right\rfloor\right) - \ell_0(\mathcal{D} + \lfloor \div \theta_{2-k} \rfloor) \\
		&= \deg\left\lfloor \div \theta_k+(k-1)S^\circ\right\rfloor - \deg(\lfloor \div \theta_{2-k} \rfloor) \qquad \text{when }\deg \mathcal{D} \text{ is sufficiently large, by Riemann--Roch} \\
		&= \dim \frac{\M_k^S(\Gamma)}{D^{k-1}(\M_{2-k}^S(\Gamma))} - \delta_{k,2} \qquad \text{by Lemma }~\ref{lemma_k_2-k_dimension_S}.\end{aligned}$$
	Therefore $\dim_K \mathcal{H}_t(\mathcal{D}) = \dim \frac{\M_k^S(\Gamma)}{D^{k-1}(\M_{2-k}^S(\Gamma))} - \delta_{k,2}$ when $\mathcal{D}$ is integral, supported on $S$, with $\deg \mathcal{D}$ sufficiently large.
	
		\begin{proof}[Proof of Theorem \ref{creative_telescoping_S_intro}]
		Write $d:= \dim \frac{\M_k^S(\Gamma)}{D^{k-1}(\M_{2-k}^S(\Gamma))} - \delta_{k,2}$. Consider the $d+1$ forms $g,tg,\ldots,t^d g$. Choose an integral divisor $\mathcal{D}$ supported on $S$, with $\deg \mathcal{D}$ sufficiently large, such that they all lie in $\mathcal{L}_k(\mathcal{D}+(k-1)S^\circ)$. We may also assume that $\dim_K \mathcal{H}_t(\mathcal{D}) = d$. The $d+1$ elements above are therefore linearly dependent when viewed as elements of $\mathcal{H}_t(\mathcal{D})$. That is, there exist $r_0,\ldots,r_d \in K$ such that
		$$\sum_{j=0}^d r_j(n) t^j g = 0 \text{ in }\mathcal{H}_t(\mathcal{D})\iff \sum_{j=0}^d r_j(n) t^j g = \frac{D^{k-1}(t^n R_n)}{t^n}$$
		for some $R_n \in \mathcal{L}_{2-k}(\mathcal{D}) \otimes_\mathbb{C} K$. The proof is completed by setting $\boldL = \sum_{j=0}^d r_j(n) \textbf{S}^j$.
	\end{proof}
	
	\subsection{Computation of creative telescoping}\label{sec:creative-computation}
	Having established the existence of a creative telescoper $\boldL$ and a certificate $R_n \in \M_{2-k}^S(\Gamma) \otimes_\mathbb{C} \mathbb{C}(n)$, we now seek a practical algorithm to compute them. 
	
	Assume that $\infty$ is a regular cusp, and write $q_\infty=e^{2\pi i\tau/w}$, where $w$ is its width. Recall \cite[Proposition~2.16]{shimura1971introduction}, which says
	\begin{equation}\label{degXk}\deg(\div\theta_k) = \frac{k}{2}\left(2g-2+\varepsilon_\infty+\sum_{j=1}^r\left(1-\frac{1}{e_j}\right)\right).\end{equation}
	The following lemma derives an easy-to-compute valence bound for $\mathcal{L}_k(\mathcal{D})$. 
	\begin{lemma}\label{lemma_valence_bound}
	Assume that $\infty$ is a regular cusp. For a divisor $\mathcal{D}$, put
	$$B_{k,\mathcal{D}}:=\left\lfloor \deg(\div\theta_k) +\deg\mathcal{D}-\mathcal{D}(\infty)\right\rfloor.$$
	If $f=\sum_n a_nq_\infty^n\in\mathcal{L}_k(\mathcal{D})$ and $a_n=0$ for every $n\leq B_{k,\mathcal{D}}$, then $f=0$. 
	\end{lemma}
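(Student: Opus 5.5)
The plan is to argue by contradiction using the degree of the divisor of $f$. Suppose $f\in\mathcal{L}_k(\mathcal{D})$ is nonzero and $a_n=0$ for all $n\leq B_{k,\mathcal{D}}$. Since $\infty$ is regular, $v_\infty(f)$ is the $q_\infty$-order of $f$, so $v_\infty(f)\geq B_{k,\mathcal{D}}+1$.

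Next I will use the fact that $\div f$ and $\div\theta_k$ differ by the divisor of the weight-zero function $f/\theta_k$. Hence
$$\deg(\div f)=\deg(\div\theta_k),$$
whose value is given by \eqref{degXk}. Write $\div f+\mathcal{D}=\mathcal{E}$. By the definition of $\mathcal{L}_k(\mathcal{D})$, the divisor $\mathcal{E}$ is effective (rational coefficients are allowed). Then
$$\deg(\div\theta_k)+\deg\mathcal{D}=\deg\mathcal{E}\geq \mathcal{E}(\infty)=v_\infty(f)+\mathcal{D}(\infty).$$
Rearranging gives
$$v_\infty(f)\leq \deg(\div\theta_k)+\deg\mathcal{D}-\mathcal{D}(\infty).$$

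Since $v_\infty(f)$ is an integer, this strengthens to $v_\infty(f)\leq B_{k,\mathcal{D}}$, taking the floor. This contradicts $v_\infty(f)\geq B_{k,\mathcal{D}}+1$, so $f=0$.

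The only step needing care is the integrality of $v_\infty(f)$ and the use of the floor. At a regular cusp the order is an honest integer. The other coefficients of $\mathcal{E}$ may be fractional at elliptic points or irregular cusps, but they are all nonnegative, which is all the inequality $\deg\mathcal{E}\geq\mathcal{E}(\infty)$ uses. This is why the regularity hypothesis on $\infty$ is needed. I would also verify that $\deg(\div f)$ is independent of $f$ in the rational-divisor normalization of Section~\ref{sec:divisors}, which holds because $\div(f/\theta_k)$ is a principal divisor of degree zero on $X(\Gamma)$.
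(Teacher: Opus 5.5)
Your proposal is correct and follows essentially the same argument as the paper. Both proofs use $\deg(\div f)=\deg(\div\theta_k)$ together with $v_x(f)\geq-\mathcal{D}(x)$ at every $x\neq\infty$ to bound $v_\infty(f)\leq \deg(\div\theta_k)+\deg\mathcal{D}-\mathcal{D}(\infty)$, and then use the integrality of $v_\infty(f)$ at the regular cusp $\infty$ to pass to the floor $B_{k,\mathcal{D}}$.
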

	\begin{proof}
	Suppose that $0\neq f\in\mathcal{L}_k(\mathcal{D})$. By the definition of $\mathcal{L}_k(\mathcal{D})$, we have $v_x(f)\geq-\mathcal{D}(x)$ for every $x\in X(\Gamma)$. Therefore
	\begin{align*}
	 \deg(\div\theta_k) =\deg(\div f)
	&=v_\infty(f)+\sum_{x\neq\infty}v_x(f)\\
	&\geq v_\infty(f)-\sum_{x\neq\infty}\mathcal{D}(x)=v_\infty(f)-\deg\mathcal{D}+\mathcal{D}(\infty).
	\end{align*}
	It follows that
	$$v_\infty(f)\leq \deg(\div\theta_k) +\deg\mathcal{D}-\mathcal{D}(\infty).$$
	Since $\infty$ is regular, $v_\infty(f)$ is an integer. Thus every nonzero $f\in\mathcal{L}_k(\mathcal{D})$ satisfies $v_\infty(f)\leq B_{k,\mathcal{D}}$. If all coefficients $a_n$ with $n\leq B_{k,\mathcal{D}}$ vanished, then $v_\infty(f)>B_{k,\mathcal{D}}$, a contradiction, forcing $f=0$.
	\end{proof}
	
	\begin{remark}\label{CT_classical}
		The modular version of creative telescoping has the feature, shared by its classical counterpart, that verifying it is much easier than finding it. In this case, verification consists simply of computing enough $q$-coefficients beyond a valence bound; see Example \ref{Ap\'ery_rec_verification} below. \par
		%A distinction and nicer feature for the modular version is that the upper-order bound given in the statement is often tight, unless there is extra symmetry available (e.g. Proposition \ref{prop:CT_refined_order} and Theorem \ref{creative_telescoping_char}). This is in contrast to classical version: a priori bound for the recurrence order is not easy to obtain, and is often large in comparison to the actual order.
	\end{remark}
	
	\begin{example}\label{Ap\'ery_rec_verification}
		Consider the modular forms $t,u,v$ on $\Gamma_1(6)$ defined in \eqref{level6_uv}, with $t= \frac{uv}{(u+8v)(u+9v)} \sim (0)+(\infty)-(\frac12)-(\frac13)$. Write $R= -\frac{uv}{2(u+8v)^2(u+9v)^2}\in \mathcal{L}_{-2}(\mathcal{D})$. Here $k=4$ and $\mathcal{D} = 2(\frac12)+2(\frac13)$. Then Proposition \ref{Dt^n_continuation} implies $$\frac{D^3(t^n R)}{t^n} \in \mathcal{L}_4(\mathcal{D}).$$
		
		A valence bound for $\mathcal{L}_4(\mathcal{D})$ is $8$. Therefore, to check the identity \eqref{aux_12}, in which both sides lie in $\mathcal{L}_4(\mathcal{D})$,
		$$(n+1)^3 g -(3 + 2 n) (39 + 51 n + 17 n^2) tg+(n+2)^3 t^{2}g =  \frac{D^3\left(t^n  R\right)}{t^n},$$
		it suffices to compare the $q$-expansions of both sides up to $q^8$. The $q$-expansion of the right-hand side is computed just as in the proof of Proposition \ref{Dt^n_continuation}, and that of the left-hand side is even easier. 
	\end{example}

	In the following algorithm, we assume we can compute the $q$-expansion of any given element of $\mathcal{L}_k(\mathcal{D})$ to arbitrary precision.
	
	\begin{tcolorbox}[breakable, title=Algorithm for linear relations in the $\frac{D^{k-1}(t^n \cdot)}{t^n}$ space]
	Given
	\begin{itemize}[leftmargin=*]
	\item divisors $\mathcal{D}_1, \mathcal{D}_2$ supported on $S$;
	\item $f_1,\ldots,f_l \in \mathcal{L}_k(\mathcal{D}_1)$.
	\end{itemize}
	The following algorithm determines whether there exist $r_1,\ldots,r_l\in K = \mathbb{C}(n)$ and $R_n \in \mathcal{L}_{2-k}(\mathcal{D}_2) \otimes K$ such that \begin{equation}\label{aux_13}r_1 f_1 + \cdots + r_l f_l = \frac{D^{k-1}(t^n R_n)}{t^n}.\end{equation}
	\tcblower
	\begin{enumerate}[leftmargin=*]
	\item Compute a valence bound for $\mathcal{L}_k(\max(\mathcal{D}_1,\mathcal{D}_2+(k-1)S^\circ))$ using Lemma~\ref{lemma_valence_bound}, and denote it by $B$.
	\item Compute the $q_\infty$-expansions of $f_1,\ldots,f_l$ up to $O(q_\infty^{B+1})$.
	\item Compute a basis of $\mathcal{L}_{2-k}(\mathcal{D}_2)$ through at least $O(q_\infty^{B+1})$, obtaining $v_1,\ldots,v_N \in \mathbb{C}((q_\infty))$, and put
	$$\mu:=\min_{1\leq i\leq N}v_\infty(v_i).$$
	\item Put $\nu:=v_\infty(t)$, compute $t$ to the absolute precision $O(q_\infty^{\nu+B-\mu+1})$. This determines
	$a:=\frac{Dt}{t}$
	up to $O(q_\infty^{B-\mu+1})$. Use the procedure in the proof of Proposition~\ref{Dt^n_continuation} to calculate
	$$\vartheta_i = \frac{D^{k-1}(t^n v_i)}{t^n}=(D+na)^{k-1}v_i \pmod{q_\infty^{B+1}}, \quad 1\leq i\leq N.$$
	%Indeed, $a\in\mathbb{C}\llbracket q_\infty\rrbracket$, and an $a$-coefficient of degree greater than $B-\mu$ cannot contribute to a coefficient of $\theta_i$ of degree at most $B$. 
	The coefficients of $\vartheta_i$ are in $K$.
	\item Perform Gaussian elimination over $K$ on the homogeneous system obtained by comparing all coefficients of $q_\infty^m$ with $m\leq B$ in
	$$\sum_{j=1}^l r_j f_j = \sum_{i=1}^N A_i \vartheta_i$$
	and determine whether it has a solution with $(r_1,\ldots,r_l)\neq 0$. If not, equation \eqref{aux_13} has no nontrivial solution involving the $f_j$; otherwise, the resulting $r_1,\ldots,r_l$, together with $R_n = \sum_{i=1}^N A_i v_i$, give a desired solution.
	\end{enumerate}
	\end{tcolorbox}
	
	We elaborate a bit on Step (3), which is less straightforward than the other steps. For the purpose of the algorithm, it is enough to compute a spanning set instead of a basis of $\mathcal{L}_{2-k}(\mathcal{D}_2)$. Usually one can identify $\Xi \in \M_l(\Gamma)$ such that $\Xi \mathcal{L}_{2-k}(\mathcal{D}_2) \subset \M_{2-k+l}(\Gamma)$; the latter space has an easily computable basis. If a spanning set is used, the kernel computed in Step (5) may also contain relations among the $v_i$ themselves; this is why Step (5) requires $(r_1,\ldots,r_l)\neq 0$.
	
	\begin{tcolorbox}[breakable, title=Algorithm for a creative telescoper and certificate in Theorem \ref{creative_telescoping_S_intro}]
	Given $t,g$ as in the statement of Theorem~\ref{creative_telescoping_S_intro}, the following algorithm computes a telescoper $\boldL$ and a certificate $R_n \in \M_{2-k}^S(\Gamma) \otimes K$ occurring in the theorem.
	\tcblower
	\begin{enumerate}[leftmargin=*]
	\item Put $\mathcal{D}_t = (\div t)_\infty$ and $\mathcal{D}_g = (\div g)_\infty$; both divisors are supported on $S$.
	\item Initialize $j=0$. For the current value of $j$, put $\mathcal{D}_2 =\mathcal{D}_g + j\mathcal{D}_t$ and $\mathcal{D}_1 =\mathcal{D}_2 + (k-1)S^\circ$.
	\item Execute the previous algorithm with
	$$(f_1,\ldots,f_{j+1}) = (g,tg,\ldots,t^j g)$$
	and with $\mathcal{D}_1, \mathcal{D}_2$ as above.
	\item If it finds no relation involving the $f_i$, increase $j$ by $1$ and return to step (2).
	\item If it returns a relation, relabel it as
	$$\sum_{r=0}^j r_r(n)t^r g=\frac{D^{k-1}(t^nR_n)}{t^n}.$$
	Output
	$$\boldL=\sum_{r=0}^j r_r(n)\textbf{S}_n^r$$
	and the certificate $R_n$.
	\end{enumerate}
	\end{tcolorbox}
	Note that the proof of Theorem~\ref{creative_telescoping_S_intro} guarantees that the algorithm terminates. 
	
	For later reference, we give a refinement of Theorem~\ref{creative_telescoping_S_intro}. The key point is that we obtain a saving in the order of $\boldL$ from suitable cusps depending on $t$ and $g$.
	\begin{proposition}\label{prop:CT_refined_order}
		Let $k\geq 2$ and let $0\neq g\in \M_k^S(\Gamma)$. Put
		$$\mu(t,g):=\left\{\alpha\in\Gamma\backslash\mathbb{P}^1(\mathbb{Q}) \mid \alpha \text{ regular},\ v_\alpha(t)=0,\quad v_\alpha(g)\in\mathbb{Z}_{>0}\right\}.$$
		Then, in the statement of Theorem~\ref{creative_telescoping_S_intro}, we can choose $\boldL$ to have order at most
		$$\dim\frac{\M_k^S(\Gamma)}{D^{k-1}(\M_{2-k}^S(\Gamma))}
		-\delta_{k,2}-\#\mu(t,g).$$
	\end{proposition}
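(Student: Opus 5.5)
The plan is to rerun the proof of Theorem~\ref{creative_telescoping_S_intro}, replacing the ambient space $\mathcal{L}_k(\mathcal{D}+(k-1)S^\circ)$ by a smaller subspace that contains every $t^jg$, and then to show that the quotient $\mathcal{H}_t(\mathcal{D})$ drops in dimension by one for each cusp in $\mu(t,g)$.

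For each $\alpha\in\mu(t,g)$, the form $t^jg$ vanishes at $\alpha$ for all $j\geq0$, because $v_\alpha(t)=0$ and $v_\alpha(g)\geq1$. Let $\mathcal{D}$ be integral, supported on $S$, and of large degree, and set
$$\mathcal{D}' := \mathcal{D}+(k-1)S^\circ-\sum_{\alpha\in\mu(t,g)}\bigl(\mathcal{D}(\alpha)+1\bigr)\alpha .$$
In other words, we impose $v_\alpha\geq1$ at each such $\alpha$. The forms $g,tg,\ldots,t^dg$ all lie in $\mathcal{L}_k(\mathcal{D}')$.

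Next I would determine which $h\in\mathcal{L}_{2-k}(\mathcal{D})\otimes K$ satisfy $D^{k-1}(t^nh)/t^n\in\mathcal{L}_k(\mathcal{D}')\otimes K$. Work at a regular cusp $\alpha$, transported to $\infty$ by Bol's identity, so that $t$ is a unit there. The constant term of $D^{k-1}(t^nh)$ is zero automatically. For $m<0$, the coefficient of $q^m$ in $(D+na)^{k-1}h$, where $a=Dt/t$ is holomorphic, is triangular in the principal part of $h$, with diagonal entries $m^{k-1}\neq0$. Hence vanishing of the principal part of $D^{k-1}(t^nh)/t^n$ at $\alpha$ forces $h$ to be holomorphic at $\alpha$. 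This means $h\in\mathcal{L}_{2-k}(\mathcal{D}-\sum_\alpha\mathcal{D}(\alpha)\alpha)$. Conversely, every such $h$ gives an element of $\mathcal{L}_k(\mathcal{D}')\otimes K$.

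The new quotient therefore has $K$-dimension
$$\ell_k(\mathcal{D}')-\ell_{2-k}\Bigl(\mathcal{D}-\sum_\alpha\mathcal{D}(\alpha)\alpha\Bigr).$$
At each $\alpha\in\mu(t,g)$, the first term loses $\mathcal{D}(\alpha)+k-1$ compared with before, up to the $S^\circ$ bookkeeping, which vanishes at cusps. The second term loses $\mathcal{D}(\alpha)$. To apply Riemann--Roch I would keep the degrees large by enlarging $\mathcal{D}$ at the other points of $S$, which all the earlier estimates allow. Each $\alpha$ then contributes a net loss of exactly one.

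I expect this loss of one to be the main obstacle, and it needs care. A cusp $\alpha$ where all forms vanish should account for the Eisenstein direction at $\alpha$. Naively, Riemann--Roch only compares degrees, and a single cusp condition in weight $k$ versus weight $2-k$ should give a difference of one, not $k-1$. The correct bookkeeping comes from the constant term: $\mathcal{L}_k$ loses $\mathcal{D}(\alpha)+1$ conditions, namely the principal part plus the constant term, while $\mathcal{L}_{2-k}$ loses the $\mathcal{D}(\alpha)$ principal-part conditions. So I would define $\mathcal{D}'$ with coefficient $-1$ at $\alpha$ and verify this count via Lemma~\ref{lemma_riemann_roch}, using $\deg$ large. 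With this count, the quotient has dimension $d-\#\mu(t,g)$, and the linear-dependence argument from the proof of Theorem~\ref{creative_telescoping_S_intro}, applied to $g,\ldots,t^{d-\#\mu}g$, yields $\boldL$ and $R_n$ of the stated order.
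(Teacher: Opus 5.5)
Your argument is correct and is essentially the paper's proof: you impose vanishing at each $\alpha\in\mu(t,g)$ on the target space $\mathcal{L}_k$, restrict to $h$ holomorphic at these cusps, and let Riemann--Roch give a net drop of one per such cusp. The paper simply takes $\mathcal{D}$ supported on $S\setminus\mu(t,g)$ from the start, so only the easy direction (holomorphic $h$ gives an image vanishing at $\alpha$) is needed and your triangularity argument for the preimage is unnecessary; also, in your dimension count the first term drops by $\mathcal{D}(\alpha)+1$ at each $\alpha$, as you eventually say, not by $\mathcal{D}(\alpha)+k-1$.
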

	\begin{proof}
		Let
		$$\mathcal{D}^\sharp:=\sum_{\alpha\in\mu(t,g)}(\alpha).$$
		Choose an effective integral divisor $\mathcal{D}$ supported on the nonempty set $S\setminus\mu(t,g)$. At a cusp $\alpha\in\mu(t,g)$, an element $h\in\mathcal{L}_{2-k}(\mathcal{D})$ is holomorphic and $t$ is a unit. Thus $t^nh$ is holomorphic at $\alpha$, and $D^{k-1}(t^nh)$ has zero constant Fourier coefficient there. Division by the unit $t^n$ preserves this vanishing. Together with Proposition~\ref{Dt^n_continuation}, this gives
		$$\left\{\frac{D^{k-1}(t^nh)}{t^n}:h\in\mathcal{L}_{2-k}(\mathcal{D})\otimes_\mathbb{C}K\right\}
		\subset
		\mathcal{L}_k\bigl(\mathcal{D}+(k-1)S^\circ-\mathcal{D}^\sharp\bigr)\otimes_\mathbb{C}K.$$
		Define $\mathcal{H}_t^\sharp(\mathcal{D})$ to be the quotient of the space on the right by the space on the left. The map defining the space in the denominator is injective, as in the discussion following \eqref{quotient_D_space_CT}. Hence, when $\deg\mathcal{D}$ is sufficiently large, Riemann--Roch gives
		\begin{align*}
		\dim_K\mathcal{H}_t^\sharp(\mathcal{D})
			&=\ell_k\bigl(\mathcal{D}+(k-1)S^\circ-\mathcal{D}^\sharp\bigr)-\ell_{2-k}(\mathcal{D})\\
			&=\dim\frac{\M_k^S(\Gamma)}{D^{k-1}(\M_{2-k}^S(\Gamma))}
			-\delta_{k,2}-\#\mu(t,g).
		\end{align*}
		Put the last quantity equal to $d$. By increasing $\mathcal{D}$ if necessary, the forms $g,tg,\ldots,t^dg$ all belong to the numerator defining $\mathcal{H}_t^\sharp(\mathcal{D})$. Their $d+1$ classes are therefore linearly dependent over $K$. The resulting relation gives a creative telescoper of order at most $d$, exactly as in the proof of Theorem~\ref{creative_telescoping_S_intro}.
	\end{proof}
	
	\section{Two examples of meromorphic modular forms}\label{sec:meromorphic}
In this section, we look closely at two examples of modulonomic sequences on meromorphic modular forms, both on $\Gamma =\SL_2(\mathbb{Z})$. We also experimentally observe that their arithmetic properties differ somewhat from those of modulonomic sequences coming from weakly holomorphic forms.
\subsection{An example at an elliptic point}
	\label{level_one_creative_telescoping}
	Let $\Gamma=\mathrm{SL}_2(\mathbb{Z})$, $\rho=e^{2\pi i/3}$, and $S=\{\rho,\infty\}$. Recall that
	$$E_4 \sim \frac{(\rho)}{3},\qquad E_6 \sim \frac{(\sqrt{-1})}{2},\qquad J := \frac{j}{1728} \sim (\rho) - (\infty).$$ 
	Theorem~\ref{dim_theorem} gives
	$$\dim\frac{\M_4^S(\Gamma)}{D^3\bigl(\M_{-2}^S(\Gamma)\bigr)}=2,$$
	and the classes of $E_4$ and $E_4/J$ form a basis of this quotient. Hence, for every $n\geq1$, there are unique $A_n\in\mathbb{C}$ and $h_n\in\M_{-2}^S(\Gamma)$ such that
	\begin{equation}\label{aux_17}\frac{E_4}{J^n}=A_n\frac{E_4}{J}+D^3(h_n).\end{equation}
	The first values are
	\begin{center}
	\renewcommand{\arraystretch}{1.5}
	\begin{tabular}{c|c|c}
	$n$ & $A_n$ & $h_n$ \\ \hline
	$1$ & $1$ & $0$ \\
	$2$ & $\dfrac{41}{64}$ & $\dfrac{9E_6}{8E_4^2}$ \\
	$3$ & $\dfrac{18147}{35840}$ & $\dfrac{E_6}{E_4^2}\left(\dfrac{999}{896}+\dfrac{9}{70J}\right)$
	\end{tabular}
	\end{center}
	The creative telescoping algorithm above (with $t=J^{-1}$) produces $\boldL(t^n E_4) = D^3(t^n R_n)$
	with $$\boldL = 72n^3 -(2n+1)(72n^2+72n+41)\textbf{S}_n+8(n+1)(3n+2)(3n+4)\textbf{S}_n^2,\qquad R = R_n = 72\frac{E_6}{E_4^2}.$$
	As in Example~\ref{Ap\'ery_rec_verification}, once this identity has been written down, it can be verified by comparing finitely many Fourier coefficients using the valence bound of Lemma~\ref{lemma_valence_bound}. It follows that $\boldL(A_n)=0$. Let $B_n:=a_0(h_n)$. Since $t$ vanishes at $\infty$, we also have $\boldL(B_n) = 0$. Experimentally, the two sequences have the following denominator types, which have been verified up to $n=10000$.
	\begin{conjecture}\label{LCM_bound_Ex1}
	The denominator types of $A_n$ and $B_n$ are $$A_n:\quad 64^n\lcmD{3n-2},\qquad
	B_n:\quad 64^n\lcmD{3n-2}  \lcmD{n-1} \lcmD{\frac{3n-2}{4}}.$$
	\end{conjecture}
	
	On the other hand, it is not difficult\footnote{For example, one may use standard creative telescoping to verify directly that the right-hand side satisfies the same recurrence $\boldL$.} to show that
	\begin{equation}\label{AB_explicit_expr}\begin{aligned}A_{m+1}&=\frac{m!^3}{64^m(3m+1)!}\sum_{j=0}^{m}432^{m-j}\binom{2m-2j}{m-j}\frac{(6j)!}{(3j)!(j!)^3}, \\
	B_{m+1} &= \frac{1728 \: m!^3}{64^m(3m+1)!}\sum_{j=0}^{m}432^{m-j}\binom{2m-2j}{m-j}\frac{(6j)!}{(3j)!(j!)^3} \left(\sum_{\ell=1}^j 432^{\ell-1}
	\frac{(2\ell-2)!(3\ell)!(\ell-1)!}{(6\ell)!} \right).
	\end{aligned}\end{equation}
	Note that Conjecture \ref{LCM_bound_Ex1} gives $$\limsup_{n\to\infty} \frac{\log(\den(64^n A_n))}{n} \leq 3,\qquad \limsup_{n\to\infty} \frac{\log(\den(64^n B_n))}{n} \leq 3+1+\frac{3}{4} = \frac{19}{4}.$$
	The fact that $A_n$ and $B_n$ have the above explicit expressions allows us to improve the denominator types. 
	\begin{proposition}\label{beta_exp_mero}
	Assuming Conjecture \ref{LCM_bound_Ex1} holds, we have $$ \limsup_{n\to\infty} \frac{\log(\den(64^n A_n))}{n} \leq \beta, \qquad  \limsup_{n\to\infty}  \frac{\log(\den(64^n B_n))}{n} \leq 2+\beta,$$
	where $\beta=\frac{\pi}{2\sqrt3}+\frac32\log3 \approx 2.55482$.
	\end{proposition}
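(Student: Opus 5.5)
The plan is to combine the two sources of information prime by prime. The explicit formula \eqref{AB_explicit_expr} controls how much each prime can contribute, and Conjecture \ref{LCM_bound_Ex1} controls which primes can occur at all and with what exponent. Then the prime number theorem turns the resulting prime-wise bound into an integral. Write $n=m+1$. The inner sum in the formula for $A_{m+1}$ is an integer $S_m$, since $\binom{2r}{r}$ and $(6j)!/((3j)!j!^3)$ are integers. Hence
$$64^mA_{m+1}=\frac{m!^3}{(3m+1)!}\,S_m,$$
so for every prime $p$,
$$v_p\bigl(\den(64^mA_{m+1})\bigr)\le \max\bigl(0,\,v_p((3m+1)!)-3v_p(m!)\bigr).$$
The conjecture gives, in addition, $v_p(\den)\le \lfloor\log_p(3m+1)\rfloor$ up to the bounded constant $A$. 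We take the minimum of the two bounds.

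We split the primes at $\sqrt{3m+1}$. For $p\le\sqrt{3m+1}$, the conjectural bound gives a total contribution of at most $\pi(\sqrt{3m+1})\log(3m+1)=o(m)$. For $p>\sqrt{3m+1}$, Legendre's formula has a single term, and the explicit bound becomes
$$\Bigl\lfloor\tfrac{3m+1}{p}\Bigr\rfloor-3\Bigl\lfloor\tfrac mp\Bigr\rfloor .$$
Up to the negligible effect of the $+1$, this equals $\lfloor 3\{m/p\}\rfloor$. The conjecture caps the exponent at $1$. So the contribution is at most $\log p$ times the indicator of $\{m/p\}\ge\frac13$; in particular $p\le 3m+1$. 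Therefore
$$\log\den(64^mA_{m+1})\le \sum_{\substack{p\le 3m+1\\ \{m/p\}\ge 1/3}}\log p+o(m).$$
By the prime number theorem, primes with $m/p\in[y,y+dy]$ contribute $\sim m\,dy/y^2$. The right-hand side is then $m\int_{1/3}^\infty \mathbf 1_{\{y\}\ge1/3}\,y^{-2}\,dy+o(m)$. This integral equals
$$\sum_{k\ge0}\Bigl(\frac1{k+1/3}-\frac1{k+1}\Bigr)=\psi(1)-\psi(\tfrac13)=\frac{\pi}{2\sqrt3}+\frac32\log3=\beta,$$
using Gauss's digamma value. Making this rigorous is the usual Chudnovsky--Hata type argument: truncate at $y\le Y$, apply the prime number theorem on each of the finitely many intervals, and bound the tail $y>Y$ by $m/Y$. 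This gives the bound for $A_n$.

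For $B_n$ we argue the same way, with one modification. The $\ell$-th inner term has denominator dividing $(6\ell)!/\bigl((2\ell-2)!\,(3\ell)!\,(\ell-1)!\bigr)$, up to a bounded power of $2$ and $3$ absorbed by $432^{\ell-1}$ and $1728$. The range is $\ell\le j\le m$. For $p>\sqrt{6m}$, Legendre gives a valuation of at least $\lfloor2x\rfloor+\lfloor3x\rfloor+\lfloor x\rfloor-\lfloor6x\rfloor$ with $x\approx\ell/p$, up to bounded shifts. This expression is $>-3$, hence $\ge-2$. We then bound prime by prime, again taking the minimum with the conjecture.

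For $p>m$, the factors $\lcmD{n-1}$ and $\lcmD{\frac{3n-2}{4}}$ in the conjecture do not involve $p$. So $v_p(\den)\le1$, and $p\le 3m+1$. Such $p$ have $m/p\in(\frac13,1)$, so they are already counted in the $\beta$-integral.

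For $\sqrt{6m}<p\le m$, the explicit formula gives $v_p(\den)\le\mathbf 1_{\{m/p\}\ge1/3}+2$.

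For $p\le\sqrt{6m}$, the conjecture gives at most $3\log(3m)$ per prime, which is $o(m)$ in total.

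Summing, we get $\log\den(64^mB_{m+1})\le \beta m+2\theta(m)+o(m)=(\beta+2)m+o(m)$. The main obstacle I expect is this bookkeeping for $B_n$: checking that the inner-sum denominators really lose at most $p^2$ for large $p$, including boundary effects from the shifts $2\ell-2$, $\ell-1$ and the $+1$ in $3m+1$. The second delicate point is making sure that primes in $(m,3m]$ are not double counted beyond the cap of $1$ supplied by the conjecture.
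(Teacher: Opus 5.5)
Your proposal is correct and follows the paper's own sketch. You bound the denominator by the gcd of the conjectured $\lcmD{3n-2}$ and $(3n-2)!/(n-1)!^3$, discard primes below the square root, and use the prime number theorem to discard the primes with $\{(n-1)/p\}<\frac13$; your integral $\int_{1/3}^\infty \mathbf{1}_{\{y\}\ge 1/3}\,y^{-2}\,dy$ is just the complement of the paper's excluded intervals $(\frac{n}{j+1/3},\frac{n}{j})$, and your treatment of $B_n$ fills in what the paper only calls similar.

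The shift issue you flag is harmless. Writing $\ell=ap+r$, the inner valuation is exactly $-2$ for $r=0$. For $r\ge1$ and $p\ge7$ it is $\ge-2$, because $r\ge 2p/3$ forces $\lfloor (2r-2)/p\rfloor=1$. Note, however, that your bound $2+\mathbf{1}_{\{m/p\}\ge 1/3}$ on $\sqrt{6m}<p\le m$ genuinely uses the conjecture's cap of $3$ there. The explicit formula alone permits exponent $4$ when $\{m/p\}\ge\frac23$.
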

	\begin{proof}[Proof sketch]
	Let $X_n = \log(\den(64^n A_n))$. Since $\frac{(6j)!}{(3j)!(j!)^3} \in \mathbb{Z}$, we have $$X_n \leq \log \left( \gcd\left(\lcmD{3n-2},  \frac{(3n-2)!}{(n-1)!^3}\right) \right) + O(1).$$ We consider which primes $p$ occur in $\lcmD{3n-2}$. We can safely ignore the primes $p\leq \sqrt{3n-2}$, since they contribute at most $O(\sqrt{n})$ to $X_n$. The remaining primes $p$ occur in $\lcmD{3n-2}$ with multiplicity one. Now
	$$v_p\left( \frac{(3n-2)!}{(n-1)!^3}\right)=\sum_{a\geq 1}
	\left(\left\lfloor\frac{3n-2}{p^a}\right\rfloor-3\left\lfloor\frac{n-1}{p^a}\right\rfloor\right).$$
	We only need to focus on $a=1$. When $\lfloor\frac{3n-2}{p}\rfloor-3\left\lfloor\frac{n-1}{p}\right\rfloor = 0$, $p$ does not contribute to $X_n$. Since $\lfloor 3x \rfloor - 3\lfloor x\rfloor = 0 \iff j\leq x < \frac{1}{3}+j$ for some $j\in \mathbb{Z}$, we see that such primes $p$ lie roughly in the following intervals:
	$$(\frac{3n}{4},n),(\frac{3n}{7},\frac{n}{2}),\cdots (\frac{n}{1/3+j},\frac{n}{j}),\cdots.$$
	For fixed $J$, the prime number theorem implies
	$$ X_n \leq 3n- n\sum_{j=1}^J \left(\frac{1}{j} - \frac{1}{1/3+j}\right) + o(n).$$
	Letting $J\to \infty$ gives the conclusion. The argument for $B_n$ is similar.
	\end{proof}
	Numerical experimentation indicates the above two bounds for $\limsup$ are tight. 
	\begin{remark}\label{E4/J_eval}
	(a) As in previous sections, we can integrate \eqref{aux_17} from $0$ to $i\infty$. Lemma \ref{IBP_lemma} then gives
	$$(2\pi i)^3 \int_0^{i\infty} \frac{E_4}{J^n} d\tau = A_n \left((2\pi i)^3 \int_0^{i\infty} \frac{E_4}{J} d\tau \right)-2B_n.$$
	It is a rather non-trivial fact that \begin{equation}\label{aux_18}(2\pi i)^3 \int_0^{i\infty} \frac{E_4}{J} d\tau = 270 L_{-3}(2).\end{equation}
	The proof consists of recognizing $\frac{E_4}{J}$ as a simple multiple of 
	$$\sum_{\substack{(a,b,c)\in \mathbb{Z}^3 \\ b^2-4ac=-3}} (a\tau^2+b\tau+c)^{-2}.$$
	Such meromorphic modular forms were first considered by Zagier \cite{zagier1975modular} and subsequently studied by several authors; see, for example, \cite{lobrich2022locally, bringmann2019regularized, bengoechea2015meromorphic}. The evaluation \eqref{aux_18} is a special case of \cite[Theorem~1.5]{lobrich2022locally}. \\
	(b) We note that the sequence $A_n$ defined above is (up to a constant factor) the $(\tau-\rho)^{-2}$-coefficient of $E_4/J^n$. This observation does not seem to directly give the desired denominator type in Conjecture \ref{LCM_bound_Ex1}, because such expansions at interior points are much less understood; see also \cite{bogo2024laurent}.
	\end{remark}
	
	\begin{comment}
	\begin{remark}
	Consider the integral
	$$I(s) := (2\pi i)^3 \int_0^{i\infty} \frac{E_4(\tau)}{j(\tau)^s} d\tau,\qquad \Re(s)>0.$$
	Apart of its special value at $s\in \mathbb{N}$ we have just seen, one also has
	$$\begin{aligned} I(\frac13) = \frac{\sqrt{3} \Gamma \left(\frac{1}{3}\right)^9}{8 \pi ^3},\qquad I(\frac{1}{2}) =8 L(f_{24.a.1.1},3).\end{aligned}$$
	Here the first evaluation is rather trivial by noting that $E_4/j^{1/3} = \eta^8$, a CM cusp form. The second, due to Bonish \cite[p.~84]{bonisch2023modularity}, is more involved, note here $\sqrt{j}$ is not even a well-defined function on $\mathbb{H}$. After some effort, one can also prove the following expression:
	$$I(s) =
		\frac{8\pi^{5/2}\,\Gamma(s)}{\Gamma(s+\frac12)} \: \pFq{4}{3}{s,\frac16,\frac12,\frac56}{s+\frac12,1,1}{1}+
		\frac{4\pi^{3/2} \Gamma(s)^3}
		{\Gamma(s+\frac16)\Gamma(s+\frac12)\Gamma(s+\frac56)} \: \pFq{4}{3}{s,s,s,\frac12}{s+\frac16,s+\frac12,s+\frac56}{1}.$$
	\end{remark}
	\end{comment}

	\subsection{An example at a non-elliptic CM point}
	\begin{example}
	Consider again $\Gamma = \SL_2(\mathbb{Z})$ and $k=4$. Pick a non-elliptic point $\tau_0 \in \mathbb{H}$ and let $j_0 := j(\tau_0)$, $t = (j-j_0)^{-1}$, and $S=\{\infty,\tau_0\}$. Theorem \ref{dim_theorem} says that $\frac{\M_4^S}{D^3(\M_{-2}^S)}$ has dimension $4$. A basis is given by
	$$\left\{ E_4, \frac{E_4}{j-j_0},\frac{E_4}{(j-j_0)^2},\frac{E_4}{(j-j_0)^3}\right\}.$$
	Theorem \ref{creative_telescoping_S_intro} predicts a telescoper of order $\leq 4$ and a certificate $R_n$ such that
	$$\boldL(t^n E_4) = D^3(t^n R_n).$$
	Explicitly, we have
	$$\begin{aligned}
	\boldL={}&n^3
	+(2n+1)\bigl(2(j_0-864)n(n+1)+j_0-984\bigr)\textbf{S}_n\\
	&+(n+1)\bigl((6n^2+12n+7)j_0(j_0-1728)-240j_0
	+331776(3n+2)(3n+4)\bigr)\textbf{S}_n^2\\
	&+2j_0(j_0-1728)(j_0-864)(n+1)(n+2)(2n+3)\textbf{S}_n^3\\
	&+j_0^2(j_0-1728)^2(n+1)(n+2)(n+3)\textbf{S}_n^4,\\
	R_n={}&R:=\frac{E_4E_6}{E_4^3-j_0\Delta}=\frac{Dt}{tE_4}.
	\end{aligned}$$
	%After division by $t^n$, both sides of the telescoping identity have weight $4$ and a pole of order at most $4$ at $\tau_0$, with no other poles. Thus comparing the coefficients of $q^0,\ldots,q^4$ proves the identity by Lemma~\ref{lemma_valence_bound}.
	Actually, for this special case, it is not difficult to prove conceptually that the minimal order of $\boldL$ is $4$ and that an order-$4$ certificate $R_n$ can be normalized to be independent of $n$. Suppose that $\boldL=\sum_{j=0}^l r_j(n)\textbf{S}_n^j$. Then $$\sum_{j=0}^l r_j(n) E_4 t^j = \frac{D^3(t^n R_n)}{t^n}.$$ Since $t$ and $E_4$ are holomorphic at $\infty$, the right-hand side is analytic at $\infty$, so $v_\infty(R_n)\geq 0$. Since $t$ has a simple pole at $\tau_0$, the right-hand side has a pole of order $l$ at $\tau_0$, and hence $v_{\tau_0}(R_n)\geq 3-l$. Since $R_n$ has weight $-2$ and is holomorphic away from $S$, Lemma~\ref{lemma_elliptic_order_congruence} gives
	$$v_\rho(R_n)\geq\frac13,\qquad v_i(R_n)\geq\frac12,
	\qquad \rho=e^{2\pi i/3},\quad i=\sqrt{-1}.$$
	The valence formula \eqref{degXk} therefore implies
	$$-\frac16=\deg\div(R_n)\geq(3-l)+\frac13+\frac12,
	\qquad\text{hence }l\geq4.$$
	When $l=4$, equality is forced throughout, so
	$$\div(R_n)=\frac{(\rho)}3+\frac{(i)}2-(\tau_0)=\div(R).$$
	Consequently, $R_n/R\in\mathbb{C}(n)$, and rescaling the telescoping identity makes its certificate equal to $R$, independent of $n$. We will see a similar argument again in Lemma \ref{four_cusp_cert} below.
	\end{example}
	
	\begin{example}
	In the previous example, let $\tau_0 = \frac{1+\sqrt{-7}}{2}$, $j_0 = -3375$, and $t = \dfrac{1}{j+3375}$. Instead of using the naive basis $\{tE_4,t^2E_4,t^3E_4\}$, we use the basis recorded in \cite[Example~4.2]{zhang2025elliptic}:
	\begin{equation}\label{aux_19}\begin{aligned}
	G^{(1)}&:=tE_4,\\
	G^{(2)}&:=\frac{19}{27}tE_4-3375t^2E_4,\\
	G^{(3)}&:=\frac{6995}{11907}tE_4-\frac{55875}{7}t^2E_4+22781250t^3E_4.
	\end{aligned}\end{equation}
	We will soon see that they produce the natural splitting for the denominator types of the corresponding modulonomic sequences. When $n\geq 1$, there exist rational numbers $A_{(1),n},A_{(2),n},A_{(3),n}$ such that
	$$\frac{E_4}{(j+3375)^n} = \sum_{j=1}^3 A_{(j),n} G^{(j)} + D^3(h_n),\qquad h_n \in \M_{-2}^S(\Gamma).$$
	Let $B_n := a_0(h_n)$. Then these four sequences are annihilated by $\boldL$. 
	Using the recurrence and the following initial values, it is possible to calculate $A_{(j),n}$ and $B_n$ efficiently for large $n$. 
	\begin{center}
	\renewcommand{\arraystretch}{1.7}
	\begin{tabular}{c|c|c|c|c}
	$n$ & $A_{(1),n}$ & $A_{(2),n}$ & $A_{(3),n}$ & $h_n$ \\ \hline
	$1$ & $1$ & $0$ & $0$ & $0$ \\
	$2$ & $\dfrac{19}{3^6 5^3}$ & $-\dfrac{1}{3^3 5^3}$ & $0$ & $0$ \\
	$3$ & $\dfrac{2137}{3^{10}5^6 7^2}$ & $-\dfrac{149}{2\cdot3^8 5^6 7}$ & $\dfrac{1}{2\cdot3^6 5^6}$ & $0$ \\
	$4$ & $\dfrac{2903701}{3^{18}5^9 7^3}$ & $-\dfrac{27883}{2\cdot3^{14}5^9 7^2}$ & $\dfrac{157}{3^{12}5^9 7}$ & $\dfrac{R}{2\cdot3^{19}5^6 7^2}$
	\end{tabular}
	\end{center}
	Here $R=E_4E_6/(E_4^3+3375\Delta)$ is the preceding certificate specialized at $j_0=-3375$. The first three rows follow by changing basis. For the fourth, write $\boldL=\sum_{j=0}^4r_j(n)\textbf{S}_n^j$; setting $n=0$ in the telescoping identity gives $h_4=R/r_4(0)$. In particular, $a_0(R)=1$ gives
	$$(B_1,B_2,B_3,B_4)=\left(0,0,0,\frac{1}{2\cdot3^{19}5^6 7^2}\right).$$
	We summarize our observations for these sequences below. They have been verified up to $n\leq 10000$.
	\begin{conjecture}\label{conj_mero_-7}Let $X_{i,n}=(3^6\,5^3\,7)^nA_{(i),n}$ and $Y_n = (3^6\,5^3\,7)^nB_n$. Then \\
	(a) The denominator types of these four sequences are \begin{equation}\label{conj_mero_denom}\begin{aligned}&X_{2,n}:\quad \lcmD{n-1}\\ &X_{1,n},X_{3,n}:\qquad \lcmD{n-1}\lcmD{\frac{n-1}{2}}\\
		&Y_n:\qquad \lcmD{n-1}\lcmD{\frac{n-1}{2}} \lcmD{\frac{n-1}{3}} .\end{aligned}\end{equation}
	(b) The denominators of $X_{1,n}$ and $X_{3,n}$ have essentially complementary prime support. More precisely, if $p\mid \mathrm{den}(X_{1,n})$, then $p$ is inert in $\mathbb{Q}(\sqrt{-7})$; if $p\mid \mathrm{den}(X_{3,n})$, then either $p$ splits in $\mathbb{Q}(\sqrt{-7})$ or $p^2<n$. \\
	(c) $X_{1,n}$ is always a $2$-adic unit, i.e., $v_2(X_{1,n})= 0$. \\
	(d) For $n\geq 3$, $v_7(A_{(3),n})=v_7(A_{(1),n})+2$.
	\end{conjecture}
	Assuming the conjecture, we see that there exist integer sequences $d_{i,n}$ satisfying
	$$d_{2,n} = (3^6\,5^3\,7)^n e^{n+o(n)},\qquad d_{1,n}, d_{3,n} = (3^6\,5^3\,7)^n e^{3n/4+o(n)}$$
	such that $d_{i,n} A_{(i),n} \in \mathbb{Z}$. See Section \ref{CM_level7_sect} for an example of such an estimate.
	\end{example}

%$$(2\pi i)^3 \int_0^{i\infty} G^{(2)} d\tau = (2\pi i)^3 \int_0^{i\infty} \frac{E_4(19j-27000)}{27(j+3375)^2} d\tau = \frac{10}{729} L_{-7}(2).$$
	\begin{comment}
	Actually, part (a), (b) of the above conjecture seems to generalize to any negative discriminant $D$, regardless of class number. First define, $j_D$ and $\psi_D \in \mathbb{Q}(j_D)$ to be the the value of $j$ and $\frac{E_2^\ast E_4}{E_6}$ evaluated at any CM-point\footnote{picking another such CM-point changes $j_D,\psi_D$ to a Galois conjugate, this does not affect the statement of the conjecture we will give below.} of discriminant $D$. Then let (see \cite[Example~4.1]{zhang2025elliptic}):
	$$G^{(1)} := XXX$$
	When $D=-7$, we recover exactly \eqref{aux_19}. Consider the following expansion $$\frac{E_4}{(j-j_D)^n} = \sum_{j=1}^3 A_{(j),n} G^{(j)} + D^3(h_n),\qquad h_n \in \M_{-2}^S(\Gamma).$$ Then we conjecture the following denominator types of the modulonomic sequences $A_{(i),n} \in \mathbb{Q}(j_D)$ and $B_n = a_0(h_n)$. 
		\begin{conjecture}\label{conj_mero_D}Let $D$ be any negative discriminant. Write $s_D = j_D(j_D-1728)$. Let $X_{i,n}=s_D^n A_{(i),n}$ and $Y_n = s_D^n  B_n$. Then \\
		(a) The denominator type of these four sequences are the same as in \eqref{conj_mero_denom}.\\
		(b) Let $p$ co-prime to $s_D$. If $p\mid \mathrm{den}(X_{1,n})$, then $p$ is inert in $\mathbb{Q}(\sqrt{D})$; if $p\mid \mathrm{den}(X_{3,n})$, then either $p$ splits in $\mathbb{Q}(\sqrt{D})$ or $p^2<n$. 
	\end{conjecture}
	\end{comment}
	Conjecture \ref{conj_mero_D_intro} mentioned in the introduction is a generalization to other discriminants. Note that when $D=-7$, $s_{-7} = 3^9 5^3 7$, while Conjecture \ref{conj_mero_-7} gives the stronger $3^6 5^3 7$. It is not immediately obvious what the optimal scaling factor for general $D$ is. The conjecture predicts that $j_D (j_D-1728)$ is always enough.

	\section{Zagier's weight $3$ modulonomic sequences}\label{sec:zagier}
	
	Suppose that $t$ is a weakly holomorphic modular function of degree $1$ that vanishes at the cusp $\infty$ and has a pole at the cusp $\alpha$. The following lemma, whose core ideas already appeared in Proposition \ref{prop:level_six_plus_constant_terms}, allows one to construct explicit solutions to the recurrence $\boldL$ arising from creative telescoping:
	$$\boldL(t^{-n}g) = D^{k-1}(t^{-n}R_n).$$
	\begin{lemma}\label{lemma:eichler_solution}
		(a) Suppose that $X\in \M_k^!(\Gamma)$ vanishes at $\infty$. Then the sequence
		$$u_X(n):=a_0\bigl(t^{-n}g \E_{k-1}(X)\bigr)$$
		satisfies
		$$\boldL u_X(n)\in \operatorname{span}_{\mathbb{C}(n)}
		\bigl\{t(\beta)^{-n}:\beta\notin\{\infty,\alpha\}\bigr\}$$
		for $n\gg0$, where $\beta$ runs over the cusps of $\Gamma$ and $t(\beta)$ denotes the finite nonzero value of $t$ at $\beta$. \\
		(b) Moreover, if $g$ is holomorphic at all cusps, then $R_n$ can be chosen to be holomorphic at the cusps other than $\infty$ and $\alpha$. If $X$ vanishes at every cusp other than $\alpha$, then $\boldL u_X(n) = 0$ for $n\gg 0$.
	\end{lemma}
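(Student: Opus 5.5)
The plan is to follow the computation in the proof of Proposition \ref{prop:level_six_plus_constant_terms}. The telescoper and certificate are linear in the relevant sense, so they pass through the constant term. Since $a_0(X)=0$, we have $D^{k-1}\E_{k-1}(X)=X$. Then, for each fixed $n\gg0$,
$$\boldL u_X(n)=a_0\bigl(\boldL(t^{-n}g)\,\E_{k-1}(X)\bigr)=a_0\bigl(D^{k-1}(t^{-n}R_n)\,\E_{k-1}(X)\bigr)=(-1)^{k-1}a_0\bigl(t^{-n}R_n X\bigr),$$
where the last step is Lemma \ref{lemma:constant_term_integration_by_parts} with $r=k-1$. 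Here $R_n$ is specialized at the integer $n$. One must avoid the finitely many poles of the coefficients in $\mathbb{C}(n)$, which is why only $n\gg0$ is claimed. The product $t^{-n}R_nX$ lies in $\M_2^S(\Gamma)$. In the setting of this section the poles are at cusps, so the form is weakly holomorphic and Lemma \ref{lemma:weight_two_residue_relation} applies. It gives
$$a_0(t^{-n}R_nX)=-\sum_{\beta\neq\infty}\frac{w_\beta}{w_\infty}\,a_0\bigl((t^{-n}R_nX)|_2\gamma_\beta\bigr).$$

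Next I would analyse each cusp $\beta\neq\infty$ separately.

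\textbf{At $\beta=\alpha$.} Here $t$ has a simple pole, so $t^{-n}|\gamma_\alpha$ vanishes to order $n$ in $q_\alpha$. The functions $R_n|\gamma_\alpha$ and $X|\gamma_\alpha$ have poles of order bounded independently of $n$. For this I will use the structure of $R_n=\sum_i A_i(n)v_i$ with the $v_i$ in a fixed space $\mathcal{L}_{2-k}(\mathcal{D})$, as given by the proof of Theorem \ref{creative_telescoping_S_intro}. Hence the constant term vanishes once $n$ is large.

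\textbf{At other cusps $\beta$.} Here $t$ takes the finite nonzero value $t(\beta)$. Since $t$ has degree $1$, it is a local coordinate, and its only zero is $\infty$. Write $t|\gamma_\beta=t(\beta)(1+\varphi_\beta)$ with $\varphi_\beta\in q_\beta\mathbb{C}[[q_\beta]]$. Then
$$t^{-n}|\gamma_\beta=t(\beta)^{-n}(1+\varphi_\beta)^{-n}=t(\beta)^{-n}\sum_{m\ge0}\binom{-n}{m}\varphi_\beta^m.$$
The factor $R_nX|\gamma_\beta$ has pole order at most some $N$ independent of $n$. Therefore only $m\le N$ contributes to the constant term. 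This contribution is $t(\beta)^{-n}$ times a polynomial in $n$, multiplied by the coefficients $A_i(n)\in\mathbb{C}(n)$. Summing over $\beta$ gives part (a).

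For part (b), I would rerun the existence argument of Theorem \ref{creative_telescoping_S_intro}, in the refined form of Proposition \ref{prop:CT_refined_order}. The divisor $\mathcal{D}$ is taken supported only on $\{\infty,\alpha\}$, which are the poles of $t^{-n}g$ when $g$ is holomorphic at the cusps. The $K$-dimension count still goes through by Riemann--Roch, since only $\deg\mathcal{D}$ large matters. The forms $t^{-j}g$ lie in $\mathcal{L}_k(\mathcal{D})$ because $t^{-1}$ is holomorphic away from $\alpha$. The resulting certificate is therefore in $\mathcal{L}_{2-k}(\mathcal{D})\otimes K$, so it is holomorphic at every $\beta\neq\infty,\alpha$.

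The one point to check is that this choice does not increase the order of $\boldL$. The statement only asserts existence, so a possibly different $\boldL$ is acceptable; I would phrase (b) with this $\boldL$. With such $R_n$, suppose $X$ vanishes at every cusp other than $\alpha$. Then for $\beta\neq\infty,\alpha$, the form $(t^{-n}R_nX)|\gamma_\beta$ is holomorphic and vanishes at $q_\beta=0$, so its constant term is $0$. Together with the vanishing at $\alpha$ for $n\gg0$, this gives $\boldL u_X(n)=0$.

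I expect the main obstacle to be bookkeeping rather than any single step. The points to control are uniform pole bounds for $R_n$ as $n$ varies, the cusp widths, and the specialization of $\mathbb{C}(n)$-coefficients at integers. The one conceptual input is that $t$ has degree one, so $t^{-n}$ has no zeros or poles at the remaining cusps.
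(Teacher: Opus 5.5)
Your proposal is correct and follows the paper's proof. You use Lemma~\ref{lemma:constant_term_integration_by_parts} to move $D^{k-1}$ onto $\E_{k-1}(X)$, apply the weight-two residue relation of Lemma~\ref{lemma:weight_two_residue_relation}, and analyse cusp by cusp ($t^{-n}$ vanishes at $\alpha$, $t$ is a unit elsewhere), with (b) obtained by rerunning the dimension count with $\mathcal{D}$ supported on $\{\infty,\alpha\}$ as in the proof of Proposition~\ref{prop:CT_refined_order}. Your extra bookkeeping — uniform pole bounds on $R_n$ from a fixed $\mathcal{L}_{2-k}(\mathcal{D})$ and the binomial expansion of $(1+\varphi_\beta)^{-n}$ — simply makes explicit what the paper leaves implicit.
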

	\begin{proof} (a) Using Lemma \ref{lemma:constant_term_integration_by_parts}, we have
		$$\boldL u_X(n) = a_0( D^{k-1}(t^{-n}R_n)\E_{k-1}(X)) = \pm a_0( t^{-n}R_n X).$$
		The product $R_nX$ has weight $2$, so Lemma \ref{lemma:weight_two_residue_relation} gives
		$$a_0( t^{-n}R_n X) = - \sum_{\beta\neq \infty} a_0((t^{-n} R_n X)|\omega_\beta).$$
		When $\beta = \alpha$, the summand vanishes because $t^{-1}$ has a zero there. At a cusp $\beta \notin \{\alpha,\infty\}$, the function $t$ is a unit, and hence
		$$a_0((t^{-n} R_n X)|\omega_\beta) \in \mathbb{C}(n)t(\beta)^{-n}.$$
		(b) The assertion that $R_n$ can be chosen to be holomorphic at the relevant cusps follows from the proof of Proposition \ref{prop:CT_refined_order}. In this case, $a_0((t^{-n} R_n X)|\omega_\beta) = 0$ for every $\beta\notin \{\alpha,\infty\}$.
	\end{proof}
	
	\subsection{Zagier's seven sequences}
	For the rest of this section, we focus on a group $\Gamma$ satisfying the following conditions:
	\begin{itemize}[leftmargin=*]
	\item $-I \notin \Gamma$;
	\item $X(\Gamma)$ has genus $0$, four cusps, and no elliptic points;
	\item all cusps are regular.
	\end{itemize}
	It is known that, up to conjugation in $\SL_2(\mathbb{R})$, there are exactly four such groups $\Gamma$ (see \cite[Section~9]{sebbar2001torsion} or \cite{beauville1982familles}):
	\begin{equation}\label{possible_gamma}\Gamma_1(5),\quad \Gamma_1(6),\quad \Gamma_0(8)\cap\Gamma_1(4),\quad \Gamma_0(9)\cap\Gamma_1(3).\end{equation}
	
	From equation \eqref{degXk}, we see that every weight-$k$ form in $\mathcal{A}_k(\Gamma)$ has degree $k$. As $X(\Gamma)$ has genus zero, the converse is also true: every degree-$k$ divisor is the divisor of an element of $\mathcal{A}_k(\Gamma)$. The dimension formula also gives $\dim \E_3(\Gamma) = 4$ and $\dim \S_3(\Gamma) = 0$. Let $\alpha_1,\alpha_2,\alpha_3,\infty$ be the four cusps of $\Gamma$. \par
	
	Fix $t \in \M_0^!(\Gamma)$ and $g,X \in \M_3(\Gamma)$ such that 
	\begin{equation}\label{tgX_div}\div t = (\infty) - (\alpha_1),\quad \div g = (\alpha_1) + (\alpha_2) + (\alpha_3),\quad \div X = (\infty)+(\alpha_2)+(\alpha_3).\end{equation}
	Then Proposition \ref{prop:CT_refined_order} predicts the existence of a recurrence $\boldL$ of order at most $2$ such that
	$$\boldL(t^{-n}g) = D^2(t^{-n}R_n).$$
	The special properties of $\Gamma$ allow us to determine the exact order of $\boldL$ and an explicit expression for the certificate. 
	\begin{lemma}\label{four_cusp_cert}
	In the above setting and notation, the order of $\boldL$ is exactly $2$, and we can take $$R_n = \frac{Dt}{t^3 g} \sim -2(\infty)+(\alpha_1).$$
	Moreover, the coefficients of $\boldL$ are polynomials in $n$ of degree at most $2$.
	\end{lemma}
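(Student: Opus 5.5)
We prove the lower bound $l\geq 2$ by a divisor count, in the same way as the order-$4$ argument in the example at a non-elliptic CM point of Section~\ref{sec:meromorphic}. Equality in that count pins down $\div(R_n)$, hence $R_n$ up to a factor in $\mathbb{C}(n)$. Writing the telescoping identity with this fixed certificate then gives the claim about the degrees of the coefficients.

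\emph{Upper bound.} We have $\dim\E_3(\Gamma)=4$ and $\S_3(\Gamma)=0$, and $t^{-1}$ has divisor $(\alpha_1)-(\infty)$. The cusps $\alpha_2,\alpha_3$ are regular, $t$ is a unit there, and $g$ vanishes there to order $1$. So $\#\mu(t^{-1},g)=2$, and Proposition~\ref{prop:CT_refined_order} gives a telescoper $\boldL=\sum_{j=0}^l r_j(n)\textbf{S}_n^j$ of order $l\leq 2$.

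\emph{Lower bound.} Work with $n$ generic (equivalently, over $\mathbb{C}(n)$) and consider the relation
$$\sum_{j=0}^l r_j(n)\,t^{-j}g=\frac{D^{2}(t^{-n}R_n)}{t^{-n}},\qquad R_n\in\M_{-1}^!(\Gamma)\otimes\mathbb{C}(n).$$
We bound $\div R_n$ from below at each cusp, using that $D^2$ preserves the order at a cusp whenever that order is nonzero (equation~\eqref{aux_2} and Bol's identity).
\begin{itemize}[leftmargin=*]
\item At $\infty$ the left side has a pole of order at most $l$, and $t^{-n}R_n$ has order $-n+v_\infty(R_n)\neq 0$ for generic $n$. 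Hence $v_\infty(R_n)\geq -l$.
\item At $\alpha_1$ the left side vanishes to order at least $1$, since the $j=0$ term is $g$ and the other terms vanish to order $j$. Here $t^{-n}R_n$ has order $n+v_{\alpha_1}(R_n)\neq 0$, so $v_{\alpha_1}(R_n)\geq 1$.
\item At $\alpha_2,\alpha_3$ the function $t$ is a unit and the left side is holomorphic. Since $R_n$ has poles only at cusps, $R_n$ is holomorphic there: a pole would survive $D^2$.
\end{itemize}
By \eqref{degXk}, weight $-1$ forms have degree $-1$ (genus $0$, four cusps, no elliptic points). Therefore
$$-1=\deg\div R_n\geq -l+1,$$
so $l\geq 2$. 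Combined with the upper bound, $l=2$.

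With $l=2$ all the inequalities are equalities, so $\div R_n=-2(\infty)+(\alpha_1)$. To identify $R=\frac{Dt}{t^3g}$, note that $Dt/t\in\M_2(\Gamma)$ has nonzero constant terms at $\infty$ and $\alpha_1$ and vanishes at $\alpha_2,\alpha_3$, so $\div(Dt/t)=(\alpha_2)+(\alpha_3)$. Then
$$\div R=(\alpha_2)+(\alpha_3)-\bigl(2(\infty)-2(\alpha_1)\bigr)-\bigl((\alpha_1)+(\alpha_2)+(\alpha_3)\bigr)=-2(\infty)+(\alpha_1).$$
In genus zero this divisor determines $R_n$ up to scalars, so $R_n/R\in\mathbb{C}(n)$, and after rescaling $\boldL$ we may take $R_n=R$.

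\emph{Degrees of the coefficients.} As in the proof of Proposition~\ref{Dt^n_continuation}, with $a=Dt/t$,
$$\frac{D^2(t^{-n}R)}{t^{-n}}=(D-na)^2R=F_0+nF_1+n^2F_2,$$
where the $F_i$ are fixed forms. This expression lies in the space $V$ of weight-$3$ forms with $\div\geq-2(\infty)+(\alpha_1)+(\alpha_2)+(\alpha_3)$ whose constant terms vanish at $\alpha_2,\alpha_3$; so do $g,t^{-1}g,t^{-2}g$. The step needing most care is checking $\dim V=3$, so that these three forms are a basis of $V$. The divisor has degree $1$, so by Riemann--Roch a weight-$3$ form whose divisor is bounded below by it forms a space of dimension $(3-1)+1=3$; the constant-term conditions at $\alpha_2,\alpha_3$ are then automatic. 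Granting this, the coordinates $r_j(n)$ of $F_0+nF_1+n^2F_2$ in the basis $g,t^{-1}g,t^{-2}g$ are polynomials in $n$ of degree at most $2$. Finally, $r_2\neq 0$ by the lower bound already proved.
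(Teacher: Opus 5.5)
Your proposal is correct and follows the paper's proof. Both arguments bound $\div R_n\geq -l(\infty)+(\alpha_1)$ against the degree $-1$ of weight-$(-1)$ divisors, take the upper bound from Proposition~\ref{prop:CT_refined_order}, identify $R_n$ from the forced equality of divisors, and read off the degree bound from Proposition~\ref{Dt^n_continuation}. Your version improves on the paper in small ways: you show directly that any certificate is holomorphic at $\alpha_2,\alpha_3$ instead of appealing to the refined construction, you work with generic $n$ and inequalities instead of exact orders, and you spell out why the coordinates $r_j(n)$ are polynomials (the dimension count for $V$ is harmless but unnecessary, since linear independence of $g,t^{-1}g,t^{-2}g$ already suffices).
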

	\begin{proof}
	Let $\boldL = \sum_{j=0}^l a_j(n) \textbf{S}_n^j$. Then
	$$\sum_{j=0}^l a_j(n) t^{-n-j} g = D^2(t^{-n} R_n).$$
	First, the refined construction in the proof of Proposition \ref{prop:CT_refined_order} allows us to take $R_n$ holomorphic at $\alpha_2$ and $\alpha_3$. Let $\mathcal{D}$ denote the divisor of the left-hand side. When $n$ is sufficiently positive, we have $\mathcal{D}(\infty) = -n-l$, and hence $(\div R_n)(\infty) = -l$. When $n$ is sufficiently negative, we have $\mathcal{D}(\alpha_1) = -n+1$, and hence $(\div R_n)(\alpha_1) = 1$. Thus
	$$\div R_n\geq -l(\infty)+(\alpha_1).$$
	Since a weight-$(-1)$ form has a divisor of degree $-1$, this implies $l\geq2$. The upper bound $l\leq2$ from Proposition \ref{prop:CT_refined_order} therefore gives $l=2$ and
	$$\div R_n=-2(\infty)+(\alpha_1).$$
	It follows that $R_n$ is a $\mathbb{C}(n)$-multiple of $\frac{Dt}{t^3g}$. After rescaling, we may therefore assume that $R_n = \frac{Dt}{t^3 g}$ is independent of $n$. Finally, the proof of Proposition \ref{Dt^n_continuation} shows that, for any $t,h\in \mathbb{C}((q))$,
	$$\frac{D^{k-1}(t^n h)}{t^n} \in \mathbb{C}[n]((q)),$$
	with every coefficient a polynomial in $n$ of degree at most $k-1$. Consequently, the coefficients $a_i(n)$ are polynomials of degree at most $2$.
	\end{proof}
	
	The two solutions of $\boldL(\cdots)=0$ for $n\gg0$ are
	$$a_0(t^{-n}g)\qquad\text{and}\qquad a_0(t^{-n}g\E_2(X)),$$
	the second being supplied by Lemma \ref{lemma:eichler_solution}.
	
	By taking all possible choices of $\Gamma$ in \eqref{possible_gamma} and then all possible choices of $\alpha_1,\alpha_2,\alpha_3$, we obtain a total of seven rational telescopers $\boldL$, i.e., telescopers with coefficients in $\mathbb{Q}[n]$. These are exactly the seven cases\footnote{There is an additional case arising from $\Gamma = \Gamma(5)$, but the corresponding $\boldL$ is defined over $\mathbb{Q}(\sqrt{5})$ rather than $\mathbb{Q}$.} written down by Zagier in \cite{zagier2009integral}; see Table \ref{tab:examples_groups} below. Two linearly independent solutions are $a_0(t^{-n}g)$ and $a_0(t^{-n}g \E_2(X))$. Their denominator types are trivial and $\lcmD{n}^2$, respectively. \par
	
	As in the proofs of Corollaries \ref{cor:Ap\'ery_mod_p} and \ref{congruence_level4_cor_2}, one could investigate the properties of these integer solutions modulo $p$. We do not pursue this point here.

	\begin{table}[h]
		\centering
		\begin{tabular}{@{}c c c c c@{}}
			\toprule
			& $\Gamma$ & $(\alpha_1,\alpha_2,\alpha_3)$ & $t$ & $g$ \\
			\midrule
			$\textbf{A}$ & $\Gamma_1(6)$ & $(\tfrac13,0,\tfrac12)$ & $q-3q^2+3q^3$ & $1-q-5q^2-q^3$ \\
			$\textbf B$ & $\Gamma_0(9)\cap\Gamma_1(3)$ & $(0,\tfrac13,-\tfrac13)$ & $-q-3q^2-9q^3$ & $1-9q^3$ \\
			$\textbf C$ & $\Gamma_1(6)$ & $(\tfrac12,0,\tfrac13)$ & $q-4q^2+10q^3$ & $1-q-5q^2-q^3$ \\
			$\textbf D$ & $\Gamma_1(5)$ & $(\tfrac25,0,\tfrac12)$ & $q-5q^2+15q^3$ & $1-2q-6q^2+7q^3$ \\
			$\textbf E$ & $\Gamma_0(8)\cap\Gamma_1(4)$ & $(\tfrac12,0,\tfrac14)$ & $q-4q^2+12q^3$ & $1-4q^2$ \\
			$\textbf F$ & $\Gamma_1(6)$ & $(0,\tfrac12,\tfrac13)$ & $-q-5q^2-19q^3$ & $1-q-5q^2-q^3$ \\
			$\textbf G$ & $\Gamma_0(8)\cap\Gamma_1(4)$ & $(\tfrac14,0,\tfrac12)$ & $q-4q^3$ & $1-4q^2$ \\
			\bottomrule
		\end{tabular}
			\caption{Groups, cusps, and relevant data that reproduce Zagier's sequences $\textbf A$--$\textbf G$. Every displayed expansion is modulo $O(q^4)$.}
			\label{tab:examples_groups}
	\end{table}

	From our description of these sequences, it is easy to compute their generating functions and monodromy. 
	\begin{lemma}\label{lemma:constant_term_generating_function}
		Let $t,g\in\mathbb{C}((q))$ be Laurent series convergent in a neighbourhood of $q=0$, and suppose that $v_\infty(t)=1$ and $g$ is holomorphic at $q=0$. Then, for $\tau$ sufficiently close to $i\infty$,
		\begin{equation}\label{eq:constant_term_generating_function}
			\sum_{n\geq0}a_0(t^{-n}g)t(\tau)^n=\frac{t(\tau)g(\tau)}{Dt(\tau)}.
		\end{equation}
	\end{lemma}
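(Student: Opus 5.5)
The identity is purely formal: it is Lagrange inversion, i.e.\ a change of variables in a residue. I will argue with formal Laurent series in $q$ and let convergence justify evaluating at $\tau$ near $i\infty$.

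Since $v_\infty(t)=1$, the series $x:=t(q)$ is a valid local coordinate near $q=0$. Let $q=q(x)$ be the inverse series. Then
\[
a_0(t^{-n}g)=\operatorname{Res}_{q=0}\, t^{-n}g\,\frac{dq}{q}.
\]
Change variables $q\mapsto x$. Residues of meromorphic differentials are invariant under such a change, and $\frac{dq}{q}=\frac{dx}{x}\cdot\frac{t}{Dt}$ because $Dt=q\,dt/dq$. This gives
\[
a_0(t^{-n}g)=\operatorname{Res}_{x=0}\, x^{-n-1}\,\frac{t\,g}{Dt}\,dx .
\]

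Put $\Phi:=\frac{tg}{Dt}$. Since $Dt=t\cdot(1+O(q))$, we get $\Phi=g\cdot(1+O(q))^{-1}$, which is holomorphic at $q=0$. Viewed as a power series in $x$, write $\Phi=\sum_{m\ge0}b_m x^m$. The residue above is exactly $b_n$. Therefore $a_0(t^{-n}g)=b_n$ for every $n\ge0$, and
\[
\sum_{n\ge0}a_0(t^{-n}g)\,x^n=\Phi(x),
\]
as formal power series in $x$. For negative $n$ both sides vanish consistently, since $t^{-n}g$ then has no constant term, but only $n\ge0$ is needed.

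For convergence, note that $t$, $g$ and $Dt$ are convergent near $q=0$, and $q(x)$ is analytic near $x=0$ by the inverse function theorem. Hence $\Phi(x)$ is analytic in a disc $|x|<r$, and its Taylor series converges there. Taking $\tau$ close enough to $i\infty$ makes $|t(\tau)|<r$. Substituting $x=t(\tau)$ then gives \eqref{eq:constant_term_generating_function}, with the right side equal to $t(\tau)g(\tau)/Dt(\tau)$.

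The only step needing care is the invariance of the residue under the substitution $q\mapsto x$. This is standard for formal Laurent series whose change of variable has valuation $1$. Alternatively, one can check it directly by writing $dq/q=d\log q$ and using $\operatorname{Res}(dF)=0$ for the exact terms.
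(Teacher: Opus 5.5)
Your proof is correct and follows the paper's own argument. Both change variables from $q$ to $x=t(q)$ in $\operatorname{Res}_{q=0}\,t^{-n}g\,dq/q$, using $dq/q=dt/Dt$, and so identify $a_0(t^{-n}g)$ with the $n$-th Taylor coefficient of $tg/Dt$ in the variable $t$. Your extra remarks on why $tg/Dt$ is holomorphic at $q=0$ and on convergence for $\tau$ near $i\infty$ spell out points the paper leaves implicit.
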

	\begin{proof}
		Since $t$ has a simple zero at $q=0$, it is a local coordinate there. Let $q=q(t)$ be its local inverse and put
		$$F(t):=\frac{t\,g(q(t))}{Dt(q(t))}.$$
		Changing variables from $q$ to $t$ in the residue gives
		\begin{align*}
			a_0(t^{-n}g)
			&=\underset{q=0}{\operatorname{Res}}\,t(q)^{-n}g(q)\frac{dq}{q}\\
			&=\underset{t=0}{\operatorname{Res}}\,t^{-n}\frac{g(q(t))}{Dt(q(t))}\,dt
			=[t^n]F(t).
		\end{align*}
		Summing over $n\geq0$ proves \eqref{eq:constant_term_generating_function}. 
	\end{proof}
	
	\begin{example}
	Here we briefly recast, in our framework, the construction of the initial approximation used in \cite[Section~11]{CDT}. In the above setting, let $\Gamma = \Gamma_1(6)$ and $(\alpha_1,\alpha_2,\alpha_3) = (\frac12,0,\frac13)$, corresponding to case $\textbf{C}$ in Table \ref{tab:examples_groups}. Choose any $t,g,X$ satisfying the divisor condition \eqref{tgX_div}. The two homogeneous solutions of the telescoper $\boldL$ are $a_n := a_0(t^{-n}g)$ and $b_n := a_0(t^{-n} g\E_2(X))$. Choose any $Y$ with $\div Y = (\infty)+(\alpha_1)+(\alpha_2)$. The proof of Lemma \ref{lemma:eichler_solution} then implies that, for $c_n := a_0(t^{-n} g \E_2(Y))$, the quantity $\boldL c_n$ is a nonzero constant. \par
	Write $$\mathscr{A}(\tau) := \sum_{n\geq 0} a_n t(\tau)^n,\qquad \mathscr{B}(\tau) := \sum_{n\geq 0} b_n t(\tau)^n,\qquad \mathscr{C}(\tau) := \sum_{n\geq 0} c_n t(\tau)^n.$$
	Then Lemma \ref{lemma:constant_term_generating_function} says
	$$\mathscr{A}(\tau) = \frac{t(\tau)g(\tau)}{Dt(\tau)} ,\qquad \mathscr{B}(\tau)= \frac{t(\tau)g(\tau)}{Dt(\tau)} \E_2(X)(\tau),\qquad \mathscr{C}(\tau) = \frac{t(\tau)g(\tau)}{Dt(\tau)} \E_2(Y)(\tau).$$
	These expressions immediately show that the generating functions extend to the universal cover of $\mathbb{P}^1(\mathbb{C})\setminus \{0,\infty,t(\alpha_2),t(\alpha_3)\}$. Their monodromy is governed by their transformations under $\Gamma_1(6)$. In particular, $\mathscr{B}(\tau)$ and $\mathscr{C}(\tau)$ encode $\zeta(2)$ and $L(2,\chi_{-3})$, respectively, through the monodromy of the Eichler integrals:
	\begin{equation}\label{eichler_trans}\E_k(f)|_{2-k} \gamma - \E_k(f) = \frac{(2\pi i)^{k-1}}{(k-1)!} \int_{\gamma^{-1}\infty}^\infty (\tau-z)^{k-2} f(z) dz,\qquad \gamma \in \Gamma.\end{equation}
	\end{example}
	
	\begin{comment}
	\begin{corollary}
	Let $k\geq 2$ and let $0\neq g\in \M_k^S(\Gamma)$. Let $t$ be a degree $1$ modular function such that $\div t=(\infty)-(\alpha)$, where $\alpha,\infty\in S$. Assume that $g$ vanishes at every cusp other than $\alpha$ and $\infty$. Then in the statement of Theorem~\ref{creative_telescoping_S_intro}, we can choose $\boldsymbol{\mathrm{L}}$ to have order at most
	$$2+2\dim\S_k(\Gamma)
	+\sum_{x\in S\cap(\Gamma\backslash\mathbb{H})}\mu_k(e_x)-2\delta_{k,2}.$$
	\end{corollary}
	\begin{proof}
	Let $c_k$ be the number of cusps at which the order of a weight-$k$ form is integral. Thus $c_k=\varepsilon_\infty$ for even $k$ and $c_k=\varepsilon_\infty^{\mathrm{reg}}$ for odd $k$. The dimension formulas recalled in the proof of Lemma~\ref{lemma_k_2-k_dimension} give
	$$c_k=\dim\E_k(\Gamma)+\delta_{k,2}.$$
	At every such cusp other than $\alpha$ and $\infty$, the function $t$ is a unit and $g$ vanishes. Hence
	$$\#\mu(t,g)\geq c_k-2.$$
	The result now follows by substituting the dimension formula of Theorem~\ref{dim_theorem} into the preceding proposition.
	\end{proof}
	\end{comment}
	
	\section{Creative telescoping with character}\label{sec:characters}
	\subsection{Isotypic decomposition}
	Let $\Gamma_1$ and $\Gamma_0$ be two $\SL_2(\mathbb{Z})$-commensurable subgroups of $\SL_2(\mathbb{R})$, with $\Gamma_1$ normal in $\Gamma_0$. Let $G := \Gamma_0 / \Gamma_1$ be the finite quotient group. Then $G$ acts on $\mathcal{A}_k(\Gamma_1)$ by the slash action $f\mapsto f|_k\gamma^{-1}$. Consider the isotypic decomposition of this representation of $G$:
	$$\mathcal{A}_k(\Gamma_1) = \bigoplus_{\chi \in \widehat{G}} \mathcal{A}_k(\Gamma_0,\chi).$$
	Here $\widehat{G}$ denotes the collection of irreducible representations of $G$, up to isomorphism. In particular, when $\chi$ is one-dimensional, we have
	$$\mathcal{A}_k(\Gamma_0,\chi) = \{f\in \mathcal{A}_k(\Gamma_1) : f|_k \gamma^{-1} = \chi(\gamma) f \text{ for all } \gamma\in \Gamma_0\}.$$
	We use the analogous notation $\M_k^!(\Gamma_0,\chi)$ for weakly holomorphic forms in this isotypic component.
	If $-I \in \Gamma_0$, a necessary condition for $\mathcal{A}_k(\Gamma_0,\chi)$ to be nontrivial is $\chi(-I) = (-1)^k$. Here are some familiar examples:
	\begin{itemize}[leftmargin=*]
	\item If $\Gamma_1 = \Gamma_1(N)$ and $\Gamma_0 = \Gamma_0(N)$, then $G = (\mathbb{Z}/N\mathbb{Z})^\times$. The above isotypic decomposition is the usual nebentypus decomposition.
	\item Let $N\geq 3$, $\Gamma_1 = \Gamma_0(N)$, $\Gamma_0 = \langle \Gamma_1,W_N\rangle$ with $W_N$ the Fricke involution. Then $G = \mathbb{Z}/2\mathbb{Z}$. The above decomposition is simply the eigenspace decomposition.
	\item Let $N\geq 3$, $\Gamma_1 = \Gamma_1(N) $, $\Gamma_0 = \langle \Gamma_1,W_N\rangle$ with $W_N$ the Fricke involution. Then $G = \mathbb{Z}/4\mathbb{Z}$. Let $\chi$ be a generator of $\widehat{G}$ defined by $\chi(W_N) = e^{\pi i/2}$. Then $$\mathcal{A}_k(\Gamma_1) = \bigoplus_{j=0}^3 \mathcal{A}_k(\Gamma_0,\chi^j).$$
	When $k$ is even, only the summands with even $j$ are nonzero; when $k$ is odd, only those with odd $j$ are nonzero.
	\end{itemize}
	
	Bol's identity implies the isotypic decomposition descends to the quotient: 
	$$\frac{\M_k^!(\Gamma_1)}{D^{k-1}(\M_{2-k}^!(\Gamma_1))}
	=\bigoplus_{\chi\in\widehat G}
	\frac{\M_k^!(\Gamma_0,\chi)}{D^{k-1}(\M_{2-k}^!(\Gamma_0,\chi))}.$$
	Let $d_\chi$ denote the dimension of each summand. With a more involved argument, one can show that 
	\begin{equation}\label{d_chi_dim}d_\chi = \dim \E_k(\Gamma_0,\chi) + \dim \S_k(\Gamma_0,\chi) + \dim \S_k(\Gamma_0,\overline{\chi}).\end{equation}
	We omit the argument because we will not use this conclusion.
	
	\begin{theorem} [Creative telescoping with character] \label{creative_telescoping_char}
		With the notation above, let $t\in \M_0^!(\Gamma_0)$ have no zeros or poles in $\mathbb{H}$, let $k\geq 3$, and let $g\in \M_k^!(\Gamma_0,\chi)$. Then there exist a nonzero linear recurrence operator $\boldL$ with coefficients in $\mathbb{C}(n)$ and an element $R_n \in \M_{2-k}^!(\Gamma_0,\chi)\otimes_\mathbb{C} \mathbb{C}(n)$ such that
		$$\boldL(t^n g) = D^{k-1}(t^n R_n).$$
		We can choose $\boldL$ to have order at most $$d_\chi = \dim \frac{\M_k^!(\Gamma_0,\chi)}{D^{k-1}(\M_{2-k}^!(\Gamma_0,\chi))}.$$
	\end{theorem}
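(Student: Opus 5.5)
The plan is to run the proof of Theorem \ref{creative_telescoping_S_intro} inside the $\chi$-isotypic component, using a projector to pass from $\Gamma_1$ to $\chi$. Take $S$ to be the set of cusps of $X(\Gamma_1)$. Since $t$ has no zeros or poles in $\mathbb{H}$, its divisor on $X(\Gamma_1)$ is supported on $S$.

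\textbf{Step 1: compatibility of the projector.} Let $\pi_\chi$ denote the $G$-equivariant projector of $\mathcal{A}_k(\Gamma_1)$ onto $\mathcal{A}_k(\Gamma_0,\chi)$, given by the usual averaging $\frac{\dim\chi}{|G|}\sum_{\gamma\in G}\overline{\operatorname{tr}\chi(\gamma)}\,(\cdot)|_k\gamma^{-1}$. Two facts make it compatible with the construction.
\begin{itemize}[leftmargin=*]
\item Because $t$ is $\Gamma_0$-invariant, multiplication by $t^n$ commutes with the $G$-action.
\item By Bol's identity, $D^{k-1}$ commutes with the $G$-action.
\end{itemize}
Hence $\pi_\chi(D^{k-1}(t^nh))=D^{k-1}(t^n\pi_\chi h)$. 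The same holds after tensoring with $K=\mathbb{C}(n)$, for the continuation given by Proposition \ref{Dt^n_continuation}. To see this, note that the identity holds for every integer $n$, and both sides are polynomial in $n$ once expanded in a fixed basis.

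\textbf{Step 2: a finite-dimensional $\chi$-quotient.} Fix an integral divisor $\mathcal{D}$ supported on the cusps. Choose it $G$-stable, for instance $N$ times the sum of all cusps, so that $\mathcal{L}_k(\mathcal{D})$ and $\mathcal{L}_{2-k}(\mathcal{D})$ are $G$-stable. Here $S^\circ=0$, since $S$ contains no interior points. The space $\mathcal{H}_t(\mathcal{D})$ defined after \eqref{quotient_D_space_CT} is then a $K[G]$-module, and its $\chi$-part is
$$\mathcal{H}_t(\mathcal{D})_\chi=\frac{\mathcal{L}_k(\mathcal{D})_\chi\otimes K}{\{D^{k-1}(t^nh)/t^n : h\in\mathcal{L}_{2-k}(\mathcal{D})_\chi\otimes K\}}.$$
For $k\ge3$ the map $h\mapsto D^{k-1}(t^nh)/t^n$ is injective, so
$$\dim_K\mathcal{H}_t(\mathcal{D})_\chi=\dim\mathcal{L}_k(\mathcal{D})_\chi-\dim\mathcal{L}_{2-k}(\mathcal{D})_\chi .$$

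\textbf{Step 3: identifying the dimension (main obstacle).} We must show that this equals $d_\chi$ once $N$ is large. Riemann--Roch over $\Gamma_1$ only controls the total over all $\chi$, so it does not apply directly. I would avoid an equivariant Riemann--Roch by specializing $n$. For each integer $n$ consider
$$Q_N^{(n)}=\frac{\mathcal{L}_k(\mathcal{D})_\chi}{D^{k-1}(t^n\mathcal{L}_{2-k}(\mathcal{D})_\chi)/t^n}.$$
Multiplying by $t^n$ identifies it with $\mathcal{L}_k(\mathcal{D}+n\mathcal{D}_0)_\chi/D^{k-1}(\mathcal{L}_{2-k}(\mathcal{D}+n\mathcal{D}_0)_\chi)$, where $\mathcal{D}_0=-\div t$. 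This is the $\chi$-part of the $Q_N$ appearing in the proof of Theorem \ref{dim_theorem}, taken for the divisor $\mathcal{D}+n\mathcal{D}_0$, which still has large positive coefficients when $n$ is small compared with $N$. Apply part (2) of Lemma \ref{lemma_D_L} together with the $G$-equivariance from Step 1. The argument of that proof then shows that, for $N$ large, the natural map from this quotient to $\M_k^!(\Gamma_0,\chi)/D^{k-1}(\M_{2-k}^!(\Gamma_0,\chi))$ is injective. It is also surjective, because the total dimension over all $\chi$ matches. Hence its dimension is $d_\chi$. The generic rank of a matrix with entries in $\mathbb{C}[n]$ equals its rank at all but finitely many integers $n$, so $\dim_K\mathcal{H}_t(\mathcal{D})_\chi=d_\chi$. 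The delicate points are keeping $\mathcal{D}+n\mathcal{D}_0$ positive for the specialized values of $n$ and checking that the exceptional $n$ are finite; I expect the bookkeeping to work.

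\textbf{Step 4: conclusion.} Enlarge $\mathcal{D}$ until $g,tg,\dots,t^{d_\chi}g$ all lie in $\mathcal{L}_k(\mathcal{D})_\chi$. These $d_\chi+1$ classes are linearly dependent in $\mathcal{H}_t(\mathcal{D})_\chi$, which gives
$$\sum_j r_j(n)\,t^jg=\frac{D^{k-1}(t^nR_n)}{t^n},\qquad R_n\in\mathcal{L}_{2-k}(\mathcal{D})_\chi\otimes K.$$
Setting $\boldL=\sum_j r_j(n)\textbf{S}_n^j$ gives a nonzero operator of order at most $d_\chi$, exactly as in the proof of Theorem \ref{creative_telescoping_S_intro}.
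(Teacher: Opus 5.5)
Your proposal is correct and follows the paper's proof. Like the paper, you restrict the construction of $\mathcal{H}_t(\mathcal{D})$ to the $\chi$-isotypic parts of $\mathcal{L}_k(\mathcal{D})$ and $\mathcal{L}_{2-k}(\mathcal{D})$ for a $G$-stable cuspidal divisor, get equivariance from the $\Gamma_0$-invariance of $t$ and Bol's identity, identify the $K$-dimension with $\dim\mathcal{L}_k(\mathcal{D})_\chi-\dim\mathcal{L}_{2-k}(\mathcal{D})_\chi=d_\chi$ via the proof of Theorem \ref{dim_theorem}, and conclude from the linear dependence of $g,tg,\dots,t^{d_\chi}g$. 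Your injectivity-plus-total-dimension argument in Step 3 just spells out what the paper compresses into one sentence.

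The specialization in $n$ is unnecessary. Step 2 already gives the $K$-dimension as $\dim\mathcal{L}_k(\mathcal{D})_\chi-\dim\mathcal{L}_{2-k}(\mathcal{D})_\chi$, which equals $\dim Q_N^{(0)}$ because $D^{k-1}$ is injective for $k\geq 3$. So the case $n=0$, the $\chi$-part of $Q_N$ itself, suffices, and the ``delicate points'' you anticipate never arise. Note also that the generic-rank sentence would not close the argument as phrased, since for fixed $\mathcal{D}$ you only control finitely many $n$; what actually works is that the map is injective at every $n$.
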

	\begin{proof}
	Put $K:=\mathbb{C}(n)$. Let $\mathcal D$ be a sufficiently large effective $G$-stable divisor on $X(\Gamma_1)$ supported on the cusps, and denote the $\chi$-isotypic component of $\mathcal L_k(\mathcal D)$ by $\mathcal L_k(\mathcal D,\chi)$. Since $t$ is invariant under $\Gamma_0$, the map
	$$
	T_n:\mathcal L_{2-k}(\mathcal D,\chi)\otimes_\mathbb{C}K
	\longrightarrow \mathcal L_k(\mathcal D,\chi)\otimes_\mathbb{C}K,
	\qquad
	h\longmapsto \frac{D^{k-1}(t^nh)}{t^n},
	$$
	is well-defined and $G$-equivariant. It is injective because $k\geq3$. Define
	$$
	\mathcal H_t(\mathcal D,\chi):=
	\frac{\mathcal L_k(\mathcal D,\chi)\otimes_\mathbb{C}K}
	{T_n\bigl(\mathcal L_{2-k}(\mathcal D,\chi)\otimes_\mathbb{C}K\bigr)}.
	$$
	For $\mathcal D$ sufficiently large, the proof of Theorem~\ref{dim_theorem}, applied to a $G$-stable divisor, implies
	$$\dim_K\mathcal H_t(\mathcal D,\chi)=\dim\mathcal L_k(\mathcal D,\chi)
	-\dim\mathcal L_{2-k}(\mathcal D,\chi)=d_\chi.$$

	Enlarge $\mathcal D$ if necessary so that $g,tg,\ldots,t^{d_\chi}g\in\mathcal L_k(\mathcal D,\chi)$. Since $\mathcal H_t(\mathcal D,\chi)$ has dimension $d_\chi$, the classes of these $d_\chi+1$ elements are $K$-linearly dependent. The same argument as in the proof of Theorem \ref{creative_telescoping_S_intro} completes the proof.
	\end{proof}
	
	\begin{example}
	Arguably the most familiar example is obtained by taking $k=4$, $\Gamma_1 = \Gamma_0(6)$, and $\Gamma_0 = \langle\Gamma_1, W_6 \rangle$. The minus space has no cusp forms and has two Eisenstein series. Here $\Gamma_1$ has four cusps, namely $\{0,\infty,\frac12,\frac13\}$, while $\Gamma_0$ has two, namely $\{\infty,\frac12\}$.
	\par 
	Choose the unique, up to scaling, $t\in \M_0^!(\Gamma_0)$ with $\div_{\Gamma_0}(t) = (\infty) - (\frac12)$. Likewise, choose the unique, up to scaling, Eisenstein series $g$ that vanishes at $\frac12$. Both $t$ and $g$ can be chosen to have $q$-coefficients in $\mathbb{Q}$. Theorem \ref{creative_telescoping_char} predicts that the telescoper for $t^{-n}g$ has order at most $2$. One can check that it has exact order $2$ and coincides with Ap\'ery's recurrence. The corresponding Ap\'ery limit is
	$$(2\pi i)^3 \int_0^{i\infty} g(\tau)\,d\tau \in \mathbb{Q}\zeta(3).$$
	In this case, the resulting approximants converge fast enough to prove that $\zeta(3)\notin \mathbb{Q}$.
	\end{example}
	
	\begin{example}
		A less familiar example with exactly the same geometric configuration is due to Beukers \cite[Theorem~4]{beukers2018some}. Take $\Gamma_1 = \Gamma_1(5)$ and $\Gamma_0 = \langle\Gamma_1, W_5 \rangle$. The minus space has no cusp forms. Here $\Gamma_1$ has four cusps, namely $\{0,\infty,\frac25,\frac12\}$, while $\Gamma_0$ has two, namely $\{\infty,\frac12\}$. \par
		Choose, up to scaling, $t\in \M_0^!(\Gamma_0)$ with $\div_{\Gamma_0}(t) = (\infty) - (\frac12)$, and choose, again up to scaling, the Eisenstein series $g$ that vanishes at $\frac12$. Both $t$ and $g$ can be chosen to have $q$-coefficients in $\mathbb{Q}(\sqrt{5})$. Theorem \ref{creative_telescoping_char} predicts that the telescoper for $t^{-n}g$, with coefficients in $\mathbb{Q}(\sqrt{5})[n]$, has order at most $2$. One can check that it has exact order $2$. The corresponding Ap\'ery limit is
		$$(2\pi i)^3 \int_0^{i\infty} g(\tau)\,d\tau \in \mathbb{Q}(\sqrt{5})\xi.$$
		In this case, the resulting approximants converge fast enough to prove that $\xi \notin \mathbb{Q}(\sqrt{5})$. Beukers computed a representative of $\xi$, namely $8\zeta(3)-5\sqrt{5} L_{-5}(3)$.
	\end{example}

		\subsection{A non-congruence subgroup}
	Recall that $\eta_6(-5,7,7,-5) \in \M_2(\Gamma_0(6))$. Its square root (chosen to take positive values on $i\mathbb{R}_+ \subset \mathbb{H}$) is a well-defined function on the upper half-plane:
	$$H(\tau) := \sqrt{\frac{\eta(2\tau)^7 \eta(3\tau)^7}{\eta(\tau)^5 \eta(6\tau)^5}} = \sqrt{(u+8v)(u+9v)} = 1+\frac{5q}{2}+\frac{27q^2}{8} + \frac{49q^3}{16}+\frac{147q^4}{128}+\cdots,$$
	where $u,v$ are as in equation \eqref{level6_uv}. We define $$\mathcal{N} := \{\gamma \in \Gamma_0(6) : H|_1 \gamma = H\}.$$
	Then $H$ is a weight $1$ modular form in $\M_1(\mathcal{N})$. %We will see later that $\mathcal{N}$ is a non-congruence subgroup. %As we don't assume familiarity of readers on non-congruence subgroup, we supply more arguments in this section than elsewhere in the article.
	\begin{lemma}
		The map $$\varepsilon:\Gamma_0(6) \to \{\pm 1\},\qquad  \gamma \mapsto \frac{H|_1 \gamma}{H}$$ is a surjective homomorphism; hence $\mathcal{N} = \ker \varepsilon$ is an index-two subgroup of $\Gamma_0(6)$. Moreover, $\mathcal{N}$ is normalized by the Fricke involution $W_6 = \frac{1}{\sqrt{6}}\SmallMatrix{0 & -1 \\ 6 & 0}$.
	\end{lemma}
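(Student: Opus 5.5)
The plan has three parts: show that $\varepsilon$ is well defined and takes values in $\{\pm1\}$, that it is a homomorphism, that it is surjective, and finally that $\mathcal{N}$ is normalized by $W_6$.

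\emph{Well-definedness.} Since $H^2=(u+8v)(u+9v)=\eta_6(-5,7,7,-5)\in\M_2(\Gamma_0(6))$, every $\gamma\in\Gamma_0(6)$ satisfies $(H|_1\gamma)^2=H^2|_2\gamma=H^2$. By the divisor table for $\Gamma_1(6)$, $H^2$ vanishes only at the cusps $\frac12$ and $\frac13$. So it has no zeros in $\mathbb{H}$, and $H$ is a genuine holomorphic, nowhere vanishing function on $\mathbb{H}$. Hence $(H|_1\gamma)/H$ is a continuous function on the connected set $\mathbb{H}$ whose square is $1$, and it is therefore a constant $\varepsilon(\gamma)\in\{\pm1\}$.

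\emph{Homomorphism.} The slash action satisfies $H|_1(\gamma_1\gamma_2)=(H|_1\gamma_1)|_1\gamma_2=\varepsilon(\gamma_1)H|_1\gamma_2=\varepsilon(\gamma_1)\varepsilon(\gamma_2)H$. Thus $\varepsilon$ is a homomorphism, and $\mathcal{N}=\ker\varepsilon$ has index at most two.

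\emph{Surjectivity.} This is the main step: I must exhibit one $\gamma$ with $\varepsilon(\gamma)=-1$. The natural candidate is $-I$, since $H|_1(-I)=-H$, which settles it at once if $-I$ is regarded as an element of $\Gamma_0(6)$. If one prefers to avoid $-I$, take instead a parabolic element fixing a cusp where $H^2$ has a simple zero, for example $\gamma$ fixing $\frac12$. In the local parameter $q_{1/2}$ at that cusp, of width $w$, the function $H^2|_2\rho^{-1}$ has order $1$ in $q_{1/2}$. Its square root is therefore $q_{1/2}^{1/2}$ times a unit, and it changes sign under $\tau\mapsto\tau+w$. Pulled back, the generator of the stabilizer of $\frac12$ in $\Gamma_0(6)$ (multiplied by $-I$ if needed so that it corresponds to $+\SmallMatrix{1&w\\0&1}$) acts by $\varepsilon=-1$. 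This local computation near the cusp is the only delicate point. Either way, $\varepsilon$ is onto and $[\Gamma_0(6):\mathcal{N}]=2$.

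\emph{Normalized by $W_6$.} Compute $H^2|_2W_6$ with the formulas $u|W_6=-6i\sqrt2\,v$ and $v|W_6=\frac{-i}{6\sqrt2}u$. This gives $(u+8v)|W_6=-\frac{i}{6\sqrt2}(8u+72v)\cdot\frac{\sqrt2\cdot 6\sqrt2\cdot}{\ }\!$, which is up to a constant $u+9v$; similarly $(u+9v)|W_6$ is a constant times $u+8v$. More cleanly, $W_6$ swaps the cusps $\frac12$ and $\frac13$, so $H^2|_2W_6=cH^2$ for a constant $c\ne0$, fixed by the eta-quotient transformation. Hence $H|_1W_6=c'H$ for some constant $c'$ with $c'^2=c$, by the same connectedness argument. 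For $\gamma\in\mathcal{N}$ we then get $H|_1(W_6\gamma W_6^{-1})=c'\,(H|_1\gamma)|_1W_6^{-1}=c'c'^{-1}H=H$, using $H|_1\gamma=H$ and $H|_1W_6^{-1}=c'^{-1}H$. Moreover $W_6\gamma W_6^{-1}\in\Gamma_0(6)$ because $W_6$ normalizes $\Gamma_0(6)$, so $W_6\gamma W_6^{-1}\in\mathcal{N}$ and $\mathcal{N}$ is normalized by $W_6$.
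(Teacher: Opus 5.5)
Your proof is correct and follows essentially the same route as the paper: the ratio squares to $1$, the homomorphism property comes from the slash action being a right action (the paper writes out the cocycle identity instead), surjectivity comes from $\varepsilon(-I)=-1$, and normality under $W_6$ follows from $H|_1W_6$ being a constant multiple of $H$, with your connectedness and divisor arguments filling in steps the paper leaves implicit. Do clean up the garbled display in the last paragraph: it should read $(u+8v)|W_6=-\frac{2\sqrt{2}\,i}{3}(u+9v)$, and together with $(u+9v)|W_6=-\frac{3i}{2\sqrt{2}}(u+8v)$ this gives $H^2|_2W_6=-H^2$.
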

	\begin{proof}
		Since $H^2$ is a weight $2$ modular form of $\Gamma_0(6)$, $\left(\frac{H|_1 \gamma}{H}\right)^2 = 1$, so $\frac{H|_1 \gamma}{H} = 1$ or $-1$. To see the map is a homomorphism, we compute (with $k=1$) $$\varepsilon(\gamma_1) = \frac{j(\gamma_1,\tau)^{-k} H(\gamma_1\tau)}{H(\tau)} = \frac{j(\gamma_1,\gamma_2\tau)^{-k} H(\gamma_1\gamma_2\tau)}{H(\gamma_2\tau)},$$
		where we replaced $\tau$ by $\gamma_2\tau$, since the ratio is constant in $\tau$. Now
		$$\begin{aligned}\varepsilon(\gamma_1\gamma_2) &= \frac{H|\gamma_1 \gamma_2}{H|\gamma_2} \frac{H|\gamma_2}{H} 
			= \frac{j(\gamma_1 \gamma_2,\tau)^{-k} H(\gamma_1\gamma_2 \tau)}{j(\gamma_2,\tau)^{-k} H(\gamma_2 \tau)} \varepsilon(\gamma_2) \\
			&= \frac{j(\gamma_1 \gamma_2,\tau)^{-k} }{j(\gamma_2,\tau)^{-k}} j(\gamma_1,\gamma_2\tau)^k \varepsilon(\gamma_1)\varepsilon(\gamma_2).
		\end{aligned}$$
		The well-known cocycle identity then proves that $\varepsilon$ is a homomorphism. The map is surjective because $\varepsilon(-I)=-1$.\par
		Finally, $H^2 \in \M_2(\Gamma_0(6))$ is an eigenform under $W = W_6$, so $H| W = \lambda H$ for some $\lambda \in \mathbb{C}^\times$. We have
		$$\varepsilon(\gamma) = \frac{H|\gamma}{H}  =  \left(\frac{H|\gamma}{H} \right)\vert W =  \frac{ H|(\gamma W)}{H|W}  = \frac{\lambda H|(W^{-1} \gamma W)}{H|W} = \frac{H|(W^{-1} \gamma W)}{H} = \varepsilon(W^{-1}\gamma W),$$
		so $\mathcal{N}$ is normalized by $W$.
	\end{proof}
	
	\begin{remark}
		One can show that a set of generators of $\mathcal{N}$ is given by
		$$\mathcal{N} = \left\langle \begin{pmatrix} 1 & 1 \\ 0 & 1\end{pmatrix}, \begin{pmatrix} 1 & -1 \\ 6 & -5\end{pmatrix}, \begin{pmatrix} 5 & -3 \\ 12 & -7\end{pmatrix}\right\rangle.$$
		Since $\Gamma_0(6) = \langle \mathcal{N},-I \rangle$, these two groups define the same compact Riemann surface. In particular, this surface has the same genus and cusps as $X_0(6)$, with cusps $\{0,\infty,\frac12,\frac13\}$. The last two cusps are irregular for $\mathcal{N}$. The group $\mathcal{N}$ is not a congruence subgroup.
	\end{remark}
	
	We now apply the setup from the beginning of this section with
	$$\Gamma_1:=\mathcal N,\qquad \Gamma_0:=\langle\mathcal N,W_6\rangle = \langle \Gamma_0(6),W_6\rangle,\qquad
t:=\frac{uv}{(u+8v)(u+9v)} \in \M_0^!(\Gamma_0).$$
Then $\Gamma_0/\Gamma_1\cong\mathbb Z/4\mathbb Z$. Let $\chi$ be the character with $\chi(W_6) = i$. For odd $k$, we have
$$\mathcal{A}_k(\mathcal{N}) = \mathcal{A}_k(\Gamma_0,\chi) \oplus \mathcal{A}_k(\Gamma_0,\overline{\chi}).$$
There is a unique cusp form of weight $3$ in $\S_3(\mathcal{N})$, which is $F := uvH$. One checks that it lies in $\S_3(\Gamma_0,\overline{\chi})$. Its weakly holomorphic partner has the other character $$F^\sharp:=\frac{uv(u^2-72v^2)}{H} \in \S_3^!(\Gamma_0,\chi).$$
\begin{example}\label{example:beukers_weight_three}
Let $g = (u^2-72v^2)H \in \M_3(\Gamma_0,\chi)$. Creative telescoping gives
\begin{equation}\label{eq:beukers_weight_three_certificate}
	\boldL(t^{-n}g)=D^2(t^{-n}R),
\end{equation}
where $$\boldL = (2n+1)^2-2(68n^2+170n+107)\textbf{S}_n
+4(n+2)^2\textbf{S}_n^2,\qquad R = \frac{4(u^2-72v^2)}{t^2H^3}\in\M_{-1}^!(\Gamma_0,\chi).$$
Using exactly the same argument\footnote{Here $g$ lies in the $\chi$-eigenspace, while $F$ lies in the $\overline{\chi}$-eigenspace. Their product is therefore an honest weight-$2$ modular form, so the residue-sum argument goes through without modification.} as in Lemma~\ref{lemma:eichler_solution}, one sees that the following two sequences are annihilated by $\boldL$:
\begin{equation}\label{eq:beukers_weight_three_sequences}
	A_n:=a_0(t^{-n}g),\qquad
	B_n:=a_0\bigl(t^{-n}g\E_2(F)\bigr),\qquad n\geq0.
\end{equation}
Their initial values are
$$(A_0,A_1)=\left(1,\frac52\right),\qquad (B_0,B_1)=(0,1).$$
Their denominator types are also evident: $4^n$ for $A_n$ and $4^n \lcmD{n}^2$ for $B_n$. Lemma~\ref{lemma:constant_term_generating_function} gives
\begin{equation}
\sum_{n\geq0}A_nt(\tau)^n=H(\tau),\qquad \sum_{n\geq0}B_nt(\tau)^n=H(\tau) \E_2(F)(\tau)
\end{equation}
for $\tau$ near $i\infty$. The singularities of these two generating functions arise from the values of $t$ at the points where $Dt = 0$, namely
$(\sqrt2-1)^4$ and $(\sqrt2+1)^4$. Their monodromy, and hence the Ap\'ery limit of the sequences $A_n$ and $B_n$, can be read off from \eqref{eichler_trans}. This gives $|B_n - \omega A_n | < (\sqrt2-1)^{4n}$, where
$$\omega = L(F,2)=(2\pi i)^2\int_0^{i\infty} \sqrt{\frac{\eta^9(\tau)\eta^9(6\tau)}{\eta^3(2\tau)\eta^3(3\tau)}}\tau \,d\tau \approx 0.400749797.$$
The numerology $4e^2 < (\sqrt{2}+1)^4$ then proves that $\omega \notin \mathbb{Q}$. This is precisely the conclusion of \cite[Theorem~3]{beukers1987irrationality}.
\end{example}
With essentially the same argument, we can prove that the $L$-value associated with the meromorphic partner is also irrational. 
	\begin{example}\label{example:beukers_weight_three_partner}
	With the notation of Example~\ref{example:beukers_weight_three}, take the weakly holomorphic partner
	$$F^\sharp:=\frac{uv(u^2-72v^2)}{H},\qquad g^\sharp:=H^3.$$
	The corresponding recurrence and certificate for $\boldL^\sharp(t^{-n}g^\sharp)=D^2(t^{-n}R^\sharp)$ are
	\begin{equation}
	\boldL^\sharp:=(2n+3)^2-2(68n^2+238n+209)\textbf{S}_n
	+4(n+2)^2\textbf{S}_n^2, \qquad R^\sharp:=\frac{4}{t^2H}.
	\end{equation}
	Two solutions are
	\begin{equation}\label{eq:beukers_partner_sequences}
	A_n^\sharp:=a_0(t^{-n}H^3),\qquad
	B_n^\sharp:=a_0\bigl(t^{-n}H^3\E_2(F^\sharp)\bigr),\qquad n\geq0,
	\end{equation}
	with initial values
	$$(A_0^\sharp,A_1^\sharp)=\left(1,\frac{39}{2}\right),\qquad
	(B_0^\sharp,B_1^\sharp)=(0,1).$$
	For
	\begin{equation}\label{eq:beukers_partner_period}
	\eta:=L(F^\sharp,2)=(2\pi i)^2\int_0^{i\infty}\tau F^\sharp(\tau)\,d\tau \approx 0.05133879023,
	\end{equation}
	the combination
	\begin{equation}\label{eq:beukers_partner_generating_function}
	\sum_{n\geq0}(B_n^\sharp-\eta A_n^\sharp)t(\tau)^n
	=\frac{H(\tau)^3}{u(\tau)^2-72v(\tau)^2}
	\bigl(\E_2(F^\sharp)(\tau)-\eta\bigr)
	\end{equation}
	is overconvergent at $(\sqrt2+1)^4$. The same numerology as in Example~\ref{example:beukers_weight_three} proves $\eta\notin\mathbb Q$.
	\end{example}

	\subsection{An example with CM cusp form}\label{CM_level7_sect}
	This subsection illustrates, through a particular example, that for a CM cusp form the denominators of the associated modulonomic sequences appear to involve only certain primes, rather than the full $\lcmD{n}^{k-1}$; see Proposition \ref{-7_CM_cusp_form}. We take $\Gamma_0 = \Gamma_0(7)$, $\Gamma_1 = \Gamma_1(7)$, and $k=3$. Write $\chi = \left(\frac{-7}{\cdot}\right)$. Let
	$$t(\tau) = \frac{\eta(7\tau)^4}{\eta(\tau)^4} = q+4q^2+14q^3+\cdots \sim (\infty)-(0).$$
	The normalized cusp form spanning $\S_3(\Gamma_0,\chi)$ and a weakly holomorphic partner are
$$\begin{aligned}g&= \eta(\tau)^3 \eta(7\tau)^3 =  q-3q^2+5q^4 + \cdots \sim (0)+(\infty),\\
g^\sharp&=g(t^{-2}+11t^{-1}+33) = q^{-1}-33q^3+88q^5+\cdots .\end{aligned}$$
Here $g^\sharp$ has a simple pole at $\infty$ and vanishes at $0$.
Assuming \eqref{d_chi_dim}, an upper bound for the order of a telescoper in $\boldL(t^{-n}g) = D^2(t^{-n} R_n)$ is
$$d_\chi = \dim \E_3(\Gamma_0,\chi) + 2\dim \S_3(\Gamma_0,\chi) = 2+2 = 4.$$
Indeed, we have
$$\begin{aligned}
\boldL={}&2401(n+1)^2+49(26n^2+78n+63)\textbf{S}_n\\
&+3(89n^2+356n+378)\textbf{S}_n^2+(26n^2+130n+167)\textbf{S}_n^3+(n+3)^2\textbf{S}_n^4,
\qquad R_n=\frac{Dt}{t^3g}.
\end{aligned}$$
Four linearly independent solutions of this homogeneous recurrence, for $n\geq0$, are
$$\begin{aligned}
	A_n&=a_0(t^{-n}g), \qquad
	B_n&=a_0(t^{-n}g\,\E_2(g)),\\
	C_n&=a_0(t^{-n}g\,\E_2(g^\sharp)), \qquad
	D_n&=a_0(t^{-n}g\,\E_2(E)),
\end{aligned}$$
where $E=(1+8t)g$ is the unique Eisenstein series in $\E_3(\Gamma_0,\chi)$ with $a_0(E)=0$ and $a_1(E)=1$. We have $A_n\in\mathbb{Z}$, while $B_n,C_n,D_n$ have denominator type $\lcmD{n}^2=e^{2n+o(n)}$. The CM property of $g$ gives sharper denominator bounds for $B_n$ and $C_n$.
\begin{proposition}\label{-7_CM_cusp_form}
There exist positive integers $d_n,d_n^\sharp$ with $d_n=e^{n+o(n)}$ and $d_n^\sharp=e^{n+o(n)}$ such that
$$d_n B_n \in \mathbb{Z},\qquad d_n^\sharp C_n \in \mathbb{Z}.$$
Moreover, up to factors of size $e^{o(n)}$, the prime divisors of $d_n$ (resp. $d_n^\sharp$) are exactly those $p\leq n$ that split (resp. remain inert) in $\mathbb{Q}(\sqrt{-7})$.
\end{proposition}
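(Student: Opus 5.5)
The approach is to make the denominators explicit using the CM structure of $g$. We have $B_n=a_0(t^{-n}g\,\E_2(g))$. Since $t\in q(1+q\mathbb{Z}\llbracket q\rrbracket)$ and $g\in q\mathbb{Z}\llbracket q\rrbracket$, the series $t^{-n}g$ has integer coefficients and its principal part reaches down to $q^{-(n-1)}$. Writing $t^{-n}g=\sum_{m\geq -n+1}c_{n,m}q^m$ with $c_{n,m}\in\mathbb{Z}$, we get
\[B_n=\sum_{m=1}^{n-1} c_{n,-m}\frac{a_m(g)}{m^2}.\]
So the denominator of $B_n$ divides the lcm of $m^2/\gcd(m^2,a_m(g))$ over $m<n$. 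The same formula holds for $C_n$ with $a_m(g^\sharp)$ in place of $a_m(g)$. Note that $g^\sharp$ has a $q^{-1}$ term, but it pairs with $c_{n,1}$ and only contributes an integer.

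For $g$ the key input is the CM structure. The form $g=\eta(\tau)^3\eta(7\tau)^3$ is the theta series of the Hecke character of $\mathbb{Q}(\sqrt{-7})$ of $\infty$-type $2$, so $a_p(g)=0$ for inert $p$. Multiplicativity and the Hecke recursion then give $a_{p^{2j+1}}(g)=0$ and $a_{p^{2j}}(g)=(-p^2)^j$ for inert $p$, so $p^{2v}\mid a_{p^{2v}}(g)\cdot$(unit). Hence at an inert prime $p$ the ratio $a_m(g)/m^2$ is $p$-integral. For the ramified prime $7$, $a_7(g)=\pm 7$ gives only a bounded loss. At split $p$ the ratio contributes at most $p^{2\lfloor\log_p n\rfloor}$. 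This gives $d_n=\prod_{p\text{ split},\,p<n}p^{2\lfloor\log_p n\rfloor}$ times bounded factors. Primes $p>\sqrt n$ occur with exponent $2$, so $\log d_n\sim 2\sum_{p\text{ split}\le n}\log p\sim n$ by the prime number theorem for $\mathbb{Q}(\sqrt{-7})$ (density $1/2$). This is $e^{n+o(n)}$ as claimed. The "exactly" part additionally needs each split $p\in(\sqrt n,n)$ to actually divide $\den(B_n)$. Here I would at least show that $a_p(g)\not\equiv 0\pmod p$ for split $p$: write $a_p(g)=\pi^2+\bar\pi^2$ with $p=\pi\bar\pi$, and this is prime to $p$. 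Genuine divisibility of $\den(B_n)$ would then follow after an $e^{o(n)}$ adjustment, which the statement permits. Because of this allowance, I would phrase the claim as an upper bound by $d_n$ together with the prime support of $d_n$.

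For $C_n$ the roles are swapped. I need that $a_m(g^\sharp)/m^2$ is $p$-integral for split $p$. Being a weakly holomorphic partner of a CM form in the "$\overline{\chi}$-twisted" sense, $g^\sharp$ should satisfy $a_{pm}(g^\sharp)\equiv 0\pmod{p^{2v_p(pm)}}$ at split primes; equivalently, $\E_2(g^\sharp)$ is $p$-integral away from inert primes. I would prove this via the Hecke operators $T_p$ on $\S_3^!/D^2(\M_{-1}^!)$. The quotient is $2$-dimensional, spanned by $g$ and $g^\sharp$. For split $p$, $T_p$ acts there with eigenvalues $\pi^2,\bar\pi^2$, and a suitable combination $g^\sharp+cg$ with $p$-adically determined $c$ is an eigenvector modulo $D^2(\cdot)$, with eigenvalue divisible by a power of $p$ (the "unit-root/non-unit-root" splitting, à la Scholl–Kazalov-Koenigsberg). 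However, since $g^\sharp$ is fixed, this would give only congruences up to $D^2$ of something with controlled denominators. Iterating in $p^j$ should give the required valuations up to bounded loss.

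The main obstacle is this last step: extracting $p$-adic integrality of $a_{m}(g^\sharp)/m^2$ at split primes from the Hecke action on the two-dimensional de Rham-type quotient, uniformly in the exponent of $p$. The bounded constant must not depend on $p$, or it would spoil $e^{o(n)}$. If this proves too difficult, a fallback is to use the explicit form $g^\sharp=g(t^{-2}+11t^{-1}+33)$ together with a CM Atkin–Swinnerton-Dyer congruence. The estimate $\log d_n^\sharp\sim n$ then follows exactly as before, using inert primes of density $1/2$.
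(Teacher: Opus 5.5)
\paragraph{The $B_n$ half: an error in the inert-prime step.}
For inert $p$ you have $\chi(p)=-1$, so the recursion is $a_{p^{r+1}}(g)=a_p(g)a_{p^r}(g)+p^2a_{p^{r-1}}(g)$. This gives $a_{p^{2j}}(g)=p^{2j}$ (e.g.\ $a_9(g)=9$), not $(-p^2)^j$. More importantly, the denominator in $\E_2(g)$ is $m^2$, not $m$. For $m=p^{2j}m'$ with $p\nmid m'$ one gets $v_p(a_m(g)/m^2)=2j-4j=-2j<0$. So your claim that $a_m(g)/m^2$ is $p$-integral at inert $p$ is false. Inert primes \emph{do} enter $\den(B_n)$, first at $n=p^2+1$, through $a_{p^2}(g)/p^4=1/p^2$; for instance the term $m=9$ contributes $c_{n,-9}/9$ once $n\geq10$. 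Consequently your $d_n$, built from split primes alone, does not clear $B_n$.

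The repair is the one the paper makes. An inert $p$ can only appear when $p^2\leq n-1$, so one multiplies by $\prod_{p\leq\sqrt n,\ \chi(p)=-1}p^{2\lfloor\log_p n\rfloor}=e^{O(\sqrt n\log n)}$. The ramified prime $7$ is similar: $a_7(g)=-7$, so it contributes $7^{O(\log n)}$. That is not a bounded factor, as you wrote, but it is still harmless.

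\paragraph{The $C_n$ half: the key lemma is missing.}
The whole content of the $C_n$ claim is the integrality of $\E_2(g^\sharp)$ at split primes, i.e.\ $p^{2a}\mid a_{p^am}(g^\sharp)$ for $p\nmid m$, and your proposal leaves it open. The mechanism you sketch is also misidentified. On the two-dimensional $g$-isotypic quotient spanned by $g,g^\sharp$, $T_p$ acts by the scalar $a_p(g)$. The numbers $\pi^2,\bar\pi^2$ are Frobenius eigenvalues, not eigenvalues of $T_p$. Moreover, any identity taken modulo $D^2(\M_{-1}^!)$ throws away exactly the $p$-divisibility you need.

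The missing idea is to apply Hecke operators to the actual form $g^\sharp$, after first proving the CM vanishing $a_m(g^\sharp)=0$ whenever $\chi(m)=1$. The paper proves this vanishing as a twist identity in $\M_3^!(\Gamma_0(49),\chi)$, checked on finitely many coefficients. It then constructs integral forms $F_m=q^{-m}+O(q^2)$ in $\S_3^!(\Gamma_0,\chi)$ vanishing at the cusp $0$, by triangular elimination starting from $t^{-m-1}g$.

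For split $p$, the form $T_{p^a}g^\sharp$ has principal part $\chi(p^a)p^{2a}q^{-p^a}=p^{2a}q^{-p^a}$ and zero constant term. Its $q^1$-coefficient is $a_{p^a}(g^\sharp)=0$, and it vanishes at $0$. Hence $T_{p^a}g^\sharp-p^{2a}F_{p^a}$ is a cusp form that is $O(q^2)$, so it is zero because $\S_3(\Gamma_0(7),\chi)=\mathbb{C}g$. Reading off the $q^m$-coefficient for $p\nmid m$ gives $a_{p^am}(g^\sharp)=p^{2a}a_m(F_{p^a})$. There is no loss depending on $p$ or $a$, which removes your concern about a $p$-dependent constant.

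This comparison also settles your worry about a $p$-adically chosen $c$. The same argument gives $T_{p^a}(g^\sharp+cg)=p^{2a}F_{p^a}+c\,a_{p^a}(g)\,g$. Since $a_{p^a}(g)$ is a $p$-adic unit for split $p$, only $c=0$ works. So the given $g^\sharp$ is already the right partner for every split prime simultaneously.
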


The rest of this subsection is devoted to proving this proposition. The assertion about $d_n$ is easier. Since $g$ has CM, $a_p(g)=0$ if $\chi(p)=-1$. This also follows from the theta-series representation
$$g=\frac14\sum_{(m,\ell)\in\mathbb{Z}^2}
(2m^2+2m\ell-3\ell^2)q^{m^2+m\ell+2\ell^2},$$
because $m^2+m\ell+2\ell^2$ is the norm of an algebraic integer in $\mathbb{Q}(\sqrt{-7})$, and no such norm can be an inert prime $p$. Since $g$ is a normalized Hecke eigenform, for $p\neq7$ and $r\geq1$ we have
$$a_{p^{r+1}}(g)=a_p(g)a_{p^r}(g)-\chi(p)p^2a_{p^{r-1}}(g).$$
For inert $p$, this becomes $a_{p^{r+1}}(g)=p^2a_{p^{r-1}}(g)$. Starting with $a_1(g)=1$ and $a_p(g)=0$, we obtain
$$a_{p^{2j}}(g)=p^{2j},\qquad a_{p^{2j+1}}(g)=0,\qquad j\geq0.$$
By multiplicativity, $a_m(g)=0$ whenever an inert prime occurs to an odd exponent in $m$. Therefore, in
$$\E_2(g)=\sum_{m\geq1}\frac{a_m(g)}{m^2}q^m,$$
an inert prime $p$ can occur in the denominator of the $q^m$-coefficient only if $m\geq p^2$. %For $n\geq1$, a sufficient denominator is
%$$d_n:=7^{2\lfloor\log_7n\rfloor}
%\prod_{\substack{p\leq n\\\chi(p)=1}}p^{2\lfloor\log_pn\rfloor}
%\prod_{\substack{p\leq\sqrt n\\\chi(p)=-1}}p^{2\lfloor\log_pn\rfloor},$$
%where all products are over primes. 
Thus we can take $d_n$ such that
$$\log d_n = \sum_{\substack{p\leq n \\ \chi(p)=1}} 2\log p + \sum_{\substack{p\leq \sqrt{n} \\ \chi(p)= - 1}} 2\log p + o(n).$$ 
Here the $o(n)$ comes from the ramified prime $7$ and higher prime powers. The second sum is itself $O(\sqrt n\log n)=o(n)$. The prime number theorem in arithmetic progressions now gives $\log d_n=n+o(n)$.  \par

Inert primes can occur in the denominators of $B_n$, but an inert prime $p$ first occurs at $n=p^2+1$, through the coefficient $a_{p^2}(g)/p^4=1/p^2$. Next, we consider the denominators of $C_n$, which involve the complementary set of primes: split primes are absent altogether.
\begin{lemma}\label{level_7_sharp_lemma}
$\E_2(g^\sharp)$ is integral at all split primes. More precisely, for every prime $p$ that splits in $\mathbb{Q}(\sqrt{-7})$, we have
$$\E_2(g^\sharp)\in q^{-1}+\mathbb{Z}_p[[q]].$$
Equivalently, for every split prime $p$ and all integers $a\geq1$ and $m\geq1$ with $p\nmid m$, we have
$$p^{2a}\mid a_{p^am}(g^\sharp).$$
\end{lemma}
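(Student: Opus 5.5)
The plan is to derive the divisibility $p^{2a}\mid a_{p^am}(g^\sharp)$ from a Hecke relation in the quotient $\S_3^!(\Gamma_0,\chi)/D^2(\M_{-1}^!(\Gamma_0,\chi))$, and then induct on $a$. The equivalence of the two formulations in the lemma is immediate. The $q^m$-coefficient of $\E_2(g^\sharp)$ is $a_m(g^\sharp)/m^2$, and the $q^{-1}$ term is $q^{-1}$ because $a_{-1}(g^\sharp)=1$. So the first statement is exactly $v_p(a_{p^am'}(g^\sharp))\geq 2a$ for $p\nmid m'$.

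\emph{Hecke step.} Let $p\neq 7$ be split, so $\chi(p)=1$ and $a_p(g)=\alpha+\beta$ with $\alpha\beta=p^2$. The Hecke operator $T_p$ (weight $3$, character $\chi$) commutes with $D^2$ up to the factor $p^{2}$. Precisely, $D^2(h)|T_p = p^{-2}\,D^2(h|T_p^{(-1)})$ for the weight $-1$ Hecke operator $T_p^{(-1)}=p^{-2}\cdot(\text{weight }{-1}\text{ normalization})$, so $T_p$ preserves $D^2(\M_{-1}^!)$ and acts on the $2$-dimensional quotient spanned by $g,g^\sharp$ (Theorem \ref{dim_theorem}). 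On that quotient it has characteristic polynomial $X^2-a_p(g)X+\chi(p)p^2$, by the Eichler--Shimura/Legendre-pairing compatibility. Hence
$$g^\sharp|T_p = a_p(g)\,g^\sharp + c_p\,g + D^2(h_p),$$
where $h_p\in\M_{-1}^!(\Gamma_0,\chi)$ has poles only at $\infty$.

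I would compute $c_p$ and $h_p$ explicitly from principal parts. Since $g^\sharp=q^{-1}+O(q^3)$, the form $g^\sharp|T_p$ has principal part $p^2q^{-p}$, with no $q^{-1}$ term. So $D^2(h_p)$ must carry the principal part $p^2q^{-p}-a_p(g)q^{-1}$, which forces $h_p = p^{-0}\cdot(\text{form with principal part } q^{-p} - a_p p^{-2}\cdots)$. The point is that the coefficients of $h_p$ at $q^{-p}$ are $p^2/p^2=1$, so $h_p$ is $p$-integral, using Lemma \ref{linear_comb_int_lemma}-type bounded denominators for the finitely many forms involved. I also expect the $\chi$-twist symmetry of the CM form ($g\otimes\chi=g$, and $g^\sharp$ chosen with $a_n(g^\sharp)=0$ for $\chi(n)=-1$, visible in $q^{-1}-33q^3+88q^5$) to force $c_p\in p^2\mathbb{Z}_p$. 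If it does not, I would replace $g^\sharp$ by the unit-root combination $g^\sharp-\lambda g$ adapted to $\alpha$.

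\emph{Induction.} Comparing $q^{p^am}$-coefficients, with $p\nmid m$, gives
$$a_{p^{a+1}m}(g^\sharp)=a_p(g)a_{p^am}(g^\sharp)-p^2a_{p^{a-1}m}(g^\sharp)+c_p\,a_{p^am}(g)+p^{2a}m^2a_{p^am}(h_p).$$
Assuming $v_p(a_{p^bm}(g^\sharp))\geq 2b$ for $b\leq a$, the first two terms have valuation $\geq 2a$, and the last term has valuation $\geq 2a$ by $p$-integrality of $h_p$. But we need $\geq 2a+2$, which is not automatic.

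So I would instead run the argument with the two $U_p$-stabilizations. Let $F_\beta := g^\sharp|(U_p-\alpha)$-type combinations, and use that on the quotient $U_p$ has eigenvalues $\alpha$ (a unit) and $\beta$ (with $v_p(\beta)=2$). This gives a relation of the form $g^\sharp|U_p = \beta g^\sharp + (\text{multiple of } g_\alpha) + D^2(\ast)$. Iterating yields $a_{p^am}(g^\sharp)\in \beta^a\mathbb{Z}_p + p^{2a}\mathbb{Z}_p$, provided the $g_\alpha$-component vanishes.

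\emph{Main obstacle.} The crux is showing that the $g_\alpha$-component (equivalently, $c_p$ up to the appropriate normalization) vanishes, or is divisible by a high enough power of $p$. I would try to prove this using the CM symmetry. Twisting by $\chi$ acts on the quotient, fixing the class of $g$. For the class of $g^\sharp$, the twist's eigenvalue would be determined by its vanishing on $\chi(n)=-1$ coefficients. Since twisting by $\chi$ swaps $\alpha$ and $\beta$ stabilizations only for inert primes, for split $p$ the $U_p$-eigenline for $\beta$ should coincide with the class of $g^\sharp$ itself. This identification is the step I expect to require the most care, likely via explicit checking of finitely many coefficients with the valence bound of Lemma \ref{lemma_valence_bound}.
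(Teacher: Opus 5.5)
There is a genuine gap, located exactly where you flag it, and the repair you propose cannot work.

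\textbf{The CM input is reversed.} In $g^\sharp=q^{-1}-33q^3+88q^5+\cdots$ the nonzero coefficients sit at $n=-1,3,5$, where $\chi(n)=-1$. What is true (Step~1 of the paper) is that $a_n(g^\sharp)=0$ whenever $\chi(n)=1$. In particular $a_{p^a}(g^\sharp)=0$ for every split $p$ and every $a\geq1$. This is the one arithmetic fact the lemma needs, and your argument never uses it.

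Relatedly, $T_p$ acts on the cuspidal quotient as the scalar $a_p(g)$. Your own relation together with $T_pg=a_p(g)g$ forces the characteristic polynomial $(X-a_p(g))^2$; the polynomial $X^2-a_p(g)X+p^2$ belongs to $U_p$ at level $7p$. In fact, compare $q^1$-coefficients in $g^\sharp|T_p=a_p(g)g^\sharp+c_pg+D^2(h_p)$ and use $a_0(h_pg^\sharp)=0$ (Lemma~\ref{lemma:weight_two_residue_relation}). This gives $c_p=2a_p(g^\sharp)$, so $c_p=0$ precisely because of the vanishing you missed.

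\textbf{Relations modulo $D^2(\M_{-1}^!(\Gamma_0,\chi))$ of the kind you set up cannot supply the missing factor $p^2$ per step, even with $c_p=0$.} Your recursion is circular. By definition of $h_p$, $m^2a_m(h_p)=a_{pm}(g^\sharp)-a_p(g)a_m(g^\sharp)$ for $p\nmid m$. So the congruence on $h_p$ modulo $p^2$ that you would need is already the case $a=1$ of the lemma.

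The $U_p$ version fails outright. Modulo $D^2$ one gets $g^\sharp|U_p-\beta g^\sharp\equiv\alpha\bigl(g^\sharp-\beta\,g^\sharp(p\tau)\bigr)$. This is nonzero in the level-$7p$ quotient, because the classes of $g,g(p\tau),g^\sharp,g^\sharp(p\tau)$ are independent. Hence the ``$\alpha$-component'' does not vanish, and no CM argument will make $g^\sharp$ a $U_p$-eigenvector.

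Replacing $g^\sharp$ by $g^\sharp-\lambda g$ with $\lambda\neq0$ is not available either, since it changes the statement. At $m=1$ it adds $-\lambda a_{p^a}(g)$, and $a_{p^a}(g)$ is a $p$-adic unit.

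\textbf{What the paper does instead} is prove an \emph{exact identity of forms} rather than a congruence modulo $D^2$.
\begin{enumerate}
\item It builds integral forms $F_m\in\S_3^!(\Gamma_0,\chi)\cap\mathbb{Z}((q))$ vanishing at $0$ with $F_m=q^{-m}+O(q^2)$. These are unique because $\S_3(\Gamma_0,\chi)=\mathbb{C}g$.
\item It notes that $T_{p^a}g^\sharp$ vanishes at $0$, has principal part $p^{2a}q^{-p^a}$, has zero constant term, and has $q^1$-coefficient $a_{p^a}(g^\sharp)=0$.
\item Hence $T_{p^a}g^\sharp=p^{2a}F_{p^a}$.
\item Reading off the $q^m$-coefficient for $p\nmid m$ gives $a_{p^am}(g^\sharp)=p^{2a}a_m(F_{p^a})$.
\end{enumerate}
The vanishing $a_{p^a}(g^\sharp)=0$ is itself proved by a Gauss-sum twist identity on $\Gamma_0(49)$, verified via the valence bound.
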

Hence only inert primes and the ramified prime $7$ can contribute to the denominator of $C_n$. We may take $d_n^\sharp$ such that
$$\log d_n^\sharp = \sum_{\substack{p\leq n \\ \chi(p)=-1}} 2\log p + o(n) = n+o(n).$$

For $n\geq1$ coprime to $7$, recall the Hecke operator $T_n$, which maps $\M_k^!(\Gamma_0,\chi)$ to itself and satisfies
$$a_m(T_nf)=\sum_{\substack{d\mid n\\d\mid m}}\chi(d)d^{k-1}a_{mn/d^2}(f),\qquad m\in\mathbb{Z}.$$
Because the level $7$ is prime, these Hecke operators preserve holomorphy and vanishing at each cusp. If $f=q^{-1}+O(1)$, then a negative coefficient $a_{-m}(T_nf)$, with $m\geq1$, can be nonzero only when $mn/d^2=1$, forcing $m=n=d$. Hence
\begin{equation}\label{aux_24}a_{-m}(T_nf)=\delta_{m,n}\chi(n)n^{k-1},\qquad m\geq1.\end{equation}

\begin{proof}[Proof of Lemma \ref{level_7_sharp_lemma}]
\underline{Step 1.} We have $a_m(g^\sharp)=0$ if $\chi(m)=1$. By the quadratic Gauss sum $\sum_{r=1}^6\chi(r)e^{2\pi ir/7}=i\sqrt7$, this is equivalent to the identity
$$\begin{aligned}
g^\sharp(\tau)
&+\frac1{i\sqrt7}\sum_{r=1}^6\chi(r)g^\sharp\!\left(\tau+\frac r7\right)-\frac17\sum_{r=0}^6g^\sharp\!\left(\tau+\frac r7\right)=0.
\end{aligned}$$
Indeed, the coefficient of $q^m$ on the left is
$(1+\chi(m)-\mathbf{1}_{7\mid m})a_m(g^\sharp)$.
The left-hand side belongs to $\M_3^!(\Gamma_0(49),\chi)$. Thus, by the valence formula on $\Gamma_0(49)$, it suffices to show that the left-hand side is $O(q^{22})$. \\
\underline{Step 2.} For each $m\geq1$, there is a unique $F_m\in\S_3^!(\Gamma_0,\chi)\cap\mathbb{Z}((q))$ that vanishes at the cusp $0$ and satisfies
$$F_m = q^{-m} + O(q^2).$$
For uniqueness, the difference of two such forms is a cusp form that is $O(q^2)$, hence zero because $\S_3(\Gamma_0,\chi)=\mathbb{C}g$ and $a_1(g)=1$. For existence, consider $t^{-m-1}g\in q^{-m}+q^{-m+1}\mathbb{Z}[[q]]$. Subtract integer multiples of the previously constructed forms $F_{m-1},\ldots,F_1$ to remove the negative powers, then an integer multiple of $g/t=1+O(q)$ to remove the constant term, and finally an integer multiple of $g=q+O(q^2)$ to remove the coefficient of $q$. Every form used vanishes at $0$. This also constructs the initial case $F_1=g^\sharp$. \\
\underline{Step 3.} For $p$ such that $\chi(p) = 1$, we claim that $$T_{p^a}g^\sharp = p^{2a} F_{p^a}.$$
Indeed, since $g^\sharp=q^{-1}+O(q^2)$, equation \eqref{aux_24} implies that their difference is $O(q)$. Moreover,
$$a_1(T_{p^a}g^\sharp - p^{2a} F_{p^a}) = a_{p^a}(g^\sharp) = 0$$
by Step 1, since $\chi(p^a)=1$. Their difference is therefore $O(q^2)$. Both forms vanish at $0$, so their difference is a cusp form and must be zero. \\
\underline{Step 4.} Finally, taking the $m$-th coefficient, with $p\nmid m$, gives
$$a_m(T_{p^a} g^\sharp) = a_{p^a m} (g^\sharp) = p^{2a} a_m(F_{p^a}),$$
so $p^{2a}$ divides $a_{p^a m} (g^\sharp)$, as desired.
\end{proof}
	
	\section{Denominator types of modulonomic sequences}\label{sec:denominator-types}
	\subsection{$q$-expansion at various cusps}
	For a ring $R$, let $R(\!( q )\!)$ denote the ring of formal Laurent series in $q$. We will mainly consider the following subring of the Puiseux series with algebraic-integer coefficients: \begin{equation}\label{puiseux_ring}\mathcal{P} := \bigcup_{w=1}^\infty \left\{f\in \overline{\mathbb{Z}}(\!( q^{1/w} )\!)  \right\}.\end{equation}
	Let $f\in \M^!_k(\Gamma)$. We say that $f$ is \textit{integral at the cusp} $\alpha$ if, for $\gamma\in \text{SL}_2(\mathbb{Z})$ with $\gamma \infty = \alpha$, we have $f|_k \gamma \in \mathcal{P}$. Note that this is independent of the choice of $\gamma$: any other choice multiplies the coefficients by roots of unity, and $\overline{\mathbb{Z}}$ is closed under multiplication by them. We say that $f$ is \textit{integral at every cusp} if $f|_k\gamma \in \mathcal{P}$ for every $\gamma\in \text{SL}_2(\mathbb{Z})$. We say that $f$ is a \textit{quasi-unit} if there exists a positive integer $\nu$ such that both $\nu f|_k \gamma$ and $\nu (f|_k \gamma)^{-1}$ belong to $\mathcal{P}$ for every $\gamma \in \text{SL}_2(\mathbb{Z})$.
	
	Recall that an eta-quotient is a weakly holomorphic modular form of the form \begin{equation}\label{eta_quo}\prod_{m\mid N} \eta^{r_m}(m\tau),\qquad N\in \mathbb{N},\quad r_m\in \mathbb{Z}.\end{equation}
	To save space, we will abbreviate the eta-quotient displayed above by $\eta_N(r_{d_1},r_{d_2},\ldots,r_{d_l})$, where $d_1,\ldots,d_l$ are the divisors of $N$ arranged in ascending order. We already saw this notation in the proof of Lemma \ref{lemma:gamma16_reduction}. The well-known result (see \cite[Proposition~5.9.2]{cohen2017modular} or \cite{rouse2015spaces}) allows us to construct eta-quotients on $\Gamma_0(N)$ easily. 
	\begin{lemma}
	Consider the eta-quotient \eqref{eta_quo} with $k := \sum_{m\mid N} \frac{r_m}{2}\in \mathbb{Z}$. It is a weakly holomorphic modular form of weight $k$ on $\Gamma_0(N)$ with Dirichlet character $\chi$ if and only if $\sum_{m\mid N} mr_m \equiv 0 \pmod{24}$ and $\sum_{m\mid N} (N/m)r_m \equiv 0 \pmod{24}$. If only the first congruence is assumed, it is instead modular on $\Gamma_0(M)$, where $M$ is the least common multiple of $N$ and the denominator of $\sum_{m\mid N}\frac{r_m}{24m}$. In either case its character is 
	$$\chi(d) = \left( \frac{(-1)^k P}{d}\right), \qquad \text{where } P = \prod_{m\mid N} m^{r_m}.$$
	\end{lemma}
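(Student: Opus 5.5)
The plan is to verify the transformation law directly from the classical transformation formula of $\eta$ under $\SL_2(\mathbb{Z})$, applied factor by factor. I would not try to go through the divisor or Riemann--Roch machinery of Section~\ref{sec:divisors}.

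Fix $\gamma=\SmallMatrix{a&b\\c&d}\in\Gamma_0(N)$. Since the case $c=0$ reduces to $\gamma=\pm T^b$, we may assume $c>0$ after possibly replacing $\gamma$ by $-\gamma$. For each $m\mid N$ we have $m\mid c$, so
$$m\,\gamma\tau=\frac{a(m\tau)+bm}{(c/m)(m\tau)+d},\qquad \gamma_m:=\SmallMatrix{a&bm\\c/m&d}\in\SL_2(\mathbb{Z}).$$
The $\eta$-transformation law then gives
$$\eta(m\gamma\tau)=\varepsilon(\gamma_m)\sqrt{cm^{-1}\cdot m\tau+d}\;\eta(m\tau)=\varepsilon(\gamma_m)\sqrt{c\tau+d}\;\eta(m\tau),$$
with the usual branch of the square root. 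Taking the product over $m$ with exponents $r_m$ yields
$$\Bigl(\prod_m\eta^{r_m}(m\tau)\Bigr)\Big|_k\gamma=\prod_m\varepsilon(\gamma_m)^{r_m}\cdot\prod_m\eta^{r_m}(m\tau),$$
where $k=\sum r_m/2$. The whole problem therefore reduces to computing the product of multipliers $\prod_m\varepsilon(\gamma_m)^{r_m}$.

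For this I would use Petersson's explicit formula for the $\eta$-multiplier. For $c'>0$ odd it reads
$$\varepsilon\SmallMatrix{a'&b'\\c'&d'}=\Bigl(\tfrac{d'}{c'}\Bigr)\exp\Bigl(\tfrac{2\pi i}{24}\bigl((a'+d')c'-b'd'(c'^2-1)-3c'\bigr)\Bigr),$$
and for $c'$ even there is an analogous formula with the Jacobi symbol $\bigl(\tfrac{c'}{d'}\bigr)$ and a modified exponent. Substituting $a'=a$, $b'=bm$, $c'=c/m$, $d'=d$ and summing against the $r_m$, the total phase exponent is a $\mathbb{Z}$-linear combination of $\sum_m (c/m)r_m=\tfrac{c}{N}\sum_m (N/m)r_m$ and $\sum_m m r_m$. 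The correction terms (the $-3c'$ and the $-b'd'$ terms) are absorbed using $ad-bc=1$ and elementary congruences modulo $24$, for instance $d^2\equiv1\pmod{24}$ when $\gcd(d,6)=1$; the cases where $2$ or $3$ divides $d$ need separate care.

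Under the two hypotheses $\sum m r_m\equiv\sum (N/m)r_m\equiv0\pmod{24}$ the phase disappears. What remains is the product of Jacobi symbols, $\prod_m\bigl(\tfrac{d}{c/m}\bigr)^{r_m}$ or its even-$c$ analogue. Using quadratic reciprocity, the fact that $\sum r_m=2k$, and the periodicity of $d\mapsto\bigl(\tfrac{c}{d}\bigr)$, I would rewrite this product as $\bigl(\tfrac{(-1)^kP}{d}\bigr)$ with $P=\prod m^{r_m}$. This bookkeeping, especially the $2$-adic case when $c$ is even and the sign $(-1)^k$ coming from reciprocity, is where I expect the main obstacle to lie. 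I would handle it by splitting into $c$ odd and $c$ even, and by reducing to the case $d>0$ through replacing $\gamma$ by $-\gamma$.

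For the growth and holomorphy conditions, $\prod\eta^{r_m}(m\tau)$ has no zeros or poles in $\mathbb{H}$. At $\infty$ its expansion is $q^{\sum m r_m/24}$ times a power series in $q$, so the first congruence is exactly integrality of the $q$-exponent. Necessity in the ``only if'' direction comes from the same observation: the $q$-expansion at $\infty$ must be in integral powers, which forces the first congruence. Applying the same argument to the Fricke transform $W_N$, which sends $\eta(m\tau)$ to a constant times $\sqrt{\tau}\,\eta((N/m)\tau)$, forces the second congruence.

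Finally, for the version with only the first congruence, I would view the same eta-quotient as having level $M$, padding with $r_m=0$ for $m\mid M$, $m\nmid N$. The first congruence is unchanged. The second becomes $M\sum_m r_m/m\equiv0\pmod{24}$, which holds by the definition of $M$ as the lcm of $N$ and the denominator of $\sum r_m/(24m)$. The first part then applies on $\Gamma_0(M)$, with the same character formula.
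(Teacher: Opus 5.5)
The paper does not prove this lemma; it quotes the Gordon--Hughes--Newman criterion from \cite{cohen2017modular} and \cite{rouse2015spaces}. Your plan is the classical proof behind that citation, and its framework is sound.
\begin{itemize}
\item The factorization $m\gamma\tau=\gamma_m(m\tau)$ with $\gamma_m=\SmallMatrix{a&bm\\c/m&d}$ gives $f|_k\gamma=\prod_m\varepsilon(\gamma_m)^{r_m}f$.
\item Necessity follows from $f|T=f$, and from $f|\SmallMatrix{1&0\\-N&1}=f$. Since $\SmallMatrix{1&0\\-N&1}=W_NTW_N^{-1}$, the latter says that $f|W_N$, a constant times $\prod_m\eta(\frac{N}{m}\tau)^{r_m}$, is $1$-periodic.
\item The padding argument for level $M$ is exactly right: at level $M$ the second congruence reads $24M\sum_m r_m/(24m)\equiv 0\pmod{24}$.
\end{itemize}
Compared with the citation, this gives a self-contained argument showing which congruence kills which part of the multiplier. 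For weak holomorphy you should also address the other cusps. For $\gamma\in\SL_2(\mathbb{Z})$, each $\eta(m\gamma\tau)$ is, up to an elementary factor, some $\eta((a'\tau+b')/d')$, so $f$ is meromorphic there too.

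The real flaw is in the step you deferred. You assert that under the two congruences ``the phase disappears'' and that $(-1)^k$ comes from reciprocity. This is true for $c$ odd but false for $c$ even, and following it literally would give the wrong character $(\frac{P}{d})$ there. Keep $c>0$ and put $c_m=c/m$.

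\emph{Case $c$ odd.} Then $N$, all $m$, and all $c_m$ are odd, and
\[\sum_m r_m\bigl[(a+d)c_m-bmd(c_m^2-1)-3c_m\bigr]=(a+d-3-bcd)\sum_m r_mc_m+bd\sum_m mr_m\equiv0\pmod{24},\]
because $\sum_m r_mc_m=\frac{c}{N}\sum_m\frac{N}{m}r_m$. No use of $d^2\equiv1\pmod{24}$ and no case split on $\gcd(d,6)$ is needed. Next, $\prod_m(\frac{d}{c/m})^{r_m}=\prod_m(\frac{d}{m})^{r_m}$, and $(-1)^k$ enters through reciprocity. The key input is that $k+\sum_m r_m\frac{m-1}{2}=\frac12\sum_m mr_m$ is even. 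Hence $D:=(-1)^k\prod_m m^{|r_m|}\equiv1\pmod 4$, which gives $(\frac{(-1)^kP}{d})=(\frac{D}{d})=(\frac{d}{|D|})=\prod_m(\frac{d}{m})^{r_m}$ for every $d$.

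\emph{Case $c$ even.} Then $d$ is odd, but $c/m$ may have either parity. So use your ``$c'$ even'' formula, which is in fact valid for all $c'>0$ with $d'$ odd. Its exponent contains the term $3d-3$, which sums to $(3d-3)\cdot 2k$ and is not killed by the congruences:
\[\prod_m\varepsilon(\gamma_m)^{r_m}=e^{\pi i k(d-1)/2}\prod_m\Bigl(\frac{c/m}{d}\Bigr)^{r_m}=\Bigl(\frac{-1}{d}\Bigr)^k\Bigl(\frac{c}{d}\Bigr)^{2k}\prod_m\Bigl(\frac{m}{d}\Bigr)^{r_m}=\Bigl(\frac{(-1)^kP}{d}\Bigr).\]
So here a residual phase survives, it is exactly the twist by $(-1)^k$, and no reciprocity is used.

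\emph{Signs.} You cannot normalize both $c>0$ and $d>0$ via $\gamma\mapsto-\gamma$. Keep $c>0$ and read every symbol as a Kronecker symbol; then $d<0$ needs no extra work. For $c>0$ the symbol in the multiplier formula is $(\frac{c}{|d|})$, which equals the Kronecker symbol $(\frac{c}{d})$, and $e^{\pi i(d-1)/2}=(\frac{-1}{d})$ also holds for negative odd $d$.
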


	Let $f$ be an eta-quotient. We define the \textit{content} of $f$ at the cusp $\alpha$ to be an algebraic number $c$ such that the Fourier expansion of $f/c$ at $\alpha$ is a unit in $\mathcal{P}$. This $c$ can be easily computed from the $\text{SL}_2(\mathbb{Z})$-transformation law of $\eta(\tau)$, which also shows that every eta-quotient is a quasi-unit. For $f=\prod_{m\mid N}\eta(m\tau)^{r_m}$ and $\alpha=a/c$ with $\gcd(a,c)=1$, one may take $$\operatorname{cont}_{a/c}(f)=\prod_{m\mid N}\left(\frac{\gcd(c,m)}{m}\right)^{r_m/2},$$ with $c=0$ at $\infty$. Roots of unity in the transformation formula do not affect the content. 
	\begin{example}
		Consider $$f(\tau) = \eta_6(3,-3,-9,9) = \frac{\eta^3(\tau) \eta^9(6\tau)}{\eta^3(2\tau) \eta^9(3\tau)} \in \M_0^!(\Gamma_1(6)).$$ There are four cusps for $\Gamma_1(6)$, namely $\{0,\frac12,\frac13,\infty\}$. We claim that the contents of $f$ at the cusps $\infty$ and $\frac12$ are $1$, while those at the cusps $0$ and $\frac13$ are $\frac18$. Clearly, $f$ is integral at the cusp $\infty$, so its content there is $1$. The three elements
		$$\gamma_0 = \begin{pmatrix}0 & -1 \\ 1 & 0\end{pmatrix},\quad \gamma_{1/2} = \begin{pmatrix}1 & 0\\ 2 & 1\end{pmatrix},\quad \gamma_{1/3} = \begin{pmatrix}1 & 0 \\ 3 & 1\end{pmatrix}$$
		of $\text{SL}_2(\mathbb{Z})$ map $\infty$ to the other three cusps. One explicitly computes
		$$f(\gamma_0 \tau) = \frac{\eta \left(\frac{\tau }{6}\right)^9 \eta (\tau )^3}{8 \eta
			\left(\frac{\tau }{3}\right)^9 \eta \left(\frac{\tau }{2}\right)^3},\quad f(\gamma_{1/2}\tau) = -\frac{e^{\frac{i \pi }{4}} \eta \left(\frac{2 \tau }{3}+\frac{1}{3}\right)^9
			\eta (\tau )^3}{\eta \left(\frac{\tau }{3}+\frac{2}{3}\right)^9 \eta (2
			\tau )^3},\quad f(\gamma_{1/3}\tau) = \frac{i e^{-\frac{3 i \pi }{4}} \eta (\tau )^3 \eta \left(\frac{3 \tau
			}{2}+\frac{1}{2}\right)^9}{8 \eta \left(\frac{\tau
			}{2}+\frac{1}{2}\right)^3 \eta (3 \tau )^9},$$
		which proves the claim. In fact, one readily sees that the entries of Table \ref{Gamma0(6)eta_content} encountered previously are exactly the contents of various weight $0$ eta-quotients on $\Gamma_1(6)$. 
	\end{example}

	\subsection{Reduction systems and denominator types of modulonomic sequences}
	We now formulate the reduction procedure used in Section~\ref{sec:gamma1-6-denominators} in a general setting. %Here the situation is a bit simpler than that of $\Gamma_0(4)$ because the factor $16^M$ no longer appears.

For any integral domain $R$ and $f\in R$, we let $R_{(f)}$ be the localization of $R$ with respect to the multiplicative set $\{1,f,f^2,\ldots\}$. Recall the ring $\mathcal{P}$ defined in \eqref{puiseux_ring}. In $\mathrm{pp}_{>\mathcal{D}}(X)$, we now use the Fourier expansions obtained from the $\SL_2(\mathbb{Z})$ scaling matrices at the cusps. We say that a form is $N$-integral at a cusp if its Fourier expansion there belongs to $\mathcal{P}_{(N)}$.
\begin{definition}
A set $\mathcal{R} = \{f_1,\ldots,f_r\} \subset \M_0^!(\Gamma)$ is called an $N$-\textit{reduction system} with respect to a subset of cusps $S \subset \Gamma\backslash\mathbb{P}^1(\mathbb{Q})$ if
\begin{itemize}[leftmargin=*]
\item the $q$-expansions of each $f_i$ at all cusps are units in $\mathcal{P}_{(N)}$;
\item each $f_i$ has no pole outside $S$;
\item there exists $M_0 \in \mathbb{N}_{\geq 0}$ such that, whenever $M\geq M_0$ and $s\in S$, there exists $f$ in the monoid generated by $\mathcal{R}$ such that the $s$-component of $\div f$ is $-M$; for every $s'\in S\setminus\{s\}$, the $s'$-component of $\div f$ is at least $-M_0$.
\end{itemize}
\end{definition}

We give some examples to illustrate this technical definition.
\begin{example}\label{example:reduction_systems}
(1) The first two eta-quotients in Table \ref{Gamma0(6)eta_content}, namely $$\mathcal{R} := \{f_1,f_2\} := \{\eta_6(3,-3,-9,9),\eta_6(9,-9,-3,3)\} \subset \M_0^!(\Gamma_0(6)),$$ form a $2$-reduction system with respect to $S = \{\frac13,\frac12\}$. Indeed, the first bullet of the definition is immediate because the contents at all cusps are units in $\mathbb{Z}_{(2)}$. For the third bullet, we can take $M_0 = 0$. If $s=\frac13$, we can take $f=f_1^M$; if $s=\frac12$, we can take $f=f_2^M$. Similarly, the last two eta-quotients in Table \ref{Gamma0(6)eta_content} form a $3$-reduction system with respect to the same $S$. \par
(2) We claim that the first three eta-quotients in Table \ref{Gamma0(15)eta}, namely,
$$\mathcal{R} := \{f_1,f_2,f_3\}  := \{\eta_{15}(9,-3,-9,3),\eta_{15}(3,-9,-3,9),\eta_{15}(3,-3,-3,3)\}\subset \M_0^!(\Gamma_0(15)),$$
form a $5$-reduction system with respect to $S = \{\frac13,\frac15\}$. The first bullet of the definition is immediate. For the third bullet, we can take $M_0 = 4$. Assume that $s=\frac15$ and write $M = 4a+b$ with $0\leq b <4$. Then $f = f_1^a f_3^b$ satisfies $\div f = -M(\frac15)-b(\frac13) + \cdots$, so this $f$ satisfies the condition. For $s=\frac13$, one takes $f=f_2^a f_3^b$. Analogously, the last three eta-quotients in Table \ref{Gamma0(15)eta} form a $3$-reduction system with respect to the same $S$.
\begin{table}[h]
	\begin{tabular}{|c|c|c|c|c|}
		\hline
		$\text{}$ & $0$ & $\frac15$       & $\frac13$ & $\infty$ \\ \hline
		$\eta_{15}(9,-3,-9,3) \sim 4(0)-4(\frac15)$ & 125      & 1 & 125       & 1 \\ \hline
		$\eta_{15}(3,-9,-3,9) \sim 4(\infty)-4(\frac13)$ & $\frac{1}{125}$      & 1 & $\frac{1}{125}$       & 1 \\ \hline
		$\eta_{15}(3,-3,-3,3) \sim (0)+(\infty)-(\frac13)-(\frac15)$ & 1      & 1 & 1       & 1 \\ \hline
		$\eta_{15}(1,-1,-5,5) \sim 2(\infty)-2(\frac15)$ & $\frac19$      & $\frac19$ & 1       & 1 \\ \hline
		$\eta_{15}(5,-5,-1,1) \sim 2(0)-2(\frac13)$ & 9      & 9 & 1       & 1 \\ \hline
	\end{tabular}
	\caption{Contents of some weight $0$ eta-quotients of $\Gamma_0(15)$ at each cusp.}
	\label{Gamma0(15)eta}
\end{table}

(3) The group $\Gamma_0(30)$ has eight cusps. Let $S$ be the set of cusps other than $0$ and $\infty$. There exists an $N$-reduction system for each $N\in\{6,10,15\}$. This follows by inspecting Table \ref{tab:gamma0_30_reduction_eta}: the rows whose last-column entries divide $N$ form the desired reduction system. Here the third bullet is satisfied because, for every $s\in S$, the gcd of the $s$-coefficients of the divisors in each system is $1$.
\begin{table}[h]
	\centering
	\small
	\setlength{\tabcolsep}{4pt}
	\begin{tabular}{|l|c|c|}
		\hline
		Exponent vector $(r_d)_{d\mid 30}$ & $\div f$ & $C$ \\ \hline
		$(9,-5,-3,-1,-1,-3,-5,9)$ & $\begin{aligned}&6(0)+6(\infty)-2(\frac12)-2(\frac13)\\&-2(\frac15)-2(\frac16)-2(\frac1{10})-2(\frac1{15})\end{aligned}$ & $1$ \\ \hline
		$(14,-7,-2,2,1,-1,-14,7)$ & $12(0)-12(\frac1{15})$ & $15$ \\ \hline
		$(9,-3,-3,3,1,-9,-1,3)$ & $8(0)-8(\frac1{10})$ & $10$ \\ \hline
		$(25,-5,5,-5,-25,1,-1,5)$ & $24(0)-24(\frac16)$ & $6$ \\ \hline
		$(6,-3,-2,-6,1,3,2,-1)$ & $4(0)-4(\frac15)$ & $5$ \\ \hline
		$(20,-10,-20,-4,10,2,4,-2)$ & $12(0)-12(\frac13)$ & $3$ \\ \hline
		$(45,-45,-15,-9,15,9,3,-3)$ & $24(0)-24(\frac12)$ & $2$ \\ \hline
		$(-3,3,9,15,-9,-15,-45,45)$ & $24(\infty)-24(\frac1{15})$ & $2$ \\ \hline
		$(-1,2,1,5,-2,-10,-5,10)$ & $6(\infty)-6(\frac1{10})$ & $3$ \\ \hline
		$(-1,2,3,1,-6,-2,-3,6)$ & $4(\infty)-4(\frac16)$ & $5$ \\ \hline
		$(5,-1,1,-25,-5,5,-5,25)$ & $24(\infty)-24(\frac15)$ & $6$ \\ \hline
		$(3,-1,-9,1,3,-3,-3,9)$ & $8(\infty)-8(\frac13)$ & $10$ \\ \hline
		$(7,-14,-1,1,2,-2,-7,14)$ & $12(\infty)-12(\frac12)$ & $15$ \\ \hline
		$(8,-6,-2,0,0,-2,-6,8)$ & $\begin{aligned}&5(0)+5(\infty)-3(\frac12)-(\frac13)\\&-(\frac15)-(\frac16)-(\frac1{10})-3(\frac1{15})\end{aligned}$ & $1$ \\ \hline
	\end{tabular}
	\caption{Some weight $0$ eta-quotients on $\Gamma_0(30)$, given by $f=\eta_{30}(r_d)$. The last column gives a positive integer $C$ for which the Fourier expansions at every cusp are units in $\mathcal{P}_{(C)}$.}
	\label{tab:gamma0_30_reduction_eta}
\end{table}
\end{example}

The significance of the concept of a reduction system is given in the following lemma, which generalizes Lemma \ref{lemma:gamma16_reduction}. Recall the notation $\mathrm{pp}_{>\mathcal{D}}(X)$ defined there.
\begin{lemma}[Reduction lemma]\label{reduction_lemma}
	Let $\mathcal{R} \subset \M_0^!(\Gamma)$ be an $N$-reduction system with respect to the cusps in $S$. Then there exists a nonnegative cuspidal divisor $\mathcal{D}_0$ supported on $S$ such that, for every $X\in\M_0^!(\Gamma)$ whose poles are supported on $S$ and for which $\mathrm{pp}_{>\mathcal{D}_0}(X)$ has coefficients in $\overline{\mathbb{Z}}_{(N)}$, we can write $X = G + \kappa$, where $G$ is $N$-integral at all cusps and $\kappa\in \mathcal{L}_0(\mathcal{D}_0)$.
\end{lemma}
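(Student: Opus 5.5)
We take $\mathcal{D}_0:=\sum_{s\in S}M_0(s)$, with $M_0$ the constant in the third bullet of the definition of an $N$-reduction system, and run a descending induction that generalizes the proof of Lemma \ref{lemma:gamma16_reduction}. Define the \emph{excess} of $X$ as
$$e(X):=\sum_{s\in S}\max\bigl(0,\,-v_s(X)-M_0\bigr).$$
The invariant we maintain is that $\mathrm{pp}_{>\mathcal{D}_0}(X)$ has coefficients in $\overline{\mathbb{Z}}_{(N)}$; the induction is on $e(X)$. If $e(X)=0$, then $X\in\mathcal{L}_0(\mathcal{D}_0)$, since $X$ has no poles outside $S$. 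In that case we take $G=0$ and $\kappa=X$.

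For the inductive step, suppose $e(X)>0$. Choose $s\in S$ with $M:=-v_s(X)>M_0$. By the third bullet, there is a monomial $f$ in the elements of $\mathcal{R}$ with $v_s(f)=-M$ and $v_{s'}(f)\geq -M_0$ for $s'\neq s$. By the first bullet, the expansion of $f$ at $s$ is a unit of $\mathcal{P}_{(N)}$, so its leading coefficient $\lambda$ lies in $\overline{\mathbb{Z}}_{(N)}^\times$. The leading coefficient $c$ of $X$ at $s$ is a coefficient of index $<-M_0$, so $c\in\overline{\mathbb{Z}}_{(N)}$. Put $X':=X-(c/\lambda)f$. The multiplier $c/\lambda$ lies in $\overline{\mathbb{Z}}_{(N)}$, and $f$ is $N$-integral at every cusp, so $(c/\lambda)f$ is $N$-integral everywhere. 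Consequently $\mathrm{pp}_{>\mathcal{D}_0}(X')$ still has coefficients in $\overline{\mathbb{Z}}_{(N)}$, and $X'$ still has poles only on $S$ by the second bullet. At the other cusps of $S$, the pole order of $X'$ is at most $\max(-v_{s'}(X),M_0)$, so their contributions to the excess do not grow. At $s$, the pole order drops by at least one. Hence $e(X')<e(X)$. Induction gives $X'=G'+\kappa$, and we set $G:=G'+(c/\lambda)f$.

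One bookkeeping point must be checked. When $S$ uses ramification widths or irregular-cusp half-integral parameters, the leading exponent must be measured in the same local parameter used for $\mathrm{pp}$. Then "pole order drops by at least one" is meaningful because the exponents lie in a discrete set. I would phrase $e$ using $v_s$ measured in the local parameter $q_s$ attached to the chosen $\SL_2(\mathbb{Z})$ scaling matrix.

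The main obstacle, mild as it is, is normalizing the expansions at each cusp consistently. Different scaling matrices change coefficients by roots of unity, which are units in $\overline{\mathbb{Z}}$, so "$N$-integral" and "unit in $\mathcal{P}_{(N)}$" are unaffected and the argument goes through.
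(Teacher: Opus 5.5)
Your proof is correct and is essentially the paper's argument. It makes the same choice $\mathcal{D}_0=\sum_{s\in S}M_0(s)$, runs the same induction on the excess $\sum_{s\in S}\max(x_s-M_0,0)$, and uses the same step of subtracting a $\overline{\mathbb{Z}}_{(N)}$-multiple of a monoid element $f$ to lower the pole at $s$ without raising the excess at the other cusps. You are also slightly more explicit than the paper in two checks it leaves implicit: that the leading coefficient of $f$ at $s$ is a unit of $\overline{\mathbb{Z}}_{(N)}$, and that the integrality hypothesis on $\mathrm{pp}_{>\mathcal{D}_0}$ survives each subtraction.
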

\begin{proof}
	We take $\mathcal{D}_0 = \sum_{s\in S} M_0(s)$, where $M_0$ is as in the definition of a reduction system. Write $$\div X = -\sum_{s\in S} x_s (s) + \cdots,\qquad \delta(X) := \sum_{s\in S} \max(x_s-M_0,0).$$
	We proceed by induction on $\delta(X)$. The base case is $\delta(X) = 0$. In this case, $x_s\leq M_0$ for every $s\in S$, and hence $X\in \mathcal{L}_0(\mathcal{D}_0)$, so we can take $(G,\kappa) = (0,X)$.\par
	We therefore assume that $x_s > M_0$ for some $s$. By the definition of a reduction system, we can find $f$ in the monoid generated by $\mathcal{R}$ such that $$\div f = -x_s(s) + \sum_{s' \in S\setminus\{s\}} c_{s'} (s') + \cdots, \qquad c_{s'}\geq -M_0.$$
	Moreover, the Fourier expansions of $f$ at all cusps have coefficients in $\overline{\mathbb{Z}}_{(N)}$. There exists a unique constant $a$ such that the order of the pole of $X-af$ at the cusp $s$ is less than that of $X$. By comparing the leading coefficients, we see that $a\in \overline{\mathbb{Z}}_{(N)}$. Write $\div (X-af) = -\sum_{s\in S} y_s(s) + \cdots$. We just saw that $y_s < x_s$. For $s'\in S\setminus\{s\}$, we also have $\max(x_{s'}-M_0,0) = \max(y_{s'}-M_0,0)$. Hence $\delta(X-af) < \delta(X)$. By the induction hypothesis, we can write $X-af = G' + \kappa'$, where $G'$ is $N$-integral at all cusps and $\kappa'\in \mathcal{L}_0(\mathcal{D}_0)$. Setting $G = G'+af$ and $\kappa = \kappa'$ completes the proof.
\end{proof}

The next result generalizes that of Proposition \ref{prop:gamma16_denom_type}. Let $\mathfrak{B}$ be a basis of $\M_k^!(\Gamma) \cap \overline{\mathbb{Q}}((q))$ modulo $D^{k-1}(\M_{2-k}^!(\Gamma))$. Let $F \in \M_k^!(\Gamma)$ with poles supported on $S$. Write
\begin{equation}\label{F_basis_expansion}F = \sum_{f\in \mathfrak{B}} c_f f + D^{k-1}(h),\qquad h\in \M_{2-k}^!(\Gamma).\end{equation}

	\begin{proposition}\label{N_reduction_integrality}
	Let $k\geq 3$ and $F\in \M_k^!(\Gamma)$, and write $$(\div F)_\infty = \sum_{s\in S} M_s(s), \qquad M:= \max_s\{M_s\}.$$
	Assume that \begin{enumerate}[leftmargin=*]
	\item $F$ is integral at all cusps and all cusps in $S$ are regular;
	\item there exists a quasi-unit $g\in\M_{k-2}^!(\Gamma)$;
	\item for some $N$, the group $\Gamma$ has an $N$-reduction system with respect to $S$.
	\end{enumerate} 
Then, for $c_f$ and $h$ in \eqref{F_basis_expansion}, there exists an integer $\nu$, depending only on $k$, $\Gamma$, and $\mathfrak{B}$ (i.e., independent of $F$), such that \begin{itemize}[leftmargin = *]
		\item $\left(\lcmD{M}^{k-1} \nu\right)c_f\in \overline{\mathbb{Z}}_{(N)}$;
		\item $\left(\lcmD{M}^{k-1} \nu\right)h$ is $N$-integral at all cusps.
\end{itemize}\end{proposition}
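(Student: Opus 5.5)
The plan is to run the argument of Proposition~\ref{prop:gamma16_denom_type} in this general setting, with Lemma~\ref{reduction_lemma} replacing Lemma~\ref{lemma:gamma16_reduction} and the quasi-unit $g$ replacing $(u+8v)^{k-2}$. Since changing $\mathfrak{B}$ only changes $\nu$, we may fix a divisor $\mathcal{D}$ supported on $S$ with $\mathfrak{B}\subset\mathcal{L}_k(\mathcal{D})$. We write $\nu_1,\nu_2,\ldots$ for positive integers that depend only on $k$, $\Gamma$ and $\mathfrak{B}$.

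\emph{Step 1 (source of the LCM).} For each cusp $s\in S$, choose $\gamma\in\SL_2(\mathbb{Z})$ with $\gamma\infty=s$ and expand
\[
h|_{2-k}\gamma=a_0+\sum_{m\neq0}\frac{a_m}{m^{k-1}}q_s^m ,
\]
where $\sum_{m\ne0}a_mq_s^m=F|\gamma-\sum_f c_f f|\gamma$. Because $s$ is regular, $q_s$ is an honest local parameter with width $w_s$, so the denominators are $(m/w_s)^{k-1}$ and a factor $w_s^{k-1}$ is absorbed into $\nu_1$.

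Below the order $-\mathcal{D}(s)$, the basis forms $f\in\mathfrak{B}$ contribute nothing, and $F$ is integral. Every nonzero coefficient in that range has $|m|\le M_s\le M$. Hence $\mathrm{pp}_{>\mathcal{D}}(\nu_1\lcmD{M}^{k-1}h)$ has coefficients in $\overline{\mathbb{Z}}$. This step does not need the $c_f$ to be known.

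\emph{Step 2 (reduction).} Multiply by the quasi-unit $\nu g$, which has weight $k-2$, so that $X:=\nu_1\nu\lcmD{M}^{k-1}gh\in\M_0^!(\Gamma)$. One point needs care: $g$ may have zeros or poles in $\mathbb{H}$ or at cusps outside $S$. If so, $X$ has poles off $S$, and Lemma~\ref{reduction_lemma} requires poles supported on $S$. I would address this first by choosing $g$ as in the model case, with divisor supported on cusps. If $g$ is only a quasi-unit with interior zeros, $g$ is holomorphic and nonvanishing on $\mathbb{H}$, because a quasi-unit's inverse is weakly holomorphic. Its zeros are therefore at cusps, and I would enlarge the fixed divisor $\mathcal{D}_1=\mathcal{D}+\div g+\mathcal{D}_0$ accordingly.

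Because $\nu g$ and $\nu g^{-1}$ have expansions in $\mathcal{P}$, the principal part of $X$ beyond $\mathcal{D}_1$ stays integral up to a bounded factor. Lemma~\ref{reduction_lemma} then gives $X=G+\kappa$, with $G$ $N$-integral at all cusps and $\kappa$ in a fixed finite-dimensional space $\mathcal{L}_0(\mathcal{D}_1)$ with basis $\kappa_1,\dots,\kappa_r$ having algebraic coefficients.

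\emph{Step 3 (linear algebra).} Divide by $g$ and apply $D^{k-1}$:
\[
\nu_2\lcmD{M}^{k-1}F-D^{k-1}(Gg^{-1})=\sum_f \nu_2\lcmD{M}^{k-1}c_f\,f+\sum_j\theta_jD^{k-1}(\kappa_jg^{-1}).
\]
Transform both sides by a fixed $\gamma$, for instance at a cusp of $S$ or at $\infty$. The left side has coefficients in $\nu_3^{-1}\mathcal{P}_{(N)}$, since $F$ is integral, $G$ is $N$-integral, $g^{-1}$ has bounded denominators, and $D$ preserves integrality up to width factors.

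The forms on the right are linearly independent. Indeed, the $f\in\mathfrak{B}$ are independent modulo $D^{k-1}$, and $D^{k-1}$ is injective for $k\ge3$. Lemma~\ref{linear_comb_int_lemma}, applied with $A=\overline{\mathbb{Z}}_{(N)}$ and adapted to Puiseux expansions with the widths fixed, then bounds $\nu_4\lcmD{M}^{k-1}c_f$ and $\nu_4\theta_j$ inside $\overline{\mathbb{Z}}_{(N)}$. Finally, $\lcmD{M}^{k-1}h$ is a constant multiple of $(G+\kappa)g^{-1}$, which is $N$-integral at all cusps after multiplying by a bounded $\nu$.

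The main obstacle is Step 2: making sure the poles of $gh$ lie on $S$ and that the fixed divisor $\mathcal{D}_1$ is independent of $F$. A secondary bookkeeping issue is tracking width factors at cusps of width greater than $1$ under $\SL_2(\mathbb{Z})$ scalings.
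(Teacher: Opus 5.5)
Your proposal is essentially the paper's proof, with the same three steps: the factor $\lcmD{M}^{k-1}$ comes from the principal part of $h$ at the regular cusps; then you multiply by the quasi-unit $g$ and apply Lemma~\ref{reduction_lemma}; then you apply $D^{k-1}$ and Lemma~\ref{linear_comb_int_lemma}. Your care about fixed poles of $gh$ off $S$ addresses a point the paper passes over, and enlarging $\mathcal{D}_1$ works because the reduction only subtracts monomials in $\mathcal{R}$, which have no poles off $S$.

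One small correction: the stated definition of quasi-unit only constrains formal $q$-expansions, so it does not force $g^{-1}$ to be weakly holomorphic (e.g.\ $E_4$ on $\SL_2(\mathbb{Z})$ qualifies). This is harmless, because zeros of $g$ in $\mathbb{H}$ create no poles of $gh$, and the independence in Step 3 still holds since $D^{k-1}$ of a form with a pole in $\mathbb{H}$ still has a pole there.
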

\begin{proof}
The proof is largely parallel to the three steps in the proof of Proposition \ref{prop:gamma16_denom_type}. We may enlarge each $M_s$ to a fixed sufficiently large bound, at the cost of changing $\nu$ by a fixed factor. We will use $\nu_1,\nu_2,\ldots \in \mathbb{Z}_{>0}$ to denote integers that depend only on $k$, $\Gamma$, and $\mathfrak{B}$ (i.e., are independent of $F$).\\
\underline{Step 1.} Fix an effective cuspidal divisor $\mathcal{D}$ such that $\mathfrak{B}\subset \mathcal{L}_{k}(\mathcal{D})$. Let $\gamma \in \text{SL}_2(\mathbb{Z})$, set $s=\gamma\infty$, and write \begin{equation}\label{aux_5}h|_{2-k}\gamma := a_0 + \sum_{\substack{n\geq -M_s\\ n\neq 0}} \frac{a_n}{(n/w)^{k-1}} q^{n/w},\end{equation}
where $w$ is the width of the cusp $s$. Then $$ \sum_{\substack{n\geq -M_s\\ n\neq 0}} a_n q^{n/w} = F|_k \gamma -\sum_{f\in \mathfrak{B}} c_f f|_k \gamma.$$
Since $F$ is integral at all cusps, we have $a_n \in \overline{\mathbb{Z}}$ for every negative $n$ for which $q^{n/w}$ does not appear in any $f|_k \gamma$ with $f\in\mathfrak{B}$. Using $\lcmD{M}^{k-1}$ to cancel the denominators in \eqref{aux_5} and absorbing the cusp widths into a constant $\nu_1$, we see that $\mathrm{pp}_{>\mathcal{D}}(\nu_1\lcmD{M}^{k-1}h)$ has coefficients in $\overline{\mathbb{Z}}$. \\
\underline{Step 2.} By hypothesis (2), we can choose a quasi-unit $g\in \M_{k-2}^!(\Gamma)$. After enlarging $\nu_1$ by a fixed factor, $\mathrm{pp}_{>\mathcal{D}_1}(\nu_1\lcmD{M}^{k-1}gh)$ has coefficients in $\overline{\mathbb{Z}}$ for some effective cuspidal divisor $\mathcal{D}_1$. Lemma \ref{reduction_lemma} then gives $G$, which is $N$-integral at all cusps, and $\kappa\in \mathcal{L}_0(\mathcal{D}_1)$ such that $$\nu_1\lcmD{M}^{k-1}g h = G + \kappa.$$
\underline{Step 3.} Multiply both sides of the above equation by $g^{-1}$, apply $D^{k-1}$, and then use \eqref{F_basis_expansion} to obtain
$$\begin{aligned}\nu_1\lcmD{M}^{k-1} F - D^{k-1}(Gg^{-1}) &= \sum_{f\in \mathfrak{B}} \nu_1\lcmD{M}^{k-1} c_f f + D^{k-1}(\kappa g^{-1}) \\ 
&= \sum_{f\in \mathfrak{B}} \nu_1\lcmD{M}^{k-1} c_f f + \sum_{j=1}^r \theta_j D^{k-1}(\kappa_j g^{-1}),
\end{aligned}$$ 
where, in the last line, we chose $\kappa_1,\ldots,\kappa_r$ as a basis of $\mathcal{L}_0(\mathcal{D}_1)$ and wrote $\kappa = \sum_j \theta_j \kappa_j$. Since $F$ and $G$ are $N$-integral at all cusps, the left-hand side has Fourier coefficients in $\nu_2^{-1}\overline{\mathbb{Z}}_{(N)}$. By definition of $\mathfrak{B}$ and since $k\geq 3$, the forms $\{f\mid f\in \mathfrak{B}\}$ and $\{D^{k-1}(\kappa_1g^{-1}),\ldots,D^{k-1}(\kappa_rg^{-1})\}$ are linearly independent. Lemma \ref{linear_comb_int_lemma} shows that there exists $\nu_3\in \mathbb{Z}_{>0}$ such that $$\nu_1\lcmD{M}^{k-1} c_f \in \overline{\mathbb{Z}}_{(N)}/\nu_3, \qquad \theta_j \in  \overline{\mathbb{Z}}_{(N)}/\nu_3.$$
This proves the first bullet after absorbing $\nu_1\nu_3$ into $\nu$. Also, writing $\nu_1\lcmD{M}^{k-1} h = Gg^{-1} + \kappa g^{-1}$ and recalling that $g$ is a quasi-unit, we see that $\nu_4\nu_1\lcmD{M}^{k-1} h$ is $N$-integral at all cusps.
\end{proof}

\begin{remark}
The above proof also applies when $k=2$. The only distinction is that $h$ in \eqref{F_basis_expansion} is then unique only up to the addition of a constant. Thus, the second bullet of the proposition should be replaced by the following statement: $\left(\lcmD{M}^{k-1} \nu\right)h$ is $N$-integral at all cusps up to an additive constant.
\end{remark}

\begin{example}
Let $\Gamma = \Gamma_0(15)$ and $$t(\tau) = \eta_{15}(3,-3,-3,3) = \left(\frac{\eta(\tau)\eta(15\tau)}{\eta(3\tau)\eta(5\tau)}\right)^{3} \sim (0) + (\infty)-(\frac13)-(\frac15).$$ Then, for $k\geq 3$ and any $g\in \M_k(\Gamma)\cap\overline{\mathbb{Q}}((q))$, write
$$t^ng = \sum_{f\in \mathfrak{B}} c_{f,n} f + D^{k-1}(h_n), \qquad h_n\in \M_{2-k}^!(\Gamma).$$
The sequences $\{c_{f,n}\}$ have denominator type $\lcmD{n}^{k-1}$. Also, for each $\gamma \in \text{SL}_2(\mathbb{Z})$, the sequence $\{a_0(h_n|_{2-k} \gamma)\}$ has denominator type $\lcmD{n}^{k-1}$. This follows from Example~\ref{example:reduction_systems}, where we constructed $N$-reduction systems for two coprime integers, namely $N=3$ and $N=5$.
\end{example}

\begin{example}
Consider the case $k=2$ and $\Gamma = \Gamma_0(15)$, with the same $t$ as in the preceding example. The curve has genus one, and its unique cusp form is $f = \eta(\tau)\eta(3\tau)\eta(5\tau)\eta(15\tau)$.
Let
$$\begin{aligned}
E &= \frac{-E_2(\tau)+21E_2(3\tau)-35E_2(5\tau)+15E_2(15\tau)}{24}
= (1+4t)f \in \E_2(\Gamma),\\
f^\sharp &= t(t+5)f \in \S_2^!(\Gamma).
\end{aligned}$$
Note that $E$ is the unique Eisenstein series vanishing at $0$ and $\infty$. Therefore, for $n\geq 0$, we have
\begin{equation}\label{aux_22}t^n f = A_n E + B_n f + C_n f^\sharp + D(h_n), \qquad h_n\in \M_0^!(\Gamma).\end{equation}
All elements are in the minus space of the Fricke involution, so $h_n$ is uniquely determined\footnote{Constant functions have Fricke eigenvalue $+1$, so the minus space contains no nonzero constants.}. A familiar argument shows that $A_n$ has trivial denominator type. By Proposition \ref{N_reduction_integrality}, $B_n$, $C_n$, and $a_0(h_n)$ have denominator type $\lcmD{n}$. By integrating \eqref{aux_22} from $0$ to $i\infty$, we obtain
$$(2\pi i)\int_0^{i\infty} t^n f\,d\tau = A_n \log\left(\frac{81}{125}\right) - B_n L(f,1) - C_n L(f^\sharp,1) + 2a_0(h_n).$$
Here we use the Mellin normalization $L(F,1)=2\pi\int_0^\infty F(iy)\,dy$ for $F=f,f^\sharp$; both integrals converge since these forms vanish at $0$ and $\infty$.
Since $t(\frac{i}{\sqrt{15}}) = \frac{-11+5\sqrt{5}}{2}$, at least one of the above numbers is irrational. This conclusion is rather uninteresting, since the first one is already known to be irrational.
%The telescoper and certificate of $\boldL(t^n f) = D(t^n R)$ is
%$$\boldL = (n+1)-5(2n+3)\mathbf S_n-13(n+2)\mathbf S_n^2+5(2n+5)\mathbf S_n^3+(n+3)\mathbf S_n^4,\qquad R = \frac{tDt}{f}.$$ 
\end{example}

\begin{example}
Let $\Gamma = \Gamma_0(30)$ and $k=6$. It has genus $3$ and eight cusps
$$\left\{0,\infty,\frac12,\frac13,\frac15,\frac16,\frac1{10},\frac1{15}\right\}.$$
Pick any $g\in \M_6(\Gamma)$. Let
$$t(\tau) = \frac{\eta (\tau )^9 \eta (30 \tau )^9}{\eta (2 \tau )^5 \eta (3 \tau )^3 \eta (5 \tau ) \eta (6 \tau ) \eta (10 \tau )^3 \eta (15 \tau )^5} \sim 6(0)+6(\infty)-2(\frac12)-2(\frac13)-2(\frac15)-2(\frac16)-2(\frac1{10})-2(\frac1{15}).$$
Choose a $\mathbb{Q}$-basis $\mathfrak{B}$ of the subspace of $\M_6^!(\Gamma)$ modulo $D^5(\M_{-4}^!(\Gamma))$ represented by forms vanishing at $0$ and $\infty$. In the expansion
$$t^ng = \sum_{f\in \mathfrak{B}} c_{f,n} f + D^5(h_n), \qquad h_n\in \M_{-4}^!(\Gamma),\quad n\geq 1,$$
there are a total of $\dim \E_6 + 2\dim \S_6 -2 = 58$ forms $f$. The above proposition implies that each $c_{f,n}$ has denominator type $\lcmD{2n}^5$. For our chosen $t$, the integral $(2\pi i)^5 \int_0^{i\infty} t^n f\,d\tau$ decays very rapidly, since
$$t(\frac{i}{\sqrt{30}}) = \left(3 \sqrt{\frac{5}{2}}-\frac{\sqrt{5}}{2}-\frac{3}{2}-\frac{3}{\sqrt{2}}\right)^2 \approx 1.65 \times 10^{-5}.$$
The inequality $e^{10} t(\frac{i}{\sqrt{30}}) < 1$ implies that at least one of 
\begin{equation}\label{irrational_list_30}\left\{(2\pi i)^5 \int_0^{i\infty} f(\tau)\,d\tau : f\in \mathfrak{B}\right\}\end{equation}
is irrational. In particular, $\zeta(5)$ belongs to this list by considering an Eisenstein series. Although the above list contains $58$ real numbers, the number of $\mathbb{Q}$-linearly independent numbers is much smaller because most of the forms in $\mathfrak{B}$ are oldforms. For example, if $F\in \S_6(\Gamma_0(3))$, the elements $\{F(m\tau) : m\mid 10\}$ give the same value of $(2\pi i)^5 \int_0^{i\infty} F(m\tau)\,d\tau$ up to rational multiples. A further reduction of \eqref{irrational_list_30} can be obtained by considering the $W_{30}$-eigenspaces. We do not pursue this point further here.
\end{example}

\bibliographystyle{plain} % We choose the "plain" reference style
\bibliography{../ref.bib} % Entries are in the refs.bib file
	
\end{document}